\newtheorem{theorem}{Theorem}[section]
\newtheorem{proposition}[theorem]{Proposition}
\newtheorem{lemma}[theorem]{Lemma}
\newtheorem{corollary}[theorem]{Corollary}
\newtheorem{define}[theorem]{Definition}
\newtheorem{remark}[theorem]{Remark}
\def\Empty{}
\def\section{\@startsection {section}{1}{\z@}{-3.5ex plus -1ex minus
-.2ex}{2.3ex plus .2ex}{\large\bf}}
\def\fnum@figure{{\small Figure \thefigure}}
\def\fakefigure{\def\@captype{figure}}
\long\def\@makecaption#1#2{
    \vskip 10pt
    \def\FCap{#2} \def\NoCap{\ignorespaces}
    \ifx \FCap\NoCap
       \setbox\@tempboxa\hbox{#1}  
      \else
       \setbox\@tempboxa\hbox{#1: \small \it #2}
    \fi
    \ifdim \wd\@tempboxa >\hsize   
        \unhbox\@tempboxa\par      
      \else                        
        \hbox to\hsize{\hfil\box\@tempboxa\hfil}
    \fi}
\def\@oddhead{\hbox{}\rightmark \hfil \rm\thepage}
\def\sectionmark#1{\markright {\sc{\ifnum \c@secnumdepth >\z@
      \S\thesection.\hskip 1em\relax \fi #1}}}
\def\oplabel#1{
  \def\OpArg{#1} \ifx \OpArg\Empty {} \else
  	\label{#1}
  \fi}
\newlength{\saveu}
\newcommand{\aas}{\mbox{${\mathcal A}^s$}}
\newcommand{\aau}{\mbox{${\mathcal A}^u$}}
\newcommand{\cH}{\mbox{${\mathcal H}$}}
\newcommand{\hhu}{\mbox{${\mathcal H} ^u$}}
\newcommand{\hhs}{\mbox{${\mathcal H} ^s$}}
\newcommand{\cc}{\mbox{$\mathcal C$}}
\newcommand{\oo}{\mbox{$\mathcal O$}}
\newcommand{\oos}{\mbox{${\mathcal O}^s$}}
\newcommand{\oou}{\mbox{${\mathcal O}^u$}}
\newcommand{\rrrr}{\mbox{${\mathbb R}$}}
\newcommand{\mi}{\mbox{$\widetilde M$}}
\newcommand{\ls}{\mbox{$\Lambda^s$}}
\newcommand{\lu}{\mbox{$\Lambda^u$}}
\newcommand{\wls}{\mbox{$\widetilde \Lambda^s$}}
\newcommand{\wlu}{\mbox{$\widetilde \Lambda^u$}}
\newcommand{\wwp}{\mbox{$\widetilde \Phi$}}
\newcommand{\wt}{\mbox{$\widetilde T$}}
\newcommand{\fol}{\mbox{$\mathcal F$}}
\newcommand{\fn}{\mbox{$\widetilde {\mathcal F}$}}
\newcommand{\ws}{\mbox{$\widetilde  W^s$}}
\newcommand{\wu}{\mbox{$\widetilde  W^u$}}
\newcommand{\hatp}{\mbox{$\hat P$}}
\newcommand{\hatflo}{\mbox{$\hat \Phi$}}
\newcommand{\hatwls}{\mbox{$\hat \Lambda^s$}}
\newcommand{\hatwlu}{\mbox{$\hat \Lambda^u$}}
\begin{document}

\title[\centerline{Free Seifert pieces of pseudo-Anosov flows}]{Free Seifert pieces of pseudo-Anosov flows}

\author{Thierry Barbot}

\author{S\'{e}rgio R. Fenley}
\thanks{Research of the second author partially
supported by a grant of the Simons foundation.}

\address{Thierry Barbot\\ Universit\'e d'Avignon et des pays de Vaucluse\\
LANLG, Facult\'e des Sciences\\
33 rue Louis Pasteur\\
84000 Avignon, France.}

\email{thierry.barbot@univ-avignon.fr}

\address{S\'ergio Fenley\\Florida State University\\
Tallahassee\\FL 32306-4510, USA \ \ and \ \
Princeton University\\Princeton\\NJ 08544-1000, USA}

\email{fenley@math.princeton.edu}

 \email{}
\maketitle

\vskip .2in

\small
\noindent
{{\underline {Abstract}} $-$ We prove a structure theorem for pseudo-Anosov flows restricted
to Seifert fibered pieces of three manifolds. The piece is called periodic if there is a
Seifert fibration so that a regular fiber is freely homotopic, up to powers, to a closed
orbit of the flow. A non periodic Seifert fibered piece is called free. In a previous paper \cite{bafe1} we
described the
structure of a pseudo-Anosov flow restricted to a
periodic piece up to isotopy along the flow.
In the present paper we consider free Seifert pieces.
We show that, in a carefully defined neighborhood of the free piece, the
pseudo-Anosov
flow is orbitally equivalent to a hyperbolic blow up of a geodesic flow piece.
A geodesic flow piece is a finite cover of the geodesic flow on
a compact hyperbolic
surface, usually with boundary. In the proof we introduce almost $k$-convergence groups and prove a convergence
theorem. We also introduce an alternative model for the geodesic flow of
a hyperbolic surface that is suitable
to prove these results, and we carefully define
what is a hyperbolic blow up.
}

\tableofcontents

\section{Introduction}

The purpose of this article is to prove a structure theorem for pseudo-Anosov flows
restricted to Seifert fibered pieces that are called free. This is part of a very broad program
to classify all pseudo-Anosov flows in $3$-manifolds.

A pseudo-Anosov flow is a flow without stationary orbits that is roughly transversely hyperbolic.
This means it has stable and unstable $2$-dimensional foliations, that may have singularities along
finitely many closed orbits. The singularities are of $p$-prong type.

These were introduced by Anosov \cite{An} who studied amongst other things geodesic flows in
the unit tangent bundle of negatively curved closed manifolds. These flows are now called Anosov flows,
and they are smooth. In dimension $3$ Thurston \cite{Th1,Th2} introduced suspension pseudo-Anosov flows,
that are suspensions of pseudo-Anosov homeomorphisms of
surfaces. These were used in an essential way to
prove the hyperbolization theorem of atoroidal $3$-manifolds that fiber over the circle.
Mosher \cite{Mo1,Mo2} then generalized this to define general pseudo-Anosov flows in $3$-manifolds, that
is, flows that
are locally like the suspension flows of pseudo-Anosov homeomorphisms.
Pseudo-Anosov flows are extremely common due to works of Thurston \cite{Th1,Th3,Th4},
Gabai and Mosher \cite{Mo3},
Calegari \cite{Cal1,Cal2} and the second author \cite{Fe1,Fe4}.
More recently B\'eguin, Bonatti, and Yu \cite{BBY} introduced extremely general new constructions
of Anosov flows in $3$-manifolds. These greatly expanded the class of Anosov flows in $3$-manifolds.
In addition, pseudo-Anosov flows have been used to analyze and understand the topology and geometry
of $3$-manifolds \cite{Cal1,Fe3,Fe4,Ga-Ka1,Ga-Ka2}.

Pseudo-Anosov flows are strongly connected with the topology of $3$-manifolds.
The existence of a pseudo-Anosov flow implies that the manifold is {\em irreducible} \cite{Fe3}, that
is, every embedded sphere bounds a ball \cite{He}. The other important property in $3$-manifolds
concerns $\pi_1$-injective tori. If a closed manifold admits this, one says that the
manifold is {\em toroidal}, otherwise it is called {\em atoroidal}.
The existence of a pseudo-Anosov flow {\underline {does not}} imply that the manifold
is atoroidal $-$ the simplest counterexample is the unit tangent bundle of a closed
surface of negative curvature, in which the geodesic flow is Anosov.
But in fact pseudo-Anosov flows in toroidal $3$-manifolds are extremely common,
see
for example
\cite{Fr-Wi,Ha-Th,Ba3,bafe1,BBY}.
One very important problem, that will not be addressed in this article is to determine
exactly which $3$-manifolds admit pseudo-Anosov flows.

The goal of this article is to advance the understanding of pseudo-Anosov flows
in relation with the topology of the $3$-manifold. A compact, irreducible $3$-manifold has
a canonical decomposition into Seifert fibered
and atoroidal pieces. This is the
JSJ decomposition of the manifold \cite{Ja-Sh,Jo}.
{\em Seifert fibered} means that it has a one dimensional foliation by circles.
We want to understand how a pseudo-Anosov flow
interacts with this decomposition. The atoroidal case is by far the most mysterious
and unknown and will not be addressed in this article. We will consider Seifert fibered pieces.
Here a lot is already known.
First, in a seminal work, Ghys \cite{Gh} proved that an Anosov flow in an $\mathbb S^1$ bundle
is orbitally equivalent to a finite cover of a geodesic flow.
{\em Orbitally equivalent} means that there is a homeomorphism that sends orbits to orbits.
Without explicitly defining it, Ghys introduced the notion of an $\mathbb R$-covered Anosov flow:
this means that (say) the stable foliation is $\mathbb R$-covered,
that is, when lifted to the universal cover, it is a foliation
with leaf space homeomorphic to the real numbers $\mathbb R$.
Later the first author \cite{Ba1} extended this result to any Seifert fibered space.

Previously the first author also started the analysis of the structure of an Anosov flow
restricted to a Seifert fibered space. He proved \cite{Ba3} that if the flow is $\mathbb R$-covered,
then the flow restricted to the Seifert
piece in an appropriate manner is orbitally equivalent
to a finite cover of a geodesic flow of a compact hyperbolic surface with boundary.
This is called a {\em geodesic flow piece}.
He also started the study of more general Anosov flows restricted to Seifert fibered pieces.
We emphasize that even for Anosov flows, the $\rrrr$-covered property
is extremely restrictive, and it does not allow blow ups.

In this article we consider the much more general case of pseudo-Anosov flows and study
the relationship with Seifert fibered pieces. Let $P$ be such a piece of the torus decomposition
of the manifold $M$.
The fundamental dichotomy here is the following: we say that the piece is {\em periodic} if
there is a Seifert fibration of $P$ so that a regular fiber of the fibration is up to finite
powers freely homotopic to a periodic orbit of the flow. Otherwise the piece is called
{\em free}. Geodesic flows have only one piece and it is free. The Handel-Thurston examples
are $\mathbb R$-covered with two Seifert pieces, both of which are free.
The Bonatti-Langevin examples \cite{Bo-La} have periodic Seifert pieces. We previously
constructed a very large class of examples in graph manifolds, so that all pieces are
periodic \cite{bafe1}, and in \cite{bafe2}, we proved that every pseudo Anosov flow on a graph manifold such that all Seifert pieces
are periodic is orbitally equivalent to one of the examples constructed in \cite{bafe1}.

The structure of a pseudo-Anosov flow in a periodic piece is fairly simple and was completely
determined in \cite{bafe1}. A {\em Birkhoff annulus} is an annulus tangent to the flow in
the boundary and transverse to the flow in the interior.
For a periodic piece there is a
$2$-dimensional  spine that is a union of a mostly embedded finite collection of Birkhoff annuli.
An arbitrarily small neighborhood of this spine is a representative for the Seifert fibered piece.
In that way the dynamics of the flow in the piece is extremely simple: there are finitely
many closed orbits entirely contained in the piece, their local stable and unstable
manifolds and every other orbit piece enters and exits the piece. In particular there
are no full non closed orbits contained in the piece.

As we mentioned above there are non-trivial free Seifert pieces that are orbitally equivalent
to a geodesic flow piece. In those cases it follows that in the piece the dynamics is extremely rich:
there are countably many periodic orbits contained in the piece and uncountably
many non periodic full orbits  entirely contained in the piece.
Given the results mentioned above  the natural conjecture is that in a free Seifert
piece the flow is orbitally equivalent to a geodesic flow piece.
However, the situation is not nearly so simple. We briefly explain one example
to illustrate what could happen.

B\'eguin, Bonatti and Yu introduced new, powerful and vast constructions
of Anosov flows. Start with a geodesic flow in a closed hyperbolic surface
and consider an orbit that projects to a simple, separating geodesic.
The unit tangent bundles $M_1, M_2$ of the complements
$S_1, S_2$  of the geodesic in the surface
are Seifert fibered manifolds.
Blow up the orbit using a DA operation, derived from Anosov, and remove a torus
neighborhood of this orbit to create a manifold with boundary and an incoming
semi-flow in it.
Roughly the DA operation transforms a hyperbolic orbit into (in this
case) a repelling orbit. This can be achieved by keeping the
expansion along the unstable directions and adding an expansion
stronger than the contraction in the stable direction.
The local unstable leaf splits into two leaves (or one
if the leaf is a M\"{o}bius band) with a solid torus
in between. See the details in \cite{Fr-Wi,BBY} or in
section \ref{examples}.
Take the manifold with the boundary a torus and
an incoming semiflow.
Glue a copy of this manifold with a semiflow
which is a time reversal of the flow, so the flow is
outgoing from the other manifold. The
results of B\'eguin, Bonatti and Yu \cite{BBY} show that this flow is Anosov. The Seifert manifolds
$M_1, M_2$ survive in the final manifold and are Seifert fibered pieces of the JSJ decomposition
of this manifold.
They are also free Seifert pieces $-$ see detailed explanation
in section \ref{examples}.
But because of the blow up operation, the final flow restricted to $M_1$ or $M_2$ cannot
be orbitally equivalent to a geodesic flow piece.
For example consider $M_1$: the only possible geodesic flow
here would be the geodesic flow of $S_1$. The problem is that
we blew up one orbit corresponding to a boundary
geodesic of $S_1$. In terms of the stable foliation,
this means that the stable leaf of this geodesic is
blown into an interval of leaves. This behavior
is exactly what necessitates the hyperbolic blow
up operation in the statement of the Main theorem.

A {\em hyperbolic blow up} of a geodesic flow piece is obtained by essentially blowing
up a geodesic flow piece as above. We explain more in the Sketch of the proof subsection
below.
The primary goal of this article is to prove the following:

\vskip .15in
\noindent
{\bf {Main theorem}} $-$ Let $(M, \Phi)$ be a pseudo-Anosov flow. Let $P$ be a free Seifert piece in $M$. Assume that $P$ is not elementary, i.e. that $\pi_1(P)$ does
not contain a free abelian group of finite index. Then, in the intermediate cover $M_P$ associated to $\pi_1(P)$ there is a compact submanifold $\hat P$
bounded by embedded Birkhoff tori, such that the restriction of the lifted flow $\hat \Phi$ to $\hat P$ is orbitally equivalent to a hyperbolic blow up of a
geodesic flow.  This orbital equivalence preserves the restrictions of the weak stable and unstable foliations.
Moreover, $\hat P$ is almost unique up to isotopy along the lifted flow $\hat \Phi$:
if  $\hat P'$ is another compact submanifold bounded by embedded Birkhoff tori, and if $\hat P_*$, $\hat P'_*$ are the complements in $\hat P$, $\hat P'$ of the (finitely many) periodic orbits contained in $\partial \hat P$,
there is a continuous
map $t: \hat P_* \to \mathbb R$ such that the map from $\hat P_*$ into $M_P$ mapping $x$ on $\hat{\Phi}^{t(x)}(x)$ is a homeomorphism, with image $\hat P'_*$.
\vskip .15in

See section \ref{conclusion} and Theorem \ref{main} for a more detailed statement.
The isotopy along the flow does not extend in general
to the tangent periodic orbits (see Remark \ref{rk:isotopybirkhofftori}).
A Birkhoff torus is one that is a union of Birkhoff annuli. We need
the cover $M_P$ because in $M$ the projection of the embedded Birkhoff
tori in $M_P$ may have tangent orbits that collapse together.
This is quite common. To get $\hat{P}$ embedded and the structure theorem
above, we need to lift to the cover $M_P$.

The main theorem substantially adds to the understanding of the relationship
of pseudo-Anosov flows with the topology of the manifold.
We emphasize that to show that the structure of periodic Seifert pieces
is given by a spine of Birkhoff annuli is relatively simple given
the understanding of the topological structure of the stable
and unstable foliations in the universal cover and the analysis
of periodic orbits and lozenges (see Background section).
Unlike the case of periodic Seifert pieces, the proof of the Main
theorem about free Seifert pieces is quite
complex and involves several new objects or constructions.

To put the Main theorem in perspective notice that pseudo-Anosov
flows are extremely common in $3$-manifolds. We already remarked
the very general recent constructions of Beguin-Bonatti-Yu of
Anosov flows \cite{BBY}. Very roughly they consider ``blocks"
with smooth semiflows where the non wandering set is hyperbolic
and the boundary is a union of transverse tori. Under extremely general
conditions they can glue these manifolds to produce Anosov flows.
In addition pseudo-Anosov flows are extremely common because
of Dehn surgery: they generate pseudo-Anosov flows in the surgered
manifolds for
the vast majority of Dehn surgeries on a closed orbit
of an initial pseudo-Anosov flow. Most of the time the resulting
flow is truly pseudo-Anosov, meaning it has $p$-prong singular
orbits.
Given this enormous flexibility it is quite remarkable that the
structure of a pseudo-Anosov flow in a Seifert fibered piece $P$ is
either described by a finite union of Birkhoff annuli (when
$P$ is periodic), or by the Main theorem (if $P$ is free).
In the course of the proof of the Main theorem, we will prove that
if $P$ is free, there is a representative $W$ for $P$, bounded
by Birkhoff annuli, so that it does not have $p$-prong singularities
in the interior. This is in contrast with the case that $P$ is
periodic, and this also highlights the remarkable fact that
possible singularities do not essentially affect the structure
of the pseudo-Anosov flow in a free Seifert piece.
The Main theorem implies that a great part of the enormous
flexibility of pseudo-Anosov flows resides in the cases that
either  $M$
is hyperbolic, or in the atoroidal pieces of the JSJ
decomposition of $M$.
The study of these is still in its infancy.

\vskip .1in
\noindent
{\bf {Sketch of the proof the main theorem}}

In order to prove the main theorem we will use a very important result that
under very general circumstances a flow is determined up to orbital equivalence by its action on the orbit space \cite{Hae}. The orbit space
is the quotient space of the flow in the universal cover.
In our situation the orbit space of the flow or a subset of the
flow will always be a subset of the plane and it will have
induced stable and unstable (possibly singular) foliations.

We introduce a new model to describe geodesic flows in compact hyperbolic
surfaces, usually with boundary,
 that involves the projectivized tangent bundle of an associated
orbit space. The fundamental group of the manifold acts in a properly
discontinuous cocompact way providing a new model for geodesic flows.

We now consider blow ups.
Instead of doing the blow up of the geodesic flow on the
$3$-manifold level we will do it
on the level of actions of the fundamental group on the circle or the line as follows.
For the geodesic flow, the stable/unstable foliations are $\mathbb R$-covered.
The quotient of the stable or unstable
leaf space (homeomorphic to  $\mathbb R$)
  by the representative $h$ of the regular fiber
is a circle $\mathbb S^1$ and the orbifold quotient fundamental group
$\pi_1(P)/<h>$ acts on these circles. These are convergence group actions.
When we lift the flow to a finite cover, the associated actions are not
convergence group actions, but are what we call $k$-convergence group
actions. If the unrolling of the fiber direction has order $k$, then
an element of $\pi_1(P)/<h>$ with fixed points in $\mathbb S^1$ has
$2k$ fixed points that are alternatively attracting and repelling.
Recall that for a convergence group action in $\mathbb S^1$
we can only have $2$ fixed points.
We prove a $k$-convergence group action theorem, extending the convergence group
theorem. This is fairly straightforward. Then we define almost $k$-convergence
groups, allowing modifications of the actions in some periodic
intervals.

To prove our theorem we will only allow hyperbolic blow ups, where the
modifications in the intervals introduce an arbitrary finite number of
points that are attracting or repelling.
The reason is that the stable and unstable foliations of
pseudo-Anosov flows have attracting or repelling holonomy
along periodic orbits.
We show that such an action is semiconjugate to a $k$-convergence action.

We build model flows via such actions. We start with
two hyperbolic blow ups of Fuchsian actions. Here a Fuchsian action is one associated
with a finite cover of the geodesic flow.
Using a combination of the two actions
we then construct  an ``orbit'' space which is a subset
of the plane, and then
a model flow with a compact
``core'' associated with this blow up.
In particular the hyperbolic blow up is a new technique to
construct flows that are later proved to be a part of
pseudo-Anosov flows.

Now consider an {\underline {arbitrary}} pseudo-Anosov $\Phi$,
and $P$ a free Seifert piece of $\Phi$.
Then the fiber $h$ in $\pi_1(P)$ acts on the leaf space of the stable/unstable foliations
freely, generating two axes $\mathcal A^s, \mathcal A^u$ that are homeomorphic to the reals.
The fundamental group of the piece $\pi_1(P)$ acts on $\mathcal A^s, \mathcal A^u$ and their
quotients by $h$, producing two actions on the circle $\mathbb S^1$. After a lot
of work we show that these two actions are hyperbolic blow ups of Fuchsian
actions. Using the hyperbolic blow ups of the two Fuchsian actions,
we construct
the associated model flow
as described above.
Finally we show that in the cover $M_P$ and in
the carefully defined subset $\hat{P}$ of $M_P$, the restriction of the
pseudo-Anosov flow $\Phi$
is orbitally equivalent to the model flow we constructed,
finishing the proof
of the main theorem.

The reader may feel at first glance that our construction of a hyperbolic
blow up through actions on the line is unnecessarily sophisticated,
and may consider that a definition involving DA operations
in dimension three
as explained in this introduction would have been more natural.
But if one uses the DA operations, then it
is for instance extremely difficult to analyze the flow or
establish the structure of the flow up to orbital equivalence.
In other words, we have to prove that {\underline {every}}
free piece of an arbitrary pseudo-Anosov flow
has the structure we are proposing and that is
very difficult if one considers just DA operations on the flow level.
Our approach
makes a much more direct connection between the construction of examples and the proof that every free Seifert piece has this form.
In the last section of this article (called Examples)
we provide several ways to exhibit free Seifert pieces of pseudo-Anosov flows with relatively simple
constructions.

\vskip .1in
\noindent
{\bf {What was previously known}}

As we mentioned before,  the first author \cite{Ba3} proved
a similar result in the case that $P$ is a free Seifert piece
of an $\mathbb R$-covered Anosov flow. In \cite{Ba3} the first author
also started the study of free Seifert pieces of general Anosov flows.
In particular in this article we use some of the constructions and proofs
of \cite{Ba3} or \cite{bafe1}.
However, even in the case of $\rrrr$-covered Anosov
flows, our Main Theorem is a refinement of the
main result in \cite{Ba3}: there, the
fact that the orbital equivalence preserves also the weak stable and unstable foliations was established only
outside the periodic orbits tangent to the boundary.
Also, as we explained before, in the case of $\rrrr$-covered Anosov flows
there are no blow ups and the analysis is much simpler.

The general strategy of the proof here is quite different. The use of almost
$k$-convergence groups is completely new.
The hyperbolic blow ups of actions is
also completely  new.
The alternative model of the geodesic flow is also new.
We expect that these objects introduced here will be useful in
other contexts.

\vskip .1in
\noindent
{\bf {The future}}

The results of this article can be used to understand/classify
pseudo-Anosov flows. For example suppose that $\Phi$ is
a pseudo-Anosov flow in $M$ that is a {\em {graph manifold}}.
This means that all pieces of the JSJ decomposition are
Seifert fibered. This is a large and very important class of
$3$-manifolds. The results of \cite{bafe1} and of this article
show that one can understand the flow $\Phi$ up to orbital
equivalence in carefully
defined neighborhoods of each Seifert piece $P$ of $M$.
There are infinitely many different gluing maps, but the next
goal is to show that up to orbit equivalence there are only
finitely many pseudo-Anosov flows in a fixed graph manifold.
This would be a substantial result.

\section{Background}

\noindent
{\bf {$p$-prong}}

Let $\mathbb R^2_{1/2}$ be the quotient of $\mathbb R^2$ by $-id$, and let $\beta: \mathbb R^2_p \to \mathbb R^2_{1/2}$ be the
finite $p$-covering over $\mathbb R^2_{1/2}$ branched over the origin $0:$
in complex coordinates, one can simply define $\beta$ as the map $z \mapsto z^{p/2}.$ Let $\tau: \mathbb R^2_p \to \mathbb R^2_p$
be the rotation by $2\pi/p:$ it is a generator of the Galois group of $\beta.$ We denote by $\mathfrak P_0^s$, $\mathfrak P_0^u$
the preimage in $\mathbb R^2_p$ of the ``vertical line" and the
``horizontal line" through $0$, respectively. Let $\lambda$ be a real number
(non necessarily positive) of modulus $> 1.$ Let $f_\lambda: \mathbb R^2_p \to \mathbb R^2_p$ be the only lift of the linear map
$(x,y) \mapsto (\lambda x, \lambda^{-1}y)$ such that:

-- (if $\lambda > 1$) $f_\lambda$ preserves every component of $\mathfrak P_0^s  -  0,$

-- (if $\lambda < -1$) $f_\lambda$ maps every component of $\mathfrak P_0^s  -  0$ onto its image by the rotation by $\pi/p.$

\begin{define}{(local model near a $p$-prong periodic orbit)}{}\label{def:modelprong}
For any integer $0 \leq k < p,$ the composition $\tau^k \circ f_\lambda$ is the model $p$-prong map of index $k$ and parameter $\lambda.$
We denote it by $f_{\lambda, k}.$
The suspension of $f_{\lambda, k},$ i.e. the quotient $M_{\lambda, k}$ of $\mathbb R^2_p \times \mathbb R$ by the transformation $(z, t) \mapsto (f_{\lambda, k}(z), t-1)$
equipped with the projection of the horizontal vector field $\frac\partial{\partial t},$ is the model $p$-prong vector field of index $k.$
The periodic orbit, projection of the line $\{ z=0 \},$ is the model $p$-prong periodic orbit of index $k.$
The projections of $\mathfrak P_0^s \times \mathbb R$, $\mathfrak P_0^u \times \mathbb R$ are respectively denoted by $\Lambda_0^s$, $\Lambda_0^u,$ and
called the stable (unstable) leaf of the model periodic orbit.
\end{define}

Observe that, up to topological conjugacy, $f_{\lambda, k}$ does not depend on $\lambda$, just on its sign. Similarly, a model $p$-prong only depends, up to orbital equivalence,
on $p,$ the index $k,$ and the sign of $\lambda.$

\vskip .1in
\noindent
{\bf {Pseudo-Anosov flows $-$ definitions}}

\begin{define}{(pseudo-Anosov flow)}
Let $\Phi$ be a flow on a closed 3-manifold $M$. We say
that $\Phi$ is a pseudo-Anosov flow if the following conditions are
satisfied:


- For each $x \in M$, the flow line $t \to \Phi(x,t)$ is $C^1$,
it is not a single point,
and the tangent vector bundle $D_t \Phi$ is $C^0$ in $M$.

- There are two (possibly) singular transverse
foliations $\ls, \lu$ which are two dimensional, with leaves saturated
by the flow and so that $\ls, \lu$ intersect
exactly along the flow lines of $\Phi$.

- There is a finite number (possibly zero) of periodic orbits,
called singular orbits such that in the neighborhood of each of them the flow is locally orbit equivalent to a model $p$-prong flow as defined in definition \ref{def:modelprong}, with $p \geq 3.$


- In a stable leaf all orbits are forward asymptotic,
in an unstable leaf all orbits are backwards asymptotic.
\end{define}

Basic references for pseudo-Anosov flows are \cite{Mo1,Mo2} and
\cite{An} for Anosov flows. A fundamental Remark is that the ambient manifold
supporting a pseudo-Anosov flow is necessarily irreducible - the
universal covering is homeomorphic to ${\mathbb R}^3$ (\cite{Fe-Mo}).
We stress that in our definition one prongs are not allowed.
Even if they will not appear in the present paper, we mention that there are however ``tranversely hyperbolic" flows with one prongs:

\begin{define}{(one prong pseudo-Anosov flows)}{}
A flow $\Phi$ is a one prong pseudo-Anosov flow in $M^3$ if it satisfies
all the conditions of the definition of pseudo-Anosov flows except
that the $p$-prong singularities  can also be
$1$-prong ($p = 1$).
\end{define}

\vskip .05in
\noindent
{\bf {Torus decomposition}}

Let $M$ be an irreducible closed $3$--manifold. If $M$ is orientable, it  has a unique (up to isotopy)
minimal collection of disjointly embedded incompressible tori such that each component of $M$
obtained by cutting along the tori is either atoroidal or Seifert-fibered \cite{Ja,Ja-Sh}
and the pieces are isotopically maximal with this property.
If $M$ is not orientable,
a similar conclusion holds; the decomposition has to be performed along tori, but also along
some incompressible embedded Klein bottles.

Hence the notion of maximal Seifert pieces in $M$ is well-defined up to isotopy. If $M$ admits
a pseudo-Anosov flow, we say that a Seifert piece $P$  is {\em periodic} if there is a
Seifert fibration on $P$ for which, up to finite powers, a regular
fiber is freely homotopic to a periodic orbit of $\Phi$. If not,
the piece is called {\em free.}

\vskip .05in
\noindent{\bf {Remark.}}
In a few circumstances, the Seifert fibration is not unique: it happens for example
when $P$ is homeomorphic to a twisted line bundle over the Klein bottle or
$P$ is $T^2 \times I$.
We stress out that our convention is to say that the Seifert piece is free
if
{\underline {no}} Seifert fibration in $P$ has fibers homotopic to a periodic orbit.

\vskip .2in
\noindent
{\bf {Birkhoff annuli}}

\begin{define}{(Birkhoff annulus)}
Let $A$ be an immersed annulus in $M$. We say that $A$ is a Birkhoff annulus
if the boundary $\partial A$ is a union of closed orbits of $\Phi$ and the
interior of $A$ is transverse to $\Phi$.
\end{define}

Observe that Birkhoff annuli are essentially topological objects: we may require differentiability at the (transverse) interior, but not
at the tangent periodic orbits. Since the interior of $A$ is transverse to $\Phi$, it is transverse to the
stable and unstable foliations, which then induce one dimensional foliations
in $A$ - each boundary component of $A$ is a leaf of each of these foliations.
We say that the Birkhoff annulus $A$ is {\em elementary} if each of these
foliations does not have a closed leaf in the interior of $A$.
We stress that $A$ need not be embedded in $M$. The easiest non embedded
example occurs for geodesic flows in the unit tangent bundle
$M = T^1 S$ where $S$ is a hyperbolic surface. If $\gamma$ is a non embedded
closed geodesic in $S$, consider the annulus $A$ obtained by turning the angles
along $\gamma$ from $0$ to $\pi$. Since $\gamma$ is not embedded,
this generates
a Birkhoff annulus that is not homotopic rel boundary to an embedded one.

\vskip .1in
\noindent
{\bf {Orbit space and leaf spaces of pseudo-Anosov flows}}

\vskip .05in
\noindent
\underline {Notation/definition:} \
We denote by $\pi: \mi \to M$ the universal covering of $M$, and by $\pi_1(M)$ the fundamental group of $M$,
considered as the group of deck transformations on $\mi$.
The singular
foliations lifted to $\mi$ are
denoted by $\wls, \wlu$.
If $x \in M$ let $W^s(x)$ denote the leaf of $\ls$ containing
$x$.  Similarly one defines $W^u(x)$
and in the
universal cover $\ws(x), \wu(x)$.
Similarly if $\tilde \theta$ is an orbit of $\Phi$ define
$W^s(\tilde \theta)$, etc...
Let also $\wwp$ be the lifted flow to $\mi$.

\vskip .05in

We review the results about the topology of
$\wls, \wlu$ that we will need.
We refer to \cite{Fe2,Fe3} for detailed definitions, explanations and
proofs.
The orbit space of $\wwp$ in
$\mi$ is homeomorphic to the plane $\rrrr^2$ \cite{Fe-Mo}
and is denoted by $\oo \cong \mi/\wwp$. There is an induced action of $\pi_1(M)$ on $\oo$.
Let

$$\Theta: \ \mi \ \rightarrow \ \oo \ \cong \ \rrrr^2$$

\noindent
be the projection map: it is naturally $\pi_1(M)$-equivariant.
If $L$ is a
leaf of $\wls$ or $\wlu$,
then $\Theta(L) \subset \oo$ is a tree which is either homeomorphic
to $\rrrr$ if $L$ is regular,
or is a union of $p$-rays all with the same starting point
if $L$ has a singular $p$-prong orbit.
The foliations $\wls, \wlu$ induce $\pi_1(M)$-invariant singular $1$-dimensional foliations
$\oos, \oou$ in $\oo$. Its leaves are $\Theta(L)$ as
above.
If $L$ is a leaf of $\wls$ or $\wlu$, then
a {\em sector} is a component of $\mi - L$.
Similarly for $\oos, \oou$.
If $B$ is any subset of $\oo$, we denote by $B \times \rrrr$
the set $\Theta^{-1}(B)$.
The same notation $B \times \rrrr$ will be used for
any subset $B$ of $\mi$: it will just be the union
of all flow lines through points of $B$.
We stress that for pseudo-Anosov flows there are at least
$3$-prongs in any singular orbit ($p \geq 3$).
For example, the fact that the orbit space in $\mi$ is
a $2$-manifold is not true in general if one allows
$1$-prongs.

\begin{define}
Let $L$ be a leaf of $\wls$ or $\wlu$. A slice of $L$ is
$l \times \rrrr$ where $l$ is a properly embedded
copy of the reals in $\Theta(L)$. For instance if $L$
is regular then $L$ is its only slice. If a slice
is the boundary of a sector of $L$ then it is called
a line leaf of $L$.
If $a$ is a ray in $\Theta(L)$ then $A = a \times \rrrr$
is called a half leaf of $L$.
If $\zeta$ is an open segment in $\Theta(L)$
it defines a {\em flow band} $L_1$ of $L$
by $L_1 = \zeta \times \rrrr$.
We use the same terminology of slices and line leaves
for the foliations $\oos, \oou$ of $\oo$.
\end{define}

If $F \in \wls$ and $G \in \wlu$
then $F$ and $G$ intersect in at most one
orbit.

We abuse convention and call
a leaf $L$ of $\wls$ or $\wlu$ {\em periodic}
if there is a non-trivial covering translation
$\gamma$ of $\mi$ with $\gamma(L) = L$. This is equivalent
to $\pi(L)$ containing a periodic orbit of $\Phi$.
In the same way an orbit
$\tilde \theta$ of $\wwp$
is {\em periodic} if $\pi(\tilde \theta)$ is a periodic orbit
of $\Phi$. Observe that in general, the stabilizer of an element $\tilde \theta$
of $\oo$ is either trivial, or a cyclic subgroup of $\pi_1(M)$.

\vskip .2in
\noindent
{\bf {Leaf spaces of $\wls, \wlu$}}

Let $\hhs, \hhu$ be the leaf spaces of $\wls, \wlu$
respectively, with the respective quotient topology.
These are the same as the leaf spaces of $\oos, \oou$
respectively.
The spaces $\hhs, \hhu$ are inherently one dimensional
and they are simply connected.
It is essential
that there are no $1$-prongs.
For simplicity consider $\hhs$. Through each point
passes either a germ of an interval, if the stable leaf
is non singular; or a $p$-prong, if the stable leaf
has a $p$-prong singular orbit. In this way the space
$\hhs$ is ``treelike". In addition it may not be
Hausdorff, and this is extremely common, even for
Anosov flows \cite{Fe2,bafe1}. These spaces are what
is called a non Hausdorff tree. This was defined
in \cite{Fe5,Ro-St}. In those articles, group actions
on non Hausdorff trees are carefully studied, in
particular the case of free actions. Even more general
than non Hausdorff trees is the concept of ``order trees"
introduced by Gabai and Kazez \cite{Ga-Ka2}, where the local
model is any space with a total order.

\vskip .2in
\noindent
{\bf {Product regions}}

Suppose that a leaf $F \in \wls$ intersects two leaves
$G, H \in \wlu$ and so does $L \in \wls$.
Then $F, L, G, H$ form a {\em rectangle} in $\mi$, ie. every stable leaf between $F$ and $L$
intersects every unstable leaf between $G$ and $H$. In particular,
there is  no singularity
in the interior of the rectangle \cite{Fe3}.

\begin{define}{}{}
Suppose $A$ is a flow band in a leaf of $\wls$.
Suppose that for each orbit $\tilde \theta$ of $\wwp$ in $A$ there is a
half leaf $B_{\tilde \theta}$ of $\wu(\tilde \theta)$ defined by $\tilde \theta$ so that:
for any two orbits $\tilde \theta', \tilde \theta''$ in $A$ then
a stable leaf intersects $B_{\tilde \theta'}$ if and only if
it intersects $B_{\tilde \theta''}$.
%
%
This defines a stable product region which is the union
of the $B_{\tilde \theta}$.
Similarly define unstable product regions.
\label{defsta}
\end{define}

The main property of product regions is the following:
for any product region $P$, and
for any $F \in \wls$, $G \in \wlu$ so that
$(i) \ F \cap P  \
\not = \ \emptyset \ \ {\rm  and} \ \
 (ii) \ G \cap P \ \not = \ \emptyset,
\ \ \ {\rm then} \ \
F \cap G \ \not = \ \emptyset$.
There are no singular orbits of
$\wwp$ in $P$.

\begin{theorem}{(\cite{Fe3})}{}
Let $\Phi$ be a pseudo-Anosov flow. Suppose that there is
a stable or unstable product region. Then $\Phi$ is
topologically equivalent to a suspension Anosov flow.
In particular $\Phi$ is non singular.
\label{prod}
\end{theorem}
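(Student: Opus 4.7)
The plan is to argue by saturation: show that the existence of a stable (or unstable) product region forces the product structure to spread over the entire orbit space $\oo$, which then pins down $\Phi$ as a suspension Anosov flow.

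First, I would examine the product region $P$ itself. Using the defining property in Definition \ref{defsta}, no $p$-prong singular orbit can sit inside $P$: a singular leaf would branch the stable leaf space $\hhs$ or the unstable leaf space $\hhu$ along the corresponding ray, violating the uniform intersection property required of $P$. Hence $\Theta(P) \subset \oo$ is a nonempty open set in which $\oos$ and $\oou$ restrict to two regular transverse foliations with the global product property: each leaf of one meets each leaf of the other in exactly one point. So within $\Theta(P)$ we already have a bona fide ``product chart'' with no singularities.

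Next I would try to extend $\Theta(P)$ across its frontier. Let $\tilde\theta$ be an orbit on the frontier of $P$ (if any) and let $F \in \wls$ be its stable leaf. The key step is to show that the product structure extends across $F$: given $\tilde\theta_n \in P$ with $\Theta(\tilde\theta_n) \to \Theta(\tilde\theta)$, the half-leaves $B_{\tilde\theta_n}$ have a limiting half-leaf $B_{\tilde\theta}$ of $\wu(\tilde\theta)$, and by continuity of the foliations on the $2$-manifold $\oo$, a slightly enlarged flow band around $F$ still satisfies the product condition. The only way this extension can fail is if it runs into a singular orbit or into a non-Hausdorff branching of $\hhs$ or $\hhu$. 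I would rule both out: singularities are excluded by the previous paragraph, and branching in the leaf spaces inside a product region is incompatible with the uniform intersection of stable and unstable leaves. Combined with density of periodic orbits of $\Phi$ (hence of periodic leaves in $\wls, \wlu$) and equivariance of the product structure under the stabilizer of each such periodic orbit found inside $\Theta(P)$, one can saturate: conjugating by deck transformations that fix periodic orbits inside $\Theta(P)$ enlarges $\Theta(P)$, and the union of all stable product regions is open, closed, and $\pi_1(M)$-invariant in $\oo$, hence equals $\oo$.

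Once $\Theta(P) = \oo$, we have established that $\oo \cong \rrrr^2$ carries two transverse regular foliations $\oos$, $\oou$ in global product position, with no singularities at all $-$ so $\Phi$ itself is non-singular. Choose product coordinates $\oo \cong \rrrr \times \rrrr$ in which $\oos$ is the horizontal and $\oou$ the vertical foliation. The deck action of $\pi_1(M)$ preserves this splitting, and the flow $\wwp$ on $\mi \cong \oo \times \rrrr$ is conjugate to the trivial vertical flow. The expansion/contraction along unstable/stable directions, together with compactness of $M$, forces a $\Z$-subgroup of $\pi_1(M)$ translating the $\rrrr$-factor of $\mi$ and the quotient of $M$ by its action to be a closed surface carrying an Anosov diffeomorphism whose mapping torus is $M$. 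Thus $\Phi$ is topologically equivalent to a suspension Anosov flow.

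The main obstacle is the saturation step: showing that the frontier of a product region cannot be a genuine barrier in $\oo$. This requires combining the local product condition with the global topology of the non-Hausdorff trees $\hhs$, $\hhu$ and density of periodic orbits; the delicate point is ruling out frontier orbits whose stable or unstable leaves branch in such a way that the product structure fails to extend, even though no singularity is present on the frontier itself.
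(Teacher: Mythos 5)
The theorem you are trying to prove is cited from \cite{Fe3} and is not reproved in the present paper, so I am comparing your proposal against the published argument.

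Your high-level strategy — saturate the product region over the whole orbit space, then conclude suspension — is in the right spirit, and the observation that a product region contains no singular orbit is correct. But there is a concrete gap in the extension step that you yourself flag: you claim that the only way the extension of the product structure across a frontier leaf $F$ can fail is if it runs into a singular orbit or into non-Hausdorff branching of $\hhs$ or $\hhu$. That claim is false. The chief obstruction is a \emph{perfect fit} (Definition \ref{def:pfits}): a stable leaf $F'$ and an unstable leaf $G'$ that are asymptotic but disjoint. Perfect fits are not singularities and need not create branching in either leaf space, yet they exactly break the uniform-intersection property required of a product region: if the unstable half-leaf $B_{\tilde\theta_0}$ through a new orbit $\tilde\theta_0$ just outside the flow band $A$ picks up one additional stable leaf $F'$ not met by the $B_{\tilde\theta}$ with $\tilde\theta \in A$, one has precisely a perfect fit between $F'$ and the boundary unstable half-leaf of the product region. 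Ruling out perfect fits near the frontier is the heart of the theorem and cannot be finessed by the dichotomy you propose; in fact, showing that a product region is incompatible with any perfect fit anywhere in $\oo$ is essentially equivalent to the theorem itself. The argument in \cite{Fe3} is dynamical rather than point-set: one takes a periodic orbit $\tilde\alpha$ with projection in $\Theta(P)$, uses the deck transformation $g$ stabilizing $\wls(\tilde\alpha)$ and $\wlu(\tilde\alpha)$, and exploits the fact that $g^{-1}$ expands the stable interval $\Theta(A)$ while the unstable half-leaves remain half-leaves; the union $\bigcup_n g^{-n}(P)$ is then a product region with a full stable leaf as base, and iterating (and playing the stable and unstable arguments against each other) forces every stable leaf to meet every unstable leaf. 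You gesture at the deck-transformation idea in a parenthetical, but it is not carried out and it is not integrated with the false ``only obstacles'' claim. Finally, your concluding paragraph asserts that a global product structure on $\oo$ with the two invariant transverse foliations yields a suspension; this is true and classical (it follows from the Anosov-like action of $\pi_1(M)$ on the product $\hhs \times \hhu$ and the existence of a global cross-section), but as written it is stated rather than proven and would need a reference or a further argument.
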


We will occasionally use {\em product pseudo-Anosov flow}
as an abbreviation for {\em pseudo-Anosov flow
topologically equivalent to a suspension.}

\vskip .2in
\noindent
{\bf {Perfect fits, lozenges and scalloped chains}}

Recall that a foliation $\fol$ in $M$ is $\rrrr$-covered if the
leaf space of $\fn$ in $\mi$ is homeomorphic to the real line $\rrrr$
\cite{Fe1}.

\begin{figure}
  \centering
   \includegraphics[scale=0.8]{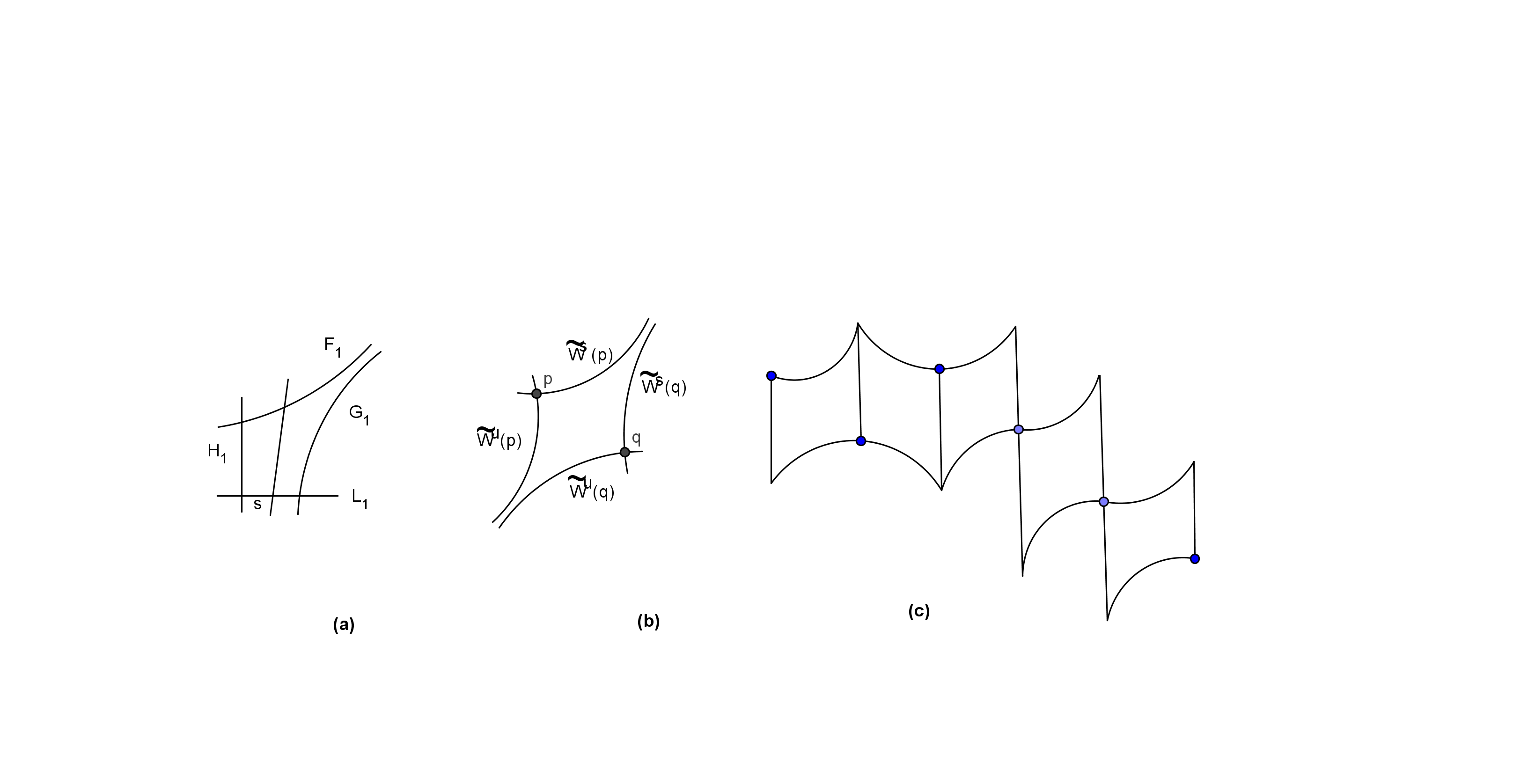}
\caption{a. Perfect fits in $\mi$,
b. A lozenge, c. A chain of lozenges.}
\label{loz}
\end{figure}

\begin{define}{(\cite{Fe2,Fe3})}\label{def:pfits}
Perfect fits -
Two leaves $F \in \wls$ and $G \in \wlu$, form
a perfect fit if $F \cap G = \emptyset$ and there
are half leaves $F_1$ of $F$ and $G_1$ of $G$
and also flow bands $L_1 \subset L \in \wls$ and
$H_1 \subset H \in \wlu$,
so that
%
the set

$$\overline F_1 \cup \overline H_1 \cup
\overline L_1 \cup \overline G_1$$

\noindent
separates $M$ and forms an a rectangle $R$ with a corner removed:
The joint structure of $\wls, \wlu$ in $R$ is that of
a rectangle with a corner orbit removed. The removed corner
corresponds to the perfect of $F$ and $G$ which
do not intersect.
\end{define}

We refer to fig. \ref{loz}, a for perfect fits.
There is a product structure in the interior of $R$: there are
two stable boundary sides and two unstable boundary
sides in $R$. An unstable
leaf intersects one stable boundary side (not in the corner) if
and only if it intersects the other stable boundary side
(not in the corner).
We also say that the leaves $F, G$ are {\em asymptotic}.

%
%
%
%

\begin{define}{(\cite{Fe2,Fe3})}\label{def:lozenge}
Lozenges - A lozenge $R$ is a region of $\mi$ whose closure
is homeomorphic to a rectangle with two corners removed.
More specifically two points $p, q$ define the corners
of a lozenge if there are half leaves $A, B$ of
$\ws(p), \wu(p)$ defined by $p$
and  $C, D$ half leaves of $\ws(q), \wu(q)$ defined by $p, q$, so
that $A$ and $D$ form a perfect fit and so do
$B$ and $C$. The region bounded by the lozenge
$R$ does not have any singularities.
%
%
%
%
The sides of $R$ are $A, B, C, D$.
The sides are not contained in the lozenge,
but are in the boundary of the lozenge.
There may be singularities in the boundary of the lozenge.
See fig. \ref{loz}, b.
\end{define}


There are no singularities in the lozenges,
which implies that
$R$ is an open region in $\mi$.


Two lozenges are {\em adjacent} if they share a corner and
there is a stable or unstable leaf
intersecting both of them, see fig. \ref{loz}, c.
Therefore they share a side.
A {\em chain of lozenges} is a collection $\cc = \{ C _i \},
i \in I$, where $I$ is an interval (finite or not) in ${\mathbb Z}$;
so that if $i, i+1 \in I$, then
$C_i$ and $C_{i+1}$ share
a corner, see fig. \ref{loz}, c.
Consecutive lozenges may be adjacent or not.
The chain is finite if $I$ is finite.

\begin{define}{(scalloped chain)}\label{def:scallop}
Let ${\mathcal C}$ be a chain of lozenges.
If any two
successive lozenges in the chain are adjacent along
one of their unstable sides (respectively stable sides),
then the chain is called {\em s-scalloped}
(respectively {\em u-scalloped}) (see
fig. \ref{pict} for an example of a $s$-scalloped chain).
Observe that a chain is s-scalloped if
and only if there is a stable leaf intersecting all the
lozenges in the chain. Similarly, a chain is u-scalloped
if and only if there is an unstable leaf intersecting
all the lozenges in the chain.
The chains may be infinite.
A scalloped chain is a chain that is either $s$-scalloped or
$u$-scalloped.
\end{define}

\begin{figure}
\includegraphics[scale=1]{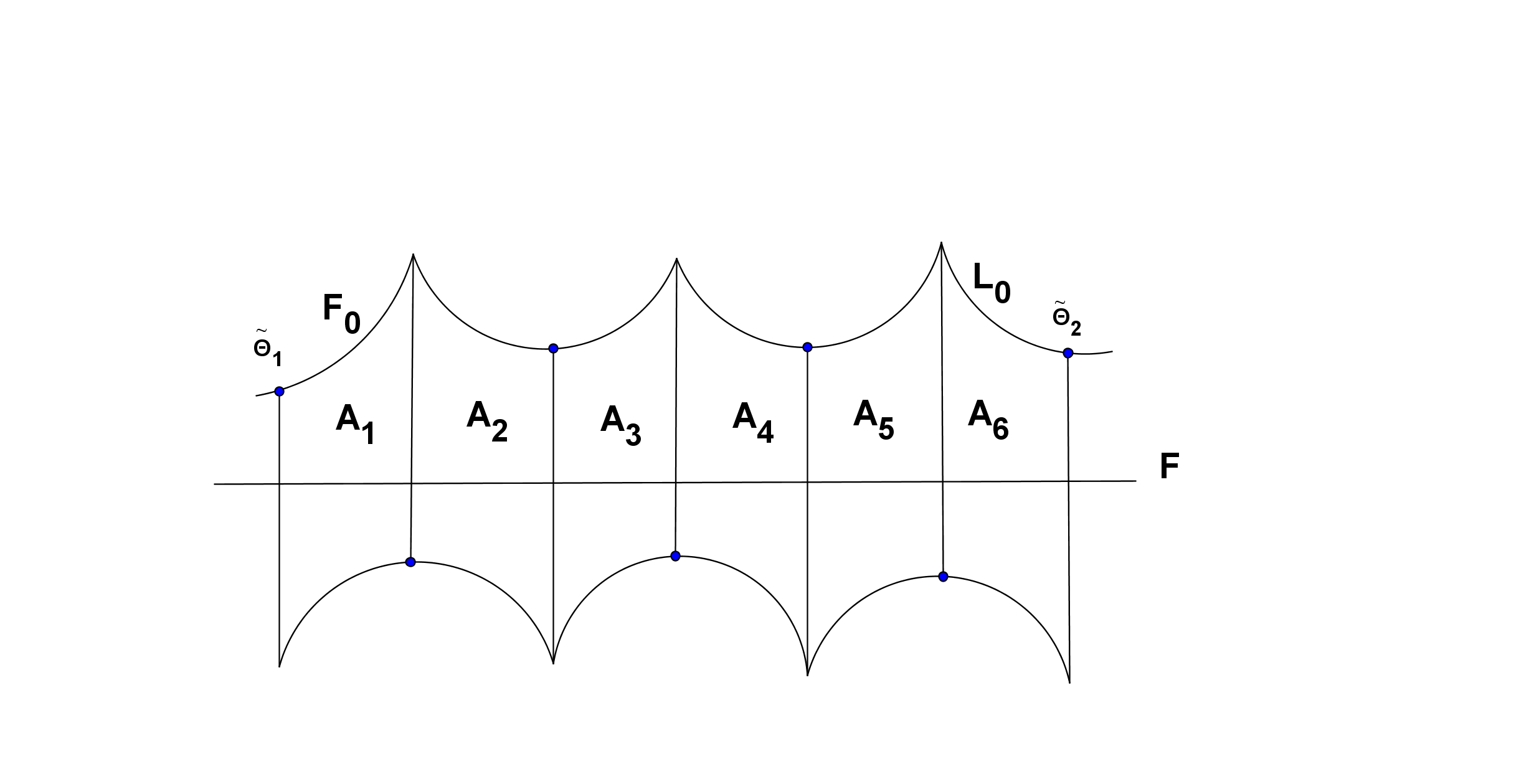}
\caption{
A partial view of a scalloped region.
Here $F, F_0, L_0$ are stable leaves,
so this is a s-scalloped region.}
\label{pict}
\end{figure}

For simplicity when considering scalloped chains we also include any half leaf which is
a boundary side of two of the lozenges in the chain. The union of these
is called a {\em {scalloped region}} which is then a connected set.

We say that two orbits $\tilde \theta_1, \tilde \theta_2$ of $\wwp$
(or the leaves $\ws(\tilde \theta_1), \ws(\tilde \theta_2)$)
are connected by a
chain of lozenges $\{ C_i \}, 1 \leq i \leq n$,
if $\tilde \theta_1$ is a corner of $C_1$ and $\tilde \theta_2$
is a corner of $C_n$.

\begin{theorem}{(\cite{Fe2,Fe3})}{}
Let $\Phi$ be a pseudo-Anosov flow in $M^3$ closed and let
$F_0 \not = F_1 \in \wls$.
Suppose that there is a non-trivial covering translation $\gamma$
with $\gamma(F_i) = F_i, i = 0,1$.
Let $\tilde \theta_i, i = 0,1$ be the periodic orbits of $\wwp$
in $F_i$ so that $\gamma(\tilde \theta_i) = \tilde \theta_i$.
Then $\tilde \theta_0$ and $\tilde \theta_1$ are connected
by a finite chain of lozenges
$\{ {C}_i \}, 1 \leq i \leq n,$ and $\gamma$
leaves invariant each lozenge
$C_i$ as well as their corners.
\label{chain}
\end{theorem}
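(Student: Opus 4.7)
The plan is to work in the orbit space $\oo \cong \mathbb{R}^2$ and exploit the fixed-point structure of $\gamma$ acting there. First I would note that $p_i := \Theta(\tilde\theta_i)$ is a fixed point of $\gamma$ acting on $\oo$, with $p_0 \ne p_1$ since $F_0 \ne F_1$. Because $\pi(\tilde\theta_i)$ is a periodic orbit of the pseudo-Anosov flow, the action of $\gamma$ on a neighborhood of $p_i$ in $\oo$ is, after replacing $\gamma$ by a suitable power to kill any $p$-prong rotation, conjugate to a linear hyperbolic map whose contracting axis lies along $\oos$ and expanding axis along $\oou$ through $p_i$ (or the reverse, depending on orientation). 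A consequence is that on any $\gamma$-invariant leaf of $\wls$ (resp. $\wlu$), the $\gamma$-fixed set reduces to a single orbit: the one-dimensional contracting/expanding dynamics on the leaf forces uniqueness.

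Next I would rule out the trivial configurations. The unstable leaf $U_0 := \wu(\tilde\theta_0)$ is $\gamma$-invariant; if $U_0$ met $F_1$ then the intersection orbit would be $\gamma$-fixed, yielding a second $\gamma$-fixed orbit on $U_0$ distinct from $\tilde\theta_0$ (distinct because $\tilde\theta_0 \notin F_1$), contradicting the uniqueness just established. Hence $U_0 \cap F_1 = \emptyset$, and symmetrically for $U_1 := \wu(\tilde\theta_1)$ against $F_0$. So $F_0$ and $F_1$ are separated from the opposite unstable leaves through the respective fixed orbits, which is the configuration out of which a lozenge can be extracted.

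The geometric heart of the argument is extracting a perfect fit from this configuration. I would take an arc $\alpha$ in $\oo$ joining $p_0$ to $p_1$ and iterate $\gamma^{\pm n}$, pushing $\alpha$ along the hyperbolic eigendirections at each endpoint. A Hausdorff-type limit produces a canonical $\gamma$-invariant configuration of stable/unstable leaves between $p_0$ and $p_1$. If this limit gives simultaneous perfect fits $(U_0, F_1)$ and $(U_1, F_0)$, then $F_0, U_0, F_1, U_1$ bound a single lozenge $C_1$ with corners $\tilde\theta_0, \tilde\theta_1$, and $\gamma$ preserves $C_1$ and both its corners. If not, the limit instead exhibits a new $\gamma$-invariant stable leaf $F'$ strictly between $F_0$ and $F_1$ in the non-Hausdorff leaf space $\hhs$, carrying a $\gamma$-fixed periodic orbit $\tilde\theta'$, such that $(F_0, U_0, F', U')$ bounds a lozenge with corners $\tilde\theta_0, \tilde\theta'$ and both preserved by $\gamma$. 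One then iterates, replacing $\tilde\theta_0$ by $\tilde\theta'$, producing a chain of $\gamma$-invariant lozenges with $\gamma$-invariant corners.

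The main obstacle, and the final step, is finiteness of the chain. I would argue that each corner arising in the construction projects in $M$ to a periodic orbit freely homotopic (up to powers) to $\pi(\tilde\theta_0)$, since it is fixed by the same deck transformation $\gamma$. Periodic orbits of a pseudo-Anosov flow in a fixed free homotopy class form a finite set (a standard consequence of the pseudo-Anosov dynamics and the local uniqueness of closed orbits, including at $p$-prong singularities). Since each lozenge in the chain contributes a new corner orbit strictly between the previous one and $\tilde\theta_1$ in the appropriate partial order on $\gamma$-fixed points, the iteration must terminate after finitely many steps, and it terminates precisely when the current corner coincides with $\tilde\theta_1$. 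This yields the desired finite chain $\{C_i\}_{1 \le i \le n}$, with $\gamma$ leaving each lozenge and each corner invariant by construction.
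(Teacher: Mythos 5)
The paper does not itself prove this statement; it is quoted verbatim from \cite{Fe2,Fe3}, so the comparison is to Fenley's argument there.

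Your outline has the right skeleton: pass to the orbit space, use uniqueness of the $\gamma$-fixed orbit in a $\gamma$-invariant leaf, deduce $\wu(\tilde\theta_0)\cap F_1 = \emptyset$ and $\wu(\tilde\theta_1)\cap F_0 = \emptyset$, and then look for either a lozenge joining $\tilde\theta_0$ to $\tilde\theta_1$ or an intermediate $\gamma$-fixed orbit to iterate on. Two of the steps, however, are gaps rather than arguments. The extraction of perfect fits by ``iterating $\gamma^{\pm n}$ on an arc $\alpha$ and taking a Hausdorff limit'' is not a proof: nothing in that description shows the limit exists, that it is a union of stable/unstable leaf segments, or that the resulting configuration is a perfect fit rather than some more degenerate accumulation. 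The actual argument analyzes directly which stable leaves through the unstable half-leaf of $\tilde\theta_0$ (in the quadrant facing $p_1$) still separate $p_0$ from $p_1$, and shows that this family either escapes (producing a perfect fit with a leaf through a $\gamma$-fixed orbit, hence a lozenge) or accumulates on a $\gamma$-invariant stable leaf strictly between $F_0$ and $F_1$ carrying a new $\gamma$-fixed orbit. That escape/accumulation dichotomy is the technical heart of the theorem and is entirely absent from your sketch.

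The finiteness step is also not solid as written. You invoke ``finitely many periodic orbits in a given free homotopy class'' as a standard consequence of pseudo-Anosov dynamics plus local uniqueness of closed orbits, but that inference does not work: freely homotopic closed orbits need not be close to one another in $M$, and their periods are not a priori bounded, so local hyperbolicity by itself gives nothing. The finiteness of orbits in a free homotopy class for pseudo-Anosov flows is in fact proved in \cite{Fe2,Fe3} in the same circle of ideas as this theorem, so quoting it here risks circularity unless you supply an independent proof (for instance via Markov partitions). You also appeal to an unspecified ``partial order on $\gamma$-fixed points'' to ensure the iteration never revisits a corner; without defining it and verifying that the new corner strictly decreases, termination does not follow even if you grant finiteness of the ambient set of $\gamma$-fixed orbits.
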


In addition we {\underline {always assume without mention}} that
the chain is minimal: this means that there is no backtracking
and no three lozenges share a corner. That could happen
if $C_i, C_{i+1}$ share (say) a stable side (they are adjacent)
and $C_{i+1}, C_{i+2}$ share an unstable side (also adjacent),
with all three sharing a corner. In this case we eliminate
$C_{i+1}$ from the chain.
We could have a more complicated behavior if the corner
is a singular orbit with many lozenges abutting it.

The main result concerning non Hausdorff behavior in the leaf spaces
of $\wls, \wlu$ is the following:

\begin{theorem}{\cite{Fe2,Fe3}}\label{theb}
Let $\Phi$ be a pseudo-Anosov flow in $M^3$.
Suppose that $F \not = L$
are not separated in the leaf space of $\wls$.
Then $F$ is periodic and so is $L$. More precisely,
there is a non-trivial element $\gamma$ of $\pi_1(M)$ such that $\gamma(F)=F$ and $\gamma(L)=L$.
Moreover, let $\tilde \theta_1$, $\tilde \theta_2$ be the unique $\gamma$-fixed points in $F$, $L$, respectively.
Then, the chain of lozenges connecting $\tilde \theta_1$ to $\tilde \theta_2$ is s-scalloped.
\end{theorem}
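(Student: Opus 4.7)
The plan is to prove the theorem in three stages. First, produce a non-trivial $\gamma \in \pi_1(M)$ fixing both $F$ and $L$; second, locate periodic corner orbits $\tilde\theta_1 \in F$, $\tilde\theta_2 \in L$ and invoke Theorem \ref{chain} to build the connecting chain; third, use the non-separation hypothesis to force the chain to be $s$-scalloped.

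\emph{Stage 1 (producing $\gamma$).} By non-separation, choose a sequence $F_n \in \wls$ converging in $\hhs$ to both $F$ and $L$. Pick a non-singular point $\tilde x \in F$ and a non-singular $\tilde y \in L$; using the local product structure of $(\wls, \wlu)$, we can select $\tilde x_n, \tilde y_n \in F_n$ with $\tilde x_n \to \tilde x$ and $\tilde y_n \to \tilde y$ in $\mi$, so that $\tilde x_n$ and $\tilde y_n$ lie on the same $\wwp$-saturation of a transversal arc in $F_n$. Project to $M$: since $M$ is compact, after extracting a subsequence the points $\pi(\tilde x_n)$ and $\pi(\tilde y_n)$ converge. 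Choose deck transformations $\gamma_n$ so that $\gamma_n(\tilde y_n)$ stays in a compact fundamental domain containing $\tilde x$. A standard closing/shadowing argument (of the kind used in \cite{Fe2,Fe3}) then extracts a limit $\gamma = \lim \gamma_n$ that maps a subsequence of $F_n$ to itself, and therefore, by taking limits in $\hhs$ of the two accumulation leaves of $\{F_n\}$, fixes both $F$ and $L$. Since $F \neq L$, the element $\gamma$ is non-trivial.

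\emph{Stage 2 (periodic orbits and chain).} A non-trivial covering translation fixing a stable leaf $F$ of a pseudo-Anosov flow preserves a unique orbit of $\wwp$ in $F$: the action of $\gamma$ on the tree $\Theta(F)$ descends to an action on the flow lines (an action on an $\rrrr$-tree coming from a Seifert-like contraction in the flow direction), which has a unique fixed point. Call these periodic orbits $\tilde\theta_1 \in F$ and $\tilde\theta_2 \in L$. Theorem \ref{chain} now applies and produces a minimal finite chain of lozenges $\{C_i\}_{1 \leq i \leq n}$ connecting $\tilde\theta_1$ to $\tilde\theta_2$, each invariant under $\gamma$ together with their corners.

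\emph{Stage 3 (s-scalloped).} The chain is $s$-scalloped iff any two successive lozenges share an unstable side, equivalently iff there exists a stable leaf meeting every $C_i$. The approximating leaves $F_n$ of Stage 1 are the natural candidates. Since $F_n \to F$ and $F_n \to L$ in $\hhs$, for $n$ large $F_n$ passes arbitrarily close to both periodic orbits $\tilde\theta_1$ and $\tilde\theta_2$, i.e.\ enters the $\gamma$-invariant ``string'' of lozenges from both ends. Using the product structure in each lozenge, one sees inductively that $F_n$ must traverse every intermediate $C_i$; otherwise $F_n$ would be trapped on one side of a stable boundary side shared by consecutive lozenges, contradicting the fact that $F_n$ accumulates on leaves in both ends of the chain. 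Hence a common stable leaf $F_n$ meets every $C_i$, so consecutive lozenges are adjacent along their unstable sides, and the chain is $s$-scalloped. Equivalently, if some successive pair $C_i, C_{i+1}$ shared a stable side contained in $\ws(\tilde\eta)$, then $\ws(\tilde\eta)$ would separate $\tilde\theta_1$ from $\tilde\theta_2$ in $\hhs$, forcing the non-separated pair $F, L$ to lie on the same side of $\ws(\tilde\eta)$, which violates minimality of the chain or produces an extra non-separated leaf in $\ws(\tilde\eta)$.

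The main obstacle is Stage 1: translating the non-Hausdorff behavior of the leaf space into an actual covering translation $\gamma$. This requires a careful closing argument that simultaneously handles the non-separation (so that one limiting element fixes \emph{both} $F$ and $L$) and accommodates possible $p$-prong singularities. Stages 2 and 3 are then relatively routine consequences of Theorem \ref{chain} and of the rectangle/product structure recalled in the Background section.
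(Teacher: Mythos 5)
This theorem is cited in the paper from \cite{Fe2,Fe3} as background and is not reproved here, so I am comparing your sketch against those references. Your three-stage skeleton is reasonable, and Stages 2--3 are essentially sound: given $\gamma$, the unique $\gamma$-fixed orbit in a $\gamma$-invariant stable leaf is standard, Theorem \ref{chain} supplies the chain, and if some consecutive pair $C_i, C_{i+1}$ were not adjacent along an unstable side then the stable leaf of their common corner would separate $F$ from $L$ in $\hhs$, so no sequence of leaves could accumulate on both, contradicting non-separation. (Your last sentence in Stage 3 is garbled, though: the separating leaf puts $F$ and $L$ on \emph{opposite} sides, which is already the contradiction; ``violates minimality'' and ``extra non-separated leaf'' are not the point.)

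The genuine gap is Stage 1, which is the real content of the theorem, and the closing argument as written does not produce the required $\gamma$. Choosing deck transformations $\gamma_n$ with $\gamma_n(\tilde y_n)$ in a fixed compact fundamental domain does, by proper discontinuity and $\tilde y_n \to \tilde y$, give an eventually constant $\gamma_n = \gamma$; but all this yields is that $\gamma(\tilde y)$ lies in that fundamental domain. Generically $\pi(\tilde x) \neq \pi(\tilde y)$, so nothing relates $\gamma(\tilde y)$ to $\tilde x$, and there is no mechanism forcing $\gamma(F_n) = F_n$, let alone $\gamma(F)=F$ and $\gamma(L)=L$; the sentence asserting that a ``standard closing/shadowing argument extracts a limit $\gamma$ that maps a subsequence of $F_n$ to itself'' is exactly what has to be proved and does not follow from the construction you describe. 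In \cite{Fe2,Fe3} the argument is not to first produce $\gamma$ abstractly and then invoke Theorem \ref{chain}. One exploits the non-separation to exhibit perfect fits between $F$ (and $L$) and unstable leaves lying in the region bounded by $F$, $L$ and the approximating $F_n$, then flows forward to use the contraction along stable leaves, applies compactness of $M$ for recurrence, and constructs the periodic orbits and the lozenge chain simultaneously through an escape/trapping argument; the $\gamma$ that fixes both $F$ and $L$ falls out of that process, and the $s$-scalloped form is read off at the same time. That dynamical input is also what handles the possibility that $F$ or $L$ is a singular ($p$-prong) leaf in the pseudo-Anosov case. Stage 1 cannot be patched by a generic closing lemma applied to the points $\tilde x_n,\tilde y_n$; it needs the perfect-fit and escape machinery.
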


\begin{remark}
\label{rk.lozengeannulus}
{\em A key fact, first observed in \cite{Ba3}, and extensively used in \cite{bafe1}, is the following:
the lifts in $\widetilde{M}$ of elementary Birkhoff annuli are precisely lozenges invariant by some cyclic subgroup of $\pi_1(M)$ (see \cite[Proposition $5.1$]{Ba3} for the case
of embedded Birkhoff annuli). It will also play a crucial role in the sequel.
More precisely: let $A$ be an elementary Birkhoff
annulus. We say that $A$ lifts to the
lozenge $C$ in $\mi$ if the interior
of $A$ has a lift which intersects
orbits only in $C$. It follows that
this lift intersects every orbit in $C$
exactly once and also that the two
boundary closed orbits of $A$ lift
to the
full corner orbits of $C$.
This uses the fact that a lozenge cannot be properly
contained
in another lozenge.}
\end{remark}

In particular
the following important property also follows: if $\theta_1$ and $ \theta_2$
are the periodic orbits in $\partial A$ (traversed in the
flow forward direction), then there are
{\underline {positive}} integers $n, m$ so that
$\theta_1^n$ is freely homotopic
to $(\theta_2^m)^{-1}$. We emphasize the free homotopy between inverses.

\begin{remark}
{\em According to Remark \ref{rk.lozengeannulus}, chains of lozenges
correspond to sequences of Birkhoff annuli,
every Birkhoff annulus sharing a common periodic orbit with the previous element
of the sequence, and also a periodic orbit with the next element in the sequence.
When the sequence closes up, it provides an immersion $f: T^2$ (or $K$,
in this article $K$ denotes the Klein bottle) $\to M$, which
is called a} Birkhoff torus {\em (if the cyclic sequence contains an even number of
Birkhoff annuli),  or a} Birkhoff Klein bottle {\em (in the other case).}
\end{remark}

\begin{remark}\label{rk:isotopybirkhofftori} $-$
{\bf {Birhoff annuli associated to the same lozenges.}} \ \
{\em
Let $A_1$, $A'_1$ be two elementary Birkhoff annuli whose lifts in $\widetilde{M}$
project to the same lozenge. Then, every orbit intersecting the interior of $A_1$ intersects $A'_1:$
there is a continuous map $t$ from the interior of $A_1$ into $\mathbb R$ such that the application $f_1$ mapping every $x$ in the interior
of $A_1$ into $\Phi^{t(x)}(x)$ realizes a homeomorphism between $A_1$ and $A'_1.$

However, in general, the map $t$ does not extend continuously on the boundary orbits of $A_1.$
In particular it may be that the continuous function
$t(x)$ is unbounded and accumulates to $\pm \infty$ as
$x$ approaches the boundary of the annulus along
certain paths or sequences of points.
This can be explicitly and easily constructed for
any Birkhoff annulus: more specifically {\underline {for any}}
elementary
Birkhoff annulus $A$ there is a Birkhoff annulus $A'$
so that $A, A'$ have lifts to the same lozenge in $\mi$
and so that the map $t$ from the interior of $A$ to the
interior of $A'$ does not extend continuously to the
boundary, meaning for example that $t(x)$ converges to plus
infinity as one approaches the boundary of $A$.

Furthermore, even if $f_1$ is defined on $\partial A_1,$ and if $A_2$ and $A'_2$ are other Birkhoff annuli,
one isotopic to the other along the flow, and adjacent to $A_1$, $A'_1,$ there is no reason for the map $f_2: A_2 \to A'_2$
to coincide with $f_1$ on the common boundary orbit.
The point is that we construct Birkhoff annuli from
lozenges in $\mi$ that are invariant by some
non trivial $g$ in $\pi_1(M)$.
In particular  these Birkhoff annuli are inherently only
topological objects, non differentiable, and
can have a very wild behavior near the tangent periodic orbit.
Consequently Birkhoff tori as defined in the
previous remark may be homotopic along
the flow only on the complement of the tangent periodic orbits.

Nevertheless, the following fact is true for embedded Birkhoff tori (the similar statement also holds for embedded Birkhoff Klein bottles
and is left to the reader).
Let $(M, \Phi)$, $(N, \Psi)$ be two pseudo-Anosov flows, and $T$, $T'$ embedded Birkhoff tori in respectively
$M$, $N.$ Let $\mathcal C$, $\mathcal C'$ be the associated chain of lozenges: they are preserved by subgroups $H$, $H'$ of $\pi_1(M)$, $\pi_1(N)$, both
isomorphic to $\mathbb Z^2$, and
corresponding to the fundamental groups of $T$, $T'$.
Assume that there is an equivariant (with respect to $H$ and $H'$)
 map $f: \mathcal C \to \mathcal C'.$ Then, we can replace $f$ by another map (still denoted by $f$) with the same properties but furthermore preserving the foliations on the lozenges induced
 by the stable/unstable foliations. It induces
a continuous map $F$ between $T_{tr}$ and $T'_{tr}$, where $T_{tr}$ and $T'_{tr}$ are the complements in the tori of the tangent periodic orbits.
Furthermore, $F$ can be chosen so that it maps the foliations induced on $T_{tr}$ by the stable/unstable foliations $\Lambda^{s,u}(\Phi)$ onto
the foliations induced on $T'_{tr}$ by the stable/unstable foliations $\Lambda^{s,u}(\Psi)$.
This map $F$ may not extend to a continuous map on the entire $T$, but it induces a bijection $\varphi$ between the tangent periodic orbits of $T$ and
the tangent periodic orbits of $T'$. Assume moreover that every periodic orbit $\theta$ tangent to $T$ has the same number of prongs and the same index than $\varphi(\theta).$ Then,
there are neighborhoods $U_\theta$, $U'_\theta$
of respectively $\theta$, $\varphi(\theta)$ and an orbital equivalence $f_\theta: U_\theta \to U'_\theta$ between the restriction
of $\Phi$ to $U_\theta$ and the restriction of $\Psi$ to $U'_\theta$ such that this orbital equivalence maps the restrictions of $\Lambda^{s,u}(\Phi)$
to $U_\theta$ onto the restrictions of $\Lambda^{s,u}(\Phi)$ to $U'_\theta$.

\vskip .2in
\noindent
{\bf {Claim:}} \  There are tubular neighborhoods $W$
and $W'$ of respectively $T$, $T'$ such that the
restriction of $\Phi$ to $W$
is orbitally equivalent to the restriction of $\Psi$ to $W'$
(but this orbital equivalence does not necessarily maps $T$ onto $T'$). Moreover, this orbital equivalence maps the restriction to $W$
of the stable/unstable foliations $\Lambda^{s,u}(\Phi)$ onto the restriction to $W'$ of $\Lambda^{s,u}(\Psi)$.

\vskip .1in
The proof of the claim
is as follows: for every Birkhoff annulus $A$ of $T$, let $A'$ be an open relatively compact sub-annulus of $A$, and
let $U_A$, $U'_A$ be the
{\underline {open}} neighborhoods of $A'$, $F(A')$ made of points of the form $\Phi^t(x)$ (respectively $\Psi^t(x)$) for $t$ small
and $x$ in $A'$ or $F(A')$. Then there is an orbital equivalence $f_A: U_A \to U'_A$ between the restrictions of $\Phi$ and $\Psi$ to $U_A$ and $U'_A$,
which preserves the restrictions of the stable/unstable foliations. We can adjust $U_\theta$ so that:

-- the union of all $U_A$ and $U_\theta$ covers $T$, and the union of all $U'_A$ and $U'_\theta$ covers $T'$,

-- for different periodic orbits $\theta_1$ and $\theta_2$ the neighborhoods $U_{\theta_1}$ and $U_{\theta_2}$ are disjoint,

-- $U_\theta$ intersects $U_A$ if and only if $\theta$ is a boundary component of $A$,

-- the intersection with $U_\theta$ of any orbit $\nu$ of the restriction of $\Phi$ to $U_A$ is either empty, or a connected relatively
compact subset of $\nu$.

The last condition means that for any point $x$ in $U_A \cap U_\theta$, the orbit of $x$ under $\Phi$ escapes in the past and in the future
from $U_\theta$ still staying in $U_A$.

Then, the union of the domains $U_\theta$ and $U_A$, and the union of $U'_A$ and $U'_\theta$, are open neighborhoods $W$, $W'$ of respectively $T$, $T'.$
Introduce a partition of unity subordinate to the covering formed by $U_\theta$ and $U_A$: these are functions $\mu_A: W \to [0, 1]$ and
$\mu_\theta: W \to [0,1]$ such that:

-- $\mu_A$ vanishes outside $U_A,$

-- $\mu_\theta$ vanishes outside $U_\theta$,

-- for every $x$ in $W$, the sum of all $\mu_A(x)$ and $\mu_\theta(x)$ is $1$ (observe that all the terms of this sum vanish except maybe
at most two of them).

Finally, we equip $W$ and $W'$ with  Riemannian metrics, that we use to reparametrize the orbits of $\Phi$ and $\Psi$ by unit lentgh,
so that these local flows are now defined over the entire $\mathbb R$.

Then, for any $x$ in $W$, we define $g(x)$ as the average along the $\Psi$-orbit of $f_\theta(x)$ with weight $\mu_\theta(x)$ and $f_A(x)$ with weight $\mu_A(x)$
(observe that if, for example, $x$ does not lie in $U_A(x)$, the affected weight $\mu_A(x)$ vanishes. Hence

$-$ $g(x)$ is either $f_\theta(x)$
if $x$ lies in $U_\theta$ but $\mu_A(x)=0$ for any Birkhoff annulus $A$,

$-$ $g(x)$ is  $f_A(x)$ is $x$ lies in $U_A$ but no $U_\theta$,  and

$-$ $g(x)$ is an average
as defined if $x$ belongs to an intersection $U_A \cap U_\theta$).

This defines a map $g: W \to W'$ which has all the required properties of the orbital equivalence we seek for, except that it may fail to be injective along
orbits of $\Phi$. But since we required that orbits of points in $U_A \cap U_\theta$ escape from $U_\theta$ by staying in $U_A$, it is easy
to see that there is $t_0 > 0$ such that for any
$x$ in $W$ we have $g(\Phi^{t_0}(x)) \neq g(x)$. It then follows that, by a classical procedure of
averaging along orbits (explained for example in the proof of Proposition $3.25$ in \cite{Ba3}), we can modify $f$ so that it is an orbital equivalence
between the restrictions of $\Phi$ and $\Psi$ (but $W'$ may have changed).
This proves the claim.

\vskip .1in
We emphasize that in general we cannot choose $g$ that takes
$T$ to $T'$ with the properties above.

}
\end{remark}

\section{Actions on the circle}

In all this section, $\bar{\Gamma}$ will be a finitely generated group. We consider  representations
$\bar{\rho}: \bar{\Gamma} \to \mbox{Homeo}(\mathbb S^1)$, that we will assume most of the time to be
non-elementary (i.e. with no finite orbit). Then, there is a unique minimal invariant closed subset $\mu$
\cite{He-Hi}. We will
also assume
that the action of $\bar{\Gamma}$ restricted to $\mu$ \textit{almost commutes} with a homeomorphism
$\tau: \mu \to \mu$, of finite order $k$, which preserves the cyclic order induced by
the cyclic order on the circle. More precisely, we will require that:

-- for every $\bar{\gamma}$ in $\bar{\Gamma}$, we have, on $\mu$, $\bar{\rho}(\bar{\gamma}) \circ \tau = \tau \circ \bar{\rho}(\bar{\gamma})$ if $\bar{\rho}(\bar{\gamma})$ preserves the orientation, and
$\bar{\rho}(\bar{\gamma}) \circ \tau = \tau^{-1} \circ \bar{\rho}(\bar{\gamma})$ if $\bar{\rho}(\bar{\gamma})$
reverses the orientation;

-- for every $x$ in $\mu$, the (small) arcs $[x, \tau(x)]$
have the following properties:
\begin{itemize}
  \item $\mathbb S^1 = [x, \tau(x)] \cup [\tau(x), \tau^2(x)] \cup ... \cup [\tau^{k-1}(x), \tau^k(x)=x]$
  \item the open arcs $]\tau^i(x), \tau^{i+1}(x)[$ for $i = 0, 1, ... , k-1$ are pairwise disjoint.
\end{itemize}

In other words, $\tau$ looks like the rotation by $1/k$, but is only defined on $\mu$.

The small arcs $[x,\tau(x)]$ are uniquely defined unless $k = 2$.
If $k = 1$, then $\mu$ is the identity and
$]x,\tau(x)[ = \mathbb S^1 - \{ x \}$.
When $k = 2$, choose an orientation in
$\mathbb S^1$ and choose $[x,\tau(x)]$ to be the arc from $x$ to $\tau(x)$ in
this orientation.
In this case $\mu = \mu^{-1}$.
For all other $k$ the small arc $[x,\tau(x)]$ is uniquely defined
by the second condition on disjointness of the open arcs.

By $x < y < \tau(x)$ we mean that $y$ is in $[x,\tau(x)]$ and is not one
of the extremities, that is, $y$ is in $]x,\tau(x)[$.

We will also assume that $\bar{\rho}$ is \textit{$\mu$-faithful}, meaning that the restriction of $\bar{\Gamma}$ to $\mu$ is faithful.

A \textit{gap} is a connected component of $\mathbb S^1  -  \mu$. For every gap $I$ and for every element $\gamma$ of $\bar{\Gamma}$, we have
the dichotomy:
\begin{itemize}
  \item $\gamma I \cap I = \emptyset$
  \item $\gamma I = I$
\end{itemize}

If $I$ is disjoint from all its iterates $\gamma I$ for $\gamma \neq 1$, it is
called a \textit{wandering gap.} If not, $I$ is called a
\textit{periodic gap.}
The group $\bar{\Gamma}$ acts by permutations on the set $\mathcal I$ of gaps of $\mu$.
We will denote by $\sigma(\bar{\gamma})$ the permutation
on $\mathcal I$ induced by $\bar{\rho}(\bar{\gamma})$. It commutes with the action induced by $\tau$
if $\bar{\rho} (\bar{\gamma})$ is orientation preserving. Observe that $\bar{\rho}$ is $\mu$-faithful
if and only if the morphism $\sigma: \bar{\Gamma} \to S(\mathcal I)$ is injective, where $S(\mathcal I)$ denotes
 the permutation group of gaps of $\mu$.

\begin{define}
Let $\mu$ be a closed perfect subset of $\mathbb S^1$, $\tau: \mu \to \mu$ a fixed point free homeomorphism of order $k$
that preserves cyclic order, and $\sigma: \bar{\Gamma} \to S(\mathcal I)$
a morphism, where $\mathcal I$ is the set of gaps of $\mu$.
A $(\mu, \tau, \sigma)$-representation
is a representation $\bar{\rho}: \bar{\Gamma} \to \mbox{Homeo}(\mathbb S^1)$
such that:

-- $\mu$ is the unique minimal invariant set in $\mathbb S^1$,

-- $\tau$ almost commutes with the restriction of the action to $\mu$,

-- the induced action on the set $\mathcal I$ of gaps is $\sigma$.
\end{define}

\subsection{Orbifold groups}
\label{sub:orbifoldcase}
In this paper, we mainly focus on the case of orbifold groups.
More precisely, let $\Gamma$ denote the fundamental group of the
Seifert fibered space $P$ with boundary.
The boundary components of $P$ are tori.
When the base orbifold $B$ is orientable, $\Gamma$ is generated by elements:
$$h, a_1, b_1, ... , a_g, b_g, d_1, ... , d_p, c_1, ... , c_q$$
satisfying the relations:
$$a_ih=h^{\epsilon}a_i, b_ih=h^{\epsilon}b_i,d_ih=hd_i, c_ih=hc_i, d_j^{\alpha_j}h^{\beta_j}=1, h^ec_1...c_q = [a_1, b_1]...[a_g, b_g]d_1...d_p$$
The integer $g$ is the genus of $B$, the number $\epsilon$ is $\pm 1$, according to the fiber-orientability of the Seifert bundle
along the appropriate curve in the base orbifold $B$. Every $d_j$ corresponds to a singularity of $B$ (of type $(\alpha_j, \beta_j)$)
and every $c_i$ corresponds to a boundary component.
Since $P$ has boundary it follows that $q \geq 1$.

When $B$ is non-orientable, $\Gamma$ is generated by elements
$$h, a_1,...,a_g,d_1, ... , d_p, c_1, ... , c_q$$
satisfying the relations:
$$a_ih=h^{\epsilon_i}a_i, d_ih=hd_i, c_ih=hc_i, d_j^{\alpha_j}h^{\beta_j}=1, h^ec_1...c_q=a_1^2...a_g^2d_1...d_p$$

\noindent
Here $g$ is the number of crosscaps needed to generate $B$.

In either case,
it follows that the fundamental group $\bar{\Gamma}$ of the base orbifold $B$, quotient of $\Gamma$ by the cyclic subgroup $H$ generated by $h$,  has the following presentation:
\begin{eqnarray}\label{eq:presentation}
  \langle a_1,b_1,...,a_g,b_g,d_1, ... , d_p, c_1, ... , c_q \ \mid \ d_j^{\alpha_j}=1, \; c_1...c_q = [a_1, b_1]...[a_g, b_g]d_1...d_p  \rangle  \mbox{ (when $B$ is orientable)} \\
  \langle a_1,...,a_g,d_1, ... , d_p, c_1, ... , c_q \ \mid \  d_j^{\alpha_j}=1, \; c_1...c_q=a_1^2...a_g^2d_1...d_p \rangle   \mbox{ (when $B$ is not orientable)}
\end{eqnarray}

We call $\bar{\Gamma}$ an \textit{orbifold group.}

We also need the following definition:

\begin{define}
Let $\tilde  \mu$ be a closed, perfect subset of $\mathbb R$,
$\tilde \tau: \tilde{\mu} \to \tilde{\mu}$
a fixed point free homeomorphism
order preserving, and $\tilde \sigma: G \to S(\widetilde{\mathcal I})$
a morphism, where $\widetilde{\mathcal I}$ is the set of gaps of $\tilde \mu$.
A $(\tilde \mu, \tilde \tau, \tilde \sigma)$-representation is a
representation $\rho: G \to \mbox{Homeo}(\mathbb R)$
such that:

-- $\tilde \mu$ is the unique minimal invariant set in $\mathbb R$,

-- $\tilde \tau$ almost commutes with the restriction of the action to
$\tilde \mu$,

-- the induced action on the set $\widetilde{\mathcal I}$ of gaps is $\tilde \sigma$.
\end{define}

\subsection{Modifying the action in a periodic gap}
From now on, $\bar{\Gamma}$ will be assumed to be an orbifold group.
In this section, we explain how it is possible to modify a $(\mu, \tau, \sigma)$-representation of $\bar{\Gamma}$ to another $(\mu, \tau, \sigma)$-representation which
essentially only differs on a  periodic gap, and with any new prescribed action on this periodic gap. More precisely:

\begin{proposition}\label{pro:blowupaction}
Let $\bar{\Gamma}$ be an orbifold group.
Let $\bar{\rho}: \bar{\Gamma} \to \mbox{Homeo}(\mathbb S^1)$ be a $(\mu, \tau, \sigma)$-representation, and let $I_0$
be a periodic gap of $\bar{\rho}(\bar{\Gamma})$. Suppose that
the stabilizer $Stab(I_0)$ of $I_0$ is generated by one of the generators $c_i$
of $\bar{\Gamma}$ as described in the previous section.
Denote by $\mathfrak J$ the union of all the iterates of $I_0$ by $\sigma(\bar{\Gamma})$.

Let $f_0$ be any homeomorphism of $I_0$, coinciding with $c_i$ on $\partial I_0$. Then, there is a new $(\mu, \tau, \sigma)$-representation
$\bar{\rho}': \bar{\Gamma} \to \mbox{Homeo}(\mathbb S^1)$ such that:

-- the action on the complement $\mathbb S^1  -  \mathfrak J$ is not modified i.e. coincides with the action induced by
$\bar{\rho}$,

-- the restriction of $\bar{\rho}'(c_i)$ on $I_0$ coincides with $f_0$.

We call such a representation \textbf{a modification of $\bar{\rho}$ on the gap $I_0$ by $f_0$.}

Furthermore, if $\bar{\rho}$ and $f_0$ are $C^k$, and $f_0$ coincides with $\bar{\rho}(c_i)$ near $\partial I_0$,
then the new representation $\bar{\rho}'$ is also $C^k,$ and for every $\gamma$ in $\bar{\Gamma}$ and every $r \leq k$, the $r$-derivatives of $\bar{\rho}(\gamma)$ and
$\bar{\rho}'(\gamma)$ coincide on $\mu$.
\end{proposition}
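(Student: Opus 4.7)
My plan is to construct $\bar{\rho}'$ explicitly, by extending $\bar{\rho}$ from $\mathbb S^1 - \mathfrak J$ to $\mathfrak J$ one gap at a time via a choice of coset representatives for $\mbox{Stab}(I_0) = \langle c_i \rangle$. First I would pick a system of left-coset representatives $\{g_\alpha\}_{\alpha \in A}$ of $\bar{\Gamma}/\langle c_i \rangle$, with $g_{\alpha_0} = e$, so that $\mathfrak J = \bigsqcup_{\alpha \in A} g_\alpha I_0$. For each $\alpha$, set $h_\alpha := \bar{\rho}(g_\alpha)|_{I_0}$, a homeomorphism $I_0 \to g_\alpha I_0$. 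On $\mathbb S^1 - \mathfrak J$ I declare $\bar{\rho}' := \bar{\rho}$. For $\gamma \in \bar{\Gamma}$ and $\alpha \in A$, there is a unique pair $(\beta, n) \in A \times \mathbb Z$ with $\gamma g_\alpha = g_\beta c_i^{n}$, and I define
\[
\bar{\rho}'(\gamma)|_{g_\alpha I_0} \; := \; h_\beta \circ f_0^{n} \circ h_\alpha^{-1}.
\]
Taking $\alpha = \alpha_0$ and $\gamma = c_i$ yields $(\beta, n) = (\alpha_0, 1)$, so that $\bar{\rho}'(c_i)|_{I_0} = f_0$ as required.

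The heart of the proof is checking that this formula defines a group homomorphism. Given $\gamma_1, \gamma_2$, write $\gamma_2 g_\alpha = g_\beta c_i^{n_2}$ and $\gamma_1 g_\beta = g_\delta c_i^{n_1}$; since $\langle c_i \rangle$ is abelian, $\gamma_1 \gamma_2 g_\alpha = g_\delta c_i^{n_1 + n_2}$. The inner $h_\beta$'s cancel, and one obtains
\[
\bigl(\bar{\rho}'(\gamma_1) \circ \bar{\rho}'(\gamma_2)\bigr)|_{g_\alpha I_0}
\; = \; h_\delta f_0^{n_1} h_\beta^{-1} h_\beta f_0^{n_2} h_\alpha^{-1}
\; = \; h_\delta f_0^{n_1 + n_2} h_\alpha^{-1}
\; = \; \bar{\rho}'(\gamma_1 \gamma_2)|_{g_\alpha I_0}.
\]
Continuity across $\partial(g_\alpha I_0)$ follows from the hypothesis $f_0|_{\partial I_0} = \bar{\rho}(c_i)|_{\partial I_0}$, which propagates to $f_0^n = \bar{\rho}(c_i^n)$ on $\partial I_0$, so on $\partial(g_\alpha I_0)$ the formula reduces to $\bar{\rho}(g_\beta c_i^n g_\alpha^{-1}) = \bar{\rho}(\gamma)$, and the two pieces glue into a homeomorphism of $\mathbb S^1$. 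Since $\bar{\rho}'$ coincides with $\bar{\rho}$ on $\mu \subset \mathbb S^1 - \mathfrak J$, and since $\bar{\rho}'(\gamma)$ sends each $g_\alpha I_0$ to $g_\beta I_0 = \bar{\rho}(\gamma)(g_\alpha I_0)$, it follows at once that $\mu$ is still the unique minimal set, that $\tau$ still almost commutes with the restriction to $\mu$, and that the induced permutation of $\mathcal I$ is still $\sigma$; hence $\bar{\rho}'$ is a $(\mu, \tau, \sigma)$-representation.

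For the smoothness addendum, the strengthened assumption that $f_0 = \bar{\rho}(c_i)$ on a whole neighborhood of $\partial I_0$ upgrades the boundary matching to a neighborhood matching: $\bar{\rho}'(\gamma)$ coincides with $\bar{\rho}(\gamma)$ not just on $\partial(g_\alpha I_0)$ but near it, so $\bar{\rho}'(\gamma)$ is globally $C^k$ and its $r$-jet agrees with that of $\bar{\rho}(\gamma)$ on $\mu$ for every $r \le k$. The main obstacle I expect is precisely the cocycle identity in the homomorphism check: the additivity $n(\gamma_1\gamma_2,\alpha) = n(\gamma_1,\beta) + n(\gamma_2,\alpha)$ works only because $\mbox{Stab}(I_0)$ is the cyclic group $\langle c_i \rangle$. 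The hypothesis in the proposition that $\mbox{Stab}(I_0)$ be generated by a single $c_i$ is exactly what makes the explicit formula canonical; without it, one would lose the coset-arithmetic identity and would have to verify all orbifold relations of $\bar{\Gamma}$ by hand rather than inheriting them from the coset decomposition.
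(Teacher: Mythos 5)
Your coset construction is genuinely different from the paper's proof, and in the topological ($C^0$) part it is both correct and cleaner. The paper works with the explicit orbifold presentation, splits into cases on the number $q$ of boundary generators and the genus, modifies one chosen generator (say $a_1$ or $d_1$) on one gap $I_{\ell-1}$, and must verify a nontrivial claim (the iterated gaps $I_0,\dots,I_{\ell-1}$ are pairwise distinct) to avoid conflicts; the $q=1$, $g=0$ case even requires a secondary correction to preserve the torsion relation $d_1^{\alpha_1}=1$. Your approach defines $\bar\rho'(\gamma)$ for every $\gamma$ at once by the cocycle formula $h_\beta f_0^{\,n} h_\alpha^{-1}$, so the group relations come for free and the case analysis disappears. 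You are also right that the only structural input is that $\mathrm{Stab}(I_0)$ is infinite cyclic, which is formally weaker than the hypothesis that it is generated by one of the distinguished $c_i$. The cocycle computation, the gluing at $\partial(g_\alpha I_0)$, the invariance of $\mu$, and the preservation of $\sigma$ are all correct.

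Where your proposal has a real gap is in the $C^k$ addendum. You argue that since $f_0=\bar\rho(c_i)$ near $\partial I_0$, the map $\bar\rho'(\gamma)$ agrees with $\bar\rho(\gamma)$ near $\partial(g_\alpha I_0)$ for each gap, hence is ``globally $C^k$.'' But this matching happens gap by gap, while $\mu$ has accumulation points limited on by infinitely many gaps, and the exponent $n=n(\gamma,\alpha)$ in your formula can a priori be unbounded over $\alpha$ for a fixed $\gamma$ (it depends on the choice of coset representatives, since replacing $g_\alpha$ by $g_\alpha c_i^{m(\alpha)}$ shifts $n$ by $m(\alpha)-m(\beta)$). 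When $n$ is large, the correction $G_n := f_0^n\circ\bar\rho(c_i)^{-n}$ is a product of $n$ conjugates of a compactly supported diffeomorphism of $I_0$, and controlling the $C^k$ norm of $\bar\rho'(\gamma)$ at accumulation points of $\mu$ then requires a genuine distortion estimate that you do not supply. The fix is to make a careful choice of transversal --- for instance, exploiting the free-product structure so that for each generator $s$ the exponent $n(s,\alpha)$ vanishes for all but finitely many $\alpha$, so that $\bar\rho'(s)$ differs from $\bar\rho(s)$ on only finitely many gaps; $C^k$-ness for general $\gamma$ then follows by composition. This is exactly what the paper's generator-by-generator modification buys: each modified generator differs from the original on one gap, so global $C^k$-ness and agreement of $r$-jets on $\mu$ are immediate. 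Your construction as written leaves this choice implicit and therefore does not yet establish the $C^k$ statement.
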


\begin{remark}\label{rk:topoconj}{\em
Suppose that $g_0$ is a homeomorphism of $I_0$ that is topologically conjugate to $f_0$ by some homeomorphism
$\varphi_0: I_0 \to I_0$ preserving the orientation. Let $\bar{\rho}'_1$ (respectively $\bar{\rho}'_2$) be a modification
of $\bar{\rho}$ on the gap $I_0$ by $f_0$ (respectively by $g_0$). Then, one can extend $\varphi_0$ to a homeomorphism
$\varphi: \bar{\rho}(\bar{\Gamma})(I_0) \to \bar{\rho}(\bar{\Gamma})(I_0)$ by
$\bar{\rho}(\bar{\Gamma})$-equivariance, and thereafter
to the
entire circle simply by requiring to be the identity map on $\mathbb S^1  -  \mathfrak J$.
This provides a topological conjugacy between $\bar{\rho}'_1$ and $\bar{\rho}'_2$.

In other words, modifications on $I_0$
are well defined up to topological conjugacy by the choice of $I_0$ and the topological conjugacy class of
$f_0$.
}
\end{remark}

\begin{remark}\label{rk:semiconj}{\em
More generally, let $g_0$ be a homeomorphism of $I_0$ which is topologically semi-conjugate to $f_0$. This means that there is
a continuous, surjective map
$\varphi_0: I_0 \to I_0$ that is weakly monotone and preserves the orientation,
and such that on $I_0$, we have:
$$g_0 \circ \varphi_0 = \varphi_0 \circ f_0$$
Then, one can show as above that, given any pair $\bar{\rho}'_1$ and $\bar{\rho}'_2$ of modifications
of $\bar{\rho}$ on the gap $I_0$ given respectively by $f_0$ and $g_0$, there is an extension of $\varphi_0$ on the entire circle
defining a semiconjugacy between $\bar{\rho}'_1$ and $\bar{\rho}'_2.$
}
\end{remark}


\noindent\textit{Proof of Proposition \ref{pro:blowupaction}.} The final statement in Proposition \ref{pro:blowupaction}, i.e. the fact that if $\bar{\rho}$ and $f_0$ are $C^k$, and that if $f_0$ coincides with $\bar{\rho}(c_i)$ near $\partial I_0$, then the new action is still $C^k$, follows easily in every case
considered in the proof below.

We can assume without loss of generality that $c_i$ is $c_1$. Let us first consider the case $q \geq 2$: the orbifold group
is then a free product of the cyclic subgroup generated by $c_1$ and a subgroup $\bar{\Gamma}_1$, where $\bar{\Gamma}_1$, in the orientable case,
is the subgroup generated by
$a_1$, $b_1$, $...$ , $a_g$, $b_g$, $d_1$, $...$ , $d_p$, $c_3$, ... , $c_q$,
and in the non-orientable case, the subgroup generated by
$a_1$, $...$ , $a_g$, $d_1$, $...$ , $d_p$, $c_3$, ... , $c_q$ (observe
that $c_1$ and $c_2$ have been removed).
In other words we remove $c_2$ from the generators of
$\bar{\Gamma}$  and $c_2$ can be recovered from the last relation
in either (1) or (2). When that is done, the only relations that need to be satisfied
for $\bar{\rho}(\bar{\Gamma})$ to be a representation are
$(\bar{\rho}(d_j))^{\alpha_j}   = 1, 1 \leq j \leq p$
(notice that $p$ may be $0$ in which case there are no relations at all).

The modification $\bar{\rho}'$ is then simply defined as the unique representation such that:

-- $\bar{\rho}'$ and $\bar{\rho}$ coincide on $\bar{\Gamma}_1$,

-- $\bar{\rho}'(c_1)$ is the map coinciding with $f_0$ on $I_0$, and equal to $\bar{\rho}(c_1)$ everywhere else.

Then the action of $\bar{\rho}(\bar{\Gamma})$
on the complement of $\mathfrak J$ is clearly not modified.
In addition the action of the subgroup $\bar{\Gamma}_1$ is also not modified, so the defining relations
are still satisfied. Therefore $\bar{\rho}'_1$ is still a representation and it is easy to see that
the other properties of the modification are satisfied.
The proposition follows in this case.

\vskip .15in
From now on we assume $q=1$, i.e. the orbifold $B$ has exactly one boundary component.

Consider the following generating set $\mathfrak S$ for $\bar{\Gamma}$:

-- when $B$ is orientable we put $\mathfrak S = \{ a_1^{\pm 1}, b_1^{\pm 1}, ... , a_g^{\pm 1}, b_g^{\pm 1}, d_1^{\pm 1}, ... , d_p^{\pm 1}\}$. Then:
$$c_1 = [a_1, b_1]...[a_g, b_g]d_1...d_p \;\;\; (*)$$

-- when $B$ is not orientable, we define $\mathfrak S = \{ a_1^{\pm 1},...,a_g^{\pm 1},d_1^{\pm 1}, ... , d_p^{\pm 1}\}$. We have:
$$c_1 = a_1^2...a_g^2d_1...d_p \;\;\; (**)$$

\noindent
In both cases, the only defining relations for $(\bar{\Gamma}, \mathfrak S)$ are $d_i^{\alpha_i}=1$.
In other words $\bar{\Gamma}$ is a free product of two groups. One group is a free group
generated by
either $a_1,b_1,...,a_g,...,b_g$ in the orientable  case or $a_1,...,a_g$ in the non orientable case.
The other group is freely generated by the torsion elements $d_1,...,d_p$.

In order to treat simultaneously the orientable and non-orientable cases,
let us write $w_0 = s_\ell... s_1$ for the
{\underline {word}}
$[a_1, b_1]...[a_g, b_g]d_1...d_p$ in the orientable case, and $a_1^2...a_g^2d_1...d_p$ in the non-orientable case. The crucial observation is that $w_0$ is the unique word with letters in $\mathfrak S$
representing $c_1$ and of minimal length. This is because $\bar{\Gamma}$ is a free product
of two groups as above and in either case $c_1$ is represented by the formulas
$(*)$ or $(**)$ above.
 Furthermore, since $w_0$ is cyclically reduced, it can also be easily
checked that for every integer $r$, any word with letters in $\mathfrak S$
representing a conjugate of $c_1^r$ has word-length $\geq r\ell$.

We define by induction, for every integer $i\geq1$:
$$I_i = \sigma(s_i)I_{i-1}$$

\centerline{\textit{Claim: for $0\leq i < j \leq \ell-1$, we have $I_i \neq I_j$.}}
\vspace{.5cm}

Indeed, if not, we would have an element $s_j...s_{i+1}$ of length
$<\ell$ whose image by $\bar{\rho}$ maps $I_i$ onto itself.
Therefore $s_j...s_{i+1}$ is a power of a conjugate of $c_1$.
As we have just observed, this is possible only if $s_j...s_{i+1}$ represents the trivial element, but then
we could write $c_1$ as a product of $\ell-(j-i)$ generators; contradiction.

\vskip .1in
Let us first consider the case $g>0$: the last letter $s_\ell$ is then $a_1$. In this case, our new action $\bar{\rho}'$ is obtained by applying essentially one and only one modification
to the generators: we only modify the restriction of $\bar{\rho}(s_\ell)$
to the arc $I_{\ell-1}$ (and hence $\bar{\rho}(a_1^{-1})$ on
$I_0$). More precisely, for every element $s$ of $\mathfrak S$ except for
$a_1=s_\ell$ and $a_1^{-1}$, we define
$\bar{\rho}'(s) = \bar{\rho}(s)$. We define $\bar{\rho}'(a_1)$ as follows:
outside $I_{\ell-1}$ we put $\bar{\rho}'(a_1)(x) = \bar{\rho}(a_1)(x)$,
and on $I_{\ell-1}$ we put
$\bar{\rho}'(a_1)(x) = \bar{\rho}'(s_\ell)(x) = f_0 \circ \bar{\rho}(s_{\ell-1}... s_1)^{-1}(x)$.
By the claim no $I_i = I_{l-1}$ or $I_0$  if $i < l-1$, therefore $\bar{\rho}'(a_1)$ is well defined.
Finally we define $\bar{\rho}'(a_1^{-1})$ to be the inverse of
$\bar{\rho}'(a_1)$.

Since the only relations in $\bar{\Gamma}$ are $d_i^{\alpha_i}=1$, and we have not changed the representation
on $d_i$, these prescriptions of $\bar{\rho}'$ on $\mathfrak S$ define a
representation $\bar{\rho}': \bar{\Gamma} \to \mbox{Homeo}(\mathbb S^1)$. It follows directly from our construction that
$\bar{\rho}$ and $\bar{\rho}'$ coincide outside $\mathfrak J$. Let us check the last statement to be proved, i.e. that the restriction
of $\bar{\rho}'(c_1)$ to $I_0$ is $f_0$. In the orientable case we have:
$$\bar{\rho}'(c_1) = \bar{\rho}'(a_1)\bar{\rho}'(b_1)\bar{\rho}'(a_1)^{-1} ... \bar{\rho}'(d_p)$$
Let $x$ be in $I_0$. The point $\bar{\rho}'(b_1)^{-1} ... \bar{\rho}'(d_p)(x)$ is equal to $\bar{\rho}(b_1)^{-1} ... \bar{\rho}(d_p)(x)$
since $\bar{\rho}$ and $\bar{\rho}'$ coincide on elements of $\mathfrak S  -  \{ a_1, a_1^{-1}\}$. This point belongs
to $I_{\ell-3}$. According to the Claim above, $I_{\ell-3}$ is different from $I_0$,
hence by our construction, $\bar{\rho}(a_1^{-1})$ and
$\bar{\rho}'(a_1^{-1})$ coincide on $I_{\ell-3}$: we have $\bar{\rho}'(a_1^{-1}) ... \bar{\rho}'(d_p)(x) =
\bar{\rho}(a_1^{-1}) ... \bar{\rho}(d_p)(x)$.
The equality $\bar{\rho}'(b_1)\bar{\rho}'(a_1^{-1}) ... \bar{\rho}'(d_p)(x) =
\bar{\rho}(b_1)\bar{\rho}(a_1^{-1}) ... \bar{\rho}(d_p)(x)$ follows.
Now $\bar{\rho}'(c_1)(x)$ is the image under $\bar{\rho}'(a_1)=\bar{\rho}'(s_\ell)$ of $\bar{\rho}(b_1)\bar{\rho}(a_1)^{-1} ... \bar{\rho}(d_p)(x)$. In addition this is equal to $\bar{\rho}(s_{l-1}...s_1)(x)$.
But we have defined the restriction of $\bar{\rho}'(s_\ell)(x)$ to be $f_0 \circ \bar{\rho}(s_{\ell-1}... s_1)^{-1}$. The equality
$\bar{\rho}'(c_1)(x) = f_0(x)$ follows.

The non-orientable case is treated in a similar way. We have:
$$\bar{\rho}'(c_1) = \bar{\rho}'(a_1)^2\bar{\rho}'(a_2)^2 ... \bar{\rho}'(d_p)$$

\noindent
because $\bar{\rho}'$ is a representation.
For $x$ in $I_0$, we still have the equality $\bar{\rho}'(a_2)^2 ... \bar{\rho}'(d_p)(x)=\bar{\rho}(a_2)^2 ... \bar{\rho}(d_p)(x)$,
one checks that this point lies in the region where $\bar{\rho}'(a_1)$ and $\bar{\rho}(a_1)$ coincide
because $I_{l-2}$ is not $I_{l-1}$.
The next occurence of
$\bar{\rho}'(a_1)$ has been designed so that it leads to the desired equality $\bar{\rho}'(c_1)(x) = f_0(x)$.

\vskip .1in
The last case to consider is the case $g=0$, $q=1$, ie. the case where the orbifold $B$ is a disk with a finite number $\geq 2$ of singular points.
This is because  if $B$ is non orientable then $g \geq 1$ as at least one
crosscap is needed to produce $B$.
Then $\mathfrak S$ is the collection $\{ d_1^{\pm 1}, ... , d_p^{\pm 1} \}$ of finite order elements. In this case we have $w_0 = d_1 ... d_p$. We
essentially do the same procedure as in the previous case: the first idea is to define
$\bar{\rho}'(d_i^{\pm 1}) = \bar{\rho}(d_i^{\pm 1})$ for every $i \geq 2$.
This implies that $\bar{\rho}'(d_i^{\pm 1}) = (\bar{\rho}(d_i))^{\pm 1}$ and
in addition $(\bar{\rho}'(d_i))^{\alpha_i} = 1$, for all $i \geq 2$.
Then we first define $\bar{\rho}'(d_1)(x) = \bar{\rho}(d_1)(x)$ everywhere except on the interval $I_{\ell-1} = d_1^{-1}(I_0)$
where we put
$\bar{\rho}'(d_1)(x) =  f_0(\bar{\rho}(d_p)^{-1} ... \bar{\rho}(d_2)^{-1}(x))$.
Notice it does \textbf{not} matter if we put $\bar{\rho}$ or $\bar{\rho}'$ here as they
are equal on $d_i, i \geq 2$.
 In this way  the equality $\bar{\rho}'(c_1)(x) = f_0(x)$ automatically
holds for $x \in I_0$. The problem is that after this change,
the relation $\bar{\rho}'(d_1)^{\alpha_1}=1$ is not satisfied.
This can be fixed by changing $\bar{\rho}(d_1)$ in another interval
as well in the following way:
Notice that $\alpha_1 \geq 2$.
In addition $I_{l-1} = \bar{\rho}(d_1^{\alpha_1 -1})(I_0)$ as $d_1^{\alpha_1} = 1$.
We want to define $\bar{\rho}'(d_1)$ on
$(\bar{\rho}(d_1))^{-1}(I_{l-1})$ so that $(\bar{\rho}'(d_1))^{\alpha_1} = 1$.
The only modification done so far has been on $\bar{\rho}'(d_1)$ in $I_{l-1}$.
The concern is that $\bar{\rho}(d_1^m(I_0)) = I_{l-1}$ for some
$m < \alpha_1 -1$.
But if that happens, $d_1^{m+1}$ is in $Stab(I_0)$ and is not trivial
in $\bar{\Gamma}$ because $m + 1 < \alpha_1$. By hypothesis on the
proposition this is not possible, because $Stab(I_0)$ is
generated by $c_1$ and no nontrivial element in the torsion subgroup
of $d_1$ is in the subgroup generated by $c_1$.
Therefore we can now define $\bar{\rho}'(d_1)$ on
$\bar{\rho}(d_1^{-1})(I_{l-1})$ (which would be equal to $I_0$ if
$\alpha_1 = 2$) so that $\bar{\rho}'(d_1^{\alpha_1}) = 1$.
This finishes the proof of Proposition \ref{pro:blowupaction}.
\begin{flushright}
    $\Box$
\end{flushright}

\medskip

The representations $\bar{\rho}: \bar{\Gamma} \to \mbox{Homeo}(\mathbb S^1)$
we will consider are always coming from a representation $\rho: \Gamma \to \mbox{Homeo}(\mathbb R)$
such that $\rho(h)$ is the translation by $+1$, i.e. the Galois covering for the covering map $\mathbb R \to \mathbb S^1$. The homeomorphism
$\tau$ is then the projection of a homeomorphism $\tilde{\tau}: \tilde{\mu} \to \tilde{\mu}$, where $\tilde{\mu}$ is the unique minimal closed
invariant subset of the action of $\rho(\Gamma)$, and we can assume wlog that $\tilde{\tau}$ is the restriction of the translation by $1/k$.

Observe that
the construction in Proposition \ref{pro:blowupaction} does not affect this property: this construction lifts to a modification $\rho^\ast: \Gamma \to \mbox{Homeo}(\mathbb R)$,
and the lifting is uniquely characterized by the property that the restrictions of $\rho(\Gamma)$ and $\rho^\ast(\Gamma)$ to $\tilde{\mu}$ coincide.

In the same way as in Proposition \ref{pro:blowupaction} we define
modifications of $(\tilde \mu,\tilde \tau, \tilde \sigma)$:

\begin{define}\label{blowupunivcov}
Let $\Gamma$ be the fundamental group of a Seifert manifold.
Let $\rho: \Gamma \to$ Homeo$(\mathbb R)$ be a
$(\tilde \mu, \tilde \tau, \tilde \sigma)$-representation, and let $I_0$
be a periodic gap of $\tilde{\mu}$, with stabilizer generated by some element $c_i$ of $\Gamma$.
Denote by $\mathfrak J$ the union of all the iterates of $I_0$ by $\sigma(\Gamma)$.

Let $f_0$ be any homeomorphism of $I_0$, coinciding with $c_i$ on $\partial I_0$.
Then, there is a new $(\tilde \mu, \tilde \tau, \tilde \sigma)$-representation
$\rho': \Gamma \to$ Homeo$(\mathbb R)$ such that:

-- the action on the complement $\mathbb R  -  \mathfrak I$ is not modified i.e. coincides with the action induced by
$\rho$,

-- the restriction of $\rho'(c_i)$ on $I_0$ coincides with $f_0$.

We call such a representation \textbf{a modification of $\rho$ on the gap $I_0$ by $f_0$.}

Furthermore, if $\rho$ and $f_0$ are $C^k$, and $f_0$ coincides with $\rho(c_i)$ near $\partial I_0$,
then the new representation $\rho'$ is also $C^k,$ and for every $\gamma$ in $\Gamma$ and every $r \leq k$, the
$r$-derivatives of $\rho(\gamma)$ and
$\rho'(\gamma)$ coincide on $\mu$.
\end{define}

\subsection{Groups of almost-(k)-convergence}

\begin{define}
A  $(\mu, \tau, \sigma)$-representation $\bar{\rho}: \bar{\Gamma} \to \mbox{Homeo}(\mathbb S^1)$ has the
\textbf{(discrete) (k)-convergence property} if, for every sequence $(\bar{\gamma}_n)_{n \in \mathbb N}$,
up to a subsequence the following dichotomy holds:

-- either the sequence $(\bar{\rho}(\bar{\gamma}_n))_{n \in \mathbb N}$ is stationary,

-- or there exist two $\tau$-orbits
$\{ x_0^- = \tau(x_{k-1}^-), x^-_1=\tau(x_0^-), \ldots , x_{k-1}^-=\tau(x_{k-2}^-)\}$ and
$\{ x_0^+ = \tau(x_{k-1}^+),  x^+_1=\tau(x_0^+), \ldots , x_{k-1}^+ = \tau(x_{k-2}^+)\}$ such that,
for any compact subset $K$ of $]x^-_i, x^-_{i+1}[$ (with $0 \leq i < k$) the restriction of $\bar{\rho}(\bar{\gamma}_n)$ to $K$ converges uniformly to $x^+_i$.
\end{define}

We also say that $(\bar{\gamma}_n)_{n \in \mathbb N}$ or
$(\bar{\rho}(\bar{\gamma}_n))_{n \in \mathbb N}$ satisfies the
$(k)$-convergence property.

Observe that, in particular, if $\bar{\rho}: \bar{\Gamma} \to \mbox{Homeo}(\mathbb S^1)$ has the (k)-convergence property, then the fixed point set of every non-trivial $\bar{\rho}(\gamma)$
is a union of at most $2$ orbits by $\tau$, hence contain at most $2k$ elements.
In the case $k=1$ one recovers the usual notion of convergence group.

Typical examples of discrete (k)-convergence groups are Fuchsian groups. More precisely: for every integer $k\geq1$, let PGL$_k(2, \mathbb R)$ denote the groups of projective
transformations (orientation preserving or not) of the cyclic $k$-cover $\mathbb R\mathbb P_k^1$ over the real projective line. It is also the quotient of the universal covering $\widetilde{\mbox{PGL}}(2, \mathbb R)$
of PGL$(2, \mathbb R)$ by the subgroup of index $k$ of the center of PGL$(2, \mathbb R)$. Then, any discrete subgroup of PGL$_k(2, \mathbb R)$, as group of transformation of $\mathbb R\mathbb P_k^1 \approx \mathbb S^1,$
is a discrete (k)-convergence group.

In particular, the definition of PGL$_k(2,\mathbb R)$ immediately implies that
 there is a natural projection
$\pi_k: {\mbox{PGL}}_k(2,\mathbb R) \rightarrow
{\mbox{PGL}}(2,\mathbb R)$ that is a
$k$-fold covering map and a homomorphism.

\begin{theorem}\label{thm:kconvfuchsian}
Every $(\mu, \tau, \sigma)$-representation $\bar{\rho}: \bar{\Gamma} \to \mbox{Homeo}(\mathbb S^1)$
satisfying the (k)-convergence property is topologically conjugate
to a Fuchsian action, i.e. there exists a homeomorphism $f: \mathbb S^1 \to {\mathbb R\mathbb P}_k^1$ and a representation $\bar{\rho}_0: \bar{\Gamma} \to$ {PGL}$_k(2,\mathbb R)$
such that:
$$f \circ \bar{\rho} = \bar{\rho}_0 \circ f$$
\end{theorem}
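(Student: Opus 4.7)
The plan is to reduce to the classical convergence group theorem of Tukia, Gabai, and Casson--Jungreis (the case $k=1$): extend $\tau$ to an order-$k$ homeomorphism of $\mathbb S^1$, quotient by $\langle \tau \rangle$, apply the classical theorem to the quotient action to conjugate it to a Fuchsian representation into $\mbox{PGL}(2,\mathbb R)$, and then lift the conjugacy through the covering $\pi_k : \mathbb R\mathbb P^1_k \to \mathbb R\mathbb P^1$.

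First I would analyze the action on gaps. Since $\bar{\rho}$ is $\mu$-faithful, it is globally faithful, so infinite-order elements of $\bar{\Gamma}$ act by infinite-order homeomorphisms. If an infinite-order $\gamma$ stabilized a periodic gap $I$ with an interior fixed point $p \in I^\circ$, then $\bar{\rho}(\gamma^n)(p)=p$ would contradict the $(k)$-convergence statement, which forces $\bar{\rho}(\gamma^n)(p)$ to converge to a point of $\mu$. Torsion elements stabilizing a gap setwise either fix the gap pointwise (impossible, since a finite-order homeomorphism of $\mathbb S^1$ fixing infinitely many points is the identity, and $\bar{\rho}$ is faithful) or are orientation-reversing involutions with a unique interior fixed point. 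Hence every periodic gap has cyclic stabilizer and the action of that stabilizer on the gap is determined up to topological conjugacy by its order.

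Using this classification I would extend $\tau$ to an order-$k$ homeomorphism of $\mathbb S^1$ by choosing, in each $\langle\tau\rangle$-orbit of gaps, gap-to-gap homeomorphisms whose composition around the orbit is the identity. I would then conjugate $\bar{\rho}$ (by a homeomorphism of $\mathbb S^1$ which is the identity on $\mu$) to a representation $\bar{\rho}'$ that commutes with the extended $\tau$ up to inversion on orientation-reversing elements: on each $\langle\tau\rangle$-orbit of gaps I specify $\bar{\rho}'(\gamma)$ on a representative and propagate by $\bar{\rho}'(\gamma)|_{\tau(I)} = \tau^{\pm 1} \circ \bar{\rho}'(\gamma)|_I \circ \tau^{\mp 1}$; consistency with the group law of $\bar{\Gamma}$ reduces, via the previous classification, to a finite check on the cyclic stabilizer actions. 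Setting $\overline{\mathbb S}^1 := \mathbb S^1/\langle\tau\rangle$, the representation $\bar{\rho}'$ descends to an action $\bar{\rho}'': \bar{\Gamma} \to \mbox{Homeo}(\overline{\mathbb S}^1)$. The $(k)$-convergence of $\bar{\rho}'$ translates directly into the classical convergence property for $\bar{\rho}''$: the $\tau$-orbits $\{x^\pm_0, \ldots, x^\pm_{k-1}\}$ collapse to single points $\bar x^\pm$, and uniform convergence to $x^+_i$ on compact subsets of $]x^-_i, x^-_{i+1}[$ becomes uniform convergence to $\bar x^+$ on compact subsets of $\overline{\mathbb S}^1 \setminus \{\bar x^-\}$. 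Non-elementarity descends too, so the classical convergence group theorem provides a homeomorphism $\bar h : \overline{\mathbb S}^1 \to \mathbb R\mathbb P^1$ and a representation $\bar{\rho}_0'': \bar{\Gamma} \to \mbox{PGL}(2,\mathbb R)$ with $\bar h \circ \bar{\rho}'' = \bar{\rho}_0'' \circ \bar h$.

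To conclude I would lift through $\pi_k$: since $\mathbb S^1 \to \overline{\mathbb S}^1$ and $\pi_k$ are both connected cyclic $k$-fold covers of a circle, they are isomorphic as covering spaces, and $\bar h$ admits a lift $h : \mathbb S^1 \to \mathbb R\mathbb P^1_k$ intertwining $\tau$ with the generator $\tau_0$ of the deck group of $\pi_k$. For each $\gamma \in \bar{\Gamma}$, the conjugate $h \circ \bar{\rho}'(\gamma) \circ h^{-1}$ is $\tau_0$-equivariant (up to inversion for orientation-reversing $\gamma$) and projects under $\pi_k$ to $\bar{\rho}_0''(\gamma) \in \mbox{PGL}(2,\mathbb R)$; since $\mbox{PGL}_k(2,\mathbb R)$ is precisely the group of deck-equivariant lifts of $\mbox{PGL}(2,\mathbb R)$, this conjugate lies in $\mbox{PGL}_k(2,\mathbb R)$, yielding the required $\bar{\rho}_0$. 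Composing $h$ with the initial topological conjugacy between $\bar{\rho}$ and $\bar{\rho}'$ gives the desired $f$. The main obstacle is the gap-modification step in the previous paragraph: propagating the $\bar{\rho}$-action across $\langle\tau\rangle$-orbits of gaps must respect the relations in $\bar{\Gamma}$ coming from non-trivial gap stabilizers, and it is only the rigidity of interval dynamics with at most one interior fixed point, imposed by the $(k)$-convergence property, that makes this bookkeeping tractable.
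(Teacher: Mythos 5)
Your proposal follows essentially the same route as the paper: extend $\tau$ to an order-$k$ homeomorphism of $\mathbb S^1$, pass to the quotient circle $\mathbb S^1/\tau$, apply the classical convergence group theorem of Tukia/Gabai/Casson--Jungreis there, and lift the resulting conjugacy back through the $k$-fold cover $\pi_k$. The only substantive difference is in how the extension of $\tau$ is organized: the paper fixes $\tau$ arbitrarily on one representative gap in each $\bar{\rho}(\bar{\Gamma})$-orbit and then \emph{propagates by $\bar{\rho}$-equivariance}, so that the extended $\tau$ almost-commutes with $\bar{\rho}$ by construction and no conjugation of $\bar{\rho}$ is ever needed; you instead extend $\tau$ without reference to $\bar{\rho}$ and then conjugate $\bar{\rho}$ to commute with it, which is equivalent but forces exactly the compatibility bookkeeping you flag at the end as ``the main obstacle.'' Two small imprecisions worth correcting: (a) a finite-order homeomorphism of $\mathbb S^1$ fixing \emph{three} points (not ``infinitely many'') is already the identity, since orientation-reversing finite-order maps have exactly two fixed points and orientation-preserving ones are conjugate to rotations; (b) the cyclicity of periodic gap stabilizers is not immediate from ruling out interior fixed points --- the paper deduces freeness of the stabilizer action on the gap, invokes H\"older's theorem to get a semi-conjugacy to a group of translations, and then uses the discreteness forced by $(k)$-convergence to conclude cyclicity, a non-trivial step that your proposal elides. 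Your hypothetical orientation-reversing involutions swapping the endpoints of a gap in fact do not arise here, since all torsion in the orbifold groups under consideration is orientation-preserving, so that case is vacuous.
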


\begin{proof}
The case $k=1$ is simply a reformulation of the \textit{convergence group Theorem} proved by Gabai and Casson-Jungreis (\cite{gabaiconv, cassonjungreis}) culminating a series
of works by many others. Actually, the results in \cite{gabaiconv, cassonjungreis} are stated for actions preserving the orientation of $\mathbb S^1$, and here we have to take care of the general case
allowing orientation reversing elements.
The reference \cite{tukia} actually dealt with this general situation, and proved the conjecture except in the case of orientation preserving
actions of a triangular group $< a, b, c \;\mid\; a^p=b^q=c^r=1 >$. The triangular group
case was solved thereafter independently by Gabai and Casson-Jungreis. This most difficult case will not be used
in this paper, since we will apply this Theorem to an orbifold group of an orbifold admitting at least one boundary component, hence not a triangular group.

Now we give an outline of the way to reduce the general case $k>1$ to the case $k=1$ (compare with Lemma 3.6.2 in \cite{monclairthesis}).
The idea is to extend $\tau$ to a homeomorphism $\tau: \mathbb S^1 \to \mathbb S^1$, almost commuting with the action, and to apply the convergence group Theorem to the induced action on
the quotient circle $\mathbb S^1/\tau$.

Let $I$ be a gap of $\mu$. If $I$ is wandering, one can choose arbitrarly any orientation preserving extension of $\tau$ inside $\tau^i(I)$ for $0 \leq i \leq k-2$, and then define the restriction of $\tau$ inside $\tau^{k-1}(I)$
so that the restriction of $\tau^k$ to $I$ is the identity map. Then define the restriction of $\tau$ to every $\bar{\rho}(\bar{\gamma})(\tau^i(I))$ as $\bar{\rho}(\bar{\gamma})\tau_{\mid \tau^i (I)}\bar{\rho}(\bar{\gamma})^{-1}$.
Since $I$ is wandering, there is no relation to obey, and we define in this way an extension of $\tau$ to the entire circle, except on periodic gaps.

We consider now the case where $I$ is periodic. Let $\bar{\Gamma}_I$ be the stabilizer of $I$. Let us denote by $a$, $b$ the two extremities of $I$, so that $I=]a, b[$. If some element $\bar{\gamma}$ of $\bar{\Gamma}_I$
admits a fixed point $x$ inside $I$, then it would admit at least $2k+1$ fixed points: the orbits of $a$, $b$ by $\tau$ and $x$, and it would contradict the (k)-convergence property applied to the sequence
$(\bar{\gamma}^n)_{n \in \mathbb N}$. Therefore, the action of $\bar{\Gamma}_I$ on $I$ is free, and therefore, according to H\"{o}lder's Theorem (\cite{holder}), topologically conjugate to an action
by translations (once $I$ is identified with the real line). If $\bar{\rho}(\bar{\Gamma}_I)$ is not cyclic, then one would once more obtain a contradiction by considering a sequence $(\bar{\gamma}_n)_{n \in \mathbb N}$ made
of distinct elements mapping a point $x$ in $I$ to elements $\bar{\rho}(\bar{\gamma}_n)x$ converging to $x$.

We conclude that $\bar{\rho}(\bar{\Gamma}_I)$ is cyclic, generated by an element $\bar{\rho}(\bar{\gamma}_0)$, and that its action on $I$ preserves the orientation.
Therefore, there is a topological conjugacy between the action of $\bar{\rho}(\bar{\Gamma}_I)$ inside $I$
and its action on $\tau(I)$. More precisely, we select a point $x_i$ in every $\tau^i(I)$, and take any orientation preserving homeomorphism $\tau_i$ between $[x_i, \bar{\rho}(\bar{\gamma}_0)x_i]$ and
$[x_{i+1}, \bar{\rho}(\bar{\gamma}_0)x_{i+1}]$, only adjusting so that the composition $\tau_{k-1} \circ ... \circ \tau_0$ is trivial on $[x_0, \bar{\rho}(\bar{\gamma}_0)x_0]$. We then extend every $\tau_i$ on every $\tau^i(I)$
by $\bar{\rho}(\bar{\gamma}_0)$-equivariance, and then on every $\bar{\rho}(\bar{\gamma})(\tau^i(I))$ by $\bar{\rho}(\bar{\Gamma})$-equivariance.

All these extensions are compatible with one another, and define an extension of $\tau$ to the entire circle,
which almost commutes with the action of $\bar{\Gamma}$.
One considers then the action on the circle ${\mathbb S}^1/\tau$, which has the (1)-convergence property, and therefore is topologically conjugate to a projective action.
The lift of this topological conjugacy is the required topological conjugacy between $\bar{\rho}$ and a Fuchsian representation $\bar{\rho}_0: \bar{\Gamma} \to$ {PGL}$_k(2,\mathbb R)$.
\end{proof}

Of course, when one modifies the representation on a periodic gap as in Proposition \ref{pro:blowupaction}, the new representation does not have anymore the (k)-convergence property, since
one can increase arbitrarly the number of fixed points for a given element. However, we will see that a weak form of (k)-convergence property still holds:

\begin{define}
A $(\mu, \tau, \sigma)$-representation $\bar{\rho}: \bar{\Gamma} \to \mbox{Homeo}(\mathbb S^1)$ has the
\textbf{almost (k)-convergence property} if, for every sequence $(\bar{\gamma}_n)_{n \in \mathbb N}$, up to a subsequence, the following trichotomy holds:

-- either the sequence $(\bar{\rho}(\bar{\gamma}_n))_{n \in \mathbb N}$ is stationary,

-- or there are elements $a$, $\bar{\gamma}$ of $\bar{\Gamma}$ and a sequence $(p_n)_{n \in \mathbb N}$ of integers such that $\bar{\gamma}$ preserves a gap of $\mu$ and:
$$\forall n \in \mathbb N \;\; \bar{\rho}(\bar{\gamma}_n) = \bar{\rho}(\bar{\gamma})^{p_n} \bar{\rho}(a)$$

-- or there exist two $\tau$-orbits
$\{ x_0^-, x^-_1=\tau(x_0^-), ... , x_{k-1}^-=\tau(x_{k-2}^-)\}$ and $\{ x_0^+, x^+_1=\tau(x_0^+), ... , x_{k-1}^+=\tau(x_{k-2}^+)\}$ such that,
for any compact subset $K$ of $]x^-_i, x^-_{i+1}[$ (with $0 \leq i < k$) the restriction of $\bar{\rho}(\bar{\gamma}_n)$ to $K$ converges uniformly to $x^+_i$.
\end{define}

\begin{theorem}\label{thm:almostconvergencestable}
Let $\bar{\rho}: \bar{\Gamma} \to \mbox{Homeo}(\mathbb S^1)$ be a $\mu$-faithful non elementary $(\mu, \tau, \sigma)$-representation of an orbifold group $\bar{\Gamma}$, and
let $\bar{\rho}': \bar{\Gamma} \to \mbox{Homeo}(\mathbb S^1)$ be a modification of $\bar{\rho}$ on a periodic gap.
Then, $\bar{\rho}'$ is a group of almost (k)-convergence if and only if the same is true for $\bar{\rho}$.
\end{theorem}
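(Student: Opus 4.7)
The plan is to exploit a symmetry between $\bar\rho$ and $\bar\rho'$ reducing the proof to a single implication, and then to run through the trichotomy in the definition of almost $(k)$-convergence, verifying that each of the three clauses transfers. For the symmetry, I note that the relation ``$\bar\rho'$ is a modification of $\bar\rho$ on $I_0$'' is itself symmetric: the replacement datum $f_0'=\bar\rho(c_i)|_{I_0}$ agrees with $\bar\rho'(c_i)$ on $\partial I_0\subset\mu$, so modifying $\bar\rho'$ on $I_0$ by $f_0'$ is admissible; and since the construction of Proposition \ref{pro:blowupaction} alters only one generator on a single iterate of $I_0$ (the shared $\sigma$-action on gaps dictating the rest), this second modification undoes the first and returns $\bar\rho$. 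Hence it suffices to assume almost $(k)$-convergence for $\bar\rho$ and deduce it for $\bar\rho'$.

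Given a sequence $(\bar\gamma_n)$, I would apply the trichotomy to $\bar\rho(\bar\gamma_n)$. A stationary subsequence transfers by $\mu$-faithfulness: $\bar\rho(\bar\gamma_n)=\bar\rho(\bar\gamma_m)$ forces $\bar\gamma_n=\bar\gamma_m$ in $\bar\Gamma$, hence $\bar\rho'(\bar\gamma_n)=\bar\rho'(\bar\gamma_m)$. A subsequence of the form $\bar\rho(\bar\gamma_n)=\bar\rho(\bar\gamma)^{p_n}\bar\rho(a)$ with $\bar\gamma$ stabilizing a gap $J$ of $\mu$ also transfers: $\mu$-faithfulness yields $\bar\gamma_n=\bar\gamma^{p_n}a$ in $\bar\Gamma$, so $\bar\rho'(\bar\gamma_n)=\bar\rho'(\bar\gamma)^{p_n}\bar\rho'(a)$; and since the gap set of $\mu$ and the induced action $\sigma$ on it are shared by $\bar\rho$ and $\bar\rho'$, $\bar\gamma$ still stabilizes $J$ under $\bar\rho'$.

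The heart of the argument is the convergence clause. Suppose $\bar\rho(\bar\gamma_n)|_K\to x_i^+$ uniformly on every compact $K\subset\,]x_i^-,x_{i+1}^-[$. Given such a $K$, I would choose $c,d\in\mu$ with $K\subset[c,d]\subset\,]x_i^-,x_{i+1}^-[$, which is possible since $x_i^\pm\in\mu$ and $\mu$ is closed. Because $c,d\in\mu$, every gap of $\mu$ meeting $[c,d]$ lies entirely inside it, and $\bar\rho'(\bar\gamma_n)(c)=\bar\rho(\bar\gamma_n)(c)\to x_i^+$, likewise at $d$. If $(c,d)$ is a single gap of $\mu$, then $\bar\rho'(\bar\gamma_n)$ and $\bar\rho(\bar\gamma_n)$ send it to the same image gap (both act by $\sigma(\bar\gamma_n)$), whose diameter tends to $0$ by the hypothesis on $\bar\rho$. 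Otherwise, I pick $y_0\in\mu\cap(c,d)$, which exists by perfectness of $\mu$; then $\bar\rho'(\bar\gamma_n)(y_0)=\bar\rho(\bar\gamma_n)(y_0)\to x_i^+$, and together with the convergence at the endpoints this pins down $\bar\rho'(\bar\gamma_n)([c,d])$ as the short arc joining the endpoint images, which has shrinking diameter. Either way, $\bar\rho'(\bar\gamma_n)(K)\subset\bar\rho'(\bar\gamma_n)([c,d])$ converges uniformly to $x_i^+$.

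I expect the main subtlety to lie precisely in this last circle-topological point: because $\bar\gamma_n$ may reverse orientation, a homeomorphism of $\mathbb S^1$ sends the arc $[c,d]$ to one of two circular arcs joining the images of $c$ and $d$, and the ``long'' one must be excluded. The argument above does so either via the shared $\sigma$-action on gaps when $(c,d)$ is itself a gap, or via an interior $\mu$-witness when it is not; everything else is bookkeeping using $\mu$-faithfulness and the fact that $\mu$, $\tau$, and $\sigma$ are common to $\bar\rho$ and $\bar\rho'$.
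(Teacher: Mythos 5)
Your handling of the first two branches of the trichotomy via $\mu$-faithfulness is correct and matches the paper, and your symmetry observation (that the modification relation is essentially its own inverse, so it suffices to prove one direction) is a valid reduction, though the paper simply argues both directions with the same computation.

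The gap is in the convergence clause, and it is fatal. You write: ``I would choose $c,d\in\mu$ with $K\subset[c,d]\subset\,]x_i^-,x_{i+1}^-[$, which is possible since $x_i^\pm\in\mu$ and $\mu$ is closed.'' This is false in exactly the case that matters. If $K=[a,b]$ has its left endpoint $a$ inside a gap of $\mu$ that abuts $x_i^-$ on the left (i.e.\ $a\in\,]x_i^-,\alpha'[$ with $]x_i^-,\alpha'[\,$ a gap), then $\mu\cap[x_i^-,a]=\{x_i^-\}$, so there is no $c\in\mu$ with $x_i^-<c\le a$. Closedness and perfectness of $\mu$ do not help: perfectness guarantees $x_i^-$ is a limit of points of $\mu$, but the approach can be entirely from the other side of $x_i^-$. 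This is not a corner case to be patched; it is the heart of the theorem. Indeed your argument, if it worked, would prove the false statement that clause~(3) for $\bar\rho$ always transfers to clause~(3) for $\bar\rho'$. A concrete obstruction: take $\bar\gamma_n=\bar\gamma^n$ where $\bar\gamma$ stabilizes the modified gap $I_0$. For $\bar\rho$ this sequence is in clause~(3), but a hyperbolic modification introduces extra fixed points of $\bar\rho'(\bar\gamma)$ inside $I_0$, so compacta straddling those new fixed points do \emph{not} collapse to a point under $\bar\rho'(\bar\gamma)^n$; the sequence survives for $\bar\rho'$ only because it lands in clause~(2).

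What the paper does in this situation is exactly what you would need: it shows that if $\bar\rho'(\bar\gamma_n)K$ fails to collapse, then (after passing to a subsequence and using that there are only finitely many gaps of length $\ge\epsilon$) the gaps $\sigma(\bar\gamma_n)(]x_0^-,\alpha'[)$ are eventually constant equal to some gap $I_\infty$, so $\bar\gamma_n\bar\gamma_1^{-1}$ all lie in the stabilizer of $I_\infty$. Since $I_\infty$ is in the $\sigma(\bar\Gamma)$-orbit of the modified gap $I_0$, this stabilizer is the cyclic group generated by the prescribed boundary element, and one lands in clause~(2) of the trichotomy. Your proof needs to replace the appeal to nonexistent $\mu$-points with this fallback to the power clause.
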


\begin{proof}
Let $(\bar{\gamma}_n)_{n \in \mathbb N}$ be a sequence in $\bar{\Gamma}$. Up to a subsequence, we can assume
that $(\bar{\rho}(\bar{\gamma}_n))_{n \in \mathbb N}$ is in one the cases prescribed by
almost (k)-convergence:

\textbf{Case 1: } the sequence $(\bar{\rho}(\bar{\gamma}_n))_{n \in \mathbb N}$ is stationary. Then, since $\bar{\rho}$ is $\mu$-faithful, it follows that
the sequence $(\bar{\gamma}_n)_{n \in \mathbb N}$ is stationary, and therefore, that $(\bar{\rho}'(\bar{\gamma}_n))_{n \in \mathbb N}$ is stationary.

\textbf{Case 2: } there are elements $a$, $\bar{\gamma}$ of $\bar{\Gamma}$ and a sequence $(p_n)_{n \in \mathbb N}$ of integers such that $\bar{\gamma}$ preserves a gap of $\mu$ and:
$$\forall n \in \mathbb N \;\; \bar{\rho}(\bar{\gamma}_n) = \bar{\rho}(\bar{\gamma})^{p_n} \bar{\rho}(a)$$
Then, as in Case $1$, since $\bar{\rho}$ is $\mu$-faithful, it follows
that $\bar{\gamma}_n =
\bar{\gamma}^{p_n} a$. Applying $\bar{\rho}'$ shows the same property holds for $\bar{\rho}'$.

\textbf{Case 3:} \  there exist two $\tau$-orbits
$\{ x_0^-, x^-_1=\tau(x_0^-), ... , x_{k-1}^-=\tau(x_{k-2}^-)\}$ and $\{ x_0^+, x^+_1=\tau(x_0^+), ... , x_{k-1}^+=\tau(x_{k-2}^+)\}$ such that,
for any compact subset $K$ of $]x^-_i, x^-_{i+1}[$ (with $0 \leq i < k$) the restriction of $\bar{\rho}(\bar{\gamma}_n)$ to $K$ converges uniformly to $x^+_i$.

In particular this implies that $x^+_i$ is in $\mu$.
In addition, considering the sequence $(\bar{\rho}(\bar{\gamma}_n)i)^{-1}$,
one sees that $x^-_i, x^-_{i+1}$ are also in $\mu$.
In this case, we will show that either the sequence $(\bar{\rho}'(\bar{\gamma}_n))_{n \in \mathbb N}$ satisfies the same property, or satisfies the property described in case $2$.

Assume that for some compact arc $K=[a, b]  \subset ]x^-_i, x^-_{i+1}[$, the iterates $\bar{\rho}'(\bar{\gamma}_n)K$ do not shrink to the point $x^+_i$. We can assume wlog that $i=0$.
Let $\alpha$ be the unique element of $\mu \cap [x_0^-, a]$ such that $]\alpha, a]$ is disjoint from $\mu$ (if $a$ lies in $\mu$, then we have $\alpha=a$).
Consider similarly the unique element $\beta$ of $\mu \cap [b, x_1^-]$ for which
$\mu \cap [b, \beta[=\emptyset$. If $\alpha \neq x_0^-$ and $\beta \neq x_1^-$, then the interval $[\alpha, \beta]$ shrinks under the action of $\bar{\rho}(\bar{\gamma}_n)$ to the point
$x^+_0$. But since $\alpha$ and $\beta$ are in $\mu$, and since $\bar{\rho}'$ differs from $\bar{\rho}$ only by its action inside gaps, the same property holds for the sequence $\bar{\rho}'(\bar{\gamma}_n)[\alpha, \beta]$.
It follows that $K \subset [\alpha, \beta]$ also shrinks to $x^+_0$ under the action of $\bar{\rho}'(\bar{\gamma}_n)$, contradiction.

Hence, we must have $\alpha = x^-_0$ (or $\beta=x^-_1$, but the treatment of this case is similar, and will not be considered here). In other words, $a$ lies in
a gap $]x_0^-, \alpha'[$. If we had also $\beta=x^-_1$, we would also conclude that there is a gap $]\beta', x_1^-[$, but then $x_1^-$ would be at the boundary of two different gaps:
$]\beta', x_1^-[$ and also $\tau(]x_0^-, \alpha'[)=]x_1^-, \tau(\alpha')[$. It is impossible since $\mu$ is perfect, therefore we have $\beta < x^-_1$. We conclude that the segment
$[\alpha', \beta]$ shrinks to $x^+_0$ under the action of $\bar{\rho}(\bar{\gamma}_n)$, and under the action of $\bar{\rho}'(\bar{\gamma}_n)$ as well since $\alpha'$ and $\beta$
both lie in $\mu$.

It follows that the iterates $\bar{\rho}'(\bar{\gamma}_n)[a, \alpha']$ do not shrink to a point, but to a segment $[x^+, x_0^+]$ with $x^+ < x_0^+$ (up to a subsequence).
Hence the iterates under $\bar{\rho}'(\bar{\gamma}_n)$ of the gap $]x_0^-, \alpha'[$ also do not converge to a point, but to a non-trivial segment. This limit segment $I_\infty$ must be a gap;
and since, for every $\epsilon$, there is only a finite number of gaps of length $\geq \epsilon$, it follows that, up to a subsequence, the gaps
$\bar{\rho}'(\bar{\gamma}_n)(]x_0^-, \alpha'[)=\sigma(\bar{\gamma}_n)(]x_0^-, \alpha'[)=\bar{\rho}(\bar{\gamma}_n)(]x_0^-, \alpha'[)$ are all equal to $I_\infty$.
Hence for every $n$, $\bar{\gamma}_n \bar{\gamma}_1^{-1}$ is in the stabilizer of $I_\infty$. Since we are in case $3$ for $\bar{\rho}(\bar{\gamma}_n)$, we know that the iterates $\bar{\rho}(\bar{\gamma}_n)[a, \alpha']$
shrink to $x^+_0$, hence the segment
$[a,\alpha']$ lies in the domain where we have modified the action. It follows that $I_\infty$ is in the $\sigma(\bar{\Gamma})$-orbit of the periodic gap $I_0$ where  the
action has been modified, and therefore, according to the hypothesis of Proposition \ref{pro:blowupaction}, the stabilizer of $I_\infty$ is cyclic, generated by some element $\bar{\gamma}$.

In summary, and denoting $\bar{\gamma}_1$ by $a$, we have proved that every $\bar{\gamma}_n$ is of the form $\bar{\gamma}^{p_n}a$. We are in case $2$, and the Theorem is proved.
\end{proof}

\begin{proposition}\label{prop:hypfixed}
Let $\bar{\rho}: \bar{\Gamma} \to \mbox{Homeo}(\mathbb S^1)$ be a $\mu$-faithful non elementary $(\mu, \tau, \sigma)$-representation satisfying the almost (k)-convergence property. Then, $\bar{\rho}$ has the
(k)-convergence property if and only if for every periodic gap $I$ we have:
\begin{itemize}
\item  the action of the stabilizer $\bar{\Gamma}_I$ of $I$ on $I$ is free,
\item for every non-trivial element $\bar{\gamma}$ of $\bar{\Gamma}_I$ the points in $\mathbb S^1$ fixed by $\bar{\gamma}$ are exactly
the iterates under $\tau$ of the extremities $\partial I$, and they are all hyperbolic fixed points.
\end{itemize}
\end{proposition}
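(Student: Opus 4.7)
\textit{Proof proposal.} The plan exploits the fact that almost $(k)$-convergence has one extra case (case 2 of the trichotomy) beyond the two cases of $(k)$-convergence, namely sequences of the form $\bar{\rho}(\bar{\gamma})^{p_n}\bar{\rho}(a)$ with $\bar{\gamma}$ preserving a periodic gap. So the question reduces exactly to when case 2 collapses into case 1 or case 3.

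For the forward direction, assume the $(k)$-convergence property holds, and let $\bar{\gamma}_0$ be a nontrivial element of $\bar{\Gamma}_I$ for a periodic gap $I$. Applying $(k)$-convergence to the sequence $(\bar{\gamma}_0^n)$: by $\mu$-faithfulness this sequence admits no stationary subsequence when $\bar{\gamma}_0$ has infinite order, so case 3 must hold on a subsequence. This places every fixed point of $\bar{\gamma}_0$ inside two $\tau$-orbits $\{x_i^\pm\} \subset \mu$ of total cardinality at most $2k$. Since $I \cap \mu = \emptyset$, $\bar{\gamma}_0$ has no fixed point in $I$, yielding (i). A self-homeomorphism of an open interval acting without fixed points preserves orientation (intermediate value theorem); since $\bar{\gamma}_0$ preserves $I$ setwise, this forces $\bar{\gamma}_0$ to be orientation-preserving on $\mathbb S^1$ and to fix $\partial I$ pointwise. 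The untwisted $\tau$-commutation relation then propagates fixed points along $\tau$-orbits, and a simple cardinality comparison identifies the $\tau$-orbit of $\partial I$ with the full fixed set $\{x_i^-\} \cup \{x_i^+\}$, consisting of exactly two $\tau$-orbits of size $k$ each (they cannot coincide, otherwise sources and sinks of the case-3 dynamics would agree, an impossibility). Hyperbolicity of each fixed point is then immediate from the uniform convergence statement in case 3. Finite-order elements of $\bar{\Gamma}_I$ are ruled out separately: the only nontrivial finite-order self-homeomorphism of an open interval either is the identity (impossible by $\mu$-faithfulness after conjugation) or has an interior fixed point, already contradicting (i).

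Conversely, assume (i)--(ii) hold at every periodic gap, take any sequence $(\bar{\gamma}_n)$ in $\bar{\Gamma}$, and apply almost $(k)$-convergence. Cases 1 and 3 of the trichotomy are already the two cases of $(k)$-convergence; I only need to handle case 2, where $\bar{\rho}(\bar{\gamma}_n) = \bar{\rho}(\bar{\gamma})^{p_n}\bar{\rho}(a)$ with $\bar{\gamma}$ preserving some periodic gap $I$. If $(p_n)$ is bounded, a further subsequence is constant, giving case 1. Otherwise $p_n \to \pm\infty$ along a subsequence, and hypothesis (ii) gives $\bar{\gamma}$ exactly $2k$ hyperbolic fixed points forming two $\tau$-orbits and no others on $\mathbb S^1$. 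A standard argument for hyperbolic north--south--type dynamics then produces uniform convergence of $\bar{\rho}(\bar{\gamma})^{p_n}$ on compact subsets of the complement of the repelling orbit to the attracting orbit, exactly matching case 3. Post-composing on the right by the fixed homeomorphism $\bar{\rho}(a)$ only moves the repelling orbit to its $\bar{\rho}(a)$-preimage, which is still a $\tau$-orbit by almost-commutation, preserving the case-3 structure. The main technical obstacles are (a) checking the cardinality count identifying the $\tau$-orbit of $\partial I$ with the full case-3 fixed set (which uses that sources and sinks in case 3 are forced to be disjoint), and (b) promoting the mere existence of $2k$ hyperbolic fixed points in hypothesis (ii) to the full north--south uniform convergence behavior required by case 3.
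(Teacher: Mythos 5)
Your proposal is correct, and the converse direction (conditions (i)--(ii) $\Rightarrow$ $(k)$-convergence) is essentially identical to the paper's argument: isolate case 2 of the trichotomy, use hypothesis (ii) to reduce $\bar\rho(\bar\gamma)$ to an element with exactly $2k$ hyperbolic fixed points arranged in two $\tau$-orbits, observe that such an element already has the $(k)$-convergence property along its powers, and then note that precomposing with the fixed $\bar\rho(a)$ merely transports the source orbit by $\bar\rho(a)^{-1}$, keeping the case-3 structure intact.

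The forward direction, however, is where you genuinely diverge from the paper. The paper disposes of this implication in one line: $(k)$-convergence implies topological conjugacy to a Fuchsian action in $\mathrm{PGL}_k(2,\mathbb R)$ by Theorem \ref{thm:kconvfuchsian}, and the two conditions are trivially true for Fuchsian actions. That argument is slick but leans on the full weight of the convergence group theorem embedded in Theorem \ref{thm:kconvfuchsian}. Your argument instead works directly from the definition of $(k)$-convergence: apply it to $(\bar\gamma_0^n)$, conclude the fixed set of $\bar\gamma_0$ lies inside $\{x_i^\pm\}\subset\mu$ (hence $\bar\gamma_0$ is fixed-point-free on the gap $I$, giving (i)), use the intermediate value theorem to force orientation-preservation and pointwise fixing of $\partial I$, propagate fixed points along $\tau$-orbits via almost-commutation, and then use the source/sink alternation of case 3 to see that the two $\tau$-orbits of $\partial I$ cannot coincide, hence account for the full $2k$-element fixed set. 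This is more self-contained and elementary, at the cost of two details you should spell out more carefully: (a) that $\bar\gamma_0$ actually fixes all $2k$ points $x_i^\pm$ (not merely that its fixed set is contained there) follows because the $\tau$-orbits of the consecutive fixed points $a,b=\partial I$ already yield $2k$ points once you argue they are disjoint; and (b) the disjointness itself, which you attribute to ``sources and sinks would agree,'' deserves the explicit observation that the extremities $a,b$ are adjacent fixed points on $\mathbb S^1$ and the case-3 dynamics alternates attracting/repelling around the circle, placing $a$ and $b$ in the two distinct $\tau$-orbits $\{x_i^-\}$, $\{x_i^+\}$.
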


\begin{proof}
One implication is clear: if $\bar{\rho}$ has the (k)-convergence property, then it  is topologically conjugate to a Fuchsian action, and the two conditions are necessarily satisfied.

Let now $\bar{\rho}: \bar{\Gamma} \to \mbox{Homeo}(\mathbb S^1)$ be a $\mu$-faithful non elementary $(\mu, \tau, \sigma)$-representation satisfying the almost (k)-convergence property and the two conditions stated in the proposition.
Notice that the fixed points of non-trivial $\bar{\rho}(\bar{\gamma})$ are in
$\mu$.
By the first hypothesis,
the action $\bar{\rho}(\bar{\Gamma})$ on $\mathbb S^1  -  \mu$ is free. Let $(\bar{\gamma}_n)_{n \in \mathbb N}$ be a sequence in $\bar{\Gamma}$. Up to a subsequence, $(\bar{\rho}(\bar{\gamma}_n))_{n \in \mathbb N}$ is in one the three cases imposed by
almost (k)-convergence. It satisfies the condition obeyed by sequences under the (k)-convergence property, except maybe if
we are in the case where $\bar{\rho}(\bar{\gamma}_n) = \bar{\rho}(\bar{\gamma})^{p_n} \bar{\rho}(a)$ for some sequence $(p_n)_{n \in \mathbb N}$ of integers and for two elements $a$, $\bar{\gamma}$ of $\bar{\Gamma}$,
where $\bar{\gamma}$ preserves a gap $I$ of $\mu$. Then since $\bar{\gamma} \in \bar{\Gamma}_I$, by hypothesis, it admits exactly
$2k$ fixed points: the points in the $\tau$-orbit of $\partial I$; and all these fixed points
are hyperbolic. It follows that $\bar{\rho}(\bar{\gamma})$ is topologically conjugate to a projective transformation, hence that $(\bar{\rho}(\bar{\gamma})^{p_n})_{n \in \mathbb N}$ satisfies the (k)-convergence property, and therefore the same is true for $(\bar{\gamma}_n)_{n \in \mathbb N}$.
\end{proof}

\begin{corollary}\label{cor:cor}
Any modification of a (k)-convergence group on a periodic gap $I$ by a homeomorphism $f: I \to I$ without
fixed points and such that $f$ is hyperbolic near $\partial I$ is a (k)-convergence group.  \hfill$\Box$
\end{corollary}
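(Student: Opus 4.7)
The plan is to combine Theorem \ref{thm:almostconvergencestable} with Proposition \ref{prop:hypfixed}. Any (k)-convergence group is a fortiori an almost (k)-convergence group, so by Theorem \ref{thm:almostconvergencestable} any modification $\bar{\rho}'$ of a (k)-convergence group $\bar{\rho}$ on a periodic gap $I_0$ is still an almost (k)-convergence representation. It therefore suffices to verify the two criteria of Proposition \ref{prop:hypfixed} for $\bar{\rho}'$. Since $\mu$, $\tau$ and $\sigma$ are unchanged by the modification, the periodic gaps of $\bar{\rho}'$ are exactly those of $\bar{\rho}$, and the verification naturally splits according to whether the gap under consideration lies in the $\bar{\Gamma}$-orbit of $I_0$ or not.

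For a periodic gap $J$ \emph{outside} the $\bar{\Gamma}$-orbit of $I_0$, the restrictions of $\bar{\rho}$ and $\bar{\rho}'$ to $\bar{\Gamma}_J$ and to $J$ coincide, so freeness of the action on $J$ is inherited directly from the fact that $\bar{\rho}$ already satisfies Proposition \ref{prop:hypfixed}. For the fixed-point condition on the whole of $\mathbb S^1$, I would argue that outside $\mathfrak J$ the two representations agree, while inside any gap $I' = \bar{\rho}(g)(I_0) \in \mathfrak J$ a new fixed point of some $\bar{\rho}'(\bar{\gamma})$ with $\bar{\gamma} \in \bar{\Gamma}_J$ would force $\bar{\gamma}$ to belong to the stabilizer $g \langle c_i \rangle g^{-1}$ of $I'$. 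But since $\bar{\rho}$ is (k)-convergent and non-elementary, distinct peripheral stabilizers $\bar{\Gamma}_J$ and $g \langle c_i \rangle g^{-1}$ have $\tau$-orbits of fixed points which are disjoint, so their intersection is trivial, contradicting $J$ being outside the $\bar{\Gamma}$-orbit of $I_0$. Hyperbolicity is detected on $\mu$, where $\bar{\rho}$ and $\bar{\rho}'$ agree, hence it is preserved.

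For $I_0$ itself (and each of its $\bar{\Gamma}$-translates), the stabilizer under $\bar{\rho}'$ is still $\langle c_i \rangle$, now acting on $I_0$ by $f_0$. Since $f_0$ has no fixed points inside $I_0$, H\"older's theorem identifies the action with a translation action, so every non-trivial iterate $f_0^n$ is also fixed-point free and the action on $I_0$ is free. The hyperbolicity of $f_0$ near $\partial I_0$ transfers to hyperbolic fixed points at the endpoints of $I_0$ for every non-trivial power of $c_i$, and the $(\mu,\tau,\sigma)$-structure propagates this to all $\tau$-iterates $\partial \tau^j(I_0)$. Absence of any further fixed points of $\bar{\rho}'(c_i^n)$ elsewhere in $\mathbb S^1$ is checked by the same trivial-intersection argument applied to $\langle c_i \rangle$. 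Both conditions of Proposition \ref{prop:hypfixed} are then satisfied for $\bar{\rho}'$.

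The main obstacle I foresee is the bookkeeping step inside $\mathfrak J$: ruling out spurious fixed points in gaps of $\mathfrak J$ other than the $\tau$-iterates of $I_0$. This is where the non-elementarity and $\mu$-faithfulness of $\bar{\rho}$ are genuinely used, through the trivial intersection of distinct peripheral stabilizers in the original (k)-convergence group, itself a consequence of Proposition \ref{prop:hypfixed} applied to $\bar{\rho}$.
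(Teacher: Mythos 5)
Your proof is correct and follows the route the paper intends: the corollary is a direct application of Theorem~\ref{thm:almostconvergencestable} (stability of the almost $(k)$-convergence property under modification) combined with the fixed-point/hyperbolicity criterion of Proposition~\ref{prop:hypfixed}, which is exactly your strategy. The one place where your phrasing is slightly loose is the claim that the ``$(\mu,\tau,\sigma)$-structure propagates'' hyperbolicity to the other $\tau$-iterates $\tau^j(\partial I_0)$: the actual reason is more concrete, namely that near each $\tau^j(a)$ the action on the $\mu$-side is literally unchanged (hence hyperbolic because $\bar\rho$ was $(k)$-convergence), while on the gap side it is either again unchanged (if $\tau^j(I_0)\notin\mathfrak J$) or is $\bar\Gamma$-conjugate to $f_0$ near $\partial I_0$ (if $\tau^j(I_0)\in\mathfrak J$), so hyperbolic either way. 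Likewise the appeal to H\"older's theorem for freeness is more than necessary: $f_0$ fixes both endpoints of $I_0$, hence is orientation preserving there, so no interior fixed point for $f_0$ already forces $f_0(x)>x$ (or $<x$) throughout the interior, and hence all nonzero powers are fixed-point free. These are cosmetic points; the logical skeleton is the right one.
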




\subsection{A dynamical characterization of (k)-convergence groups}


\begin{theorem}\label{thm:caracteriseconvergencegroup}
Let $\bar{\rho}: \bar{\Gamma} \to \mbox{Homeo}(\mathbb S^1)$ be a $(\mu, {\tau}, \sigma)$-representation. Let $\bar{\Gamma}_0$ be the index $2$ subgroup
made of elements preserving the orientation. Assume that
$\bar{\rho}$ satisfies the following properties:
\begin{enumerate}
  \item every gap of $\mu$ is periodic,
  \item for every $x$ in $\mathbb S^1$, the stabilizer of $x$ is trivial or cyclic,
  \item for every non-trivial element $\bar{\gamma}$ of $\bar{\Gamma}_0$ the fixed point set of $\bar{\rho}(\bar{\gamma})$ is either trivial, or one orbit of ${\tau}$, or the union of two orbits by $\tau$, one made of attractive fixed points and the other made of repellent fixed points,
  \item  if $(x_0, y_0)$ is a pair of fixed points of some element of $\bar{\Gamma}$ with $x_0 < y_0 < \tau(x_0)$,  then the $\bar{\rho}(\bar{\Gamma}_0)$-orbit by the diagonal action of $(x_0, y_0)$ in the space $U = \{ (x,y) \in \mu \times \mu \;|\; x < y < \tau(x)\}$ is closed and discrete.
\end{enumerate}
Then, $\bar{\rho}$ has the (k)-convergence property, and thus is topologically conjugate to a Fuchsian representation.
\end{theorem}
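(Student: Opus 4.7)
The plan is to verify the abstract $(k)$-convergence property from conditions (1)--(4) and then invoke Theorem \ref{thm:kconvfuchsian} to obtain the Fuchsian conjugacy. Since $(k)$-convergence for $\bar{\rho}$ is equivalent to $(k)$-convergence for its restriction to the index-two subgroup $\bar{\Gamma}_0$, I would first reduce to that case and work only with orientation-preserving elements, so as to apply the clean trichotomy of condition (3).

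Let $(\bar{\gamma}_n)_{n \in \mathbb N}$ be an arbitrary sequence of pairwise distinct elements of $\bar{\Gamma}_0$. By a standard diagonal argument over a countable dense subset of $\mathbb S^1$, I extract a subsequence on which $\bar{\rho}(\bar{\gamma}_n)$ and $\bar{\rho}(\bar{\gamma}_n^{-1})$ converge pointwise to weakly monotone maps $f,g:\mathbb S^1 \to \mathbb S^1$, possibly constant on arcs. The goal is to show that there exist two $\tau$-orbits $\{x^-_i\}$ and $\{x^+_i\}$ such that $f$ is constantly equal to $x^+_i$ on each arc $\,]x^-_i, x^-_{i+1}[\,$, with convergence uniform on compact subsets of these arcs; this is exactly the $(k)$-convergence conclusion.

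The core of the argument is a ping-pong mechanism converting any failure of this conclusion into a violation of (4). If, for instance, $f$ takes more than one value on a single arc, or the set of limit values does not form a single $\tau$-orbit, I choose two ``witness'' pairs of points and use the composite elements $\bar{\gamma}_m \bar{\gamma}_n^{-1}$ to build a family of elements of $\bar{\Gamma}_0$ whose dynamics is approximately source--sink with prescribed attractor and repeller. Condition (3) then forces each such composite to be hyperbolic, with a fixed-point pair $(x,y)$ in the space $U$ close to the prescribed pair. The resulting $\bar{\rho}(\bar{\Gamma}_0)$-orbit of fixed-point pairs in $U$ accumulates, directly contradicting condition (4). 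Condition (1), that every gap of $\mu$ is periodic, together with condition (2) on stabilizers, is then used to rule out limit values sitting interior to gaps and to exclude infinite-order parabolic pathologies, so that the limit points $x^-_i$, $x^+_i$ must lie in $\mu$ and assemble into $\tau$-orbits.

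The main technical obstacle will be the combinatorial bookkeeping in the ping-pong construction: separating the three genuine regimes of the subsequence (stationary, parabolic-type near a single $\tau$-orbit of neutral fixed points, and hyperbolic-type with distinct attractor and repeller $\tau$-orbits), extracting the correct pair of $\tau$-orbits in each case, and upgrading pointwise convergence to uniform convergence on compact subsets of each arc $\,]x^-_i, x^-_{i+1}[\,$. Once the $(k)$-convergence property is verified for every sequence, Theorem \ref{thm:kconvfuchsian} immediately furnishes the desired topological conjugacy to a Fuchsian representation $\bar{\rho}_0: \bar{\Gamma} \to \mathrm{PGL}_k(2,\mathbb R)$.
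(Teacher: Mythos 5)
Your strategy is genuinely different from the paper's. The paper collapses the gaps of $\mu$ to obtain a minimal action on a quotient circle, then collapses the $\tau$-orbits to reduce to the case $k=1$, and cites Theorem $2.6$ of \cite{Ba3} to conclude that the resulting action of $\bar{\Gamma}_0$ is a convergence group, hence Fuchsian; orientation-reversing elements are then handled by a Nielsen realization argument, and finally the original $(\mu,\tau,\sigma)$-representation is reassembled by opening cusps into funnels and lifting to a finite cover. You instead propose to verify the abstract $(k)$-convergence property directly for $\bar{\rho}|_{\bar{\Gamma}_0}$ from conditions (1)--(4), and then invoke Theorem \ref{thm:kconvfuchsian}. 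Both routes are structurally valid --- the paper delegates the hard sequence analysis to an existing result, while yours would make the theorem more self-contained --- but yours commits you to reproving the essential content of \cite[Thm~2.6]{Ba3} in the $k$-fold setting, and that is exactly where the sketch breaks down.

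The ping-pong step as you describe it does not deliver the contradiction. Condition (4) only constrains the $\bar{\Gamma}_0$-orbit of a \emph{single} fixed-point pair $(x_0,y_0) \in U$. Your composites $\bar{\gamma}_m\bar{\gamma}_n^{-1}$ indeed have approximate source--sink dynamics, and (3) forces each to be of hyperbolic type; but the fixed-point pairs of $\bar{\gamma}_m\bar{\gamma}_n^{-1}$ for varying $m,n$ need not lie in one $\bar{\Gamma}_0$-orbit, so their accumulation in $U$ is not a priori forbidden by (4). The usual repair is to conjugate a \emph{fixed} hyperbolic element $\alpha \in \bar{\Gamma}_0$ with fixed-point pair $(x_0,y_0) \in U$ by the sequence: the pairs $(\bar{\rho}(\bar{\gamma}_n)x_0, \bar{\rho}(\bar{\gamma}_n)y_0)$ do lie in a single $\bar{\Gamma}_0$-orbit and converge to $(f(x_0),f(y_0))$. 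But even so one must choose $(x_0,y_0)$ so that $f(x_0) < f(y_0) < \tau(f(x_0))$ holds strictly: when the pointwise limit $f$ collapses everything to a single $\tau$-orbit (the ``parabolic'' regime), $(f(x_0),f(y_0))$ lands on the boundary of $U$ and (4) gives nothing. Separating this parabolic regime from the hyperbolic one, and showing that the latter forces $f$ to be a step function whose jump points and values each form exactly one $\tau$-orbit, is precisely the nontrivial content the paper imports from \cite{Ba3}. As written your proposal does not confront this dichotomy, so the deduction of $(k)$-convergence is incomplete.
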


\begin{remark}
{\em Item $(4)$ is coherent: for every non-trivial
element $\bar{\gamma}_0$, the fixed points of $\bar{\rho}(\bar{\gamma}_0)$ all lie in $\mu$. Indeed, if not,
$\bar{\rho}(\bar{\gamma}_0)$ would admit a fixed point $x$ in a gap $]a, b[$. Then, $\bar{\rho}(\bar{\gamma}_0^2)$
would admit at least $1+2k$ fixed points: $x$, and the orbits of $a$ and $b$ by $\tau$. It contradicts item $(3)$.

Therefore, if $(x_0, y_0)$ is a pair of fixed points of $\bar{\gamma}_0$ with $x_0 < y_0 < \tau(x_0)$,
then $(x_0, y_0)$ lies indeed in $U = \{ (x,y) \in \mu \times \mu \;|\; x < y < \tau(x) \}$.}
Recall the definition of $x < y < \tau(x)$ in the beginning
of this section.
\end{remark}

\noindent\textit{Sketch of proof. } We skip the elementary case where $\bar{\Gamma}$ is a cyclic group. Let $\sim$ be the equivalence relation identifying all points in $\bar{I}$
for every gap $I$ of $\mu$. Then, the quotient space $\mathbb S = {\mathbb S^1}/\sim$ is homeomorphic to the circle,
and $\bar{\rho}$ induces an action of $\bar{\Gamma}$ on $\mathbb S$ by homeomorphisms.
After the collapsing the action is minimal.
 Moreover, $\tau$ induces
a homeomorphism $\breve{{\tau}}$ on $\mathbb S$ of order $k$, and the quotient $\breve{\mathbb S} = \mathbb S/\breve{\tau}$ is a circle too, on
which $\bar{\Gamma}$ acts naturally.  We focus on the induced action of $\bar{\Gamma}_0$. This action is minimal, and satisfies the same properties with $k=1$: \ 1) there is no gap, hence no wandering gap;
\ 2) the stabilizer of any point is trivial or cyclic;
\ 3) every element admits at most $2$ fixed points, and if it admits $2$, it is of hyperbolic type.
Finally, if $(\breve{x}_0, \breve{y}_0)$ with $\breve{x}_0 \neq \breve{y}_0$ is fixed by some element, then its $\bar{\Gamma}_0$-orbit is closed and discrete in $\breve{\mathbb S} \times \breve{\mathbb S}  -  \Delta$, where $\Delta$ is the diagonal (for details, see the proof of Theorem $2.6$ in \cite{Ba3}).

Then, Theorem $2.6$ in \cite{Ba3} implies that this quotient action has the convergence property,
hence is Fuchsian. 
Since $\bar{\Gamma}_0$ is finitely generated and the limit set is the entire circle, the quotient hyperbolic surface $\bar{\Gamma}_0\backslash\mathbb H^2$ has finite volume.
Let now $\bar{\gamma}_0$ be any element of $\bar{\Gamma}  -  \bar{\Gamma}_0$. Then $\bar{\gamma}_0$ induces a involution on $\bar{\Gamma}_0\backslash\mathbb H^2$. It is a easy case for the Nielsen realization problem
\cite{Ke}: there exist a hyperbolic metric on
the surface so that the involution is an isometry. It follows that the whole action of $\bar{\Gamma}$
on $\breve{\mathbb S}$ is Fuchsian.

The initial action $\bar{\rho}$ is obtained by opening some cusps in the associated hyperbolic orbifold, replacing them by funnels, and taking a finite covering. The Theorem follows (cf. in particular Corollary \ref{cor:cor}). \hfill$\Box$

\begin{remark}{\em
Some of the results in this section (in particular
Theorem \ref{thm:kconvfuchsian}, Proposition \ref{prop:hypfixed}
and Theorem \ref{thm:caracteriseconvergencegroup}) are related
to the results in Mann's article \cite{Mann}, and possibly
could be implied by Mann's results or techniques if interpreted
correctly. Mann's powerful techniques involve carefully
analysing rotation numbers of homeomorphisms of the circle
in the corresponding group representations.
}
\end{remark}

\section{Blowing up pieces of geodesic flows}
\label{sec:blowing}

In this section, we usually let $\Gamma$ be the fundamental group of a Seifert fibered space $P$
(or $P_0$)  with boundary,
and $\bar{\Gamma}$ is the orbifold fundamental group of the base space
$B$ of $P$.
In addition
$\rho: \Gamma \to$ Homeo$(\mathbb R)$ is a representation, such that the image $\rho(h)$ of
the element corresponding to regular fibers is the translation by $+1$. It will always be a lift
of a $(\mu, \tau, \sigma)$-representation $\bar{\rho}: \bar{\Gamma} \to$ Homeo$(\mathbb S^1)$,
satisfying the almost (k)-convergence property.
Since $\bar{\Gamma}$ is the orbifold fundamental group of the base space $B$ of $P$, then
$\bar{\Gamma} = \Gamma / <h>$.
We denote by $\tilde{\mu}$ the lift
of $\mu$ (it is therefore the minimal invariant closed subset for $\rho(\Gamma)$) and by
$\tilde{\tau}: \tilde{\mu} \to \tilde{\mu}$ the lift of $\tau$: it is a homeomorphism, almost commuting
with the restriction of $\rho(\Gamma)$,
and satisfying: $\tilde{\tau}^k = \rho(h) |_{\tilde{\mu}}$.

We will denote by $\tilde{\tau}_0$ any homeomorphism from $\mathbb R$ onto $\mathbb R$, coinciding
with $\tilde{\tau}$
on $\tilde{\mu}$ and such that $\tilde{\tau}_0^k=\rho(h)$ (but not necessarily almost commuting with $\rho(\Gamma)$ outside
$\tilde{\mu}$).
Up to a conjugation in Homeo($\mathbb S^1$) one may assume that
$\tilde{\tau}_0$ is the translation by $+1/k$.

We will modify the representation along periodic gaps (we have already observed in section \ref{sub:orbifoldcase}
that all the modifications of $\bar{\rho}$ lift to representations of the same group $\Gamma$). The goal
is to construct two foliations on the Seifert fibered space $P$, transverse
to each other, that can be considered
as blow ups of the stable and unstable foliations of the geodesic flow for some hyperbolic metric on
the base orbifold $B$.

For our blow up construction it will be helpful to present the geodesic flow of
hyperbolic surfaces in
the following manner.

\vskip .1in
\subsection{An alternative construction of geodesic flows}\label{sub:geodflow}
\label{alter}

We start with a convex cocompact subgroup $\bar{\Gamma}$ of $\mbox{PGL}_k(2, \mathbb R)$.
Since we will have to consider
the non-orientable case, we carefully define this notion which involves
some subtleties.
First: The group ${\mbox{PGL}}(2, \mathbb R)$ is the isometry group of the hyperbolic plane.
This is not the usual approach, so
let us point out for the reader's convenience that the M\"obius transformation
of the semi-plane model of the hyperbolic plane defined by an element $\left(
    \begin{array}{cc}
      a & b \\
      c & d \\
    \end{array}
  \right)$ with negative determinant is $z \mapsto \frac{a\bar{z}+b}{c\bar{z}+d}$.
It is a symmetry through a geodesic and it reverses orientation
in $\mathbb H^2$.

\begin{define}\label{defi:convexcocompact}
A subgroup $\bar{\Gamma}$ of $\mbox{PGL}_k(2, \mathbb R)$ is \textit{convex cocompact}
if its projection $\check{\Gamma}$
in $\mbox{PGL}(2, \mathbb R)$ is discrete, finitely generated,
admits no parabolic element, and such that elements of $\check{\Gamma}$
reversing the orientation have infinite order.
\end{define}

\vskip .1in

\begin{remark}{\em
The last condition may look unnecessary, even a bit unusual.
The reason for this condition is that we want the action of $\check{\Gamma}_0$
(the subgroup of index two of orientation preserving elements)
on T$^1\mathbb H^2$ to be free, so that the unit tangent bundle $\bar{\Gamma}\backslash T^1\mathbb H^2$ is a manifold.
An elementary example of a discrete subgroup of $\mbox{PGL}(2, \mathbb R)$ satisfying all the hypotheses of
the convex cocompact definition except the last one is the quotient
of a hyperbolic genus $2$ surface $\Sigma$ by an involution on a ``middle'' simple closed geodesic of $\Sigma$.
}
\end{remark}

Let $\Gamma$ be the preimage of $\bar{\Gamma}$ in $\widetilde{\mbox{PGL}}(2, \mathbb R)$. Then, the inclusion
$\rho_0: \Gamma \hookrightarrow \widetilde{\mbox{PGL}}(2, \mathbb R) \subset$ Homeo$(\mathbb R)$ is a
$(\tilde{\mu}, \tilde{\tau}_0, \tilde{\sigma})$-representation. We will also consider the preimage $\ddot{\Gamma}$ in $\widetilde{\mbox{PGL}}(2, \mathbb R)$ of $\check{\Gamma}:$
$\Gamma$ is a finite index subgroup of $\ddot{\Gamma}.$

Here, $\tilde{\tau}_0$ is a generator of the pseudo center of $\widetilde{\mbox{PGL}}(2, \mathbb R)$,
in the sense of almost commutation defined in section 2.
Therefore $\tilde{\tau}$ is defined on the
entire universal covering $\widetilde{\mathbb R\mathbb P}^1$.

\vskip .1in
There are natural projections
$\widetilde{\mbox{PGL}}(2,\mathbb R) \rightarrow
{\mbox{PGL}}_k(2,\mathbb R) \rightarrow {\mbox{PGL}}(2,\mathbb R)$.
The first is infinite to one and the second is finite to one.
We denote by $\tilde{\pi}_k$ the first projection and by $\tilde{\pi}$ the composition
of the two projections.
For convenience of the reader in future referencing we recall

$$\bar{\Gamma} \ \leq \ {\mbox{PGL}}_k(2,\mathbb R), \ \ \ \ \ \
\check{\Gamma} = \pi_k(\bar{\Gamma}) \ \leq \ {\mbox{PGL}}(2,\mathbb R),$$

$$\Gamma  = \tilde{\pi}^{-1}(\bar{\Gamma}) \ \leq \ \widetilde{\mbox{PGL}}(2,\mathbb R), \ \ \ \ \ \
\ddot{\Gamma} = \tilde{\pi}_k^{-1}(\check{\Gamma}) \ \leq \ \widetilde{\mbox{PGL}}(2,\mathbb R).$$

\noindent
Here $\leq$ means being a subgroup.

\vskip .1in
The data of an oriented geodesic in $\mathbb H^2$ is equivalent to the data of a pair of distinct points $(x,y)$ in
$\partial\mathbb H^2$. Hence the space of oriented geodesics is ${\mathbb R\mathbb P}^1 \times \mathbb R\mathbb P^1  -  \Delta$, where $\Delta$
is the diagonal. The action of ${\mbox{PGL}}(2, \mathbb R)$ on ${\mathbb R\mathbb P}^1 \times \mathbb R\mathbb P^1  -  \Delta$ corresponding
to the action on geodesics is simply the diagonal action.

Let $M_0$ be the unit tangent bundle of
the hyperbolic orbifold $_{\check{\Gamma}\backslash}\mathbb H^2$.
Denote by $\Psi_0^t$ the geodesic flow on $M_0$.
Geodesics of $\mathbb H^2$ are simply projections of
orbits of the lift of $\Psi_0^t$ to $T^1\mathbb H^2$.
Let $\Psi_0$ be the one-dimensional foliation of $M_0$ induced by
$\Psi_0^t$.

The orbit space of the lift $\tilde{\Psi}_0$ of the geodesic flow
in the universal covering $\widetilde{M}_0$ of $M_0$
is the universal covering of ${\mathbb R\mathbb P}^1 \times \mathbb R\mathbb P^1  -  \Delta$.
This is identified with
the open domain $\Omega_0$ in $\widetilde{\mathbb R\mathbb P}^1 \times \widetilde{\mathbb R\mathbb P}^1$ between the graphs of the identity map id and of the map $\tilde{\tau}_0$,
that is,

$$\Omega_0 \ = \ \{ (x,y) \in
\widetilde{\mathbb R\mathbb P}^1 \times
\widetilde{\mathbb R\mathbb P}^1, \ \ x < y < \tilde{\tau}_0(x) \}$$

\noindent
The delicate
point is to understand how the action of ${\mbox{PGL}}(2, \mathbb R)$ on ${\mathbb R\mathbb P}^1 \times \mathbb R\mathbb P^1  -  \Delta$ lifts to an action of $\widetilde{\mbox{PGL}}(2, \mathbb R)$ on $\Omega_0$: it is \underline{not} the
restriction of the diagonal action since this action does not preserve $\Omega_0$: the diagonal action of elements of
$\widetilde{\mbox{PGL}}(2, \mathbb R)  -  \widetilde{\mbox{PSL}}(2, \mathbb R)$ permutes $\Omega_0$
and the domain $\{ (x, y) \in \widetilde{\mathbb R\mathbb P}^1 \times \widetilde{\mathbb R\mathbb P}^1 \;|\;
\tilde{\tau}_0^{-1}(x) < y < x\}$. The lifted action is actually the following:

-- if $\gamma$ is an element of $\widetilde{\mbox{PSL}}(2, \mathbb R)$, define $\gamma.(x,y) = (\gamma x, \gamma y)$;

-- if $\gamma \in \widetilde{\mbox{PGL}}(2, \mathbb R)$ reverses the orientation, define $\gamma.(x,y) = (\gamma x, \tilde{\tau}_0(\gamma y))$.

One easily checks that this is an action, and a lift of the diagonal action on ${\mathbb R\mathbb P}^1 \times \mathbb R\mathbb P^1  -  \Delta$. Moreover this action preserves the domain $\Omega_0$.

\vskip .1in
A key fact for us is that one can reconstruct the geodesic flow
$\Psi^t_0$ on $M_0$ from the data of the action of $\ddot{\Gamma}$ on $\Omega_0$:
Let PT$\Omega_0$ be the projectivized tangent bundle of $\Omega_0$.
The action of
$\widetilde{\mbox{PGL}}(2, \mathbb R)$ on $\mathbb R$ is differentiable and hence so is the
action on $\Omega_0$. Therefore
$\widetilde{\mbox{PGL}}(2, \mathbb R)$ acts on PT$\Omega_0$.
Let $\partial_x$ and $\partial_y$ be the horizontal and vertical vector fields on $\Omega_0$. The action of
$\widetilde{\mbox{PGL}}(2, \mathbb R)$ preserves the lines defined by these vector fields, hence restricts
to a natural action on PT$\Omega_0$ with the vertical and horizontal directions removed.
We focus on this last action. It admits
two orbits: one orbit is the open domain

$$\widetilde{M}(\Omega_0) \ \ = \ \  \{ (x, y, \xi), \ \
\xi = a\partial_x + b\partial_y, \ {\rm with} \ ab > 0 \}$$

\noindent
The other orbit is the collection of $(x,y,\xi)$ for
which $ab$ is negative.
The action preserves each of these because the action on $\Omega_0$ preserves orientation.
Below we show that these are actually orbits, that is, the respective actions are transitive.
Observe that $\widetilde{M}(\Omega_0)$ can also be naturally parametrized by $(x,y, m)$, $(x,y) \in \Omega_0$, and
where $m$ is the real number $\log(b/a)$, where as above the point is given by
$(x,y, a\partial_x + b\partial_y)$.

The action of $\widetilde{\mbox{PGL}}(2, \mathbb R)$ on $\widetilde{M}(\Omega_0)$ is not free:
as an isometry of $\mathbb H^2$, a reflection
$R$ along a geodesic with extremities (the projections in $\mathbb S^1$ of) $x$ and $y$ fixes every
element of the form $(x, y, \xi)$:
For simplicity
we can think of $R$ acting on $\mathbb R$ as $z \rightarrow - z$ and $x = y = 0$.
Hence when acting on T$\Omega_0$,
$R$ will preserve the
lines in the tangent bundle T$\Omega_0$ at the point $(x,y)$.
However the stabilizer of this action at any point of $\widetilde{M}(\Omega_0)$ over $(x,y)$ is precisely
the group of order $2$ generated by $R$. Therefore, $\widetilde{M}(\Omega_0)$ is naturally identified with
the right quotient $\widetilde{\mbox{PGL}}(2, \mathbb R)/R$, where the action of $\widetilde{\mbox{PGL}}(2, \mathbb R)$
on $\widetilde{M}(\Omega_0)$ corresponds to the (left) action of $\widetilde{\mbox{PGL}}(2, \mathbb R)$ on $\widetilde{\mbox{PGL}}(2, \mathbb R)/R$.

On the other hand, the action of $\widetilde{\mbox{PGL}}(2, \mathbb R)$ on the universal covering $\widetilde{T^{1}\mathbb H}^2$
is transitive, and the stabilizer of any unit vector tangent to the geodesic $(x,y)$ is the group of order $2$ generated by $R$:
it follows that the quotient $\widetilde{\mbox{PGL}}(2, \mathbb R)/R$ can also be identified with $\widetilde{T^{1}\mathbb H}^2$.

Therefore, $\widetilde{M}(\Omega_0)$ is naturally identified with $\widetilde{T^{1}\mathbb H}^2$; orbits of the geodesic flow
in $\widetilde{T^{1}\mathbb H}^2$ correspond to the fibers of the projection of $\widetilde{M}(\Omega_0) \subset$ PT$\Omega_0$ over $\Omega_0$, and
this identification is equivariant with respect to the actions of $\widetilde{\mbox{PGL}}(2, \mathbb R)$. In particular,
the induced action of $\ddot{\Gamma}$ on $\widetilde{M}(\Omega_0)$ is properly discontinuous. Furthermore,
by our requirement in the definition of convex cocompact subgroups
(see Definition \ref{defi:convexcocompact}), this action is free.

\vskip .1in
The quotient
$M_{\ddot{\Gamma}}(\Omega_0) =  {\ddot{\Gamma}\backslash}\widetilde{M}(\Omega_0)$
is a $3$-manifold equipped with an one-dimensional foliation
(by an abuse of notation still denoted by)
$\Psi_0$. This foliation
is the projection by $\ddot{\Gamma}$ of  the restriction to $\widetilde{M}(\Omega_0)$
of the foliation of PT$\Omega_0$ induced by the fibration over $\Omega_0.$
Furthermore, this foliation is oriented: the orientation
in the leaf defined by $(x, y) \in \Omega_0$ is the one for which the logarithmic slope $m$ is increasing.

In summary, the oriented foliations $(M_0,\Psi_0)$
and $(M_{\ddot{\Gamma}}(\Omega_0), \Psi_0)$
have the same associated orbit space $\Omega_0$ in their universal covers, with the same group action (on
$\Omega_0$) of the fundamental group $\ddot{\Gamma}$. The identifications between $\widetilde{M}(\Omega_0)$, $\widetilde{T^{1}\mathbb H}^2$ and
$\widetilde{\mbox{PGL}}(2, \mathbb R)/R$ produce a topological conjugacy
between $(M_0, \Psi_0)$ and $(M_{\ddot{\Gamma}}(\Omega_0), \Psi_0)$.

Moreover, $M_{\ddot{\Gamma}}(\Omega_0)$ has two invariant foliations: one
provided by the vertical foliation $dx = 0$ of $\Omega_0$, and denoted by
$\Lambda^s(\Psi_0)$; the other by the horizontal foliation $dy=0$ and denoted
by $\Lambda^u(\Psi_0)$. It is easy to check that, as suggested by the
notations, the first one is the weak stable foliation of the geodesic flow
$(M_0, \Psi_0)$,
the other is the unstable foliation of this flow.

In a more general way, finite coverings of the geodesic flow of hyperbolic surfaces
are all obtained as quotients
$M_\Gamma(\Omega_0) := {\Gamma\backslash}\widetilde{M}(\Omega_0)$ where $\Gamma$ is any convex cocompact subgroup
of $\widetilde{\mbox{PGL}}(2, \mathbb R)$ (we mean not necessarily the entire preimage $\ddot{\Gamma}$ in
$\widetilde{\mbox{PGL}}(2, \mathbb R)$ of its projection in PGL$(2, \mathbb R)$).

\begin{remark}{}
{\em There is another well known model of $T^1 \mathbb H^2$ as the triples $(x,y,z)$
with the counterclockwise order. The identification is via the {\underline {orientation}}
preserving isometries of $\mathbb H^2$. In our situation we will have to consider
isometries that are orientation reversing $-$ for example that would come from
an orientation reversing  geodesic in a non orientable hyperbolic surface.
For this reason we were not able to use this model in our work and decided to
use the projectivized model.}
\end{remark}

\subsection{Cutting a compact Seifert piece in the model geodesic flow}\label{sub:cutseifertgeod}
In the previous section, we have shown how to identify finite coverings $(M_0,
\Psi_0)$ of the geodesic flow of a convex cocompact orbifold
$O := {\check{\Gamma}\backslash}\mathbb H^2$
with one of our models $M_\Gamma(\Omega_0)$. The orbifold $O$ has a compact convex core $K$ delimited in $O$ by a finite number of closed simple geodesics $c_1$, ... , $c_k$. More precisely,
$O$ is the union of $K$ and a finite number of flaring
annuli, one for each $c_s$. Observe that singularities of the orbifold $O$ are all
contained in $K$, their preimage in $M_\Gamma(\Omega_0)$ are precisely the singular fibers of the Seifert fibered
structure.
For each $c_s$, unit tangent vectors based at points of $c_s$ form an embedded torus, which lifts to
an embedded torus $T_s$ in
$(M_0, \Psi_0) \approx (M_\Gamma(\Omega_0), \Psi_0)$.
These tori are quasi-tranverse: outside a finite number of periodic orbits,
they are transverse to the flow $\Psi_0$
(or to $\Psi_0$ in $M_\Gamma(\Omega_0)$).
More precisely, the region in the torus between two successive periodic orbits is an elementary Birkhoff annulus.

\vskip .08in
\noindent
{\bf {The submanifold $P_0$:}} \
The union of the tori above is the boundary of a compact submanifold $P_0$
of $M_0$ (equivalently $M_{\Gamma}(\Omega_0)$)  that is a finite covering of the unit tangent bundle of the convex core
$K$.
In particular $P_0$ is a Seifert fibered space.

\begin{define}
The manifold $P_0$ with boundary, equipped with the restriction of the
oriented foliation $\Psi_0$ and the restrictions $\hat{\Lambda}^s_0$, $\hat{\Lambda}^u_0$ of the stable, unstable foliations,
is called a \textbf{piece of geodesic flow.}
\end{define}

\begin{figure}
 \includegraphics[scale=1]{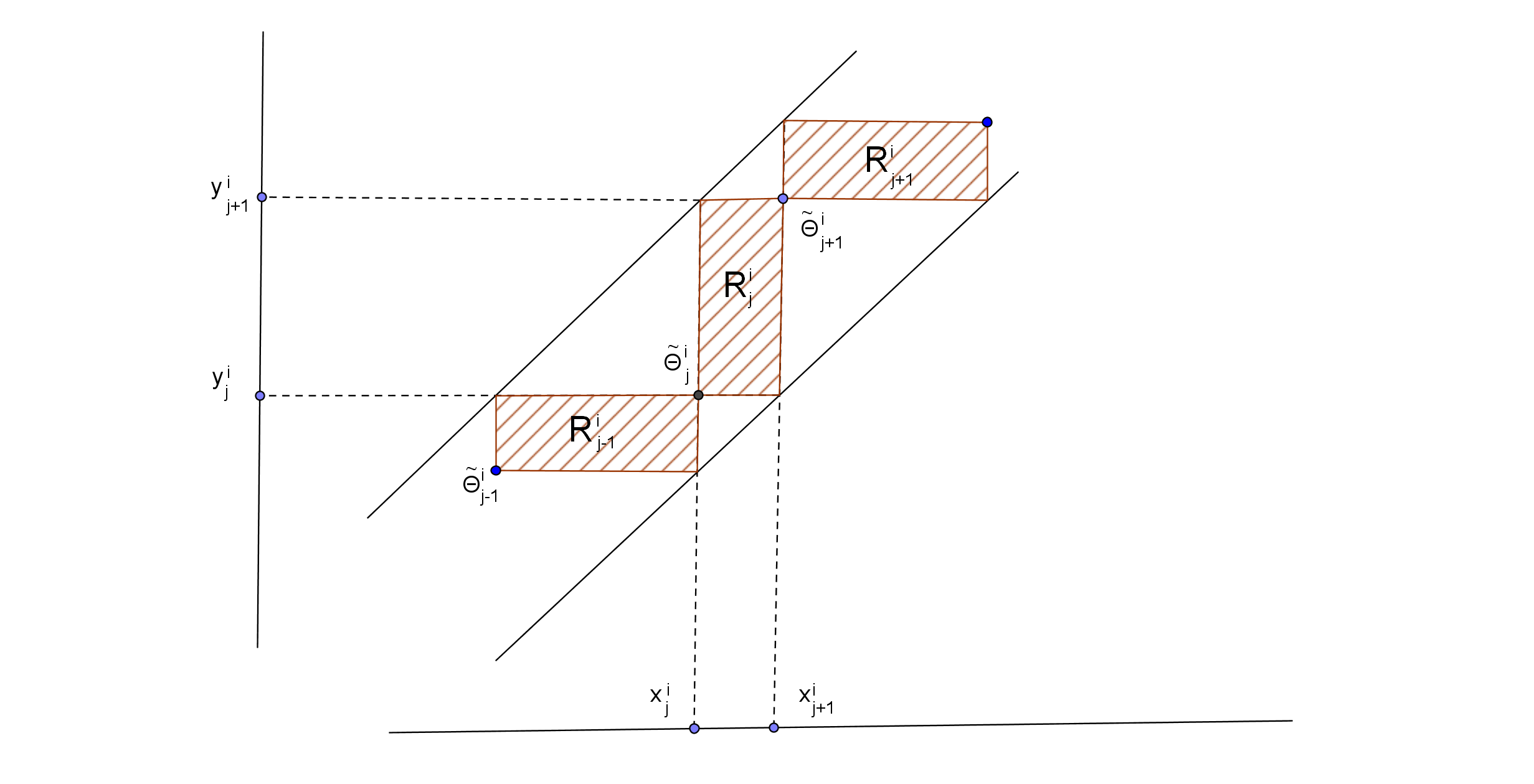}
   \caption{Chain of lozenges for the geodesic flow.}\label{fig:chainlozenge}
\end{figure}

Every torus $T_s$ as above lifts in the universal covering $\widetilde{M}_0$ to infinitely many
properly embedded planes. We will denote the collection of all such lifts for the union of the
$T_s$ as $\{ \widetilde T_i \}$.
Each $\widetilde{T}_i$ is invariant by a maximal free abelian subgroup $H_i$ of rank $2$ of $\Gamma$.
The union of these planes bounds a unique
 region $\widetilde{P}_0$ in $\widetilde{M}(\Omega_0)$. For each $\widetilde{T}_i$, the complement $\widetilde{M}(\Omega_0)  -  \widetilde{T}_i$ has two connected components. One of these connected components contains the interior of $\widetilde{P}_0$ and all the others lifts of quasi-transverse tori: we denote it by $\widetilde{T}_i^+$ . We denote by $\widetilde{T}_i^-$ the other connected component
of $\widetilde M(\Omega_0)  -  \widetilde T_i$.
Observe that the interior of $\widetilde{P}_0$ is the intersection of all $\widetilde{T}^+_i$.

One can describe precisely what is the projection in the orbit space $\Omega_0$ of each of these regions (cf. \cite[Sect. $3.1$]{Ba3}): the projection of $\widetilde{T}_i$ is a $H_i$-invariant string of lozenges $(R^i_j)_{j \in \mathbb Z}$
and their corners, where each $R^i_j$ has two corners $\theta^i_{j}$, $\theta_{j+1}^i$.
For each $i$ the $\widetilde T_i$ produces $\mathbb Z$ many corners $\theta^i_j$.
These corners  make up all orbits tangent to $\widetilde{T}_i$, each preserved by a cyclic subgroup of $H_i$. See figure \ref{fig:chainlozenge}.

More precisely, if $x^i_j$, $y^i_j$ are the coordinates of $\theta^i_j$ in $\Omega_0 \subset \widetilde{\mathbb R\mathbb P}^1 \times \widetilde{\mathbb R\mathbb P}^1$
we have:
$$R^i_j := \{ (x,y) \in \widetilde{\mathbb R\mathbb P}^1 \times \widetilde{\mathbb R\mathbb P}^1 \; \mid \; x^i_j < x < x^i_{j+1}, \; y_j^i < y < y^i_{j+1} \}$$
so $R^i_j$ is a lozenge.
In addition
each lozenge $R^i_j$ is the projection of the lift of an elementary Birkhoff annulus $A^i_j$ in some
torus $T \subset \partial P_0$. The interior of the Birkhoff annulus is
transverse to the flow. There are two cases: $A^i_j$ is either an entrance region into $P_0$
or an exit region for $\Psi_0$. Equivalently the vector field generating
$\widetilde{\Psi}_0$ might be pointing in the direction of $\widetilde{T}_i^+$ or in the direction of $\widetilde{T}_i^-$.
On the other hand, one and only one of the two sides $]x^i_j,  x^i_{j+1}[$ and $]y^i_j,  y^i_{j+1}[$ of
the rectangle $R^i_j$ is a gap of the minimal set $\tilde{\mu}$. The following characterization
will be crucial:

\begin{itemize}
\item
If the gap of $\tilde{\mu}$  is the horizontal side
$]x^i_j,  x^i_{j+1}[$, then the corresponding Birkhoff annulus $A^i_j$  is an exit annulus.

\item
If the gap is  $]y^i_j,  y^i_{j+1}[$, the annulus $A^i_j$ is an entrance annulus.

\end{itemize}

\noindent
Here $\mu$ is the minimal set of the associated action of $\check{\Gamma}$ on $\mathbb S^1 \cong \mathbb{RP}^1$.
In the case of pieces of geodesic flows this characterization is easy to see.
For an explicit proof that applies to a more general setting, see
\cite[Corollaire $3.15$]{Ba3}.

We can say more: assume that we are in the case of entering annulus, i.e. the case where $]y^i_j,  y^i_{j+1}[$ is a gap of $\tilde{\mu}$. Then, the following ``triangle''

$$\Delta(\theta_j^i) :=  \{ (x,y) \in \widetilde{\mathbb R\mathbb P}^1 \times \widetilde{\mathbb R\mathbb P}^1 \; \mid \; \tilde{\tau}^{-1}_0(x^i_{j+1}) < x < x^i_j , \; \ y_j^i < y < y^i_{j+1} \}$$

\noindent
has the following geometric interpretation: it is the projection of the orbits in $\widetilde{T}_i^-$ trapped between the part of the stable and unstable leaves of $\theta^i_j$ contained in $\widetilde{T}_i^-$. In the case
where $A^i_j$ is an exit annulus, the triangle in $\Omega_0$ satisfying this property is:
$$\Delta(\theta_j^i) :=  \{ (x,y) \in \widetilde{\mathbb R\mathbb P}^1 \times \widetilde{\mathbb R\mathbb P}^1 \; \mid \; x^i_j < x < x^i_{j+1} , \; \ \tilde{\tau}^{-1}_0(y^i_{j+1}) < y < y^i_{j} \}$$

\noindent
These formulas define $\Delta(\theta^i_j)$ in the case of entering and exiting annuli.

\begin{figure}
  \includegraphics[scale=1]{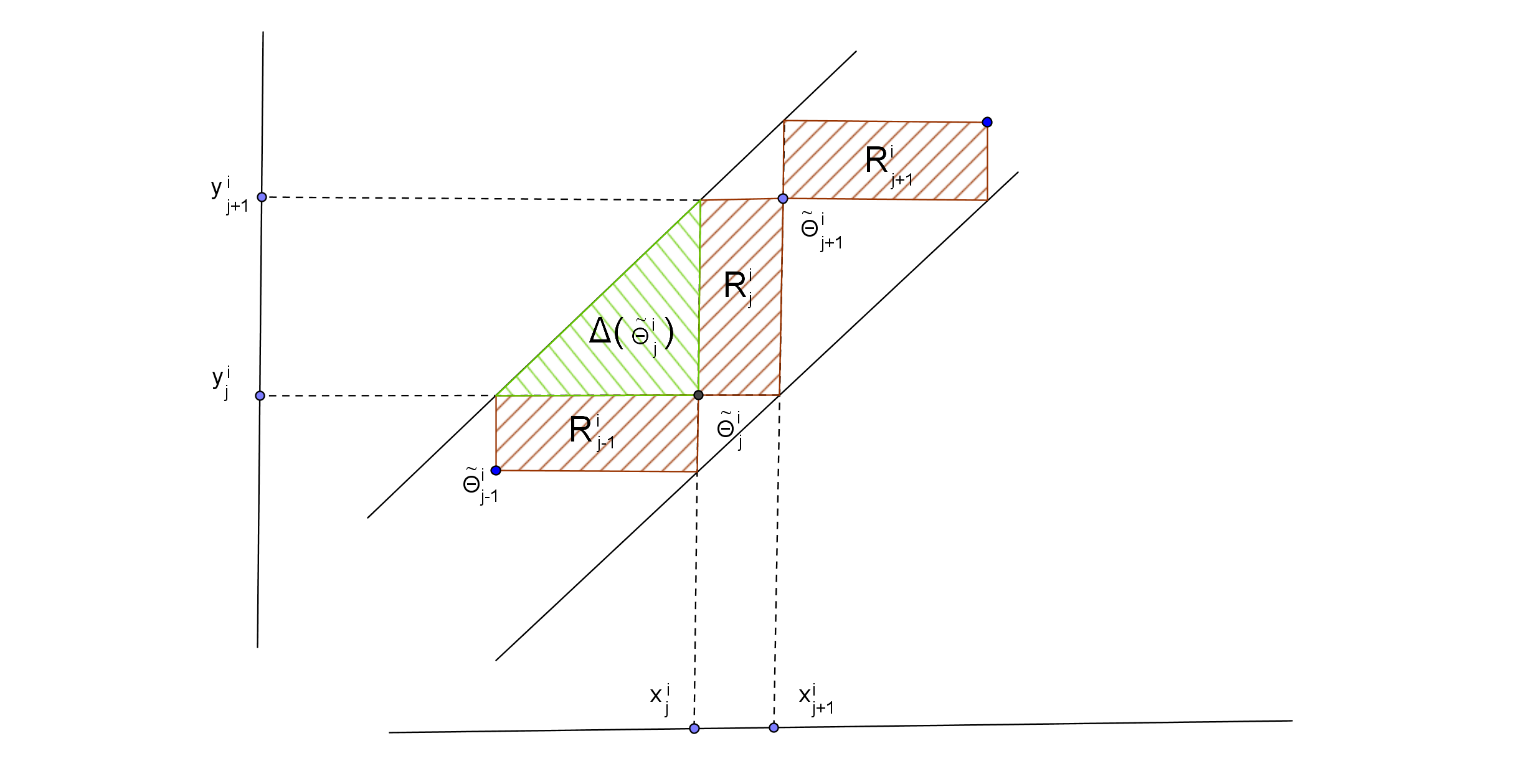}\\
  \caption{Triangle for a tangent periodic orbit in the case where $]y^i_j,  y^i_{j+1}[$ is a gap of $\tilde{\mu}$, hence
  in the case of entering annulus.}\label{fig:triangle}
\end{figure}

The non-wandering set $\mathcal M_0$ of $\Psi_0$ is precisely the projection in $M_0$ of the union $\widetilde{\mathcal M}_0$ of the orbits whose projection in $\Omega_0$ lies in $\tilde{\mu} \times \tilde{\mu}$. It is also the closure of
the union of the periodic orbits of $\Psi_0$.
Observe that the lift $\widetilde{\mathcal M}_0$ to the universal
cover contains the tangent periodic orbits $\theta_j^i$ for every $i$, $j$. In addition
this lift is contained in $\widetilde{P}_0$.

Finally, the projection of $\widetilde{P}_0$ (i.e. the set of orbits intersecting $\widetilde{P}_0$) is the complement in $\Omega_0$ of the union of the closure of the triangles $\Delta(\theta_j^i)$.
Here $\theta^i_j$ runs over all the lifted tori $\widetilde T_i$ and all lifts of periodic orbits tangent to these
$\widetilde T_i$.

\subsection{Construction of flows on $3$-manifolds via group actions on the line}

\subsubsection{Hyperbolic blow up}
\label{blowup}

Recall that $\Gamma = \pi_1(P_0)$ where $P_0$ is a Seifert fibered space
and $\bar{\Gamma} = \Gamma/<h>$ where $h$ represents a regular fiber of the Seifert fibration
of $P_0$.

\begin{define}
A modification of a $(\sigma, \mu, \tau)$-representation $\rho_0: \Gamma \to$
Homeo$(\mathbb S^1)$ on a periodic gap $I_0$ is \textbf{hyperbolic} if the modified
restriction $f_0: I_0 \to I_0$ has only a finite number of fixed points in $I_0$, and
that all these fixed points (including the extremities $\partial I_0$ as maps
of $\mathbb S^1$) are hyperbolic.

A \textit{hyperbolic blow up of $\rho_0$} is a $(\sigma, \mu, \tau)$-representation obtained from the Fuchsian
representation $\rho_0$ by a finite number of hyperbolic modifications on periodic gaps.
\end{define}

A corollary of Remark \ref{rk:topoconj} is that the topological conjugacy class of a hyperbolic modification
is uniquely determined by the (even) number of fixed points introduced in the periodic gap. It also follows
from Remark \ref{rk:semiconj} that hyperbolic blow ups are semiconjugate to the initial Fuchsian representations
they are constructed from.

The chronological order of the hyperbolic modifications has no incidence on the conjugacy class of the resulting hyperbolic blow up.

According to Theorem \ref{thm:almostconvergencestable}, the representation $\bar{\rho}: \bar{\Gamma} \to$ Homeo$(\mathbb S^1)$ associated to a hyperbolic blow up has the almost (k)-convergence property. It follows that every element $\bar{\gamma}$ for which $\bar{\rho}(\bar{\gamma})$ is not of finite order has only a finite number of fixed points, which are all of hyperbolic type, but the number of fixed points is not always $2k$: some elements may have more than $2k$ fixed points. Observe also that every gap is periodic, since it is already true for Fuchsian actions.

\subsubsection{Constructing the orbit space}\label{sub:consorbit}
Let us now consider a Fuchsian representation $\rho_0: \Gamma \to \widetilde{\mbox{PGL}}(2, \mathbb R)$ as in
subsection \ref{alter}
(the inclusion map of a convex cocompact subgroup). It is a
$(\tilde{\sigma}, \tilde{\tau}_0, \tilde{\mu})$-representation.
Let $\rho^\ast_0 : \Gamma \to \widetilde{\mbox{PGL}}(2, \mathbb R)$ be the \textit{twisted} representation: for every $\gamma$ in $\Gamma$, if $\gamma$ preserves the orientation we have $\rho^\ast_0(\gamma)=\rho_0(\gamma)$,
and if $\gamma$ reverses the orientation we have $\rho^\ast_0(\gamma) = \tilde{\tau}_0 \circ \rho_0(\gamma)$. We have already observed that $\rho^\ast_0$ is indeed a representation, and that the action
$(x,y) \mapsto (\rho_0(\gamma)x, \rho^\ast_0(\gamma)y)$ preserves the open domain $\Omega_0 = \{ (x,y)  \;|\;  x < y < \tilde{\tau}_0(x) \}$.
It is a $(\tilde{\sigma}^\ast, \tilde{\tau}_0, \tilde{\mu})$-representation, where $\tilde{\sigma}^\ast(\gamma)$ coincides with $\tilde{\sigma}(\gamma)$ when $\gamma$ is orientation preserving, and
$\tilde{\sigma}^*(\gamma)$ coincides with $\tilde{\tau}_0 \circ \sigma(\gamma)$ if not.

Orientation reversing elements do not stabilize any gap.
The reason is the following. Suppose that an orientation reversing element preserves a gap.
Then the  unperturbed representation also satisfies this. This representation is a lift of
a Fuchsian representation,
so it comes from a representation into PGL$(2,\mathbb R)$. Then there would be an orientation
reversing element preserving a gap. But gaps associated with Fuchsian
representations correspond to boundary elements in the
associated orbifold and these are orientation preserving, so this cannot happen and
establishes this fact.


\vskip .2in
\noindent
{\bf {The representations $\rho_1$ and $\rho_2$}} $-$
Let now  $\rho_1$, $\rho_2$ be hyperbolic blow ups of respectively  $\rho_0$, $\rho^\ast_0$.


We do not require the hyperbolic blow ups to be performed on the same gaps
for $\rho_1$ and $\rho_2$, they are performed in an independent way. We consider the action of $\Gamma$ on $\mathbb R \times \mathbb R$ defined by:
$$\gamma.(x, y) := (\rho_1(\gamma)x, \rho_2(\gamma)y)$$

Let $\Gamma^\ast$ be the subgroup
of index at most $2$ comprising orientation preserving elements of $\Gamma$.

\vskip .1in
We now very carefully construct
the orbit space of our eventual ``flow''. Its orbit space will be a $\Gamma$-invariant  open domain of $\Omega_0 \subset \mathbb R \times \mathbb R$, with properties
similar to that of $\Omega_0$. In particular, it will be the region between the graphs of two monotone non-decreasing maps from $\mathbb R$ into $\mathbb R$.

Let $I = ]a, b[$ be a gap of $\tilde{\mu}$. It is periodic. Let $\Gamma_I$ the stabilizer of $I$. It is generated by an element $\gamma_I$, that we can select so that $a$ is a repelling fixed point for $\rho_i(\gamma_I)$.
We compare the actions of $\rho_1(\gamma_I)$ and $\rho_2(\gamma_I)$ on $I$.

\vskip .15in
\noindent
{\bf {\underline{Construction of the map $\alpha_1$}}}

Let
$$x_1^1=a < x_2^1 < ... < x_{2p}^1=b \ \ {\rm be \ the \ fixed \ points \ of} \ \ \rho_1(\gamma_I) \ \ {\rm in} \ \
\bar{I}$$

\noindent
and
$$x_1^2=a < x_2^2 < ... < x_{2q}^2=b \ \ {\rm be \ the \ fixed \ points \ of} \ \ \rho_2(\gamma_I) \ \ {\rm in} \ \
\bar{I}$$

\noindent
Every $x^j_k$ is a repelling fixed point of $\rho_j(\gamma_I)$ if $k$ is odd, and an attracting fixed point if $k$ is even.
We select any $\Gamma_I$-equivariant increasing homeomorphism between $[a, x^1_2]$ and $[x^2_{2q-1}, b]$ realizing a topological conjugacy between $\rho_1(\gamma_I)$ and $\rho_2(\gamma_I)$ on these intervals.
If $x^1_2 < b$, we then extend this map on $[x^1_2, b]$ as the constant map taking the value $b$.
This defines a map $\alpha_1: I \to I$ which is a semi-conjugacy between the restrictions to $I$ of $\rho_1(\gamma_I)$ and $\rho_2(\gamma_I)$. We actually make the choices so that on $I$ we have always $x < \alpha_1(x)$
if $x$ is not an endpoint of $I$. Therefore

$$\alpha_1([x^1_2, b]) = b, \ \ \alpha_1([a ,x^1_2]) = [x^2_{2q-1}, b]$$

\vskip .15in
\noindent
{\bf {\underline{Construction of the map $\beta_1$}}}

Similarly, let

$$y_1^2=\tilde{\tau}_0(a) < y_2^2 < ... < y_{2r}^2=\tilde{\tau}_0(b) \ \ {\rm be \ the \ fixed \ points \ of}
 \ \ \rho_2(\gamma_I) \ \ {\rm in} \ \
\tilde{\tau}_0(I)$$

\noindent
The map $\beta_1$ is chosen to be the constant map on $[a, x^1_{2p-1}]$, taking the value $\tilde{\tau}_0(a)$, and
to be an increasing $\Gamma_I$ equivariant,
topological conjugacy between the restrictions of $\rho_1(\gamma_I)$ and $\rho_2(\gamma_I)$
on respectively $[x^1_{2p-1}, b]$ and $[\tilde{\tau}_0(a), y^2_2]$.
We actually adjust so that $\beta_1(x) < \tilde{\tau}_0(x)$ for $x$ in $I$.
Therefore

$$\beta_1([a, x^1_{2p-1}]) = \tilde \tau_0(a), \ \ \beta_1([x^1_{2p-1}, b]) = [\tilde \tau_0(a),y^2_2]$$

\begin{figure}
  \includegraphics[scale=1]{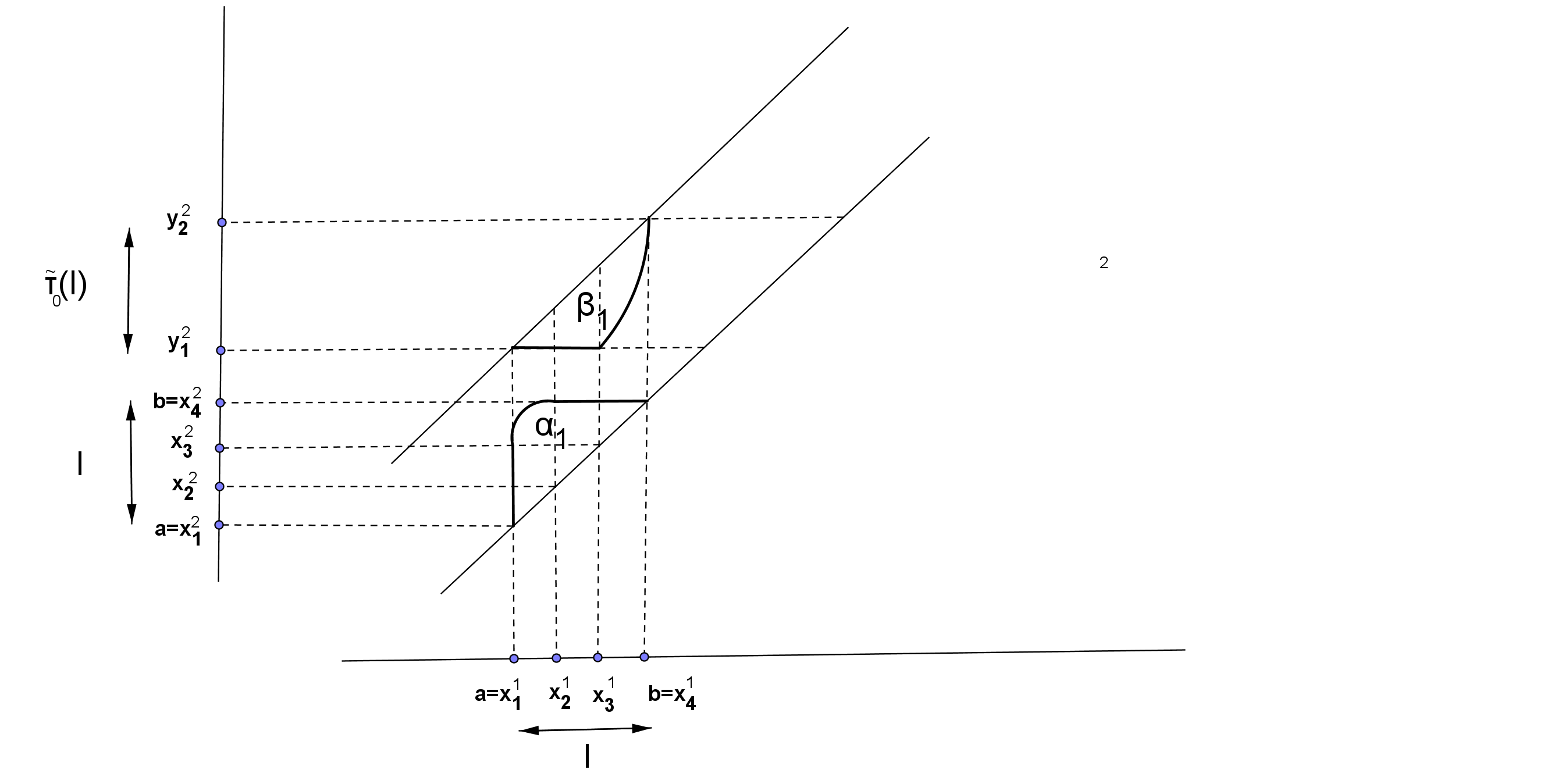}\\
  \caption{The construction of $\alpha_1$ and $\beta_1$ on a gap $I$. Here, $\rho_1(\gamma_I)$ has two
  fixed points in $I$, and $\rho_2(\gamma_I)$ has two fixed points in $I$ and no fixed points in $\tilde{\tau}(I)$.}\label{fig:alphabetaI}
\end{figure}

We then
extend $\alpha_1$ and $\beta_1$ to $\bigcup_{\gamma \in \Gamma^\ast} \tilde{\sigma}(\gamma)I$ by $\Gamma^\ast$-equivariance: the restriction of $\alpha_1$ (respectively $\beta_1$)
to $\rho_1(\gamma)I$ is defined as the conjugate $\rho_2(\gamma) \circ \alpha_1 \circ \rho_1(\gamma)^{-1}$ (respectively $\rho_2(\gamma) \circ \beta_1 \circ \rho_1(\gamma)^{-1}$).
Everywhere else we define $\alpha_1$ to be the identity and $\beta_1$ to coincide with $\tilde \tau_0$.

For orientation reversing element $\gamma$, we define the restriction of $\alpha_1$ to $\rho_1(\gamma)I$ as the conjugate $\rho_2(\gamma) \circ \beta_1 \circ \rho_1(\gamma)^{-1}$,
and the restriction of $\beta_1$ on the same interval as $\rho_2(\gamma) \circ \alpha_1 \circ \rho_1(\gamma)^{-1}$.

This is done for one $\sigma(\Gamma)$-orbit of a  gap.
We then apply the same procedure for other gaps $I$ in other $\sigma(\Gamma)$-orbits of gaps. The result are non-decreasing maps $\alpha_1, \beta_1: \mathbb R \to \mathbb R$ such that, for every element $\gamma$ of $\Gamma$, we have $\rho_2(\gamma) \circ \alpha_1 = \alpha_1 \circ \rho_1(\gamma)$ and $\rho_2(\gamma) \circ \beta_1 = \beta_1 \circ \rho_1(\gamma)$ if $\gamma$ preserves the orientation, and $\rho_2(\gamma) \circ \alpha_1 = \beta_1 \circ \rho_1(\gamma)$ and $\rho_2(\gamma) \circ \beta_1 = \alpha_1 \circ \rho_1(\gamma)$ if not.
We furthermore have $x \leq \alpha_1(x)$ and $\beta_1(x) \leq \tilde{\tau}_0(x)$ for every $x$ in $\mathbb R$.

It is easy to see that by construction
the map $\alpha_1$ is continuous on the right, but not on the left, and that $\beta_1$ is continuous on the left.
In the above setup,
for every gap $[a, b]$, the subinterval $]a, x^2_{2q-1}[$
(which may be empty if $x^2_{2q-1} = a$, that is, no blow up in that interval)
is not in the image of $\alpha_1$. We add every such vertical
segment $\{ a \} \times [a, x^2_{2q-1}[$ to the graph of $\alpha_1$: the result is a closed embedded line $L_-$ in $\mathbb R^2$. Similarly, we add vertical segments
to the graph of $\beta_1$,
obtaining a closed embedded line $L_+$ in the plane. The union $L_+ \cup L_-$
is the boundary of an open domain denoted by $\Omega$,
 which is invariant by the action of $\Gamma^\ast$.
The domain $\Omega$ can also be simply defined as follows:
$$\Omega := \{ (x,y) \in \mathbb R^2 \;|\; \alpha_1(x) < y < \beta_1(x) \}$$

Observe that since $x \leq \alpha_1(x) < \beta_1(x) \leq \tilde{\tau}_0(x)$, the region $\Omega$ is naturally included in $\Omega_0$: we call this
(non-equivariant) inclusion the \textit{canonical inclusion.}

We now introduce two maps $\alpha^-_1, \beta^+_1: \mathbb R \rightarrow \mathbb R$ defined as follows:
For every $x$ in $\mathbb R$, the intersection between $L_-$ and the vertical line $\{ x \} \times \mathbb R$ is a segment $\{ x \} \times [\alpha^-_1(x), \alpha_1(x)]$, whose maximal element is indeed $\alpha_1(x)$. Similarly, the intersection between $L^+$ and $\{ x \} \times \mathbb R$ is a segment $[\beta_1(x), \beta_1^+(x)]$. The maps $\alpha_1^-$ and $\beta_1^+$
are non decreasing and $\Gamma^\ast$-equivariant,
for instance because $\alpha^-_1 = \alpha_1$ in the gaps of $\tilde \mu$.
Moreover, $\alpha_1^-$ and $\beta^+_1$ coincide on $\tilde{\mu}$ with respectively the identity map and $\tilde{\tau}_0$.

It is easy to see that $\alpha^-_1$ is continuous on the left (whereas $\alpha_1$ is continuous on the right), and that $\beta^+_1$ is continuous on the right. Actually, $\alpha_1^-(x)$ can be defined as the limit
of $\alpha_1(x')$ for $x'$ converging to $x$ at the left.

\begin{figure}
  \includegraphics[scale=1]{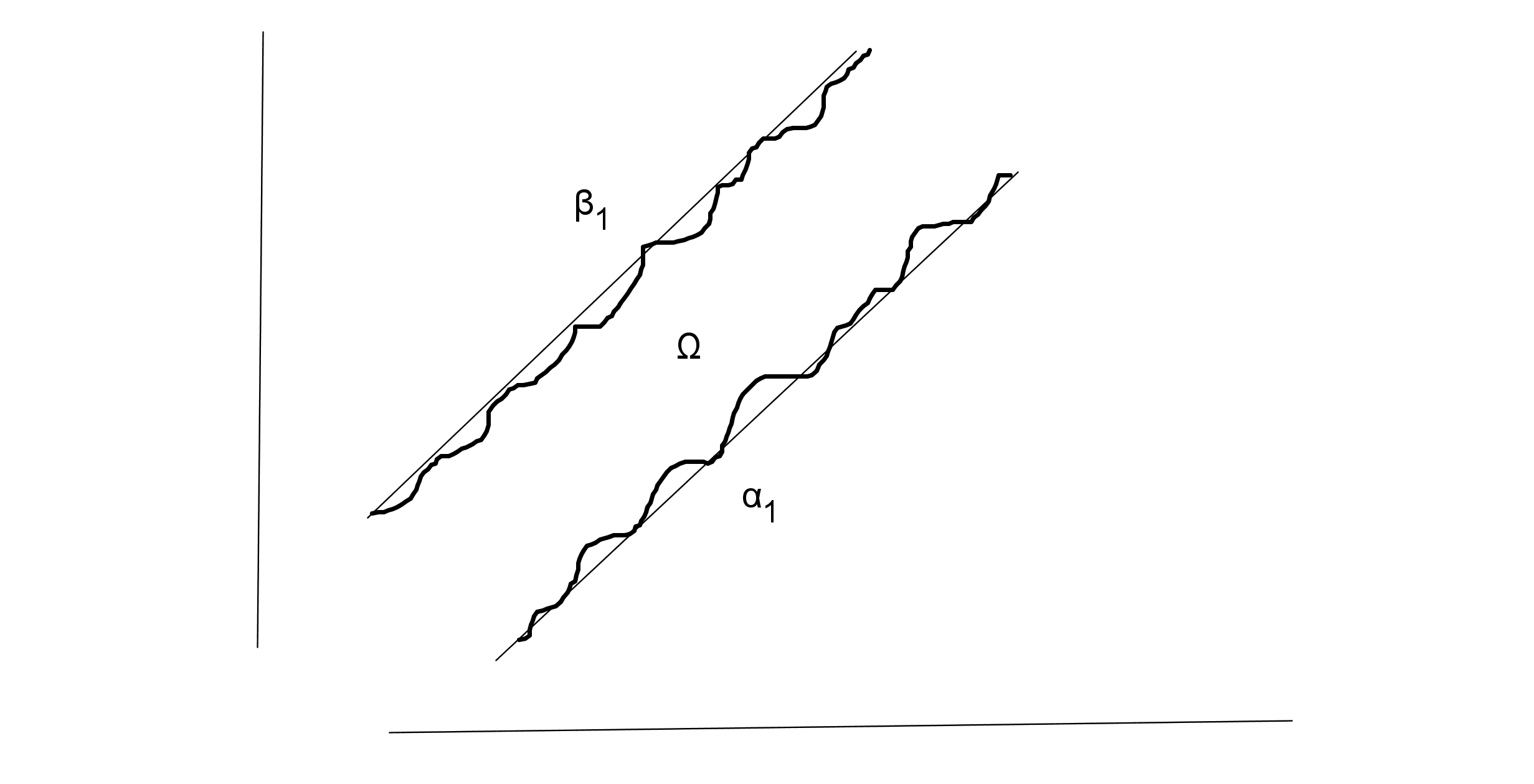}\\
  \caption{The maps $\alpha_1$, $\beta_1$ and the domain $\Omega$.}\label{fig:alphabetaglobal}
\end{figure}

Finally, we observe that $L_-$ and $L_+$ can also be considered as generalized
``graphs'' of maps
$\alpha_2$, $\beta_2$ in another way: the intersection between $L_+$ (respectively $L_-$) and every horizontal $\{ y \} \times \mathbb R$ is a segment (maybe reduced to a point) $[\alpha^-_2(y), \alpha_2(y)]$ (respectively $[\beta_2(y), \beta^+_2(y)]$. These
maps are non decreasing and $\Gamma^\ast$-equivariant.
The  open domain $\Omega$ can also be defined as:
$$\Omega := \{ (x,y) \in \mathbb R^2 \;|\; \alpha_2(y) < x < \beta_2(y) \}$$

The action of $\Gamma$ on $\mathbb R^2$ preserves $\Omega$ (but orientation reversing elements permute the connected components $L^\pm$ of the boundary).

\subsubsection{Constructing a model hyperbolic
blow up flow}
According to Remark \ref{rk:semiconj} there exist topological semiconjugacies $\varphi_1: \mathbb R \to \mathbb R$ and $\varphi_2: \mathbb R \to \mathbb R$
such that:
\begin{eqnarray*}
  \forall \gamma \in \Gamma & \rho_0(\gamma) \circ \varphi_1 = \varphi_1 \circ \rho_1(\gamma)&  \\
  \forall \gamma \in \Gamma & \rho^\ast_0(\gamma) \circ \varphi_2 = \varphi_2 \circ \rho_2(\gamma)&
\end{eqnarray*}

More precisely, we construct these semiconjugacies in the following way, using the notations introduced in
the previous subsection \ref{sub:consorbit}:
on $\tilde{\mu}$ the maps $\varphi_1$ and $\varphi_2$ coincide with the identity map.
Using the notation introduced in the previous subsection, then
on every (periodic) gap $I = ]a,b[$, the map $\varphi_1$ takes the
constant value $b$ on $[x^1_2, b]$, and on the interval $]a, x^1_2[$, $\varphi_1$ is any conjugacy between the restriction of $\rho_1(\gamma_I)$ and
the restriction of $\rho_0(\gamma_I)$ to $I$.
In addition we want that $\varphi_1$ is the identity in any gap $J$ that has not been
modified.

We then define $\varphi_2$ on $I$ as the unique map taking the constant value $a$ on $]a, x^2_{2q-1}[$ and
such that on $[x^2_{2q-1}, b]$ is satisfies:
$$\varphi_2 \circ \alpha_1 = \varphi_1$$
We then extend $\varphi_1$ and $\varphi_2$ on the entire $\mathbb R$ so that they are $\Gamma$-equivariant.

Then we define the map

$$\chi: \mathbb R^2 \to \mathbb R^2, \ \ \
{\rm by} \ \ \ \chi(x,y) \ := \ (\varphi_1(x), \varphi_2(y))$$

\noindent
This map is $\Gamma$-equivariant.
Furthermore, it follows from our choices that $\chi$ maps $L_-$ onto the graph of the identity map. Moreover, for every gap $I$
we have the following dichotomy concerning the image by $\chi$ of the
``triangle" $T_I := \{ (x,y) \;|\; x \in \bar{I}, \; \tilde{\tau}_0(a) \leq y \leq \beta_1(x) \}$:
\begin{itemize}
  \item either $\rho_0$ has not been modified in $I$ and $\rho_0^\ast$ has not
been modified in $\tilde{\tau}_0(I)$: in this case, $\chi$ maps the triangle
  $T_I$ (that in this case is $\{ (x,y) \;|\; x \in \bar{I}, \; \tilde{\tau}_0(a) \leq y \leq \tilde{\tau}_0(x) \}$) onto itself;

  \item or one (maybe both) of the actions on $I$ or $\tilde{\tau}_0(I)$ has
been modified: in this case, $\chi$ maps $T_I$ into
the union of the two sides
  $\{ (x,y) \;|\; x \in \bar{I}, \; y = \tilde{\tau}_0(a)\}$  $-$ the horizontal side of $T_I$,
and $\{ (x,y) \;|\; x = b, \; y \in \tilde{\tau}_0(\bar{I})\}$ $-$ the vertical side of $T_I$.

\end{itemize}

We explain this dichotomy. We use the notation of the definition of $\alpha_1$
and $\beta_1$ in subsection \ref{blowup}, with the fixed points $x$'s and $y$'s.
Let $J = \tilde \tau_0(I)$.
\begin{itemize}
\item
In the first case there is no change of the representations, therefore $p = 1$ and $r = 1$ $-$ notation
from subsection \ref{blowup}, hence on $I$ we have
$\alpha_1 = id$ and $\beta_1 = \tilde \tau_0.$
Therefore the triangle $T_I$ is an actual triangle, and in this case $\varphi_1 = id$
in $I$ and $\varphi_2 = id$ in $J$.

\item
In the second case, suppose first that there is some modification of $\rho_0$ in $I$.
Therefore $p \geq 2$ and in particular $x^1_2 < x^1_{2p-1}$.
Here $\alpha_1$ sends $[x^1_2,b]$ to $b$ and $\beta_1$ is constant $= \tilde \tau_0(a)$
on $[a,x^1_{2p-1}]$. If $x \leq x^1_{2p-1}$, then $T_I \cap \{ x \} \times {\mathbb R}
= (x,\tilde \tau_0(a))$, so $\beta_1(x) = \tilde \tau_0(a)$. Then $\chi$ sends the
point $(x,\tilde \tau_0(a))$ to a point in the horizontal side of $T_I$. Recall
that $\varphi_1, \varphi_2$ are equal to $id$ in $\tilde \mu$.
If on the other hand $x > x^1_{2p-1}$ then by definition $\varphi_1(x) = b$ and the
image under $\chi$ of that part of $T_I$ is in the vertical side of $T_I$.

\item
Finally suppose that $\rho^*_0$ has been modified in $J = \tilde \tau_0(I)$.
Points in $T_I$ have the form $(x,y)$ with $y \in [\tilde \tau_0(a), \beta_1(x)]$.
Since $\rho^*_0$ has been modified in $\tilde \tau_0(I)$, the definition of
$\beta_1$ implies that $y \leq y^2_2$ in $I$.
We now consider the image of this under $\varphi_2$. The subtle point is that we
have to apply the definition of $\varphi_2$ to the interval $J = \tilde \tau_0(I)$ and
not to $I$. In particular $a$ of that definition corresponds to $\tilde \tau_0(a)$ in $J$
and $x^2_{2q-1}$ of that definition  corresponds to $y^2_{2q-1}$. Since $q \geq 2$,
$2q-1 > 2$ and so $\varphi_2$ sends $[\tilde \tau_0(a),\beta_1(y)]$ to $\tilde \tau_0(a)$,
so the image is in the horizontal side of $T_I$.
This finishes the proof of the dichomotomy.

\end{itemize}

It follows that $\chi$ maps $L_+$ into the closure of $\Omega_0$, and therefore
that the image of $\Omega$ by $\chi$ is contained
in $\Omega_0$ (see figure \ref{fig:collapsingsomegaps}).

\begin{figure}
  \includegraphics[scale=1]{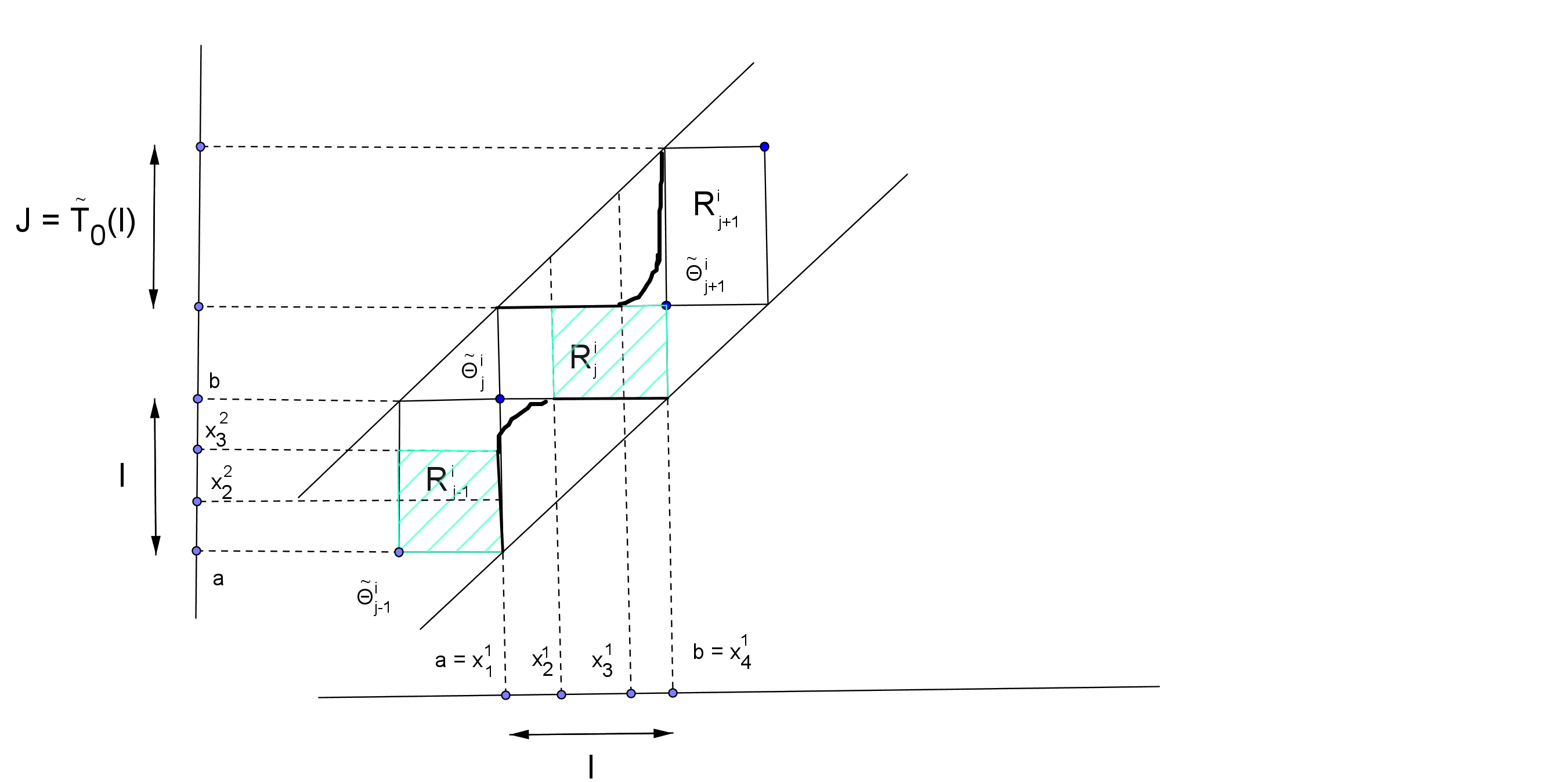}\\
  \caption{The map $\chi$ collapses the stripped rectangles, and the triangle contained in $I \times J$.}\label{fig:collapsingsomegaps}
\end{figure}

We now consider the oriented line bundle $\widetilde{M}(\Omega)$ over $\Omega$
which is the pull-back by $\chi$ of the line bundle
$\widetilde{M}(\Omega_0) \to \Omega_0$ defined in section \ref{sub:geodflow}. In other words:
$$\widetilde{M}(\Omega):= \{ (x,y,m) \in \mathbb R^3 \;|\; \alpha_1(x) < y < \beta_1(x)\}$$
equipped with the following action of $\Gamma$:
$$\forall \gamma \in \Gamma \;\; \
\gamma.(x,y,m) \ = \ (\rho_1(\gamma)x, \rho_2(\gamma)y, \ m - \log|(\rho_0(\gamma))'(\varphi_1(x))| + \log|(\rho_0^\ast(\gamma))'(\varphi_2(y))|)$$
Therefore, $\chi$ induces a $\Gamma$-equivariant map from $\widetilde{M}(\Omega)$ into $\widetilde{M}(\Omega_0),$ simply defined by:
$$(x,y,m) \to (\chi(x,y),m)$$
Since the action on
$\widetilde{M}(\Omega_0)$ is free and proper, the action of $\Gamma$ on $\widetilde{M}(\Omega)$ is free and proper too.
Let the quotient
$$M_\Gamma(\Omega) \ := \ {\Gamma \backslash}\widetilde{M}(\Omega)$$

\noindent
This is a $3$-manifold equipped with an oriented
one dimensional foliation $\Psi$,
which is the projection of the foliation
induced by the fibers over $\Omega$.
More specifically the projection of $(x,y) \times {\bf R}$
for $(x,y)$ in $\Omega$.
The manifold $M_\Gamma(\Omega)$
is also equipped with two foliations of codimension one
$\Lambda^s(\Psi)$ and $\Lambda^u(\Psi)$ induced by the horizontal and vertical foliations of $\Omega$.
Since the modification is a hyperbolic blow up, it follows that these foliations have
transversely hyperbolic behavior, that is, the flow $\Psi$ is
``essentially" an Anosov flow.
The map $\chi$
induces a
{\underline {semi-conjugacy}} $\chi_\Gamma: M_\Gamma(\Omega) \to
M_\Gamma(\Omega_0)$, mapping the oriented orbits of $\Psi$ onto
the oriented orbits of $\Psi_0$, and mapping the foliations $\Lambda^s(\Psi)$,
$\Lambda^u(\Psi)$ onto the stable/unstable
foliations $\Lambda^s(\Psi_0)$, $\Lambda^u(\Psi_0)$.

\subsubsection{Cutting a Seifert piece in the blow-up flow}\label{sub:cutseifert}
In this section, we show that there is in $M_\Gamma(\Omega)$ a compact manifold $P_\Gamma(\Omega)$ which is a Seifert bundle homeomorphic to
the Seifert piece $P_0$ (see Section \ref{sub:cutseifertgeod}) and whose
boundary is a union of embedded Birkhoff tori for the flow $\Psi$.
Moreover, the submanifold $P_\Gamma(\Omega)$ is unique up to topological conjugacies preserving the restrictions of the
 stable and unstable foliations.

Recall that in subsection \ref{sub:cutseifertgeod} we did
the following: $M_{\Gamma}(\Omega_0)  = \Gamma \backslash
 \widetilde{M}(\Omega_0)$
and $P_0$ is a compact submanifold bounded by Birkhoff tori.
In this section we consider
$M_{\Gamma}(\Omega_0)  = \Gamma \backslash \widetilde{M}(\Omega)$.
Notice that we abuse notation using the same $\Gamma$ for the
original action on $\widetilde{M}(\Omega_0)$ and the new
action from the hyperbolic blow up on $\widetilde{M}(\Omega)$.

Let $T_i$ be one peripheral torus of $M_0 \cong M_{\Gamma}(\Omega_0)$,
and let $H_i \approx \mathbb Z^2 \subset \Gamma$ be its fundamental group.
As recalled in Section \ref{sub:cutseifertgeod}, $H_i$ preserves a string of lozenges $(R^i_j)_{j \in \mathbb Z}$ in $\Omega_0$,
and the stabilizer of the corners $\tilde{\theta}^i_j$ is a cyclic subgroup $D_i$.
Recall that
the blow up action is obtained by blowing up gaps of $\mu$. These are associated with entering or
exiting annuli for $\Psi_0$, which generated the lozenges $R^i_j$.
These lozenges $R^i_j$ are
still contained in $\Omega$, i.e. in the image of the canonical inclusion $\Omega \subset \Omega_0$, and are in some sense still
preserved by $H_i$, for the new action through $(\rho_1, \rho_2)$
(instead of $(\rho_0,\rho^*_0)$).
The stabilizer of the corners is still the cyclic subgroup $D_i$.
However, some $R^i_j$
may have been decomposed into finitely many $D_i$-invariant sub-lozenges 
because there may be more fixed points under the (new)
action of $D_i$ (under $(\rho_1,\rho_2)$).
But still there are invariant lozenges exactly because
the blow up is hyperbolic.
More specifically, suppose that the associated gap of $\tilde \mu$ is $]a,b[$ and
the gap is exiting, that is, the gap is in the $x$ direction. Then the lozenge $R^i_j$
corresponds to the rectangle $]a,b[ \times ]b,\tilde \tau_0(a)[$.
Here $b$ is in $\tilde \mu$ and is isolated on the $]a,b[$ side,
therefore $b$ is not isolated in $\tilde \mu$ in the $]b,\tilde \tau_0(a)[$
side.
In particular $]b,\tilde \tau_0(a)[$
is \underline{not} a gap of $\tilde \mu$.
It follows that there is no blow up in the interval $]b,\tilde \tau_0(a)[$ abutting $b$ and
the action of $D_i$ on this interval is hyperbolic without fixed points.
So if $a = x^1_1, x^1_2,... , x^1_{2p} = b$ are the fixed points of the blown up action
in $]a,b[$ it follows that the lozenge $R^i_j$ splits into $2p - 1$ lozenges, forming
a chain of adjacent lozenges, all intersecting a common unstable leaf.


It follows that
one can easily find as in \cite{Ba3} a $D_i$-invariant section of the bundle $\widetilde{M}(\Omega)$
over each new lozenge $S^i_j$, so that their
images are $D_i$-invariant Birkhoff bands transverse to the fibers and with boundary the fibers above the (new) corners $\tilde{\theta}^i_j$ and $\tilde{\theta}_{j+1}^i$.
Altogether all these Birkhoff bands form a closed embedded plane $\widetilde{T}'_i$ that one can select to be $H_i$-invariant (simply by
taking, for every $\gamma$ in $H_i$, the section over $\gamma S^i_j$ the image under $\gamma$ of the section over $S^i_j$). As in \cite{Ba3}
or \cite{bafe1}, one shows, by cut and paste techniques,  that the collection of
these embedded planes, when we consider all the
boundary tori $T_i$ of $M_0$, can be chosen so that their
projections in $M_\Gamma(\Omega)$ are embedded Birkhoff tori $T'_i$ that are two by two disjoint.

\vskip .07in
\noindent
{\bf {Conclusion:}} There are finitely many embedded tori
$\{ T'_i \}$ in $M_{\Gamma}(\Omega)$ with union a compact
subset of $M_{\Gamma}(\Omega)$.
\vskip .05in

Let $\widetilde{\mathcal M}$ be the preimage in $\widetilde{M}(\Omega)$ of $\tilde{\mu} \times \tilde{\mu}$, and let $\mathcal M$ be its projection
in $M_\Gamma(\Omega)$. Observe that the restriction of
$\chi_{\Gamma}$
to $(\tilde{\mu} \times \tilde{\mu}) \cap \Omega$ is a homeomorphism onto
$(\tilde{\mu} \times \tilde{\mu}) \cap \Omega_0 =
(\tilde{\mu} \times \tilde{\mu}) \cap \Omega$ (it is the identity map!).
It follows that $\mathcal M$ is naturally identified with the non-wandering
set for the geodesic flow $(M_\Gamma(\Omega_0), \Psi_0)$, and
therefore the very important consequence that $\mathcal M$ is
compact. Observe also that $\mathcal M$ intersects the tori $T'_i$ only at
(possibly a subcollection of) their tangent periodic orbits. In particular, there is no
orbit in $\mathcal M$ crossing one $T'_i$.
This is because an orbit crossing $T'_i$ lifts to an orbit in a lozenge as above. Then either
its stable or unstable leaf is in a gap of $\tilde \mu$.

\vskip .1in
Consider one of  these tori, and denote it by $T'_i$.
Consider one of its lifts $\widetilde{T}'_i$. Since
$\widetilde{T}'_i$ is a closed embedded plane, it disconnects
$\widetilde{M}(\Omega)$ in two connected components $(\widetilde{T}'_i)^+$, $(\widetilde{T}'_i)^-$.
We show that one of these components does not intersect $\widetilde{\mathcal M}$.
Assume by way of contradiction
that $(\widetilde{T}'_i)^+$ and $(\widetilde{T}'_i)^-$ each contain
one element $\tilde{\theta}^+$, $\tilde{\theta}^-$ respectively
of $\widetilde{\mathcal M}$. Periodic orbits are dense in $\mathcal M$, hence we can assume that $\tilde{\theta}^+, \tilde{\theta}^-$ are both lifts of periodic orbits.
Moreover, there is a sequence $\tilde{\theta}_0$, ... , $\tilde{\theta}_{2n}$ of elements in $\widetilde{\mathcal M}$ such that:

--  $\tilde{\theta}_0 = \tilde{\theta}^+$ and $\tilde{\theta}_{2n} = \tilde{\theta}^-$,

-- every $\tilde{\theta}_{2k}$ is the lift of a periodic orbit (for $0 \leq k \leq n$),

-- for every $k$ between $0$ and $n-1$, $\tilde{\theta}_{2k+1}$ is the intersection between the stable leaf of $\tilde{\theta}_{2k}$ and the unstable leaf of $\tilde{\theta}_{2k+2}$,
or the intersection between the unstable leaf of $\tilde{\theta}_{2k}$ and the stable leaf of $\tilde{\theta}_{2k+2}$
(this sequence corresponds to a polygonal line in $\Omega$ made of
vertical and horizontal segments joining $\tilde{\theta}^+$ to $\tilde{\theta}^-$).
This is certainly true for the geodesic flow and hence it follows for $\Psi$ also.
We can furthermore assume that no $\tilde{\theta}_k$ is one orbit tangent to $\widetilde{T}'_i$.

Then, as explained above,
none of the $\tilde{\theta}_k$ crosses $\widetilde{T}'_i$.
Since $\tilde{\theta}^+$ and $\tilde{\theta}^-$ lie in different connected components of
the complement, there is some integer $k$ such that
$\tilde{\theta}_{2k}$ lies in $(\widetilde{T}'_i)^+$ and $\tilde{\theta}_{2k+2}$ lies in $(\widetilde{T}'_i)^-$.
Then, $\tilde{\theta}_{2k+1}$ is an orbit that
in the past gets closer and closer to, say $\tilde{\theta}_{2k}$,
and in the future gets closer and closer to $\tilde{\theta}_{2k+2}$. Since $\tilde{\theta}_{2k}$ and $\tilde{\theta}_{2k+2}$ project
to periodic orbits, they stay a minimum distance from $\widetilde{T}'_i$. Hence, $\tilde{\theta}_{2k+1}$ must cross $\widetilde{T}'_i$: contradiction.

\vskip .1in
Therefore, one of the connected components $(\widetilde{T}'_i)^+$, $(\widetilde{T}'_i)^-$ is disjoint from
$\widetilde{\mathcal M}$: we fix the notation so that this connected component is $(\widetilde{T}'_i)^-$. Consider the intersection of all
connected components $(\widetilde{T}'_i)^+$ through all the possible
embedded Birkhoff planes $\widetilde{T}'_i$.
This intersection
is the interior of a manifold with boundary $\widetilde{P}(\Omega)$, which contains $\widetilde{\mathcal M}$ and also every Birkhoff plane $\widetilde{T}'_i$ (indeed, $\widetilde{T}'_i$ contains elements of $\widetilde{\mathcal M}$: lifts
of some tangent periodic orbits, hence $\widetilde{T}'_i$
cannot be on the $(\widetilde{T}'_j)^-$ side of some other Birkhoff plane $\widetilde{T}'_j$). Moreover, $\widetilde{P}(\Omega)$ is $\Gamma$-invariant, hence projects in $M_\Gamma(\Omega)$ to a manifold with boundary ${P}_\Gamma(\Omega)$, whose boundary is the union of all the Birkhoff tori $T'_i$.

Since $\widetilde{T}_i$ is a Birkhoff plane, it follows as in
\cite{Ba3} that every orbit of $\widetilde{\Psi}$
can intersect any $\widetilde{T}'_i$ at most once. In addition $(\widetilde{T}'_i)^-$ is disjoint from the
other Birkhoff planes $\widetilde{T}'_j$. It follows that if an orbit
of $\widetilde{\Psi}$ crosses $T'_i$ transversely and enters $(\widetilde{T}'_i)^-$,
it cannot intersect $\widetilde{T}'_i$ after that,
and hence stays trapped in $(\widetilde{T}'_i)^-$. Moreover an element of $\Gamma$ preserving
$(\widetilde{T}'_i)^-$ also preserves its boundary $(\widetilde{T}'_i)^-$, because $\widetilde{P}(\Omega)$
is $\Gamma$ invariant.
Hence this element of $\Gamma$ must
be an element of $H_i$. It follows that the projection of $(\widetilde{T}'_i)^-$ in $M_\Gamma(\Omega)$ is a domain
$(T'_i)^-$ disjoint from $\mathcal M$, with boundary $T'_i$,
homotopic to $T'_i \times ]0, +\infty|$,
and such that every orbit of $\Psi$ crossing $T'_i$ and entering $(T'_i)^-$ remains trapped in $(T'_i)^-$.

The next step is to identify what are the entrance/exit Birkhoff annuli in the boundary
$\partial {P}_\Gamma(\Omega)$.
As before let $S^i_j = ]x^i_j,  x^i_{j+1}[ \ \times \ ]y^i_j,  y^i_{j+1}[$
be a lozenge in $\Omega$, projection
of a Birkhoff band of a Birkhoff plane $\widetilde{T}'_i$.
It corresponds to a transverse
annulus $(A')^i_j$ of $T'_i$. One (and only one) of the sides $]x^i_j,  x^i_{j+1}[$, $]y^i_j,  y^i_{j+1}[$ is a gap of the minimal set $\tilde{\mu}$.
If the gap is the horizontal side $]x^i_j,  x^i_{j+1}[$, we define:
$$\Delta'(\tilde{\theta}_j^i) :=  \{ (x,y) \in \Omega \; \mid \; x^i_j < x < x^i_{j+1} , \; \tilde{\tau}^{-1}_0(y^i_{j+1}) < y < y^i_{j} \}$$

If the gap is $]y^i_j,  y^i_{j+1}[$, we define:
$$\Delta'(\tilde{\theta}_j^i) :=  \{ (x,y) \in \Omega \; \mid \; \tilde{\tau}^{-1}_0(x^i_{j+1}) < x < x^i_j , \; y_j^i < y < y^i_{j+1} \}$$

Then, as in the case of the geodesic flow (Section \ref{sub:cutseifertgeod}) the first case is the case where $A'_i$ is an exit annulus, whereas
in the second case, $A'_i$ is an entrance annulus. Indeed, for example in the second case, the
lozenge $S^i_j$ cannot be crossed
by a horizontal-unstable leaf containing an orbit of $\widetilde{\mathcal M}$, hence $S^i_j$
is crossed by vertical-stable leaves
containing orbits of $\widetilde{\mathcal M}$.
Therefore, the projection of an orbit in such a stable leaf must accumulate in the future
in an element of $\mathcal M$, meaning than
it cannot enter in $(T'_i)^-$ since it would be a ``non-return in ${P}_\Gamma(\Omega)$'' option (compare with \cite[Corollaire $3.15$]{Ba3}).

Furthermore, similarly to the situation
in \cite{Ba3}, the triangles $\Delta'(\tilde{\theta}_j^i)$ are the projections in $\Omega$ of the orbits in $(\widetilde{T}'_i)^-$ trapped between the part of the stable and unstable leaves of $\tilde{\theta}^i_j$.
The reason is that $\Delta'(\tilde{\theta}_j^i)$ is one of the four connected
components of $\Omega$ with the stable and unstable leaves of $\tilde{\theta}^i_j$ removed.
This is true even if one blows up the vertical
interval $](\tilde{\tau}_0)^{-1}(y^i_{j+1},y^i_j[)$
because we only consider points in $\Omega$.
This is different from what happened in \cite{Ba3}.
This component is not one of the components whose projection
contains a lozenge adjacent to $\tilde{\theta}^i_j$, hence orbits in $\Delta'(\tilde{\theta}_j^i)$ do not cross $\widetilde{T}'_i$.
Furthermore, since one of the sides
of $\Delta'(\tilde{\theta}_j^i)$ is a gap of $\tilde \mu$, $\Delta'(\tilde{\theta}_j^i)$ contains no element of $\widetilde{\mathcal M}$, hence
cannot be in the quadrant
of $\tilde{\theta}^i_j$ containing elements of $\widetilde{\mathcal M}$ accumulating non-trivially in $\tilde{\theta}^i_j$,
i.e. the $(\widetilde{T}'_i)^+$ side. The claim follows.

Now we prove a very important property:

\begin{lemma}{}{}
The set ${P}_\Gamma(\Omega)$ is compact.
\end{lemma}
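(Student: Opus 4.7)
The plan is to deduce compactness of $P_\Gamma(\Omega)$ by transferring the compactness of the geodesic-flow piece $P_0 \subset M_\Gamma(\Omega_0)$ through the semi-conjugacy $\chi_\Gamma : M_\Gamma(\Omega) \to M_\Gamma(\Omega_0)$ constructed in the previous subsubsection. Recall that $P_0$ is compact, since (as noted in Section \ref{sub:cutseifertgeod}) it is a finite cover of the unit tangent bundle of the compact convex core $K$. I will show that $\chi_\Gamma(P_\Gamma(\Omega)) \subset P_0$ and that $\chi_\Gamma$ is proper. Together these give $P_\Gamma(\Omega) \subset \chi_\Gamma^{-1}(P_0)$, the latter being compact by properness; since $P_\Gamma(\Omega)$ is closed in $M_\Gamma(\Omega)$ (its complement is the union of the open sets $(T'_i)^-$), compactness of $P_\Gamma(\Omega)$ follows.

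For the inclusion $\chi_\Gamma(P_\Gamma(\Omega)) \subset P_0$, I would first observe that by construction $\chi$ sends each new lozenge $S^i_j \subset \Omega$ into its ambient unblown lozenge $R^i_j \subset \Omega_0$ and carries the corresponding corner orbits accordingly, so each embedded Birkhoff plane $\widetilde T'_i$ is sent into $\widetilde T_i$. A connectedness argument then gives $\chi((\widetilde T'_i)^+) \subset (\widetilde T_i)^+ \cup \widetilde T_i$: the interior of $\widetilde P(\Omega)$ contains $\widetilde{\mathcal M}$, which maps by $\chi$ onto the non-wandering set of the geodesic flow (recall $\chi$ is the identity on $\tilde{\mu} \times \tilde{\mu}$), and this non-wandering set lies in the interior of $\widetilde P_0 \subset (\widetilde T_i)^+$, thereby anchoring the image of the connected region $(\widetilde T'_i)^+$ to the correct side of $\widetilde T_i$. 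Intersecting over all $i$ yields $\chi(\widetilde P(\Omega)) \subset \widetilde P_0$, and taking the $\Gamma$-quotient gives the desired inclusion.

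For properness of $\chi_\Gamma$, I work at the universal-cover level, where $\chi$ is given by $(x,y,m) \mapsto (\varphi_1(x), \varphi_2(y), m)$. The semi-conjugacies $\varphi_1, \varphi_2 : \mathbb R \to \mathbb R$ are monotone non-decreasing and coincide with the identity on the unbounded set $\tilde{\mu}$, so they are proper maps with compact interval fibers; consequently every fiber of $\chi$ is a compact rectangle, and $\chi$ sends divergent sequences to divergent sequences. Equivariance with respect to the $\Gamma$-actions (both properly discontinuous and cocompact on the quotients in question) then forces the quotient map $\chi_\Gamma$ to be proper, so $\chi_\Gamma^{-1}(P_0)$ is compact and the lemma follows.

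The main subtlety to verify carefully is the connectedness step in the second paragraph, which asserts that the image of the "good" side $(\widetilde T'_i)^+$ lands on the "good" side of $\widetilde T_i$ rather than the other; here the non-emptiness of $\widetilde{\mathcal M}$ and its location in the interior of $\widetilde P(\Omega)$ are crucial as an anchor. The properness argument, by contrast, is essentially routine once the explicit formula for $\chi$ is used.
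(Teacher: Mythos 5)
The central claim that $\chi_\Gamma$ is proper is false, and the argument given for it contains the gap. You observe correctly that $\varphi_1, \varphi_2 \colon \mathbb R \to \mathbb R$ are monotone non-decreasing and agree with the identity on the unbounded set $\tilde\mu$, hence are proper; consequently the product map $(x,y,m) \mapsto (\varphi_1(x), \varphi_2(y), m)$ is a proper map of $\mathbb R^3$ to itself. But that is not the map you need. The relevant map is the restriction $\chi \colon \widetilde M(\Omega) \to \widetilde M(\Omega_0)$ between \emph{open} subsets of $\mathbb R^3$, and a proper map of $\mathbb R^3$ restricts to a proper map between open subsets only when it carries the topological boundary of the source into the boundary of the target. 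That fails here. The dichotomy established just before the definition of $\widetilde M(\Omega)$ shows that for every gap $I$ on which a genuine modification was performed, $\chi$ collapses the full two-dimensional triangle $T_I$ (whose upper edge lies in $L_+ = \partial\Omega$) onto the one-dimensional union of the horizontal and vertical sides of $T_I$; and those sides lie, apart from their endpoints, in the \emph{interior} of $\Omega_0$. Thus a sequence $(x_0, y_n, m_0)$ in $\widetilde M(\Omega)$ with $x_0$ fixed in the gap $I$ and $y_n \nearrow \beta_1(x_0) \in L_+$ escapes every compact subset of $\widetilde M(\Omega)$, yet $\chi(x_0, y_n, m_0) = (\varphi_1(x_0), \varphi_2(y_n), m_0)$ converges to a point of $\widetilde M(\Omega_0)$. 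Hence $\chi$ (and therefore $\chi_\Gamma$) is not proper. The collapsing is exactly what makes $\chi$ a semiconjugacy rather than a conjugacy, so this is not a repairable technicality: the loss of properness is intrinsic to the construction.

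Since step 2 fails, $\chi_\Gamma^{-1}(P_0)$ need not be compact, and the argument does not go through. Restricting $\chi$ to $\widetilde P(\Omega)$ does not obviously rescue it either: you would need to show that divergent sequences in $\widetilde P(\Omega)$ limiting onto $L_+$ through a collapsed triangle have $m$-coordinate going to infinity, which is an additional and non-trivial analysis of the geometry of the Birkhoff bands near their non-corner boundary. Your sketch of the inclusion $\chi_\Gamma(P_\Gamma(\Omega)) \subset P_0$ is also left at the level of a plan rather than a verification, though the idea of anchoring the connected set $(\widetilde T'_i)^+$ via $\widetilde{\mathcal M}$ is sound in spirit. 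For contrast, the paper's proof stays entirely inside $M_\Gamma(\Omega)$: it fixes a relatively compact open neighborhood $U$ of the compact set $\mathcal M \cup \bigcup_i T'_i$, uses the structural facts already established (orbits either fall into $\mathcal M$ or cross exactly one Birkhoff torus) to show that every orbit through $P_\Gamma(\Omega)$ meets $U$ within a uniformly bounded forward and backward time, and concludes that $P_\Gamma(\Omega)$ is contained in the compact union of length-$\le T + T'$ orbit segments starting at $\partial U$. That argument avoids $\chi_\Gamma$ altogether.
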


\begin{proof}{}
Let $U$ be a relatively compact open neighborhood
in $M_{\Gamma}(\Omega)$ of the (compact)
union of all the tori $T'_i$ and the compact invariant set $\mathcal M$.
Let $\theta = (x, y)$ be an element of $\Omega$.
Here we think of $\theta = (x,y)$ (or another element
of $\Omega$) as both a point in the plane
and as an orbit in $\widetilde{M}(\Omega)$.

We have three possibilities:

-- either $x$ and $y$ both lie in $\tilde{\mu}$; then $\theta \in \widetilde{\mathcal M}$,

-- or one of them lies in $\tilde{\mu}$, and the other lies in a gap. Suppose
that $x$ lies in a gap $]a,b[$.  There is $(z,y)$ in $\widetilde{\mathcal M}$.
Then the orbit of the flow associated with $(x,y)$ is backards asymptotic
to an orbit $(z,y)$ of $\widetilde{\mathcal M}$ and since $x$ is in a gap,
the forward orbit leaves $\widetilde{P}(\Omega)$ so the orbit $(x,y)$ crosses
some $\widetilde{T}'_i$ in a point $w$.
So this orbit is in $\widetilde{P}(\Omega)$ flow backwards
from $w$ and is in $(\widetilde T'_i)^-$ flow forwards from
$w$,


-- The final possibility is that
$x$ and $y$ are both outside the minimal set $\tilde{\mu}$. In this case there are two
possibilities.
One possibility is that $\theta$ lies in a triangle $\Delta'(\tilde{\theta}_j^i)$.
In this case
the orbit does not intersect any $\widetilde{T}'_i$ and hence this orbit
projects to an orbit in $M_{\Gamma}(\Omega)$
outside ${P}_\Gamma(\Omega)$. The other possibility is that
the orbit enters $\widetilde{P}(\Omega)$.
Since $x$ is in a gap of $\tilde \mu$ then this orbit has to
{\underline {exit}}
$\widetilde{P}(\Omega)$ through some $\widetilde{T}'_i$.
Similarly since $y$ is in a gap then the orbit has to
enter $\widetilde{P}(\Omega)$.
In other words
$\theta$ lies in two different lozenges associated
with chains of lozenges of tori,
and it crosses two different Birkhoff planes.

\vskip .08in
It follows that for every point $p$ in ${P}_\Gamma(\Omega)$, the future orbit of $p$ either intersects one Birkhoff torus $T'_i$
and afterwards enters in $({T}'_i)^-$, or the orbit is
forward asymptotic to an orbit in $\mathcal M$.
In both cases, there is a non negative
time $t$ such $\Psi^t(p)$ lies in $U$.
The same $t$ will apply to a neighborhood of $p$
as $U$ is open.
In the same way there is a non positive $t$ so that $\Psi_t(p)$ is
also in $U$.

Now since
the boundary of $U$ is compact, we claim that
there is a uniform positive upper bound $T$, meaning that for every
$p$ in $P_{\Gamma}(\Omega)$ there is a
time $0 < t < T$ such that $\Psi^t(p)$ lies in $U$.
We explain further: for any $p$ let $t_p$ be the infimum
of $t > 0$ so that $\Psi_{t}(p)$ is in $U$. If $p$ is in
$U$ then this is zero and $\Psi_{t_p}(p)$ is in $U$.
Otherwise
$\Psi_{t_p}(p)$ is in $\partial U$ and not in $U$.
If the claim is not true there are $p_i$ in $P_{\Gamma}(\Omega)$
so that $(t_{p_i})$ converges to infinity.
We cannot get a convergent subsequence of the $(p_i)$ as we
do not know yet that $P_{\Gamma}(\Omega)$ is compact. But up
to subsequence asssume that $q_i = \Psi_{t_{p_i}}(p_i)$
converges to $q$ in $\partial U$ as this is compact.
The $\Psi$ orbit segments from $p_i$ to $q_i$ intersect
$U \cup \partial U$ only in $q_i$. These orbit segments
have length converging to infinity at $t_i \rightarrow
\infty$ and they are entirely contained in $P_{\Gamma}(\Omega)$.
It follows that the {\underline {backward}} orbit of $q$
is contained in $P_{\Gamma}(\Omega)$ and
{\underline {does not intersect $U$}}.
But this contradicts the fact that there is $t' \leq 0$ so
that $\Psi_{t'}(q)$ is in $U$.
This proves the claim.

\vskip .08in
Similarly, there is a time $T'$ such that for every $q$ in
$P_{\Gamma}(\Omega)$
there is a time $t$ with $-T' < t < 0$ such that
$\Psi^t(p)$ lies in $U$.
It follows that every $p$ in ${P}_\Gamma(\Omega) U$
lies in a segment of orbit of time-length $< T+T'$ joining two points in $\partial U$.
Since the closure of $U$ is compact,
it now follows that ${P}_\Gamma(\Omega)$ is compact.

This proves the lemma.
\end{proof}

Finally, ${P}_\Gamma(\Omega)$ is irreducible (since its universal covering $\widetilde{P}(\Omega)$ is contractible) and its fundamental group
$\Gamma$ is the fundamental group of the Seifert manifold $P_0$: it follows that ${P}_\Gamma(\Omega)$ is homeomorphic to $P_0$.
This follows from Scott's result that there are no fake Seifert fibered spaces
\cite{Sco2}.
The boundary components of ${P}_\Gamma(\Omega)$ are embedded Birkhoff tori, whose associated chain of lozenges in the orbit space $\Omega$
is prescribed from the beginning.

\begin{define}\label{def:hyperbolicblowup}
The manifold with boundary ${P}_\Gamma(\Omega)$, equipped with the restriction
of the oriented foliation $\Psi$ and the restrictions
$\hat{\Lambda}^s_\Gamma$, $\hat{\Lambda}^u_\Gamma$ of the stable, unstable
foliations $\Lambda^s(\Psi)$, $\Lambda^u(\Psi)$,
is a \textbf{hyperbolic blow up piece of a geodesic flow.}
\end{define}

\begin{remark}\label{rk:uniqueblowup}{\em
During the construction, we have performed several choices: the maps
$\alpha_1$, $\beta_1$, $\chi$, the boundary tori $T'_i.$
But the orbits in $M_\Gamma(\Omega)$ intersecting
${P}_\Gamma(\Omega)$ are the projections of the elements of $\Omega$ which are
\underline{not} in the triangles $\Delta'(\tilde{\theta}^i_j)$.
Consider a modification of $\rho_0$ in a gap $I$ of
$\tilde\mu$. In other words $\alpha_1$ and $\beta_1$
are {\underline {not}} $id$ and $\tilde\tau_0$ in that gap. But the
corresponding graphs of $\alpha_1$ and $\beta_1$ are
both in the excluded triangles: the graph of $\alpha_1$
is in $\Delta'(\tilde\theta^i_j)$ for some $j$ and
the graph of $\beta_1$ is in $\Delta'(\tilde\theta^i_{j+1})$.
Hence the region in $\Omega$ corresponding to orbits intersecting
${P}_\Gamma(\Omega)$ does not depend on these choices. In other words, the
choices of $\alpha_1$, $\beta_1$, $\chi$
only contribute to the definition of the flow in
the regions $(T'_i)^-$, i.e. outside ${P}_\Gamma(\Omega)$.

Hence the only choice that matters is the selection of the embedded Birkhoff tori $T'_i$. But,
as we will see in the proof of Theorem \ref{main}, it follows from
Remark \ref{rk:isotopybirkhofftori}
that the hyperbolic blow up $({P}_\Gamma(\Omega), \hat{\Lambda}^s_\Gamma, \hat{\Lambda}^u_\Gamma)$
does not depend on these choices, and therefore, is uniquely defined, up to orbital equivalence,
by the initial piece of geodesic flow and the number
of tangent periodic orbits introduced in every boundary torus.
}
\end{remark}

\section{Leaf spaces and orbit spaces associated to a free Seifert piece}
\label{leaforbit}

From now on, we consider a free Seifert piece $P$ of
an {\underline {arbitrary}}
pseudo-Anosov flow $(M, \Phi)$. We denote by $h$ an element of $\pi_1(P)$ represented by a regular fiber
of the Seifert fibration. Therefore, $h$ is a generator of the pseudo-center of $\pi_1(P)$.
Recall that $\hhs$ (respectively $\hhu$) is the
leaf space of $\wls$ (resp. $\wlu$).

\subsection{Existence of $h$-invariant axis in the leaf spaces}

\begin{proposition}{}{}
The action of $h$ on $\mathcal H^s$ (respectively $\mathcal H^u$) admits an axis $\mathcal A^s$ (respectively $\mathcal A^u$)
which is a properly embedded real line. In other words these embedded lines are $h$-invariant and the action of $h$ on each of them is free, i.e. an action by translation.
\label{axesfirst}
\end{proposition}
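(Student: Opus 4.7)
The plan is to prove the statement for $\hhs$; the case of $\hhu$ is formally identical with the stable and unstable foliations swapped. The argument splits into two steps: first, showing that $h$ acts on $\hhs$ without fixed points; second, extracting the axis from this free action using the order-tree structure of $\hhs$ recalled in the Background section.

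\emph{Freeness of the action.} I will argue by contradiction: suppose $h(F_0)=F_0$ for some $F_0\in\wls$. Since $h$ generates the pseudo-center of $\pi_1(P)$, every $\gamma \in \pi_1(P)$ satisfies $\gamma h \gamma^{-1} = h^{\pm 1}$, and hence $h$ fixes every leaf $\gamma(F_0)$ in the $\pi_1(P)$-orbit of $F_0$. The stabilizer in $\pi_1(M)$ of any single leaf of $\wls$ is at most cyclic, whereas $\pi_1(P)$ contains the rank-two abelian fundamental group of a boundary component (torus or Klein bottle) of $P$; consequently the orbit $\pi_1(P)\cdot F_0$ in $\hhs$ is infinite, and $h$ fixes two distinct leaves $F_0 \neq F_1$. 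Theorem \ref{chain} then produces a finite chain of lozenges connecting $F_0$ to $F_1$, every corner of which is a periodic orbit of $\wwp$ fixed by $h$. Picking any such corner $\tilde{\theta}$, the element $h$ lies in the cyclic stabilizer of $\tilde{\theta}$, so $h$ is freely homotopic up to powers to the periodic orbit $\pi(\tilde{\theta})$, contradicting the hypothesis that $P$ is a free Seifert piece.

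\emph{Axis construction.} Once the action of $h$ on $\hhs$ is known to be free, the plan is to invoke the theory of free $\mathbb Z$-actions on non-Hausdorff simply connected one-dimensional order trees developed in \cite{Fe5, Ro-St}: any such action admits an invariant axis homeomorphic to $\mathbb R$ on which the generator acts by translation. Concretely, given $F \in \hhs$ write $[F, h(F)]$ for the unique minimal compact embedded arc joining $F$ to $h(F)$ in the order tree $\hhs$; after replacing $F$ by an appropriate basepoint on the ``axial locus'' (obtained by an order-tree nearest-point projection of the initial basepoint onto its $h$-orbit), the successive translates $h^n([F, h(F)])$ share only their common endpoints, so that
\[
\mathcal A^s \ := \ \bigcup_{n \in \mathbb Z} h^n\bigl([F, h(F)]\bigr)
\]
is an $h$-invariant embedded real line on which $h$ acts by translation. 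Proper embedding is automatic because the quotient $\mathcal A^s / \langle h \rangle$ is a compact circle; the identical construction applied to $\hhu$ yields the axis $\mathcal A^u$.

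The main obstacle will be the axis extraction in the second step, since $\hhs$ is typically non-Hausdorff: no metric is available and the classical Hausdorff-tree argument minimizing a displacement function does not apply, so one must carefully use the order-tree nearest-point projection machinery from \cite{Fe5, Ro-St} to locate an axial basepoint. By contrast, the first step reduces directly to Theorem \ref{chain}, once the pseudo-centrality of $h$ in $\pi_1(P)$ has been used to promote one fixed leaf of $h$ into two distinct fixed leaves.
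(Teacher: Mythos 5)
Your freeness argument is sound but circuitous: once $h$ fixes a single leaf $F_0\in\wls$, $h$ already stabilizes the unique periodic orbit $\tilde\theta$ of $\wwp$ in $F_0$, so $h$ is a power of the generator of the cyclic stabilizer of $\tilde\theta$, which is freely homotopic up to powers to $\pi(\tilde\theta)$. This alone contradicts $P$ being free; the detour through a second fixed leaf and Theorem \ref{chain} is unnecessary (though not wrong). The paper simply states the freeness as a one-liner.

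The real gap is in your axis construction. You assert that a free $\mathbb Z$-action on a non-Hausdorff simply connected order tree automatically admits an axis homeomorphic to $\mathbb R$, and attribute this to \cite{Fe5,Ro-St}. That is not what \cite{Fe5} proves. The cited result gives a \emph{unique} axis, but with a genuine dichotomy: either the axis is a real line, or it is an infinite disjoint union $\bigcup_{i\in\mathbb Z}[x_i,y_i]$ of closed intervals, with $y_i$ non-separated from $x_{i+1}$. Your construction $\bigcup_n h^n\bigl([F,h(F)]\bigr)$ implicitly presumes the existence of a connected compact arc $[F,h(F)]$ and that consecutive translates glue along endpoints — exactly what fails in the disconnected-union case, where the ``arc'' jumps across a non-Hausdorff pair. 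Ruling out the second alternative is precisely the main content of the paper's proof: one shows $\pi_1(P)$ then acts on the index set $\mathbb Z$ with a finite-index $\mathbb Z^2$ subgroup, forcing $P$ to be $T^2\times I$ or the twisted $I$-bundle over the Klein bottle, and both are excluded (the first via the torus-bundle/suspension analysis, the second by an explicit check that the fiber cannot act freely on $\mathbb Z$). You cannot skip this case analysis; the dichotomy is dictated by the non-Hausdorff structure, not resolved by it.

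Your proper-embedding argument is also incorrect. Compactness of $\mathcal A^s/\langle h\rangle$ says nothing about whether $\mathcal A^s$ is properly embedded in $\hhs$: the axis could perfectly well accumulate on leaves outside itself, even though the quotient circle is compact (this genuinely happens, e.g. for the Bonatti--Langevin example). The paper's argument is again substantive: if $\mathcal A^s$ is not properly embedded, it accumulates on a $\pi_1(P)$-invariant collection of pairwise non-separated leaves indexed by $\mathbb Z$, which again forces $\pi_1(P)$ to be virtually $\mathbb Z^2$; by the first part of the argument this makes the flow a suspension, in which case $\hhs\cong\mathbb R$ equals $\mathcal A^s$ and properness is vacuous. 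Both the dichotomy-elimination and the properness argument rely on pseudo-Anosov-specific machinery (Theorem \ref{theb} on non-separated leaves, Theorem \ref{prod} on product regions, the classification of virtually-$\mathbb Z^2$ pieces) that your proposal bypasses entirely.
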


\begin{proof}{}
In the case that $P$ is all of $M$ this was proved in  Theorem 4.1 of \cite{bafe1}.
The proof in the case $P$ is not all of $M$ is similar. We refer to \cite{bafe1} whenever
details are the same as in the case $P = M$.
Consider the action of $h$ on $\hhs$. Since $P$ is a free Seifert piece, $h$ acts freely on $\hhs$.
In \cite{Fe5} it is shown that there is a unique axis $\mathcal A^s$ for $h$.
There are two options: either $\mathcal A^s$ is a real line or $\mathcal A^s$ is an infinite union of closed
intervals. First we show  that the second case cannot happen.
Suppose that

$$\mathcal A^s \ = \ \bigcup_{i \in {\mathbb Z}} [x_i,y_i] \ = \ \bigcup_{i \in {\mathbb Z}} B_i,$$

\noindent
where $B_i = [x_i,y_i]$ are closed segments in $\hhs$ and $y_i$ is not separated from
$x_{i+1}$. Since the axis $\mathcal A^s$ is unique and $h$ is in the pseudo-center of $\pi_1(P)$,
then every element of $\pi_1(P)$ permutes the collection $\{ B_i \}$.
In fact $\pi_1(P)$ acts on the indexing set of this collection which is ${\mathbb Z}$.
The action preserves elements being neighbors. Some elements can reverse
the order in ${\mathbb Z}$.
As in Case 2 of Theorem 4.1 of \cite{bafe1}, $\pi_1(P)$ acts on ${\mathbb Z}$
and has a subgroup of index $\leq 4$ which is ${\mathbb Z}^2$. The difference
from \cite{bafe1} is that when $P = M$ this quickly implies a contradiction, which
is not the case here. In any case since $M$ is assumed orientable, then Lemma 5.3 of
\cite{bafe1} implies that $P$ is either $T^2 \times [0,1]$, where $T^2$ is the torus; or $P$ is a twisted $I$ bundle over the Klein bottle.

In the first case recall that the torus decomposition into Seifert fibered pieces is minimal, the only possibility
is that $P$ is the only piece of the JSJ decomposition and $M$ would be obtained by gluing one to the other the boundary components of
$M.$ Then $M$ would be a torus bundle over the circle, hence, by \cite{bafe1}, the pseudo-Anosov flow would be the suspension
of a linear diffeormorphism: it is in contradiction with the hypothesis $\mathcal A^s \neq \mathbb R.$

In the second case
$\pi_1(P) = \pi_1(K) = < a, b |  a b a^{-1} = b^{-1} >$, where $K$ is the Klein bottle.
In this situation
the only possible regular fibers for a Seifert fibration of $P$ are represented
either by $a^2$ or $b$. In our situation $\pi_1(P)$ acts on ${\mathbb Z}$
and $h$ acts freely on ${\mathbb Z}$. Otherwise for some $j$, $h(B_j) = B_j$ and
either $h(x_j) = x_j$ or if $h$ reversed the orientation in $[x_j,y_j]$ then
$h$ would fix an interior point of $[x_j,y_j]$, both
options contradict the free action of $h$ on $\hhs$.
Therefore $h$ acts as a translation on ${\mathbb Z}$.

Suppose first that $h$ is represented by $b$. Then $a h a^{-1} = h^{-1} \ (*)$. If $a$
acts freely on ${\mathbb Z}$, it is a translation and the equation is impossible. If $a$
fixes an element in ${\mathbb Z}$, then so does $a^2$ and so $P$ has a Seifert
fibration where the fiber does not act freely and $P$ would be a periodic Seifert
piece, contradiction to assumption.
Now suppose that $h$ is represented by $a^2$. Hence $a^2$ acts freely
on $\hhs$ and as proved in \cite{Fe5}, $a$ also acts freely on $\hhs$.
If $b$ acts freely, this contradicts equation $(*)$ above. If $b$ does not
act freely then $P$ has a Seifert fibration where the fiber is periodic, contradiction.
We conclude that this case for $\mathcal A^s$ cannot happen.

\vskip .1in
\noindent
{\bf {Conclusion}} $-$ The axis of $h$ is a real line $\mathcal A^s$.

Now we need to check whether $\mathcal A^s$ is properly embedded or not.
Examples where an axis of an element acting freely on $\hhs$ is not properly
embedded are very common and occur for instance in the Bonatti-Langevin
example \cite{Bo-La}. Parametrize the leaves of $\wls$ in $\mathcal A^s$
as $\{ s(t),  t \in \rrrr \}$ where $s(t)$ separates $s(t')$ from $s(t")$ if
and only if $t$ is between $t'$ and $t"$ in $\rrrr$. Without loss of generality
suppose that $\mathcal A^s$ is not properly embedded for $t \rightarrow \infty$, so

$$\lim_{t \rightarrow \infty} s(t) \ \ = \ \ \bigcup_{i \in I} C_i$$

\noindent
where $C_i$ is a collection of leaves non separated from each other in the side
the collection $\{ s(t) \}$ is limiting on. The set $I$ is an interval in ${\mathbb Z}$
which can be either finite or ${\mathbb Z}$ \cite{Fe2,Fe3}. Since $< h >$ is normal in
$\pi_1(P)$ and $\mathcal A^s$ is the unique axis for the action of $h$ on $\hhs$, it follows
that $\pi_1(P)$ preserves $\mathcal A^s$. In addition a subgroup $G$ of index at most $2$
in $\pi_1(P)$ preserves orientation in $\mathcal A^s$, so it preserves the collection
$\{ C_i, i \in I \}$. This already implies that $I = {\mathbb Z}$ and that $\pi_1(P)$ has
a finite index subgroup isomorphic to ${\mathbb Z}^2$. In the first part of this proof
we showed that then the pseudo Anosov flow is a suspension, and therefore $\mathcal H^s = \mathcal A^s$ is an embedded
real line as required.

We conclude that both $\mathcal A^s$ and $\mathcal A^u$ are homeomorphic to $\rrrr$ and each
is properly embedded in $\hhs$ or $\hhu$ respectively.
\end{proof}

\begin{remark}\label{rk:criteriainA}
{\em
One checks easily that element of $\mathcal A^s$ are characterized by the following property:
an element $s$ of $\hhs$ lies in $\mathcal A^s$ if and only if there is a component $U$ of the complement
of $s$ in $\hhs$ such that $h(U) \subset U$. Indeed, if $s$ lies in $\mathcal A^s$, then we can take as $U$ the
component containing $h(s)$. If $s$ does not lie in $\mathcal A^s$, then for any connected component
$U$ of $\hhs  -  \{ s \}$ we have:

-- either $U$ contains $\mathcal A^s$: then $s$ lies in $h(U)$ which therefore cannot be contained in $U$;

-- or $U$ is disjoint from $\mathcal A^s$: in this case, $U$ and $h(U)$ are disjoint.

Similarly, an element $u$ of $\hhu$ lies in $\mathcal A^u$ if and only if there is a component $U$ of the complement
of $s$ in $\hhu$ such that $h(U) \subset U$.
}
\end{remark}

\begin{remark}\label{rk:orienteA}
{\em
From now on, we fix the orientation of $\mathcal A^s$ and $\mathcal A^u$ (hence of $\hhs$ and $\hhu$) so that, for the induced
total order on $\mathcal A^s$ and $\mathcal A^u$ we have $h(x) > x$ for every leaf $x$.
}
\end{remark}

\subsection{Identifying $\Omega'$ in the orbit space}\label{sub:omegainorbitspace}

In this section, we construct a $\pi_1(P)$-invariant domain $\Omega_P$ in the orbit space $\mathcal O$ of our fixed flow $\Phi$. This domain is
naturally identified to a domain $\Omega'$ in
$\mathcal A^s \times \mathcal A^u$, enjoying properties similar to the properties satisfied by the domain $\Omega$ described in Section \ref{sub:consorbit}.

Given our fixed pseudo-Anosov flow $\Phi$, let

$$\Omega' \  =  \ \{ (s,u) \in \mathcal A^s \times \mathcal A^u \ | \
s \cap u \ \not = \ \emptyset \}$$

\noindent
Then, the map $\flat: \Omega' \to \oo$ mapping $(s,u)$ into the unique intersection point between $s$ and $u$ is continuous. This intersection
point is an orbit of $\widetilde{\Phi}$.
We denote by $\Omega_P$
the image of $\flat$.

The first step is to prove that this set is non-empty.

The properties of the stable and unstable foliations are similar, up to inversion of the flow. In the following lemma,
properties are stated for both foliations, but the proof is written for only one of them.

\begin{lemma}
\label{le:nonempty}
The set $\Omega_P$ intersects every leaf in $\mathcal A^s$ and every leaf in $\mathcal A^u$.
\end{lemma}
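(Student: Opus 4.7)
\emph{Plan.} The statement is symmetric in the stable and unstable directions under time-reversal of $\Phi$, so I focus on showing that every $s \in \mathcal{A}^s$ meets some $u \in \mathcal{A}^u$; equivalently, I must produce an orbit $\tilde\theta \in s$ whose unstable leaf lies in $\mathcal{A}^u$. Fix $s \in \mathcal{A}^s$ and set $s_n := h^n(s) \in \mathcal{A}^s$, which are strictly monotone in $\mathcal{A}^s$ by Remark \ref{rk:orienteA}.

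By the characterization in Remark \ref{rk:criteriainA}, an unstable leaf $u$ lies in $\mathcal{A}^u$ precisely when some component $V$ of $\mathcal{H}^u \setminus \{u\}$ satisfies $h(V) \subset V$. A natural candidate for the sought orbit is one whose unstable leaf $u$ \emph{bridges} $s$ and $s_1 = h(s)$, i.e.\ intersects both leaves. Indeed, if such a $u$ exists, then $h^n(u)$ intersects $s_n$ and $s_{n+1}$ for every $n$: the component of $\mathcal{H}^u \setminus \{u\}$ containing $h(u)$ therefore contains every $h^n(u)$ with $n \geq 1$ and is consequently $h$-forward invariant, forcing $u \in \mathcal{A}^u$ by the criterion.

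Producing such a bridging orbit is the heart of the argument, and I plan to establish it by contradiction. Assume no orbit in $s$ has unstable leaf meeting $h(s)$; then as $\tilde\theta$ varies in $s$, the leaves $\widetilde{W}^u(\tilde\theta)$ all lie on a single side of a separator in $\mathcal{H}^u$ cutting off $h(s)$. Driving $\tilde\theta$ to the end of $s$ that is nearest to $h(s)$, the family $\{\widetilde{W}^u(\tilde\theta)\}$ must accumulate, and the limiting configuration must be either a stable product region between $s$ and $h(s)$ --- contradicting Theorem \ref{prod}, since Proposition \ref{axesfirst} rules out the suspension case under our standing hypothesis --- or a non-separated pair of unstable leaves. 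In the latter case Theorem \ref{theb} yields a nontrivial $\gamma \in \pi_1(M)$ fixing both leaves, and an equivariance argument using the fact that $h$ lies in the pseudo-center of $\pi_1(P)$ will produce an element for which $h$ is forced to fix some leaf in $\mathcal{H}^s$ or $\mathcal{H}^u$, contradicting the freeness of the $h$-action established in the proof of Proposition \ref{axesfirst}.

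The main obstacle is executing the accumulation step rigorously in the non-Hausdorff leaf space $\mathcal{H}^u$. The tools I expect to use are the perfect-fit and lozenge technology from the Background (Definitions \ref{def:pfits}, \ref{def:lozenge}, and Theorems \ref{chain} and \ref{theb}), combined with the $h$-equivariance inherited from $h$ being in the pseudo-center of $\pi_1(P)$. Once a bridging orbit is produced, the symmetric claim for $\mathcal{A}^u$ follows by reversing the roles of $\widetilde{\Lambda}^s$ and $\widetilde{\Lambda}^u$.
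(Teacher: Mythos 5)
There is a genuine gap, and it occurs at the very first step: the bridging orbit you seek need not exist. Consider the geodesic flow on a closed hyperbolic surface, the baseline example of a free Seifert piece. In the orbit space $\Omega_0 = \{(x,y) : x < y < \tilde{\tau}_0(x)\}$ described in Section \ref{alter}, the stable leaf $s$ corresponds to a fixed first coordinate $x_0$, and the set of unstable leaves $u$ meeting $s$ is exactly the open interval $]x_0, \tilde{\tau}_0(x_0)[$. The set of $u$ meeting $h(s) = \tilde{\tau}_0^k(s)$ is $]\tilde{\tau}_0^k(x_0), \tilde{\tau}_0^{k+1}(x_0)[$. These two intervals are disjoint for every $k \geq 1$, so \emph{no} unstable leaf intersects both $s$ and $h(s)$. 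The same disjointness persists after hyperbolic blow up, since the region $\Omega'$ only shrinks. Thus your Step~1 is attempting to establish a statement that fails already for the model flows, and the contradiction you expect to extract cannot appear: when one drives $\tilde\theta$ along $s$ toward the perfect-fit end, the unstable leaves $\widetilde{W}^u(\tilde\theta)$ simply converge in $\mathcal{H}^u$ to a single leaf forming a perfect fit with $s$ (the leaf $u = \tilde{\tau}_0(x_0)$ in the geodesic-flow picture). This is a third outcome, beside product region and branching, that your dichotomy omits; it is precisely the generic situation, and it produces neither a contradiction nor a bridging leaf.

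The lemma itself is much weaker than what you are aiming for: one only needs $s$ to meet \emph{some} leaf of $\mathcal{A}^u$, not one that also meets $h(s)$. The paper's argument exploits this: assuming $s$ misses every leaf of $\mathcal{A}^u$, it builds the increasing family of open sets $C_n = h^n(C_0)$, where $C_0$ is the component of $\mathcal{O} \setminus s$ containing $h^{-1}(s)$, shows $\bigcup C_n = \mathcal{O}$ using proper embeddedness of $\mathcal{A}^s$ (otherwise leaves $h^n(s)$ would accumulate, forcing a non-separated stable pair), and then observes that the connected, $h$-invariant union of leaves of $\mathcal{A}^u$ would have to lie in a single ``annulus'' $C_n \setminus C_{n-1}$, which is impossible since $h$ moves that annulus off itself. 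You should abandon the bridging strategy and instead argue at this global level; a secondary point is that even if a bridging orbit did exist, your appeal to Theorem~\ref{theb} produces $\gamma \in \pi_1(M)$, not necessarily in $\pi_1(P)$, so the planned equivariance argument with $h$ in the pseudo-center of $\pi_1(P)$ would also need repair.
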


\begin{proof}
Let $s$ be an element of $\mathcal A^s$.
We abuse notation and think of $s$ both as a leaf in $\mathcal A^s$ and as a subset of the orbit space $\oo$.
Assume by contradiction that $s$ does not intersect any leaf in $\mathcal A^u$.
In particular since $\mathcal A^u$ is $h$-invariant then $h^n(s)$ also does not intersect any leaf in $\mathcal A^u$.
Define $C_0$ as the unique connected component of $\oo  -  s$ which contains $h^{-1}(s)$ (despite the notation, $C_0$ is not a lozenge).
The set $C_0$ It is an open
subset, saturated by $\oo^s$, with boundary contained in $s$. For every integer $n$, let $C_n = h^n(C_0)$. Then every $C_n$ is contained in $C_{n+1}$, more precisely,
the closure of $C_n$ is contained in $C_{n+1}$.

The union $C_\infty$ of all the $C_n$ is a $h$-invariant open subset of $\oo$, saturated by $\oo^s$. We claim that $C_\infty$ is the entire $\oo$. If not, since $\oo$ is connected,
$C_\infty$ is not closed: there is an element $p$ of $\oo  -  C_\infty$  and a sequence of points $p_n$ converging to $p$, such that every $p_n$ lies in some $C_{k(n)}$.
Consider a small neighborhood $U$ of $p$: for $n$ sufficiently big, every $p_n$ lies in $U$. If $U$ intersects only finitely many different iterates $h^k(s)$, then every $p_n$
eventually belongs to the same $C_k$, hence $p$ lies in the closure of $C_k$, i.e. in $h^k(s)$. But then $p \in C_{k+1}$, contradiction.
Therefore, $U$ intersects infinitely many iterates $h^n(s)$. It means that the leaf $\oo^s(p)$ is a limit of iterates of the leaf $s$.
Hence $h(\oos(p))$ is non separated from $\oos(p)$, a contradiction to the axis of $h$ being properly
embedded in $\hhs$.

We have proved that $C_\infty$ is the entire orbit space. Let $u$ be a leaf in $\mathcal A^u$, and $q$ an element of $u$. Let $n$ be the smaller integer such that $q$ belongs to $C_n$. Since $u$ is disjoint from $h^{n}(s)$, we have $u \subset C_n  -  C_{n-1}$. In fact, the union of unstable leaves belonging to $\mathcal A^u$ is connected because $\mathcal A^u$ is a properly embedded
line in $\hhu$. It follows that all the elements of $\mathcal A^u$ are leaves contained in $C_n  -  C_{n-1}$.
But this is a contradiction since $\mathcal A^u$ is $h$-invariant and that $C_n  -  C_{n-1}$ is disjoint from its $h$-iterates.
\end{proof}

Let $z \in \oo$. A stable prong of $z$ is a component of $\oos(z) - \{ z \}$. Sometimes we include
$z$ itself in the prong. The point $z$ is singular if and only if there are more than two prongs
at $z$. We also say these are the prongs of $\oos(z)$ at $z$.

\begin{lemma}
\label{le:2prongs}
Let $p$ be a point in $\Omega_P$. Then there are at most
two stable (respectively unstable) prongs at $p$ intersecting $\Omega_P$.
\end{lemma}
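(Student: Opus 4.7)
\textit{Proof plan.} The strategy is to describe $\Omega_P \cap s$, where $s := \mathcal{O}^s(p)$, as a topological arc in $s$ passing through $p$. An arc in the singular stable leaf $s$ can pass through $p$ at most once, hence touches at most two of its stable prongs, which yields the bound. The unstable case is symmetric and I will not treat it separately.

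First I would analyze the set $\mathcal{A}^u_s := \{u' \in \mathcal{A}^u \mid u' \cap s \neq \emptyset\}$ and show that it is a connected sub-interval of $\mathcal{A}^u \cong \mathbb{R}$. Given $u_1, u_3 \in \mathcal{A}^u_s$ and $u_2 \in \mathcal{A}^u$ strictly between them on the line $\mathcal{A}^u$, the fact that $\mathcal{A}^u$ is \emph{properly} embedded in the non-Hausdorff tree $\mathcal{H}^u$ (Proposition \ref{axesfirst}) ensures that $u_2$ separates $u_1$ from $u_3$ inside the entire tree $\mathcal{H}^u$. Now take any arc in $s$ from $q_1 := u_1 \cap s$ to $q_3 := u_3 \cap s$, and push it forward by the continuous leaf map $\phi : s \to \mathcal{H}^u$, $q \mapsto \mathcal{O}^u(q)$; the resulting continuous path from $u_1$ to $u_3$ in $\mathcal{H}^u$ must visit $u_2$, producing a point $q_2 \in s$ with $\mathcal{O}^u(q_2) = u_2$, so $u_2 \in \mathcal{A}^u_s$.

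Next, I would examine the map $\psi : \mathcal{A}^u_s \to s$ defined by $\psi(u') = u' \cap s$. It is continuous by the local product structure of the foliations, injective because an unstable leaf meets a stable leaf in at most one orbit, and its inverse $q \mapsto \mathcal{O}^u(q)$ is continuous by definition of the leaf space topology. Thus $\psi$ is a homeomorphism onto its image $J \subset s$, and $J$ is an arc (homeomorphic to the interval $\mathcal{A}^u_s$). Finally I would identify $J$ with $\Omega_P \cap s$: a point $q \in s$ lies in $\Omega_P$ iff $\mathcal{O}^u(q) \in \mathcal{A}^u$, which is the condition $q = \psi(\mathcal{O}^u(q)) \in J$. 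Since $p \in \Omega_P$ forces $\mathcal{O}^u(p) \in \mathcal{A}^u$, we have $p \in J$. The complement $s \setminus \{p\}$ is the disjoint union of the open stable prongs at $p$; an arc $J$ containing $p$ meets $p$ at most once, so $J \setminus \{p\}$ decomposes into at most two connected components, each lying in a single prong. Therefore at most two stable prongs at $p$ meet $\Omega_P$.

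The step I expect to be the main obstacle is the separation argument in Step~1: in a general one-dimensional non-Hausdorff tree, removing a point of an embedded line need not separate the ambient tree, and it is essential to use that $\mathcal{A}^u$ is \emph{properly} embedded (Proposition \ref{axesfirst}) to guarantee that $u_2$ disconnects $u_1$ from $u_3$ within all of $\mathcal{H}^u$. Once this separation is secured, the rest of the argument follows from the continuity of the leaf projection and the one-dimensional topology of $s$.
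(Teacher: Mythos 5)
Your proposal is correct, but it takes a genuinely different and more structural route than the paper. The paper's proof of Lemma \ref{le:2prongs} is a quick contradiction: assume three points $p_1, p_2, p_3 \in \Omega_P$ lie in three distinct prongs of $s$ at $p$; their unstable leaves $u_1, u_2, u_3$ all lie in the line $\mathcal A^u$, so one of them, say $u_1$, separates the other two in $\oo$; but an arc in $s$ joining $p_2$ to $p_3$ while avoiding the $p_1$-prong never crosses $u_1$, a contradiction. You instead show directly that $\Omega_P \cap s$ is the homeomorphic image, under $u' \mapsto u' \cap s$, of a connected sub-interval $\mathcal A^u_s$ of $\mathcal A^u$; since removing a single point from an arc leaves at most two components, each lying in a single prong, the bound follows. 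A pleasant byproduct is that your argument simultaneously establishes the paper's Lemma \ref{le:segment} (that $\Omega_P \cap s$ is a connected segment), which the paper proves separately afterward using Lemma \ref{le:2prongs}; so you save a step downstream.

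Both proofs ultimately rest on the same structural fact, namely that a middle element of the axis $\mathcal A^u$ separates the two outer ones. A small imprecision in your write-up: you attribute this separation to the \emph{proper embedding} of $\mathcal A^u$ in $\hhu$. In fact the separation is built into the definition of the linear order on the axis (see the parametrization in the proof of Proposition \ref{axesfirst}, where $s(t)$ separates $s(t')$ from $s(t'')$ exactly when $t$ lies between $t'$ and $t''$, and the characterization in Remark \ref{rk:criteriainA} coming from \cite{Fe5,Ro-St}); proper embedding concerns the behavior at the ends of $\mathcal A^u$ (non-accumulation onto non-separated leaves), which is a different issue from separation at an interior point. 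The paper invokes the same separation fact with no more justification than you give, so this is a misattribution rather than a gap.
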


\begin{proof}
Let $s$ be the stable leaf of $p$.
First assume that there are $3$ points $p_1$, $p_2$, $p_3$ in $\Omega_P$ lying in different prongs of $s$ at $p$.
Let $u_1$, $u_2$, $u_3$ be the unstable leaves of $p_1$, $p_2$, $p_3$.
The leaves $u_1$, $u_2$, $u_3$ all lie in the axis $\mathcal A^u$, which is a line.
Hence one them,
say $u_1$, must disconnect the other two.
On the other hand, one can join $u_2$ to $u_3$ by a segment in $s$ avoiding the prong containing $p_1$.
This stable segment does not intersect $u_1$, hence $u_1$ does not disconnect $u_2$ from $u_3$. Contradiction.
\end{proof}

\begin{lemma}
\label{le:segment}
The intersection between $\Omega_P$ and a stable (or unstable) leaf is a segment.
\end{lemma}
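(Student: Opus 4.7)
My plan is to describe $\Omega_P \cap s$ via the parametrization by $\mathcal{A}^u$ and then to exploit separation in the plane $\mathcal{O}$. Assume without loss that $s \in \mathcal{A}^s$ (otherwise $\Omega_P \cap s = \emptyset$), and set $J := \{u \in \mathcal{A}^u \mid u \cap s \neq \emptyset\}$; the restriction of $\flat$ gives a continuous bijection $\flat(s,\cdot): J \to \Omega_P \cap s$, $u \mapsto s \cap u$. I will prove that (i) $J$ is an interval of $\mathcal{A}^u \cong \mathbb{R}$, that (ii) its image in the tree $s$ is an arc, and that (iii), via Lemma \ref{le:2prongs}, this arc is a segment of $s$.

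The heart of the argument is the following separation step. Suppose $u_1, u_2 \in J$ with $u_1 < u_2$ in the oriented order on $\mathcal{A}^u$ (Remark \ref{rk:orienteA}) and let $u \in \mathcal{A}^u$ satisfy $u_1 < u < u_2$. Applying the axis criterion of Remark \ref{rk:criteriainA} to both $h$ and $h^{-1}$ at $u$ yields two distinct components $V_+, V_- \subset \mathcal{H}^u \setminus \{u\}$ with $h(V_+) \subset V_+$ and $h^{-1}(V_-) \subset V_-$. The orientation convention together with $u_2 > u$ puts $u_2$ on the same side of $u$ in the line $\mathcal{A}^u$ as $h^n(u)$ for $n>0$; being joined to $h^n(u) \in V_+$ by a sub-arc of $\mathcal{A}^u \setminus \{u\} \subset \mathcal{H}^u \setminus \{u\}$, $u_2$ itself lies in $V_+$, and symmetrically $u_1 \in V_-$. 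Continuity of the projection $\mathcal{O} \to \mathcal{H}^u$ makes the preimages of $V_\pm$ disjoint open saturated subsets of $\mathcal{O} \setminus u$ containing respectively $u_2, u_1$, so the leaves $u_1, u_2$ lie in distinct components of the plane $\mathcal{O} \setminus u$. Let $p_i := s \cap u_i$ and let $\alpha \subset s$ be the unique arc from $p_1$ to $p_2$ in the tree $s$: since $\alpha$ is connected and joins distinct components of $\mathcal{O} \setminus u$, it must cross $u$, so $u \cap s \neq \emptyset$ and $u \in J$.

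With $J$ shown to be an interval, the continuous map $[u_1, u_2] \to s$, $u \mapsto s \cap u$, has connected image containing $p_1, p_2$, and in the tree $s$ any such connected subset contains the unique arc $\alpha$. Hence every $q \in \alpha$ is of the form $s \cap u$ for some $u \in \mathcal{A}^u$, so $\widetilde{W}^u(q) \in \mathcal{A}^u$ and $q \in \Omega_P$; thus $\alpha \subset \Omega_P \cap s$. Consequently $\Omega_P \cap s$ is a connected subtree of $s$ that is closed under taking the unique sub-arc between any two of its points. Finally Lemma \ref{le:2prongs} forbids branching: at every point of $\Omega_P$ at most two stable prongs of $s$ meet $\Omega_P$, so this subtree is branchless, and a branchless connected subtree of a tree is either a single point or a topological segment.

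The hard part will be the separation step, since $\mathcal{H}^u$ is a non-Hausdorff tree and separation of $u_1, u_2$ by $u$ in $\mathcal{H}^u$ does not a priori transfer to $\mathcal{O}$. The axis criterion (Remark \ref{rk:criteriainA}) is exactly what produces two genuinely distinct $h$-invariant components of $\mathcal{H}^u \setminus \{u\}$ on either side of $u$, whose preimages in $\mathcal{O}$ under the continuous projection are automatically disjoint open sets; that is what lets the separation transfer from the leaf space back to the plane.
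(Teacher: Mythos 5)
Your proof is correct, but it takes the opposite route to the paper's, and the paper's is substantially simpler. The paper fixes $p$ on the arc $[p_1,p_2]\subset s$ and observes that $\mathcal{O}^u(p)$ separates $\mathcal{O}^u(p_1)$ from $\mathcal{O}^u(p_2)$ in $\mathcal{O}$ because the arc $[p_1,p_2]$ lies in a stable leaf, is transverse to the unstable foliation, and meets $\mathcal{O}^u(p)$ only at $p$ — this is local transversality, no leaf-space machinery required. Since $\mathcal{O}^u(p_1),\mathcal{O}^u(p_2)\in\mathcal{A}^u$ and the union of the leaves of $\mathcal{A}^u$ is a connected subset of $\mathcal{O}$ (this was already noted in the proof of Lemma \ref{le:nonempty}), it follows at once that $\mathcal{O}^u(p)\in\mathcal{A}^u$, hence $p\in\Omega_P$. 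That is the entire connectivity argument, and Lemma \ref{le:2prongs} finishes as in your last paragraph.

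You instead fix $u_1<u<u_2$ in $\mathcal{A}^u$ and aim to show $s$ crosses $u$. For that you have to establish that $u$ separates $u_1$ from $u_2$ in $\mathcal{O}$, which is not automatic since $\mathcal{H}^u$ is non-Hausdorff; your invocation of Remark \ref{rk:criteriainA} for both $h$ and $h^{-1}$ to produce two distinct $h$-invariant components $V_\pm$ is a correct way to do it (and indeed $V_+\neq V_-$, since $h^{-1}(u)\in V_+$ would force $u\in h(V_+)\subset V_+$, absurd). After that you still need an extra step — the continuity of $u\mapsto s\cap u$ on $[u_1,u_2]$ and the arc-closedness of connected subsets of the tree $s$ — to convert the interval $[u_1,u_2]\subset J$ back into the arc $\alpha\subset\Omega_P\cap s$. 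So your argument spends effort on a separation statement in the non-Hausdorff leaf space that the paper sidesteps entirely by separating with an unstable leaf in the plane $\mathcal{O}$, where transversality makes it immediate; and it then needs a closing continuity argument that the paper avoids by parametrizing along the arc in $s$ rather than along $\mathcal{A}^u$. Both routes are valid; the paper's buys directness, yours makes explicit the leaf-space separation phenomenon, which can be a useful viewpoint elsewhere but is overhead here.
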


\begin{proof}
According to Lemma \ref{le:2prongs} we just have to prove that the intersection between a stable leaf $s$ and $\Omega_P$ is connected.
Let $p_1$, $p_2$ be two elements of $s \cap \Omega_P$, and $p$ any element of the segment $[p_1, p_2]$ in $s$. Then the unstable
leaf $\oo^u(p)$ disconnects $\oo^u(p_1)$ from $\oo^u(p_2)$. It follows that it must be an element of $\mathcal A^u$. Therefore, $p = s \cap \oo^u(p)$
lies in $\Omega_P$.
\end{proof}

\begin{lemma}
\label{le:interval}
For any element $z$ of $\Omega_P$, there is a point $p$  in $\oo^s(z)$ so that $p$ is in the interior of a segment $I$ contained in $\oo^u(p) \cap \Omega_P$.
In particular if $q$ is in $I \subset \oou(p)$, then $\oos(q)$ is in the stable axis $\mathcal A^s$.
\label{interval}
\end{lemma}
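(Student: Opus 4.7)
The plan is to show that when $z\in\Omega_P$ is a regular orbit of $\wwp$, the point $p=z$ itself works: the intersection $\oou(z)\cap\Omega_P$ will contain an open neighborhood of $z$ in $\oou(z)$. The case when $z$ is singular will then be handled by first sliding along $\oos(z)\cap\Omega_P$ to a nearby regular point (guaranteed to exist because the lifts of singular orbits of $\Phi$ are discrete in $\mi$, hence in $\oo$), and then applying the regular case to that point. The final clause of the statement is then immediate, since by definition $I\subset\Omega_P$ forces $\oos(q)\in\mathcal A^s$ for every $q\in I$.

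The key to the regular case is the criterion of Remark~\ref{rk:criteriainA}. Fix a regular $z\in\Omega_P$ and write $s_0=\oos(z)$, $u_0=\oou(z)$, $s_1=h(s_0)$, $s_{-1}=h^{-1}(s_0)$. Using the orientation fixed in Remark~\ref{rk:orienteA}, there are two components $U_0^+,U_0^-$ of $\oo-s_0$ containing respectively all forward and all backward $h$-iterates of $s_0$, with $h(U_0^+)\subset U_0^+$. Pick a small product neighborhood $V$ of $z$ in $\oo$ and consider the segment $J:=u_0\cap V$. For $q\in J$ close enough to $z$, the leaf $s_q:=\oos(q)$ is distinct from $s_0$ and $s_{\pm 1}$; because $h$ acts freely on $\hhs$ (the Seifert piece is free), $s_q$ is also distinct from its own $h$-iterates $h^{\pm 1}(s_q)=\oos(h^{\pm 1}(q))$, and two distinct stable leaves in $\oo$ are disjoint. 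Let $U_q^+$ (respectively $U_q^-$) denote the component of $\oo-s_q$ containing $s_1$ (respectively $s_{-1}$).

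The main claim is that for $q$ close enough to $z$ one has $h(U_q^+)\subset U_q^+$, whence $s_q\in\mathcal A^s$ by Remark~\ref{rk:criteriainA}; combined with $\oou(q)=u_0\in\mathcal A^u$ this yields $q\in\Omega_P$ and produces the required open segment $I$. The proof proceeds in two steps. First, since $h(z)\in s_1\subset U_q^+$ and $h^{-1}(z)\in s_{-1}\subset U_q^-$ are points of these open sets, continuity of $h^{\pm 1}:\oo\to\oo$ places $h(q)$ in $U_q^+$ and $h^{-1}(q)$ in $U_q^-$ for $q$ sufficiently near $z$; by connectedness of the leaves $h^{\pm 1}(s_q)$ and their disjointness from $s_q$, one concludes $h(s_q)\subset U_q^+$ and $h^{-1}(s_q)\subset U_q^-$. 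Second, the inclusion $h^{-1}(s_q)\subset U_q^-$ forces $s_q\not\subset h(U_q^+)$, so the connected set $h(U_q^+)$, being disjoint from $h(s_q)$ and missing $s_q$, lies entirely inside one component of $\oo-s_q$; since it contains $s_2=h(s_1)\subset U_q^+$, that component must be $U_q^+$.

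The main obstacle is managing the singular case: one must show that $\oos(z)\cap\Omega_P$ contains regular points beyond a possibly singular $z$. I plan to do this by running the argument above symmetrically, exchanging the roles of $\oos$ and $\oou$: this shows that $\oos(z)\cap\Omega_P$ contains an initial arc emanating from $z$ along at least one prong of $\oos(z)$, and since only $z$ itself is singular on that prong, any regular interior point of that arc can serve as $p$ before invoking the regular case already proved. A secondary technical point is the continuous dependence of the "sides" $U_q^\pm$ as $q\to z$, which uses that the axes $\mathcal A^s,\mathcal A^u$ are properly embedded (Proposition~\ref{axesfirst}), so that $s_0, s_{\pm 1}$ are pairwise separated in $\hhs$, together with continuity of the $\pi_1(M)$-action on $\oo$.
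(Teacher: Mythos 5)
Your case division is on whether the \emph{orbit} $z$ is singular, but the operative distinction is whether the \emph{leaf} $\oos(z)$ is singular. These differ: $z$ can be a regular orbit while $\oos(z)$ carries a singular orbit $w\neq z$ elsewhere (this is exactly the paper's Case~2.2). In that configuration your regular-case claim --- that $\oou(z)\cap\Omega_P$ contains a full open neighborhood of $z$ in $\oou(z)$ --- is false in general. Near $z$ the arc $u_0=\oou(z)$ crosses $s_0=\oos(z)$, and its two sides enter two of the $p\geq 3$ complementary sectors of $s_0$ in $\oo$, equivalently two of the branches of $\mathcal H^s$ at the branch point $s_0$. The axis $\mathcal A^s$ is a properly embedded line through $s_0$ and enters exactly two of those branches (the ones toward $s_1=h(s_0)$ and $s_{-1}=h^{-1}(s_0)$), and there is no constraint forcing these to coincide with the two sectors adjacent to the prong of $s_0$ containing $z$. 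When they differ, the leaves $\oos(q)$ for $q$ on one side of $z$ in $u_0$ leave $\mathcal A^s$ immediately and those $q\notin\Omega_P$. Concretely, the step that fails in your argument is the disjointness of $s_q$ from $h(U_q^+)$: if $s_q\notin\mathcal A^s$, the connected set $\mathcal A^s$ (which does not meet $s_q$) lies in a single component of $\oo-s_q$, so that $U_q^+=U_q^-$ and the disjointness argument collapses. (And once one knows $U_q^+\neq U_q^-$, i.e.\ that $s_q$ separates $s_{-1}$ from $s_1$, the conclusion $s_q\in\mathcal A^s$ is already immediate, because the segment of $\mathcal A^s$ between $s_{-1}$ and $s_1$ would have to cross $s_q$; the detour through Remark~\ref{rk:criteriainA} is not doing independent work, and the paper's direct argument --- nonsingularity of $s_0$ plus properness of $\mathcal A^s$ in $\mathcal H^s$ --- is cleaner.)

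Your plan for singular $z$ --- slide along $\oos(z)\cap\Omega_P$ to a regular point $p$ and invoke the regular case --- does not escape the difficulty, because $\oos(p)=\oos(z)$ remains a singular leaf, so the regular-case statement you want to apply at $p$ is unavailable. The paper instead slides, in the problematic subcase of Case~2.2, to the \emph{singular} orbit $w$ on $\oos(z)$ and proves $w\in\Omega_P$ using Lemma~\ref{le:nonempty} together with a disconnection argument in $\mathcal A^u$; it then applies Case~2.1 at $w$: at a singular orbit of $\Omega_P$ there are exactly two unstable prongs whose nearby stable leaves project into $\mathcal A^s$, and their union with $w$ gives the required segment $I$ with $p=w$ in its interior. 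This reduction to the singular orbit, and the use of Lemma~\ref{le:nonempty} to place it in $\Omega_P$, is the essential ingredient missing from your proposal.
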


\begin{proof}
Let $s$ be the stable leaf through $z$, and let $u$ be
the unstable leaf through $z$. We have $s \in \mathcal A^s$,
$u \in \mathcal A^u$.

\textbf{Case 1 - The stable leaf $s$ is not singular.}
In this case, we just take $p=z$.
Consider the projection $\oo \rightarrow \hhs$ taking a point to the stable leaf containing it.
The projection in $\cH^s$ of the points in $u$ near $p$ lie
in a neighborhood of $s$ in $\cH^s$. Since $s$ is not a branch point of $\hhs$, then these projections
are in $\mathcal A^s$ if the points in $u$ are sufficiently close to $z$.
Therefore, a small segment in $u$ is the required interval $I$.

\textbf{Case 2 - $s$ is singular}

\textit{Case 2.1 -  $z$ is the singular point.}
Let $(s_i)$ be a sequence in $\mathcal A^s$, with
$s_i > s$ and $(s_i)$ converging to $s$ in $\hhs$.
Since $z$ is the {\underline {singular point}}
in $s$ then for big enough $i$ the leaf $u$ intersects
$s_i$, consequently a prong of $u$ at $z$ intersects
$s_i$. It is crucial here that $z$ is the singular point,
for otherwise this may not be true. The same
reasoning applies if $s_i < s$ in $\mathcal A^s$.
Hence
there are
{\underline {exactly}} two prongs of $u$ at $z$ which project, near $z$, into the axis $\mathcal A^s$. Choose small segments of $u$
in these prongs. The union (including $z$) is the segment $I$ as desired.

\textit{Case 2.2 -  $z$ is not the singular point.}
Let $q$ be the singular point in $s$. Once more, since a neighborhood of $s$ in $\cH^s$ is described by the projections of all the unstable prongs at $q$, there are exactly
two prongs $I_1$, $I_2$ of $\oo^u(q)$ at $q$, so that for any
$w \in I_1 \cup I_2$ then $\oos(w)$ is in $\mathcal A^s$.

Let $s_1$ be the prong of $s$ at $q$ containing $z$.
Suppose first that there is no prong of $\oos(q) = s$ at $q$
between $s_1$ and $I_1$ and also no prong of $\oos(q) = s$ at
$q$ between $s_1$ and $I_2$.
Then, the projection of $I_1 \cup \{ q \} \cup I_2$ in $\cH^s$ near $q$
coincides with the projection of $u = \oou(z)$ near $z$.
In this case let $p = z$ and $I$ a small unstable segment in $u$
containing $p$ in the interior and we are done.

Hence we are left with the case that say there is a prong $s'$ of $s$ at
$q$ between $s_1$ and $I_1$.
Let $s^*$ be a leaf in $\mathcal A^s$ intersecting $I_1$ in
$w^*$. Then, according
to Lemma \ref{le:nonempty}, $s^*$ contains a point $x_1$ in $\Omega_P$,
and hence $\oou(x_1)$ is in $\mathcal A^u$.
Suppose first that $x_1$ is not $w^*$.
Then, $\oo^u(q)$
disconnects $\oo^u(x_1)$ from $u$.  Since $u$ and $\oo^u(x_1)$ both lie in $\mathcal A^u$, it follows that $\oo^u(q)$ lies in $\mathcal A^u$.
On the other hand if $x_1 = w^*$ then obviously $\oou(x_1) =
\oou(w^*) = \oou(q)$ (because $w^*$ is in $I_1 \subset \oou(q)$)
is also in $\mathcal A^u$.

Hence $\oou(q)$ is in $\mathcal A^u$ and $\oos(q) = s$ is in
$\mathcal A^s$.
Therefore, $q$ lies in $\Omega_P$.
We are back to the situation of Case 2.1: let $p = q$,
and let $I$ be a small segment contained in $I_1 \cup \{ q \}
\cup I_2$, with $p$ in the interior..
\end{proof}

Let $p_s: \Omega_P \rightarrow \hhs$ be the projection map.

\begin{lemma}
\label{le:piecewisepath}
$\Omega_P$ is pathwise connected, in fact any two points $p$, $q$ in $\Omega_P$ are connected by a piecewise
path made of stable and unstable segments.
\end{lemma}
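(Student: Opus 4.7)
The plan is to define on $\Omega_P$ the equivalence relation $\sim$ where $p\sim q$ iff $p$ and $q$ are joined by a piecewise path made of finitely many stable and unstable segments lying in $\Omega_P$, and to show that $\sim$ has a unique equivalence class. First I would reduce to showing that, for every $p\in\Omega_P$, the projection $R_s(p):=p_s([p])\subset\mathcal{A}^s$ equals the entire axis $\mathcal{A}^s$. Indeed, if $R_s(p)=\mathcal{A}^s$ and $q$ is any point of $\Omega_P$, then the leaf $\mathcal{O}^s(q)$ lies in $\mathcal{A}^s$ and contains some $q'\sim p$; but $q$ and $q'$ both lie in the segment $\mathcal{O}^s(q)\cap\Omega_P$ provided by Lemma~\ref{le:segment}, so $q\sim q'\sim p$. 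Since $\mathcal{A}^s$ is a properly embedded real line (Proposition~\ref{axesfirst}), hence connected, and since $R_s(p)$ is non-empty (it contains $p_s(p)$), it suffices to prove that $R_s(p)$ is open and closed in $\mathcal{A}^s$.

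For openness, let $s_0\in R_s(p)$ and pick $r\in s_0\cap\Omega_P$ with $r\sim p$. Apply Lemma~\ref{le:interval} to $r$: there exist $r'\in\mathcal{O}^s(r)=s_0$ and an unstable segment $I\subset\mathcal{O}^u(r')\cap\Omega_P$ with $r'$ in its interior. The restriction $p_s|_I$ is continuous, and injective because two distinct points of the single unstable leaf $\mathcal{O}^u(r')$ have distinct stable leaves; hence $p_s(I)$ is a connected subset of $\mathcal{A}^s$ containing $s_0=p_s(r')$ in its interior, i.e. an open neighborhood of $s_0$ in $\mathcal{A}^s$. Every point of $I$ is $\sim$-equivalent to $r'$ via an unstable subsegment of $I$, while $r'\sim r\sim p$ by the stable segment inside $s_0\cap\Omega_P$; therefore $p_s(I)\subset R_s(p)$.

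For closedness, let $(s_n)$ be a sequence in $R_s(p)$ converging to $s\in\mathcal{A}^s$ in the topology of the properly embedded line $\mathcal{A}^s$. Choose $r\in s\cap\Omega_P$ using Lemma~\ref{le:nonempty}, and apply Lemma~\ref{le:interval} to obtain $r'\in s$ together with an unstable segment $I'\subset\Omega_P$ through $r'$ in its interior; as above, $p_s(I')$ is an open neighborhood of $s$ in $\mathcal{A}^s$. For $n$ large, $s_n\in p_s(I')$, so some $r^*_n\in I'$ satisfies $\mathcal{O}^s(r^*_n)=s_n$. Pick $r_n\in s_n\cap\Omega_P$ with $r_n\sim p$. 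Then $r^*_n\sim r'$ (unstable segment inside $I'$), $r'\sim r$ (stable segment inside $s\cap\Omega_P$), and $r^*_n\sim r_n$ (stable segment inside $s_n\cap\Omega_P$, by Lemma~\ref{le:segment}); chaining these yields $r\sim p$, so $s=p_s(r)\in R_s(p)$. The main technical ingredient is the combined use of Lemmas~\ref{le:segment} and~\ref{le:interval}, which guarantees that $p_s$ embeds every sufficiently small unstable segment of $\Omega_P$ as an open interval in the line $\mathcal{A}^s$, allowing one to transport the equivalence across nearby stable leaves of the axis.
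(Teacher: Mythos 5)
Your proof is correct and uses the same essential ingredients as the paper's, namely Lemma~\ref{le:interval} to produce an unstable segment projecting to an open neighborhood in $\mathcal A^s$, together with Lemma~\ref{le:segment} for the stable legs of the path. The only difference is cosmetic: you close the global argument by showing $R_s(p)$ is open and closed in the connected line $\mathcal A^s$, whereas the paper covers a closed interval in $\mathcal A^s$ with such neighborhoods, extracts a finite subcover, and concatenates.
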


\begin{proof}
By Lemma \ref{le:interval}, for any $s$ in $\mathcal A^s$ there is an open segment
$O$ in $\mathcal A^s$ containing $s$
so that any point $q$ in $\Omega_P \cap (p_s)^{-1}(O)$ can be reached from any point $r$  in $s \cap \Omega_P$ by a desired path.
That is, from a point $z$ in $s \cap \Omega_P$ go to the point $p$ as in Lemma \ref{le:interval} then along
the segment $I$ along an unstable leaf as in Lemma \ref{le:interval} and then along the stable leaf to the point $r$.
We are also using Lemma \ref{le:segment} to obtain this.
Any closed interval $J$ in $\mathcal A^s$ can be covered by these open intervals and hence has a finite subcovering.
Concatenating the paths above proves the Lemma.
\end{proof}

By Lemmas \ref{le:nonempty} and \ref{le:segment}, for every element $s$ of $\mathcal A^s$ the intersection $s \cap \Omega_P$ is a
non-empty segment (possibly a single point).
The projection $I^u(s)$ of $s \cap \Omega_P$ in $\cH^u$ is then a segment in $\mathcal A^u \approx \mathbb R$. Observe that this interval can be open or closed at each of its extremities.
In fact, the interval may be closed at an extremity whenever
there are singular orbits at the ``boundary" of the
Seifert piece $P$.
Moreover, a priori the interval
can be the entire axis $\mathcal A^u$ or a ray in it.
Recall that we have a total order $<$ on $\mathcal A^u \approx \mathbb R$ such that for every $u$ in $\mathcal A^u$ we have $h(u) > u$,
and similarly for $\mathcal A^s$.
We can then define functions $\alpha_s$, $\beta_s: \mathcal A^s \rightarrow \mathcal A^u \cup \{\pm  \infty \}$ with the following conventions:

-- If $I^u(s)$ is not bounded from above
in $\mathcal A^u$, then let $\beta_s(s) = +\infty$, otherwise let
$$\beta_s(s) \ = \  \mbox{Sup} \{ \  v \ | \ v \in I^u(s) \} \ \in \ \mathcal A^u$$

-- If $I^u(s)$ is not bounded from below in $\mathcal A^u$, then let
$\alpha_s(s) = -\infty$,
otherwise let
$$\alpha_s(s) \ = \ \mbox{Inf} \{ \ v \  \ | \ v \in I^u(s) \} \ \in \ \mathcal A^u$$


We can define in a similar way two functions $\alpha_u$, $\beta_u: \mathcal A^u \rightarrow \mathcal A^s \cup \{\pm \infty \}$. Observe that all these maps are clearly
$h$-equivariant.

We insist on the fact that $\alpha_s(s)$ and $\beta_s(s)$ even if finite,
may or may not belong to $I^u(s)$. Actually:

\begin{lemma}
\label{le:boundaryleaf}
If $\alpha_s(s)$ is the projection of a point $z$ in $s \cap \Omega_P$, that is, $\alpha_s(s)$ is in
$I^u(s)$,  then the leaf $\alpha_s(s) = \oo^u(z)$ is singular.
Similarly for $\beta_s(s)$, and the stables leaves $\alpha_u(u)$, $\beta_u(u)$.
\end{lemma}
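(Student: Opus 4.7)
\noindent\textit{Proof plan.} I will treat only the case of $\alpha_s(s)$; the other three statements follow by the same argument after reversing the order on $\mathcal A^u$ or exchanging the roles of $\oos$ and $\oou$. Suppose $\alpha_s(s)=\oou(z)$ is attained at some $z\in s\cap\Omega_P$, set $u:=\oou(z)$, and argue by contradiction, assuming $u$ is non-singular. The plan is to produce a point $w\in s\cap\Omega_P$ with $\oou(w)<u$ in the order on $\mathcal A^u$; such a $w$ contradicts the characterization of $\alpha_s(s)$ as the infimum of $I^u(s)$.

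If $u$ contains no singular orbit then in particular $z$ is a regular orbit of $\wwp$, so both $s=\oos(z)$ and $u$ are regular at $z$, and $\oo$ admits a local product chart at $z$: a homeomorphism $\phi\colon(-\epsilon,\epsilon)^2\to V\subset\oo$ with $\phi(0,0)=z$, along which $\phi(0,\cdot)$ parametrizes a neighborhood of $z$ in $s$ and $\phi(\cdot,0)$ parametrizes a neighborhood of $z$ in $u$. In particular, the assignment $b\mapsto\oou(\phi(0,b))$ is a continuous injection from $(-\epsilon,\epsilon)$ into $\cH^u$ sending $0$ to $u$ (injectivity uses that a stable and an unstable leaf can meet in at most one orbit).

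Arguing as in Case 1 of the proof of Lemma \ref{le:interval}, since $u$ is non-singular it is not a branch point of $\cH^u$, so a neighborhood of $u$ in $\cH^u$ is homeomorphic to an interval. The properly embedded line $\mathcal A^u$ passes through $u$ (Proposition \ref{axesfirst}) and therefore coincides locally with this interval. Hence $\oou(\phi(0,b))\in\mathcal A^u$ for all sufficiently small $b$, and $b\mapsto\oou(\phi(0,b))$ is a local homeomorphism onto an open sub-interval of $\mathcal A^u$ containing $u$ in its interior. Choose $b_0\ne0$ with $\oou(\phi(0,b_0))<u$ in the order on $\mathcal A^u$, and set $w:=\phi(0,b_0)$: then $w\in s$ and $\oou(w)\in\mathcal A^u$, so $w\in s\cap\Omega_P$, while $\oou(w)<\alpha_s(s)$, the desired contradiction. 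Hence $u$ is singular.

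The step I expect to be most delicate is the passage from ``$u$ is non-singular'' to ``$u$ is not a branch point of $\cH^u$'', since by Theorem \ref{theb} non-Hausdorff behavior in $\cH^u$ can in principle occur along non-singular periodic leaves; this implication should be handled exactly as it is (tacitly) used in Case 1 of the proof of Lemma \ref{le:interval}, and is the only nontrivial point to spell out.
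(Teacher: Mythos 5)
Your proof is correct and follows essentially the same route as the paper's, which simply invokes Lemma \ref{le:interval}, Case 1 (with stable and unstable switched, taking $p = z$) where you instead re-derive the needed local picture from a product chart at $z$. The point you flag as delicate is not in fact a gap: non-Hausdorff behavior at a leaf $u$ (Theorem \ref{theb}) does not make $u$ a branch point of $\mathcal H^u$ --- as recalled in the Background section, the local germ of $\mathcal H^u$ at any non-singular leaf is an interval, branching occurring only at singular leaves, so $\mathcal A^u$ must coincide with this local interval near $u$ exactly as you claim.
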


\begin{proof}
Suppose that $\oo^u(z)$ is not singular. Then we are in Case 1 of Lemma \ref{le:interval}, switching
stable and unstable, and we can take $p = z$. The conclusion of that Lemma is that $z$ is in
the interior of a segment $I$ contained in $\oos(z) \cap \Omega_P$,
 and again since $\oou(z)$ is
non singular this segment projects in
$\hhs$ to a neighborhood of $\oou(z)$ in $\hhu$. In particular
the segment also projects to
a neighborhood of $\oou(z)$ in $\mathcal A^u$.  Therefore $\oou(z)$ cannot be $\beta_s(s)$, contradiction.
\end{proof}

\begin{figure}
  \centering
   \includegraphics[scale=0.8]{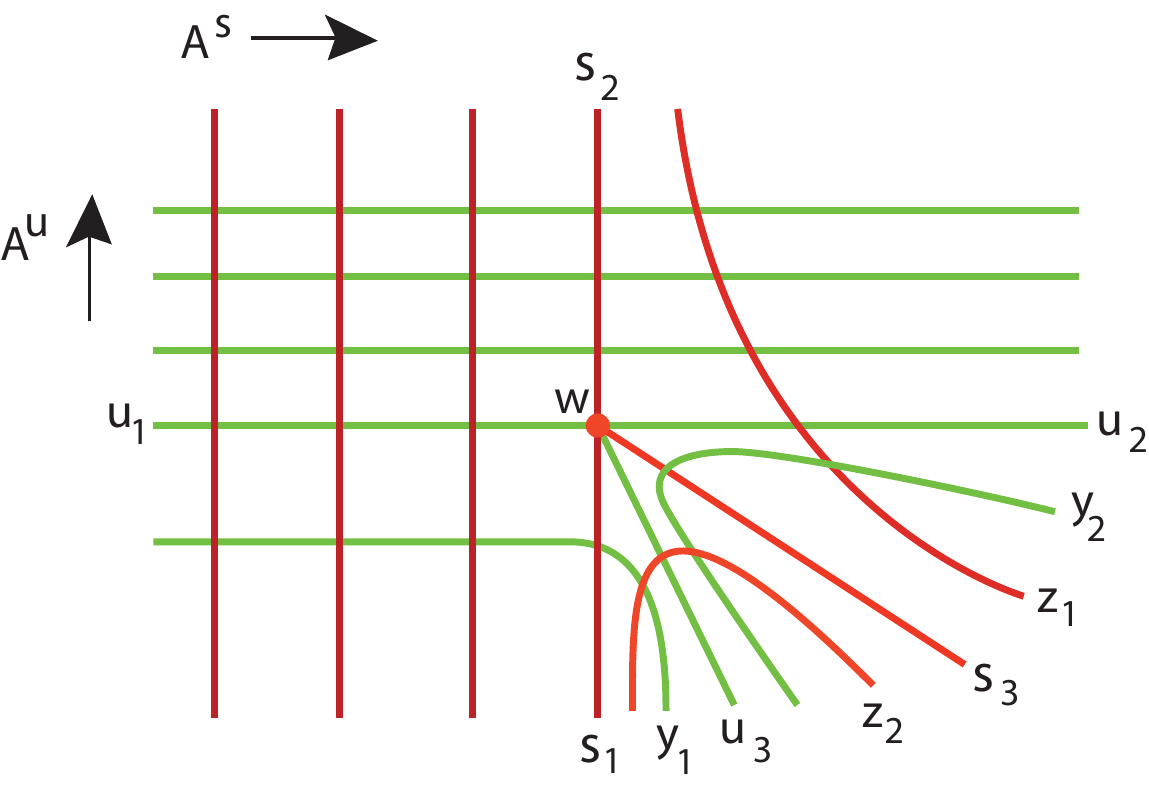}
\caption{Boundary points in $\Omega_P$.
The singular orbit $w$ is in $\Omega_P$. The figure
depicts the situation of a $3$-prong orbit.
The unstable prongs $u_1, u_2$ of $w$ are contained in
$\Omega_P$ and the unstable prong $u_3$ is not
in $\Omega_P$. Similarly the stable prongs
$s_1, s_2$ are contained in $\Omega_P$ and $s_3$ is
not. We also depict additional leaves: $y_1, y_2$ unstable
leaves, $y_1$ is in $\mathcal A^u$, $y_2$ is not
in $\mathcal A^u$; and $z_1, z_2$ stable leaves,
where $z_1$ is in $\mathcal A^s$ and $z_2$ is not
in $\mathcal A^s$.
It follows that the unstable prong $u_2$ is contained in
the boundary of $\Omega_P$ and so is the stable
prong $s_1$. Locally the boundary of $\Omega_P$ near
$w$ is made up of $s_1 \cup \{ w \} \cup u_2$.}
\label{axesomega}
\end{figure}

\begin{remark}{\em
The situation that $\alpha_s(s)$ is the projection of a point
$z$ in $s \cap \Omega_P$ is quite possible, and occurs if
there is a $p$-prong singularity in the ``boundary" of $P$,
in other words, the singularity is
in the union of the boundary Birkhoff annuli. An example of this
is the following: consider  a branched cover of the geodesic
flow in the unit tangent bundle $M^*$ of a hyperbolic
surface $S$ with a simple geodesic $\beta$ of symmetry,
so that $S - \{ \beta \}$ is the union of two surfaces
$S_1, S_2$. Then $T_1 S_1$ survives in the branched cover $M$,
and it generates a free piece $P$ of the branched pseudo-Anosov
flow that has a singularity in the boundary of $P$.
Lift this to $\widetilde M$. We choose $w$ a lift of the
corresponding singular orbit in the boundary of $P$,
so that $\oos(w)$ and $\oou(w)$ are in
$\mathcal A^s, \mathcal A^u$ respectively.
Since $w$ is singular,
Case 1 of Lemma \ref{le:interval}
implies that there are exactly two prongs $s_1, s_2$
of $\oos(w)$ at $w$
so that they are contained in $\Omega_P$.
As $w$ is singular there are other prongs of $\oos(w)$
at $p$ that are not contained
in $\Omega_P$. In the same way there are two unstable
prongs $u_1, u_2$ at $p$ contained in $\Omega_P$.
The four prongs $s_1,s_2,u_1,u_2$ have to be consecutive
around $w$, that is, after rearranging we may assume
that they go in the following order: $s_1,u_1,s_2,u_2$.
In other words $s_1, u_1$ bound a ``quadrant'' of the
stable and unstable foliations at $w$.
A {\em {quadrant}} is a component of the complement
of the union of $\oos(w)$ and $\oou(w)$. The
quadrant in question is contained in
$\Omega_P$. Similarly for $u_1,s_2$ and $s_2,u_2$.
There are exactly 3 quadrants at $w$ contained
in $\Omega_P$. If $w$ is a $p$-prong singular orbit,
then in total there are $2p$ quadrants at $w$, and
$2p-3$ open quadrants disjoint from $\Omega_P$.
It now follows that $u_1$ and $s_2$ are in the interior
of $\Omega_P$ and $s_1, u_2$ are in the boundary of
$\Omega_P$.
If $s_3$ is another stable prong at $w$ and $y_2$ is an
unstable leaf intersecting $s_3$ very near $w$, then
$y_2$ is {\underline {not}} in $\mathcal A^u$ and hence
the intersection $u \cap s_3$ is not in $\Omega_P$.
We refer to figure \ref{axesomega}.
}
\end{remark}

\begin{lemma}
\label{le:fini}
The functions $\alpha_s$, $\beta_{s}$ (respectively $\alpha_u$, $\beta_u$) take values in $\mathcal A^u$ (respectively $\mathcal A^s$).
\end{lemma}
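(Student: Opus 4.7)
\noindent\textit{Proof plan.} The proof proceeds by contradiction. By analogous arguments — the case $\alpha_s(s_0) = -\infty$ is handled in parallel to $\beta_s(s_0) = +\infty$ (replacing $h$ by $h^{-1}$, or equivalently working at the opposite end of $\mathcal A^u$), and the statements for $\alpha_u, \beta_u$ follow by reversing the flow which exchanges $\oos$ and $\oou$ — it suffices to rule out the single case $\beta_s(s_0) = +\infty$ for some $s_0 \in \mathcal A^s$. The argument relies crucially on the standing assumption that $P$ is non-elementary, so that Theorem \ref{prod} will supply the contradiction: otherwise (as one sees from the elementary/suspension situation) the whole lemma would be false, e.g.\ every horizontal line meets every vertical line in the orbit space of a suspension.

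First I would extract a divergent sequence. Since $I^u(s_0)$ is unbounded above in $\mathcal A^u$, choose orbits $p_n \in s_0 \cap \Omega_P$ whose unstable leaves $u_n := \oou(p_n) \in \mathcal A^u$ satisfy $u_n \to +\infty$ along the line $\mathcal A^u$. By Proposition \ref{axesfirst}, $\mathcal A^u$ is properly embedded in $\hhu$, so $u_n$ leaves every compact subset of $\hhu$. Next I would normalize by the $h$-translation: because $h$ acts as a translation on $\mathcal A^u$, for a compact fundamental domain $[u^{\star}, h(u^{\star}))$ one writes $u_n = h^{k_n}(v_n)$ with $v_n$ in that domain and $k_n \to +\infty$; after extraction, $v_n \to v_\infty \in \mathcal A^u$. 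The translated orbits $q_n := h^{-k_n}(p_n) \in \Omega_P$ equal $v_n \cap h^{-k_n}(s_0)$, and the stable leaves $h^{-k_n}(s_0) \in \mathcal A^s$ tend to $-\infty$ in $\mathcal A^s$, hence to infinity in $\hhs$. In particular no subsequence of $(q_n)$ converges in $\oo$, and the $q_n$ must accumulate toward one end $\eta$ of the properly embedded unstable leaf $v_\infty$.

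The central step is to upgrade this escape phenomenon into a \emph{stable product region} in the sense of Definition \ref{defsta}. Because the construction above is $h$-equivariant, translating by powers of $h$ yields the analogous escape behavior along the corresponding end of each $h^j(v_\infty) \in \mathcal A^u$. I would take the flow band $A$ to consist of the orbits of $s_0 \cap \Omega_P$ whose unstable leaves lie in the $h$-invariant ray $\{h^j(u^\star)\}_{j \geq 0} \subset \mathcal A^u$ for $u^\star \in I^u(s_0) \cap \mathcal A^u$ sufficiently far up, and for each orbit $\theta \in A$ I would take $B_\theta$ to be the half of $\oou(\theta)$ whose direction is determined by $\eta$. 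The $h$-equivariance of $\mathcal A^s$, $\mathcal A^u$ and $\Omega_P$, together with the properness of these axes in their leaf spaces (Proposition \ref{axesfirst}), which rules out intervening non-separated leaves or perfect-fit obstructions \cite{Fe2,Fe3}, forces that exactly the same stable leaves in $\mathcal A^s$ meet every $B_\theta$ for $\theta \in A$ — which is the product region condition. Then Theorem \ref{prod} would force $\Phi$ to be orbitally equivalent to a suspension Anosov flow, so $M$ would be a Sol torus bundle whose JSJ decomposition contains only copies of $T^2 \times I$ as Seifert pieces; each such piece is abelian, contradicting the non-elementary hypothesis on $P$.

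The main obstacle lies in the third paragraph: passing from a single divergent sequence of orbits to the uniform coherence demanded by Definition \ref{defsta}, namely that \emph{every} half-unstable leaf $B_\theta$ crosses \emph{exactly the same} stable leaves. One must carefully rule out the possibility that a singular orbit or a non-Hausdorff pair along the band $A$ breaks the product structure, which requires combining the $h$-equivariance with a precise analysis of how rays along $\mathcal A^u$ project into $\hhs$. Once this coherence is verified, Steps 1, 2, and 4 are essentially mechanical applications of the proper embedding of the axes and of Theorem \ref{prod}.
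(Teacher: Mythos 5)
Your overall strategy — derive a contradiction from a product region via Theorem \ref{prod}, using that $P$ is non-elementary — is the same as the paper's, and your opening reduction to a single case (say $\alpha_s(s_0)=-\infty$) is fine. But the central step is genuinely gapped, and the gap is precisely the one you flag in your last paragraph without resolving.

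The hard content of the paper's proof is \emph{not} the escape-to-infinity/normalization argument you develop in your second paragraph. It is an intermediate \emph{spreading} statement: if $\alpha_s(s_0)=-\infty$ for one $s_0$, then $\alpha_s\equiv -\infty$ on the \emph{entire} $h$-interval $[s_0,h(s_0)]$. Only after this spreading is established does a product region appear almost for free: one then has an unstable leaf $t\in\mathcal A^u$ meeting every $s\in[s_0,h(s_0)]$, and the segment of $t$ between $s_0\cap t$ and $h(s_0)\cap t$ is a base band whose transversal half-leaves $B_\theta$ all meet exactly the same unstable leaves, because the constancy $\alpha_s\equiv-\infty$ holds uniformly across the band. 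Establishing that spreading is where Lemma \ref{le:piecewisepath} (path-connectedness of $\Omega_P$ by stable/unstable paths) and Lemma \ref{le:segment} (each stable or unstable leaf meets $\Omega_P$ in a single segment) enter: if some $s\in(s_0,h(s_0))$ had $\alpha_s(s)>-\infty$, a piecewise path in $\Omega_P$ from $(s_0,u')$ to $(h(s_0),u')$ (with $u'$ low enough) would have to cross the level of some unstable $v$ with $\alpha_s(s)>v>u'$ on both sides of $s$ without crossing it \emph{at} $s$, contradicting Lemma \ref{le:segment}. Your proposal never invokes these two lemmas, and your extraction/normalization step produces only a single limiting direction in $\hhu$, not uniform behavior across an $h$-interval of stable leaves. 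Without that uniformity, the band $A\subset s_0$ you propose has no reason to satisfy the coherence condition of Definition \ref{defsta}: adjacent orbits $\theta_1,\theta_2\in A$ lie on different unstable leaves $u_1,u_2\in\mathcal A^u$, and nothing you have established forces the same set of stable leaves to cross both half-leaves $B_{\theta_1}$, $B_{\theta_2}$. In short, you correctly identified that the coherence check is the crux, but the mechanism to prove it is the path-connectivity-plus-segment argument (Lemmas \ref{le:piecewisepath}, \ref{le:segment}), not the $h$-equivariant normalization you sketch.
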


\begin{proof}
The meaning is that these functions never attain the values $\pm \infty$.
We only prove the statement for $\alpha_s$, the proof for the other functions is similar.
Let $s_0$ be an element of $\mathcal A^s$ such that $\alpha_s(s_0) =-\infty$. Then for every integer $n$ we have $\alpha_s(h^n(s_0)) = -\infty$, because $h$ preserves the whole set $\mathcal A^u$.
Assume that there is an element $s$ in the interval $(s_0, h(s_0))$ such that $\alpha_s(s) $ is an element $u$ of $\mathcal A^u$.
Then, there is an element $u'$ of $\mathcal A^u$ strictly smaller
than all of $u$, $\beta_s(s_0)$, and
$\beta_s(h(s_0))$, since
$\alpha_s(s_0), \alpha_s(h(s_0))$ are both equal to $-\infty$.
According to Lemma \ref{le:piecewisepath} there is a stable/unstable path in $\Omega_P$
connecting $(s_0, u')$ to $(h(s_0), u')$. Since this path is contained in $\Omega_P$,
it must cross the set $I^u(s)$. Hence the path must contain a point
$(s, u'')$ with $u'' \geq u > u'$. Since the path starts at $(s_0, u')$ and finishes at $(h(s_0), u')$, the path
must cross, for every element $v$ of the interval $(u',u)$ in $\mathcal A^u$,
the $v$-level (that is, intersects the unstable
leaf $v$) twice. This means that
the path contains an element of the form $(s_-, v)$ with $s_0 \leq s_- < s$, and another element $(s_+, v)$ with
$s < s_+ \leq h(s_0)$. Hence, intersection between the unstable leaf $v$ and $\Omega_P$ admits at least two connected components:
one containing $s_- \cap v$, and the other containing $s_+ \cap v$. Observe that the
intersection cannot contain $s \cap v$ since $v < u = \alpha_s(s)$.
This contradicts Lemma \ref{le:segment}.

Therefore, $\alpha_s$ attains the value $-\infty$ on the entire segment $[s_0, h(s_0)]$.

There is a stable/unstable path from $(s_0,t_1)$ in $\Omega_P$ to $h(s_0,t_2)$
in $\Omega_P$ for some $t_1, t_2 \in \mathcal A^u$. By compactness
this path attains a mininum $t \in \mathcal A^u$ therefore $t$ intersects every leaf $s \in [s_0,h(s_0)]$
because $\alpha_s \equiv -\infty$ in this interval.
Now recall that $\mathcal A^u$ is properly embedded in $\hhu$. Therefore for every stable leaf $s \in
[s_0,h(s_0)]$, $s$ does not intersect any other unstable leaf in the negative direction.
This implies that the unstable segment in $t$ from $(s_0 \cap t)$ to $(h(s_0) \cap t)$ is the
base segment of a stable product region. By Theorem \ref{prod} it follows that $\Phi$ is
topologically conjugate to a suspension, contrary to hypothesis
that $\pi_1(P)$ is not elementary
in our study of free Seifert pieces $P$.
This finishes the proof of the Lemma.
\end{proof}

\begin{lemma}
\label{le:monotone}
The functions $\alpha_s$, $\beta_s: \mathcal A^s \rightarrow \mathcal A^u$ are both weakly monotone increasing.
\end{lemma}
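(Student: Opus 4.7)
The plan is to prove the weak monotonicity of $\beta_s$ by contradiction; the argument for $\alpha_s$ will be entirely symmetric. I suppose there exist $s_1 < s_2$ in $\mathcal A^s$ with $\beta_s(s_1) > \beta_s(s_2)$, and choose $u_0 \in \mathcal A^u$ strictly between $\beta_s(s_2)$ and $\beta_s(s_1)$, and close enough to $\beta_s(s_1)$ that $u_0 > \alpha_s(s_1)$, so that $u_0 \in I^u(s_1)$ and $s_1 \cap u_0 \neq \emptyset$. (If $I^u(s_1)$ degenerates to a single point, I first replace $s_1$ by a slightly larger element of $\mathcal A^s$, using Lemma \ref{le:fini} to keep $\beta_s$ in $\mathcal A^u$, and Lemma \ref{le:interval} to ensure nearby stable leaves still carry a non-degenerate $I^u$.) On the other hand, $u_0 > \beta_s(s_2)$ forces $u_0 \notin I^u(s_2)$, and since $(s_2, u_0) \in \mathcal A^s \times \mathcal A^u$, this is equivalent to $s_2 \cap u_0 = \emptyset$.

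I then consider the set
\[
E := \{\, s \in [s_1, s_2] \cap \mathcal A^s \;\mid\; s \cap u_0 \neq \emptyset \,\}.
\]
Since $[s_1, s_2] \cap \mathcal A^s$ is a connected subinterval of $\mathcal A^s \cong \mathbb R$, with $s_1 \in E$ and $s_2 \notin E$, I aim for a contradiction by showing $E$ is both open and closed in $[s_1, s_2] \cap \mathcal A^s$.

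Openness is routine from the local structure of $\oos$ and $\oou$ near an intersection $\theta = s \cap u_0$: at a regular point $\theta$ there is a product neighborhood in $\oo$, so every $s' \in \hhs$ close to $s$ still meets $u_0$, and the intersection persists for nearby $s' \in \mathcal A^s$. At a $p$-prong singularity one applies the local model of Definition \ref{def:modelprong} together with the identification, coming from Case 2 of Lemma \ref{le:interval}, of the two prongs whose projections lie in $\mathcal A^s$.

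The main obstacle is closedness. Suppose $(s_n) \subset E$ converges in $\mathcal A^s$ to some $s^* \notin E$, and set $\theta_n := s_n \cap u_0 \in u_0$. Since $s^* \cap u_0 = \emptyset$, the orbits $\theta_n$ must escape to infinity along the leaf $u_0$, and this exhibits a perfect fit (Definition \ref{def:pfits}) between $s^*$ and a ray of $u_0$; in particular there is a leaf $u^{**} \in \hhu$, distinct from $u_0$ and non-separated from it. Theorem \ref{theb} then supplies a nontrivial $\gamma \in \pi_1(M)$ preserving both leaves, with the chain of lozenges between them $u$-scalloped; the technical core of the proof, which I expect to be the hard part, is to show that this periodic structure is incompatible with $u_0$ belonging to the axis $\mathcal A^u$ of the central element $h$. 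The intended mechanism is that iterating by $h$ (which preserves $\mathcal A^u$ and commutes with $\gamma$ up to sign via the Seifert structure and the centrality of $h$ in $\pi_1(P)$) would produce an infinite family of leaves non-separated from leaves of $\mathcal A^u$, contradicting the proper embedding of $\mathcal A^u$ in $\hhu$ established in Proposition \ref{axesfirst}, or else produce a periodic orbit freely homotopic (up to powers) to a regular fiber of $P$, contradicting freeness of the Seifert piece $P$. Once closedness is established, the clopen set $E$ must exhaust $[s_1, s_2] \cap \mathcal A^s$, contradicting $s_2 \notin E$ and yielding monotonicity of $\beta_s$. For $\alpha_s$ one runs the same scheme with $u_0 \in (\alpha_s(s_2), \alpha_s(s_1))$ chosen just below $\alpha_s(s_1)$, so that $s_2 \cap u_0 \neq \emptyset$ while $s_1 \cap u_0 = \emptyset$, and applies the open/closed dichotomy to the symmetric set $E' := \{s \in [s_1, s_2] \cap \mathcal A^s : s \cap u_0 \neq \emptyset\}$.
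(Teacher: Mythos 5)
The approach here diverges substantially from the paper's, and the closedness step — which you correctly identify as the technical core — contains a genuine gap. You assert that a perfect fit between $s^*$ and a ray of $u_0$ yields ``in particular'' a leaf $u^{**}$ distinct from $u_0$ and non-separated from it in $\hhu$. This implication is false: perfect fits and non-Hausdorff branching are independent phenomena. In a piece of a geodesic flow (the model of Section \ref{alter}), the orbit space is the strip $\Omega_0$ whose entire boundary consists of perfect fits, yet both $\hhs$ and $\hhu$ are homeomorphic to $\mathbb R$ and perfectly Hausdorff. So Theorem \ref{theb} cannot be invoked from a perfect fit, and the remainder of the closedness sketch (iterating by $h$, contradicting properness of $\mathcal A^u$ or freeness of $P$) has nothing to hang on. A secondary issue: you also miss a dichotomy. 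When $s_n \to s^*$ with $\theta_n = s_n \cap u_0$ defined but $s^* \cap u_0 = \emptyset$, the $\theta_n$ need not escape along $u_0$; they could converge to some $\theta_\infty \in u_0$, in which case $\oos(\theta_\infty)$ is non-separated from $s^*$ in $\hhs$ — branching in the \emph{stable} leaf space, not $\hhu$. You conflate the two cases and then look at the wrong leaf space.

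The paper proves the lemma with considerably lighter machinery and, crucially, by exploiting $h$-equivariance up front rather than burying it. Assuming $\alpha_s(s_2) < \alpha_s(s_1)$ for $s_1 < s_2$, the paper translates by powers of $h$ to reduce to $s_1 < s_2 \leq h(s_1)$, and with $s_3 = h^{-1}(s_2) \leq s_1$ obtains $\alpha_s(s_3) < \alpha_s(s_2) < \alpha_s(s_1)$. Picking $v \in \mathcal A^u$ with $\alpha_s(s_2) < v < \alpha_s(s_1)$, the path-connectedness of $\Omega_P$ (Lemma \ref{le:piecewisepath}) forces $v$ to intersect leaves on both sides of $s_1$ while $v$ cannot intersect $s_1$ itself (since $v < \alpha_s(s_1)$), contradicting the segment property of $v \cap \Omega_P$ from Lemma \ref{le:segment}. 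This uses only the already-established structure of $\Omega_P$, whereas your route attempts to derive the lemma from the general pseudo-Anosov dichotomy (perfect fits vs. lozenges), which is both harder and — as written — does not go through.
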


\begin{proof}
Suppose by way of contradiction that there is $s_2 > s_1$ in $\mathcal A^s$ with $\alpha_s(s_2) < \alpha_s(s_1)$.
Let $k$ be the unique non negative integer such that $s_1 < h^{-k}(s_2) \leq h(s_1)$. Then

$$\alpha_s(h^{-k}(s_2)) = h^{-k}\alpha_s(s_2) < \alpha_s(s_2) < \alpha_s(s_1),$$

\noindent
hence we can assume without loss of generality that $k=0$, ie. that $s_1 < s_2 \leq h(s_1)$.
Then, if we put $s_3 = h^{-1}(s_2)$, we have:
$$\alpha_s(s_3) = h^{-1}\alpha_s(s_2) < \alpha_s(s_2) < \alpha_s(s_1)$$

The contradiction is then obtained as in the proof of Lemma \ref{le:fini}. There is a
unstable leaf $v$ in $\mathcal A^u$ such that $\alpha_s(s_3) < \alpha_s(s_2) < v < \alpha_s(s_1)$.
The leaf $v$
intersects two leaves $s_-, s_+$ with
$s_3 \leq s_-  < s_1$ and $s_2 \geq s_+  > s_1$, but $v$ does not intersect $s_1$.
Again this contradicts Lemma \ref{le:segment}.
\end{proof}


\begin{lemma}
\label{le:nosingleton}
For any $s$ in $\mathcal A^s$, we have $\alpha_s(s) < \beta_s(s)$.
\end{lemma}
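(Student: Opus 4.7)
My plan is to argue by contradiction: suppose there is $s_0 \in \aas$ with $\alpha_s(s_0) = \beta_s(s_0) = u_0$, so that $I^u(s_0) = \{u_0\}$ and $s_0 \cap \Omega_P$ reduces to the single orbit $z_0 := s_0 \cap u_0$. Lemma \ref{le:interval} is entirely symmetric in $\wls$ and $\wlu$, so running its proof with the two foliations interchanged yields a ``swapped'' statement: for every $z \in \Omega_P$ there is a point $p \in \oou(z)$ lying in the interior of a stable segment $J \subset \oos(p) \cap \Omega_P$. I apply this to $z_0$ and run through the three cases parallel to those of Lemma \ref{le:interval}.

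In all of the ``easy'' situations $-$ namely when $\oou(z_0) = u_0$ is non-singular (analogue of Case 1), when $u_0$ is singular with singular point precisely $z_0$ (analogue of Case 2.1), or when the singular point $q$ of $u_0$ differs from $z_0$ but no prong of $u_0$ at $q$ lies strictly between the prong containing $z_0$ and either of the two distinguished stable prongs $J_1, J_2$ at $q$ (the ``easy'' branch of the analogue of Case 2.2) $-$ the swapped lemma gives $p = z_0$ and $J$ is a stable segment on $s_0 = \oos(z_0)$ containing $z_0$ in its interior. Since $J \subset s_0 \cap \Omega_P$, this immediately contradicts the hypothesis $s_0 \cap \Omega_P = \{z_0\}$.

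The genuine difficulty, and the one nontrivial subcase, is when $u_0$ is singular, its singular point $q$ differs from $z_0$, and some prong of $u_0$ at $q$ separates the prong containing $z_0$ from, say, $J_1$. The swapped lemma now places $p$ at $q$, and $J$ is a stable segment in $s_1 := \oos(q)$ containing $q$ in its interior with $J \subset J_1 \cup \{q\} \cup J_2 \subset \Omega_P$; in particular $s_1 \in \aas$ and $s_1 \neq s_0$ (for otherwise $q \in s_0 \cap \Omega_P$ with $q \neq z_0$). The key geometric observation is that, since $J_1$ and $J_2$ are the two distinguished stable prongs of $s_1$ at $q$ and $q$ is interior to $J$, the segment $J$ extends on both sides of $q$ along $s_1$; because every point of $J_1 \cup J_2$ has its unstable leaf in $\aau$ by the defining property of these prongs, the image of $J$ under $\oou$ is a neighborhood of $u_0$ in $\aau$. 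Hence $s_1 \cap \Omega_P$ contains orbits whose unstable leaves are strictly less than $u_0$ and orbits whose unstable leaves are strictly greater, so $\alpha_s(s_1) < u_0 < \beta_s(s_1)$.

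I then finish using the weak monotonicity of $\alpha_s$ and $\beta_s$ from Lemma \ref{le:monotone}, together with the total order on $\aas$ fixed by Remark \ref{rk:orienteA}: if $s_0 < s_1$ then $u_0 = \alpha_s(s_0) \leq \alpha_s(s_1) < u_0$, and if $s_1 < s_0$ then $u_0 = \beta_s(s_0) \geq \beta_s(s_1) > u_0$. Either alternative is absurd, which completes the case analysis and proves that $\alpha_s(s) < \beta_s(s)$ for every $s \in \aas$. The main obstacle, as expected, is the subcase where the swapped Lemma \ref{le:interval} fails to locate a stable segment on $s_0$ itself and instead supplies one on a different leaf $s_1$; the resolution is precisely to transport the contradiction from $s_1$ back to $s_0$ via the monotonicity of $\alpha_s$ and $\beta_s$.
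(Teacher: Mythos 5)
Your proof is correct, and it diverges from the paper's in a genuine way at the only nontrivial step. For the disposal of all ``easy'' situations — where the swapped Lemma \ref{le:interval} returns $p = z_0$ and hence a stable segment on $s_0$ itself — your route coincides with the paper's use of that lemma (the paper first invokes Lemma \ref{le:boundaryleaf} to establish that $u_0$ is singular, then applies the swapped Case 2.1 when the singular point is $z_0$). The divergence is in the hard subcase, where the swapped lemma places $p$ at the singular point $q \neq z_0$ of $u_0$ and the stable segment $J$ lands on a different leaf $s_1 = \oos(q)$. Here the paper invokes neither $s_1$ nor Lemma \ref{le:monotone}: it instead picks nearby unstable leaves $U_1, U_2$ through the two distinguished stable prongs at $q$, observes that $s_0$ separates $h(s_0)$ (say) from both, and then derives two incompatible separation statements between $s_0$, $h(s_0)$, and $U_2$ using that $\mathcal A^u$ is $h$-invariant. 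Your argument replaces this topological chase with a short order-theoretic one: you establish $\alpha_s(s_1) < u_0 < \beta_s(s_1)$ from the fact that $J$ projects to a two-sided neighborhood of $u_0$ in $\mathcal A^u$, then transport the contradiction back to $s_0$ via the already-proved weak monotonicity of $\alpha_s$ and $\beta_s$. This is cleaner, uses only tools established earlier in the section, and avoids the fiber element $h$ entirely; it also echoes the mechanism the paper itself uses in Lemmas \ref{le:fini} and \ref{le:monotone}, so it fits the surrounding text well.
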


\begin{proof}
Assume by contradiction that the equality $\alpha_s(s) = \beta_s(s)$ holds for some $s$ in $\mathcal A^s$.
This means that $s \cap \Omega_P$ is a single point $z$. Let $u$ be the unstable leaf through $z$.
Then $u = \alpha_s(s)$.
By Lemma \ref{le:boundaryleaf} $u$ is singular.
Let $p$ be the singular point in $u$. If $p = z$, then by Case 2.1 of Lemma \ref{le:interval}, with stable and unstable switched, the following
happens:
there are two stable prongs $J_1, J_2$ of $s = \oos(z) = \oos(p)$
at $p$ so that $J_1$ and $J_2$ project to $\mathcal A^u$.
In other words for any $t$ in $J_1 \cup J_2$ then $\oou(t) \in
\mathcal A^u$. Then $J_1 \cup \{ p \} \cup J_2$ is an
in $s$ entirely contained in $\Omega_P$, contradiction.

Now we consider the case that $p$ is not $z$.
Since $\oou(p) \in \mathcal A^u$ and $p$ is singular, then
as in the proof of Lemma \ref{le:interval}, case 2.2, there are exactly two
{\underline {stable}} prongs $I_1, I_2$ at $p$
which locally project into $\mathcal A^u$.
Let $v_1, v_2$ in these prongs very near $p$,
so $U_1 = \oou(v_1), U_2 = \oou(v_2)$ are in $\mathcal A^u$.
Notice that $s$ does not intersect either $U_1$ or $U_2$
and $s$ does not disconnect $U_1$ from $U_2$.
Since both $U_1$ and $U_2$ are in the same complementary
component of $s$, then $s$ separates either $h^{-1}(s)$
or $h(s)$ from both  $U_1$ and $U_2$.
Assume wlog the second option that is $s$ separates $h(s)$
from both $U_1$ and $U_2$.
Let $W$ be the component of $\oo - (U_1 \cup U_2)$ containing
$s$.
Replacing $U_1$ with $U_2$ if necessary we may assume
that $U_1 < U_2$ in $\mathcal A^u$. This implies that
$U_2$ separates $h(W)$ from $W$. In particular
since $s \subset W$ then $h(s)$ is separated from $s$ by
$U_2$. But we proved above that $s$ separates $h(s)$
from $U_2$, contradiction.
This finishes the proof of the Lemma.
\end{proof}


In summary, we have proved that $\Omega'$ is a region
very similar to the region
$\Omega$ studied in section \ref{sub:consorbit}:
whereas $\Omega$ is the region between two maps $\alpha_1$, $\beta_1$,
$\Omega'$ is the region
in $\mathcal A^s \times \mathcal A^u$ limited by the graphs of
two functions $\alpha_s$, $\beta_s$
which are weakly monotone, and which do not coincide anywhere.
$\Omega'$ may contain elements of
the graph of $\alpha_s$ or $\beta_s$.
Notice that it is not necessarily true that if $\alpha_s(s) < u < \beta_s(s)$ in
$\mathcal A^u$ then the point $(s,u)$ is in the interior of $\Omega_P$. For example it could be that
$(s,u)$ is in the interior of a nondegenerate segment $\{ s \} \times [\alpha_s(s), u_0]$
that is contained in the boundary of $\Omega'$.

With these properties one can prove that there are infinitely
many periodic orbits ``contained'' in the piece $P$ and uncountably
many full orbits ``contained'' in $P$, so the dynamics
of the flow in $P$ is extremely complicated. We do not
prove this separately as it follows immediately
from the Main theorem.

\subsection{The minimal set and fixed points}\label{sec:minimalfixed}

The group $\pi_1(P)$ acts on $\mathcal A^s$. The following result is standard:

\begin{lemma}{}{}
There is a unique minimal set $\tilde{\mu}_s \subset \mathcal A^s$ which is non empty
and $\pi_1(P)$ invariant. Since $\pi_1(P)$ is not virtually cyclic, the set
$\tilde{\mu}_s$ can either be the entire $\mathcal A^s$ or a Cantor set.
In particular $\tilde{\mu}_s$ is a perfect set.
Similarly, there exist a unique minimal non-empty and $\pi_1(P)$ invariant set $\tilde{\mu}_u \subset \mathcal A^u$.
\end{lemma}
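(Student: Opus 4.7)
The plan is to reduce everything to the classical theory of group actions on the circle. By Proposition~\ref{axesfirst}, $h$ acts freely on $\mathcal A^s \cong \mathbb{R}$ by translations, so the quotient $\mathbb{S}^1 := \mathcal A^s/\langle h\rangle$ is a topological circle. Since $\langle h\rangle$ lies in the pseudo-center of $\pi_1(P)$, it is normal, so the action of $\pi_1(P)$ on $\mathcal A^s$ descends to an action of $\bar{\Gamma} := \pi_1(P)/\langle h\rangle$ on $\mathbb{S}^1$. Any non-empty closed $\pi_1(P)$-invariant subset of $\mathcal A^s$ is automatically $\langle h \rangle$-invariant, so such subsets correspond bijectively to non-empty closed $\bar\Gamma$-invariant subsets of $\mathbb{S}^1$, and minimality is preserved by this correspondence.

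First I would establish existence of a minimal non-empty closed $\bar\Gamma$-invariant subset of $\mathbb{S}^1$ via Zorn's lemma: compactness of $\mathbb{S}^1$ gives the finite intersection property for any decreasing chain of such subsets. Pulling back produces the set $\tilde\mu_s$. For uniqueness and the dichotomy, I would invoke the classical trichotomy for group actions on the circle \cite{He-Hi}: either there is a finite $\bar\Gamma$-orbit, or there is a unique minimal set, which is either all of $\mathbb{S}^1$ or a Cantor set.

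The key step is ruling out the finite-orbit case using the hypothesis that $\pi_1(P)$ is not virtually cyclic. Suppose $\bar\Gamma$ has a finite orbit; then a finite-index subgroup $\bar\Gamma_1$ fixes a point $\bar x_1 \in \mathbb S^1$. Its preimage $\Gamma_1 \subset \pi_1(P)$ has finite index and preserves the $\langle h\rangle$-orbit of any lift $s_0 \in \mathcal A^s$ of $\bar x_1$, giving a homomorphism $\phi: \Gamma_1 \to \mathbb{Z}$ sending $g$ to the integer $n(g)$ with $g(s_0) = h^{n(g)}(s_0)$. The kernel of $\phi$ is the stabilizer of the leaf $s_0$ in $\Gamma_1$, which is at most infinite cyclic by the standard fact recalled in the Background. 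Hence $\Gamma_1$ is an extension of a cyclic group by a cyclic group, and $\pi_1(P)$ is therefore virtually abelian of rank at most two. The virtually $\mathbb Z^2$ case would force $P$ to be homeomorphic to $T^2\times I$ or a twisted $I$-bundle over the Klein bottle, which (exactly as in the analysis at the beginning of the proof of Proposition~\ref{axesfirst}) either contradicts the minimality of the JSJ decomposition together with the hypothesis $\mathcal A^s \neq \mathbb R$, or forces the flow to be a suspension, which is incompatible with the existence of a free non-elementary Seifert piece. Thus $\Gamma_1$, and so $\pi_1(P)$, must be virtually cyclic, contradicting the hypothesis.

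Finally, $\tilde\mu_s$ is perfect: if $\tilde\mu_s = \mathcal A^s$ this is immediate, and if $\tilde\mu_s$ projects to a Cantor set in $\mathbb S^1$ it is perfect by definition. The argument for $\tilde\mu_u \subset \mathcal A^u$ is identical, exchanging the roles of the stable and unstable foliations. I expect the only substantive obstacle to be the case analysis of the finite-orbit possibility; the rest is a direct application of standard tools.
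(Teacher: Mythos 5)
The paper treats this lemma as standard and gives no proof, so there is no argument of the authors' to compare yours against; your reduction to actions on the circle $\mathcal A^s/\langle h\rangle$ and appeal to the Hector--Hirsch trichotomy is indeed the expected route, and the overall logic is sound.

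Two points where the argument as written has a small gap worth closing. First, the map $\phi\colon\Gamma_1\to\mathbb Z$ defined by $g(s_0)=h^{n(g)}(s_0)$ is a homomorphism only on elements commuting with $h$: in general $n(g_1g_2)=n(g_1)+\epsilon_{g_1}n(g_2)$, where $\epsilon_{g_1}=\pm1$ records whether $g_1hg_1^{-1}$ is $h$ or $h^{-1}$, and both cases occur in Seifert groups (cf.\ the relations $a_ih=h^{\epsilon}a_i$ in Section~\ref{sub:orbifoldcase}). Passing to the index-$\leq 2$ subgroup of $\Gamma_1$ that commutes with $h$ repairs this and does not affect your conclusion that $\pi_1(P)$ would be virtually abelian of rank at most two. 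Second, the fact you cite from the Background is that the stabilizer of an element of $\mathcal{O}$ (an orbit) is at most cyclic, whereas what your kernel argument actually needs is that the stabilizer in $\pi_1(M)$ of a \emph{leaf} of $\widetilde{\Lambda}^s$ is at most cyclic. That is also true and standard (a periodic weak stable leaf projects to an annulus or M\"obius band, a non-periodic one has trivial stabilizer), but it is not literally the statement the Background records, so it deserves a word. Finally, a terminological remark that applies equally to the paper's own phrasing: the preimage in $\mathcal A^s\cong\mathbb R$ of a Cantor set in $\mathbb S^1$ is closed, perfect, nowhere dense, and unbounded, hence not compact and so not literally a Cantor set; the operative conclusion, which you correctly extract, is that $\tilde\mu_s$ is perfect.
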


\begin{lemma}{}{}\label{le:alphabetamu}
The restrictions to $\tilde{\mu}_s$  of $\alpha_s$ and $\beta_s$ are almost injective. More precisely: if $s$, $s'$ are two elements of
$\tilde{\mu}_s$ such that $\alpha_s(s) = \alpha_s(s')$ or $\beta_s(s)=\beta_s(s')$, then $s=s'$ or $]s, s'[$ is a connected component of $\mathcal A^s - \tilde{\mu}_s$.
Similarly, the same property holds for the restrictions of $\alpha_u$ and $\beta_u$ to $\tilde{\mu}_u$.
\end{lemma}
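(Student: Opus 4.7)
The plan is to argue by contradiction, combining the density of $\pi_1(P)$-orbits in $\tilde{\mu}_s$ with a cardinality count. By symmetry between $\alpha$ and $\beta$ and between the stable and unstable sides, it suffices to treat $\alpha_s$. So suppose $s<s'$ both lie in $\tilde{\mu}_s$ with $\alpha_s(s)=\alpha_s(s')=u_0$ but that $]s,s'[$ is not a gap of $\tilde{\mu}_s$, and pick $s_3\in\tilde{\mu}_s\cap\,]s,s'[\,$. By the weak monotonicity of $\alpha_s$ (Lemma \ref{le:monotone}), $\alpha_s$ is identically $u_0$ on the whole interval $[s,s']$; in particular $\alpha_s(s_3)=u_0$.

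The first ingredient is a cardinality estimate: since $\tilde{\mu}_s$ is perfect and $]s,s'[$ is an open subset of $\mathcal A^s\cong\mathbb R$, the intersection $\tilde{\mu}_s\cap\,]s,s'[$ is a non-empty perfect subset of $]s,s'[$, hence uncountable.

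The second ingredient constrains the elements $\gamma\in\pi_1(P)$ for which $\gamma s_3\in\,]s,s'[\,$. Any such $\gamma$ satisfies $\alpha_s(\gamma s_3)=u_0$ (again by monotonicity). Using $\pi_1(P)$-equivariance of the map $(s,u)\mapsto(\gamma s,\gamma u)$ on $\Omega_P$, one has $I^u(\gamma s)=\gamma I^u(s)$, and so $\alpha_s(\gamma s_3)=\gamma u_0$ if $\gamma$ preserves the orientation of $\mathcal A^u$, while $\alpha_s(\gamma s_3)=\gamma\beta_s(s_3)$ if $\gamma$ reverses it (the infimum either commutes with $\gamma$ or is swapped with the supremum). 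Hence either $\gamma$ stabilizes $u_0$, or $\gamma$ sends the leaf $\beta_s(s_3)$ to $u_0$; and $\beta_s(s_3)\neq u_0$ by Lemma \ref{le:nosingleton}. Using the standard fact that the stabilizer of a leaf of \wlu\ in $\pi_1(M)$ is cyclic (trivial if the leaf contains no periodic orbit, generated by the Poincar\'e return map of the unique periodic orbit otherwise), the set of such $\gamma$ is contained in at most two cosets of cyclic subgroups, and consequently $\pi_1(P)\cdot s_3\cap\,]s,s'[$ is contained in at most two cyclic-group orbits inside $\mathcal A^s$.

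The third ingredient controls accumulation. For a nontrivial cyclic subgroup $\langle\gamma_0\rangle$ acting on $\mathcal A^s\cong\mathbb R$, the orbit $\{\gamma_0^n s_3\}$ is countable, and its set of accumulation points in $\mathcal A^s$ is contained in the fixed point set of $\gamma_0$, which in our situation has at most one element (the stable leaf of the unique $\gamma_0$-fixed periodic orbit, when it lies in $\mathcal A^s$). Thus the closure of $\pi_1(P)\cdot s_3\cap\,]s,s'[$ in $]s,s'[$ is countable. On the other hand, since $\tilde{\mu}_s$ is the unique minimal closed $\pi_1(P)$-invariant subset of $\mathcal A^s$, the orbit $\pi_1(P)\cdot s_3$ is dense in $\tilde{\mu}_s$; and because $]s,s'[$ is open, every point of $\tilde{\mu}_s\cap\,]s,s'[$ is a limit of orbit points eventually contained in $]s,s'[\,$. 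So $\tilde{\mu}_s\cap\,]s,s'[$ sits inside this countable closure, contradicting its uncountability. The arguments for $\beta_s$, $\alpha_u$, and $\beta_u$ are entirely symmetric.

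The main technical point is the equivariance dictionary for orientation-reversing elements of $\pi_1(P)$ on $\mathcal A^u$, which can exchange the roles of $\alpha_s$ and $\beta_s$ and must be tracked independently from the orientation behavior on $\mathcal A^s$; once this bookkeeping is done, the proof reduces to the clean statement that an uncountable perfect subset of $\mathbb R$ cannot be contained in a countable set.
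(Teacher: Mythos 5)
Your proof is correct in substance but takes a genuinely different, and considerably longer, route than the paper's. The paper observes that the open set $U \subset \mathcal A^s$ of points where $\alpha_s$ (and, symmetrically, $\beta_s$) is locally constant is $\pi_1(P)$-invariant --- via exactly the equivariance dictionary you set up, including the $\alpha\leftrightarrow\beta$ swap for orientation-reversing elements --- so $\mathcal A^s - U$ is a nonempty closed invariant set and therefore contains the unique minimal set $\tilde{\mu}_s$; the lemma is then an immediate restatement of this inclusion. Your argument instead combines perfection of $\tilde{\mu}_s$ (giving uncountability of $\tilde{\mu}_s\cap\,]s,s'[$) with a countability bound on the portion of the $\pi_1(P)$-orbit of $s_3$ trapped in a level set of $\alpha_s$. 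That route works and is an instructive alternative, but one step is overstated: the fixed-point set of a nontrivial $\gamma_0$ in $\mathcal A^s$ is \emph{not} at most a single point --- for the stabilizer of a boundary Birkhoff torus it is a bi-infinite, $h$-periodic family $\{x_i\}_{i\in\mathbb Z}$ of stable leaves. What you actually need, and what is true, is that this fixed set is discrete (fixed stable leaves correspond, up to $h$, to finitely many freely homotopic periodic orbits, which are isolated), hence countable, so the closure of the trapped cyclic orbit is still countable and the cardinality contradiction stands. The paper's proof is shorter because it invokes minimality of $\tilde{\mu}_s$ directly rather than passing through a cardinality dichotomy, and it requires no case analysis on leaf stabilizers or on attracting/repelling behavior.
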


\begin{proof}
Let $U$ be the open subset of $\mathcal A^s$ comprising points where $\alpha_s$ (or $\beta_s$)  is locally constant. Then $\mathcal A^s - U$ is a closed invariant subset, hence contains
$\tilde{\mu}_s$. The Lemma follows.
\end{proof}

In the next section, we will improve Lemma \ref{le:alphabetamu} (see Corollary \ref{cor:alphabetamu2}).


Let $\gamma_0$ be a non-trivial element of $\pi_1(P)$ preserving an element $s_0$ of $\mathcal A^s$.
Replacing $\gamma_0$ by its square if necessary,
we can assume that it commutes with $h$.
Let $\tilde\theta_0$ be the unique fixed point of
 $\gamma_0$ in $s_0$, and let $u_0$ be the unstable leaf
$\oo^u(\tilde\theta_0)$.
As we will see $(s_0,u_0)$ may not belong to $\Omega'$,
i.e. $\tilde\theta_0$ may be outside $\Omega_P$.
However, since $s_0$ lies in $\mathcal A^s$,
and $u_0 = \oou(\tilde\theta_0)$ with $\tilde\theta_0$
periodic, it follows that
the leaf $u_0$ intersects a maximal
segment $]s_{-1}, s_1[$ of stable leaves in $\mathcal A^s$
which has the property of being $\gamma_0$-invariant, and
containing only one fixed point: the leaf $s_0$. Let $\tilde{\theta}_{-1}$ and $\tilde\theta_1$ be the $\gamma_0$-fixed points in respectively
$s_{-1}$ and $s_1$.

Notice that $\gamma_0 h(\tilde\theta_0) = h \gamma_0(\tilde\theta_0)
= h (\tilde\theta_0)$.
According to Theorem \ref{chain} there is a chain of
lozenges $C_1,$ $C_2$, ... $C_{k}$ such that $\tilde \theta_0$ is corner of $C_1$ and $h(\tilde \theta_0)$ a corner of $C_k$.
We extend it to
a bi-infinite chain of lozenges $\{ C_i \}_{(i \in \mathbb Z)}$ which is $H$-invariant where $H$ is the group generated by $\gamma_0$ and $h$:
for every integer $i$ we have $\gamma_0(C_i)=C_i$ and $h(C_i) = C_{i+k}$. But it is not clear that these lozenges, or their interiors, are contained
in $\Omega_P$.
We will prove this fact, and this will be done by
reconstructing the chain of lozenges in $\Omega_P$.

\begin{proposition}\label{pro:lozengeinomega}
The $\gamma_0$ fixed point $\tilde\theta_0$
is the corner of a lozenge $C'_0$ which is contained in
$\Omega_P$, except maybe
the corners. The preimage by $\flat$ of $C'_0$ with the corners removed
is the rectangle $]s_0, s_1[ \ \times \ ]u_0, \beta_s(s_0)[
\ \subset \Omega'$.
\end{proposition}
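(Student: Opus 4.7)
The plan is to construct the second corner $\tilde{\theta}_1$ of the sought lozenge, apply Theorem \ref{chain} to build $C'_0$, and then read off the preimage rectangle from the product-region structure of the lozenge.

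\smallskip
\emph{Step 1 (the second corner).} Since $s_1 \in \mathcal A^s$ is $\gamma_0$-invariant, there is a unique $\gamma_0$-fixed orbit $\tilde{\theta}_1 \in s_1$; set $u_1 := \oou(\tilde{\theta}_1)$. The leaf $u_1$ is automatically $\gamma_0$-invariant, and using the commutation $\gamma_0 h = h \gamma_0$ together with the criterion of Remark \ref{rk:criteriainA}, I would verify that $u_1 \in \mathcal A^u$ by exhibiting a connected component $V$ of $\hhu \setminus \{u_1\}$ with $h(V) \subset V$. The same reasoning applied to $\tilde{\theta}_0$ gives $u_0 \in \mathcal A^u$, so that $(s_0, u_0)$ and $(s_1, u_1)$ both lie in $\mathcal A^s \times \mathcal A^u$.

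\smallskip
\emph{Step 2 (the lozenge).} Apply Theorem \ref{chain} to $\gamma_0$ and the distinct $\gamma_0$-invariant leaves $s_0, s_1 \in \wls$: one obtains a minimal $\gamma_0$-invariant chain of lozenges $C_1, \ldots, C_n$ joining $\tilde{\theta}_0$ to $\tilde{\theta}_1$, all corners fixed by $\gamma_0$. The main obstacle is to show $n = 1$; then set $C'_0 := C_1$. Any intermediate corner $\tilde{q}$ would have a $\gamma_0$-invariant stable leaf $\oos(\tilde{q})$ that, by the maximality of $]s_{-1}, s_1[$, cannot belong to $\mathcal A^s$. I would then combine Theorem \ref{theb} (which forces non-separated leaves to come in $s$-scalloped chains of lozenges) with the properness of $\mathcal A^s$ in $\hhs$ (Proposition \ref{axesfirst}) to rule this out: such a $\tilde{q}$ would produce either two adjacent lozenges in the chain sharing a corner (contradicting minimality after collapsing), or a $\gamma_0$-invariant scalloped region intersected by $\mathcal A^s$ in a segment not containing any $h$-orbit, contradicting the $h$-translation structure on $\mathcal A^s$.

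\smallskip
\emph{Step 3 (the preimage rectangle).} The corners of $C'_0$ satisfy the perfect fits $(s_0, u_1)$ and $(u_0, s_1)$ of Definition \ref{def:lozenge}, which give the standard product-region structure: every stable leaf strictly between $s_0$ and $s_1$ in $\hhs$ meets every unstable leaf strictly between $u_0$ and $u_1$ in $\hhu$. The uniqueness and properness of $\mathcal A^s$ and $\mathcal A^u$ (Proposition \ref{axesfirst}) identify these intervals with $]s_0, s_1[ \subset \mathcal A^s$ and $]u_0, u_1[ \subset \mathcal A^u$, so $\flat$ restricts to a bijection from $]s_0, s_1[ \times ]u_0, u_1[$ onto the interior of $C'_0$; in particular the interior of $C'_0$ lies in $\Omega_P$. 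Finally, the equality $u_1 = \beta_s(s_0)$ follows from the perfect fit between $s_0$ and $u_1$: any $u \in \mathcal A^u$ with $u > u_1$ fails to meet $s_0$, whereas every $u \in \mathcal A^u$ with $u$ slightly below $u_1$ meets $s_0$ through the interior of $C'_0$, so $u_1$ is exactly the supremum defining $\beta_s(s_0)$. The bulk of the work lies in Step 2.
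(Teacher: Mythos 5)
Your Step 1 contains a gap that defeats the proof as written. You assert that $u_0 = \oou(\tilde\theta_0)$ lies in $\mathcal A^u$, claiming this follows from Remark~\ref{rk:criteriainA} and the commutation $\gamma_0 h = h\gamma_0$. But this is false in general: the only thing the hypotheses give you is that $s_0 \in \mathcal A^s$ and that $\tilde\theta_0$ is $\gamma_0$-periodic; nothing forces $u_0 \in \mathcal A^u$. In fact the text of the paper, just before the Proposition, warns explicitly that ``$(s_0,u_0)$ may not belong to $\Omega'$, i.e.\ $\tilde\theta_0$ may be outside $\Omega_P$,'' and the clause ``except maybe the corners'' in the statement of the Proposition is precisely there to allow for this. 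The same remark applies to your claim that $u_1 = \oou(\tilde\theta_1) \in \mathcal A^u$: this too can fail. The criterion of Remark~\ref{rk:criteriainA} requires exhibiting a component $V$ of $\hhu - \{u_0\}$ with $h(V) \subset V$, and the existence of such a $V$ is exactly what is in question; commutation of $\gamma_0$ with $h$ does not produce it. Once $u_0 \notin \mathcal A^u$, your Step 3 collapses as well, since $]u_0,\beta_s(s_0)[$ is no longer an interval of $\mathcal A^u$ and the perfect-fit/$\beta_s$ identification you invoke requires a rather different argument (the paper treats this in its Case 2, where one of $\alpha_s(s_0)$, $\beta_s(s_0)$ is a leaf non-separated from $u_0$ and the corners of the lozenge are detected via the constancy of $\beta_s$ on $]s_{-1},s_1[$).

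A secondary concern is Step 2. Rather than build $C'_0$ from perfect fits read off directly from the monotone functions $\alpha_s,\beta_s$ (which is how the paper proceeds, and which simultaneously proves the lozenge is contained in $\Omega_P$ and identifies its $\flat$-preimage), you apply Theorem~\ref{chain} to $s_0, s_1$ and then try to argue that the resulting chain has length one. The reduction you sketch via Theorem~\ref{theb} and properness of $\mathcal A^s$ is not carried out, and it is not obvious it can be: an intermediate corner $\tilde q$ has $\oos(\tilde q) \notin \mathcal A^s$ by maximality of $]s_{-1},s_1[$, but you have not explained why such a $\gamma_0$-invariant stable leaf outside the axis is impossible. (In fact the paper's subsequent discussion shows that type-N fixed points with associated ``fake blue corners'' do occur; what rules out a longer chain between $\tilde\theta_0$ and $\tilde\theta_1$ is the precise analysis of $\alpha_s,\beta_s$, not a generic argument.) So even in the favorable case $u_0 \in \mathcal A^u$, your proof is incomplete. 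I would redo the argument following the paper's case split on whether $\tilde\theta_0$ lies in $\Omega_P$, and construct the perfect fits directly from the structure of $\Omega'$.
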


\begin{proof}
We distinguish two cases:

\textbf{Case 1 - $\tilde\theta_0$ is in $\Omega_P$}

In this case $u_0 = \oou(\tilde\theta_0)$ is
in $\mathcal A^u$, and
consequently , the segment $]s_{-1}, s_1[ \times \{ u_0 \}$
lies in $\Omega'$. Therefore, we have $\alpha_s(s) \leq u_0$ for every $s$ in $[s_0, s_1[$.
Moreover, since $\beta_s$ is non-decreasing, we also have $\beta_s(s) \geq \beta_s(s_0)$ for every $s$ in $[s_0, s_1[$. It follows
that the rectangle $]s_0, s_1[ \times ]u_0, \beta_s(s_0)[$
is contained in $\Omega'$.
Moreover, we have $\alpha_s(s_1) \geq u_0$ (if not, $(\alpha_s(s_1), u_0)$ would correspond to a second fixed point in the unstable leaf $u_0$).
In addition if $u_1 = \beta_s(s_0)$ then $\gamma_0(u_1) = u_1$
as $\gamma_0$ commutes with $\beta_s$ and $\gamma_0(s_0) = s_0$.

We then have two subcases:

-- \emph{either $\beta_s(s) > \beta_s(s_0)$ for some $s$ in $[s_0, s_1[$:} then it is true for every $s$ in $[s_0, s_1[$ since $\beta_s$ is non-decreasing and
commutes with $\gamma_0$. It means that the leaf $u_1 = \beta_s(s_0)$ intersects every $s$ in $]s_0, s_1[$. Let $\tilde\theta'_1$ be the unique $\gamma_0$
fixed point in $u_1$. The union of the
intersections $u_1 \cap s$ for $s \in \ ]s_0, s_1[$ \ is a component of
$u_1 - \{ \tilde{\theta}'_1 \}$ which makes a perfect fit with $s_0$.
Since $\tilde\theta'_1$ is a periodic
orbit, every unstable leaf close to $u_1$ intersects $\oo^s(\tilde{\theta}'_1)$. In particular, the intersections
between $\oo^s(\tilde{\theta}'_1)$ and leaves $u$ in $]u_0, u_1[$ describes a half leaf in $\oo^s(\tilde{\theta}'_1)$ which makes a perfect fit with
$s_0$. It follows that $\tilde\theta_0$ and $\tilde\theta'_1$ are the corners of a lozenge $C'_0$, such that $C'_0$ is the rectangle
$]s_0, s_1[ \times ]u_0, \beta_s(s_0)[$. Moreover, in this case we have $\tilde\theta'_1 = \tilde\theta_1 \in \Omega_P$: we have shown that
$\tilde\theta_1$ is the corner of a lozenge
entirely contained in $\Omega_P$, corners included and also
the side $]s_0,s_1[ \times u_1$ included
(see figure \ref{lozomega1}, this figure does not
illustrate all subcases: as explained in
Lemma \ref{le:boundaryleaf}, it could happen that one corner, if singular, lies on the graph of $\alpha_s$ or $\beta_s$).

\begin{figure}
  \centering
   \includegraphics[scale=0.5]{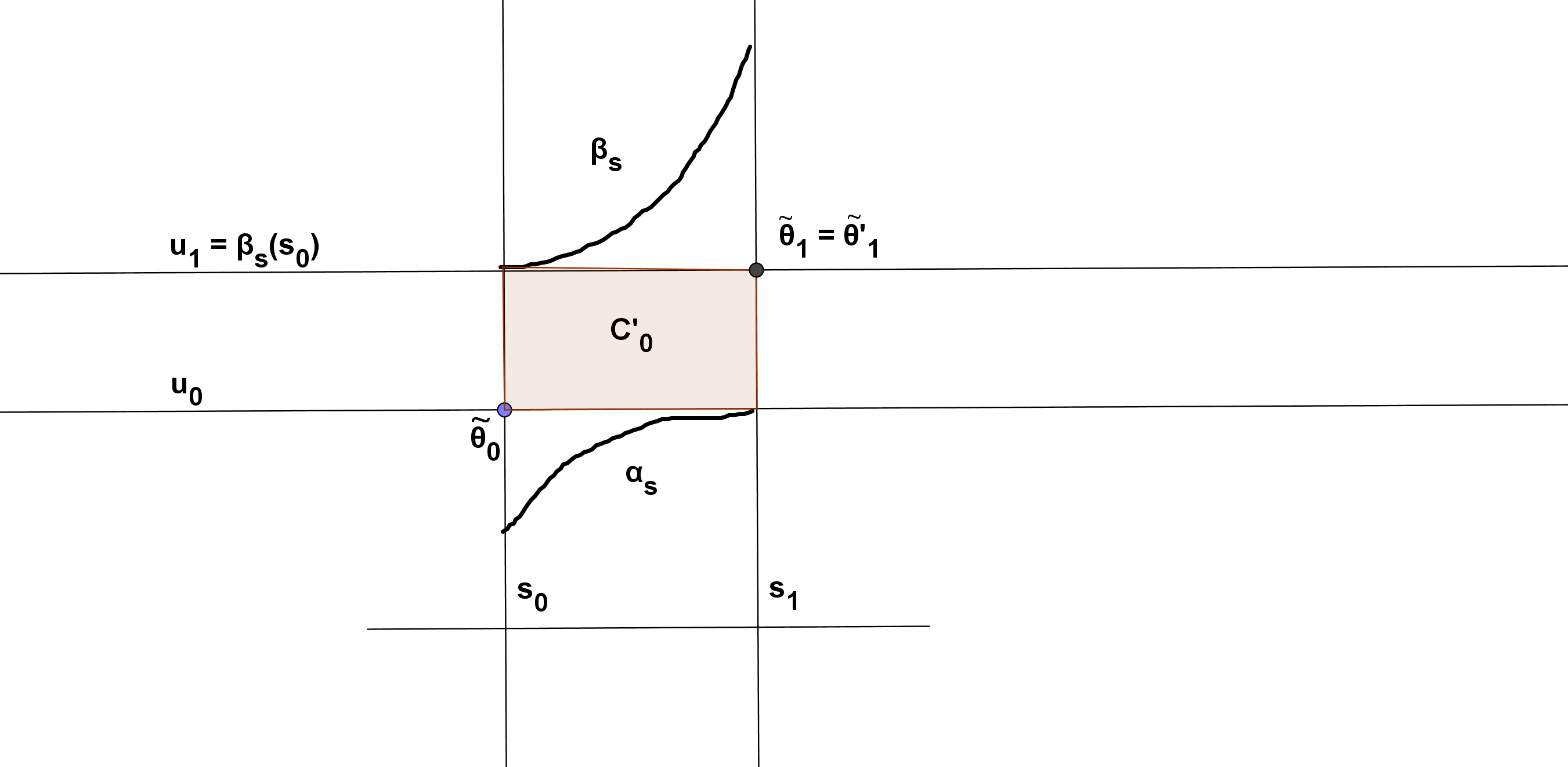}
\caption{A lozenge entirely in $\Omega_P$.}
\label{lozomega1}
\end{figure}

-- \emph{or for every $s$ in $[s_0, s_1[$ we have $\beta_s(s) = \beta_s(s_0)$:}
Since $\tilde\theta_0$ is the periodic orbit in $s_0$, then
for $s \in ]s_0,s_1[$ near $s_0$ it follows that
$s$ intersects $u_0$ and hence $\alpha_s(s) \leq u_0$.
Again by equivariance under $\gamma_0$ this is true for
all $s$ in the interval. Similarly $\beta_s \geq u_1$
throughout the interval.
As before $\beta_s(s_0)$ makes a perfect with $s_0$.
Similarly $u_0$ makes a perfect fit with $s_1$.
In other words the $\gamma_0$ invariant lozenge
$]s_0,s_1[ \times ]u_0, \beta_s(s_0)[$ is entirely
contained in $\Omega_P$.
The difference is that in this case $]s_0,s_1[ \times u_1$
is not in the interior of $\Omega_P$, but rather it is
in the boundary of $\Omega_P$. It may be contained
in $\Omega_P$ or not, and similarly for $\tilde\theta'_1$,
see figure \ref{lozomega2}.
In this case again $\tilde\theta'_1 =
\tilde\theta_1$.

\begin{figure}
  \centering
   \includegraphics[scale=0.8]{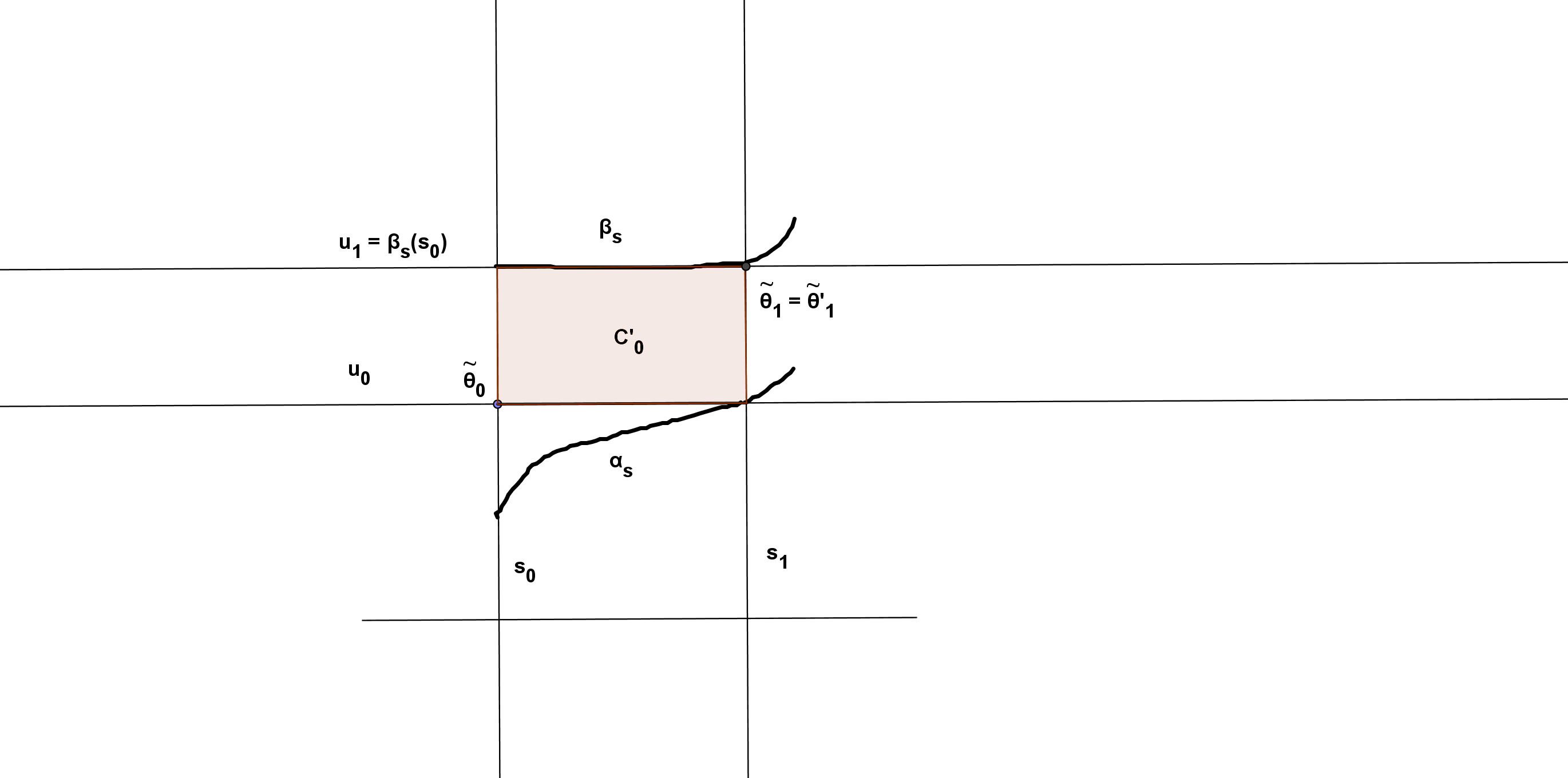}
\caption{A lozenge with a side not in the interior of
$\Omega_P$.}
\label{lozomega2}
\end{figure}

\textbf{Case 2 - $\tilde\theta_0$ is not in $\Omega_P$}

In this case, $u_0$ is not in $\mathcal A^u$.
Since $s$ intersects $\Omega_P$ in an open interval
and since it is $\gamma_0$ invariant then $s \cap \Omega_P$
is a component $D$ of $s - \tilde\theta_0$.
The projection of this component in the
{\underline {unstable}} leaf space is contained
in $\mathcal A^u$ and it is exactly the
segment $]\alpha_s(s_0), \beta_s(s_0)[$. In other words
The segment $]\alpha_s(s_0), \beta_s(s_0)[$ corresponds to the unique component of $\oo^s(\tilde\theta_0) - \{ \tilde\theta_0 \}$
intersecting $\mathcal A^u$.
One of the extremities, $\alpha_s(s_0)$ or $\beta_s(s_0)$, is a leaf which is not separated from $u_0$. Let us assume that it is $\beta_s(s_0)$, the other case
can be treated in a similar way.
Then, again because $\tilde\theta_0$ is periodic,
leaves in $\mathcal A^s$ close to $s_0$ all intersect $u_0$, which is not in $\mathcal A^u$. It follows that these
leaves are elements $s$ of $\mathcal A^s$
satisfying $\beta_s(s) = \beta_s(s_0)$. But this set of leaves is $\gamma_0$-invariant,
hence it is the entire $]s_{-1}, s_1[$.

Recall that $\tilde\theta_1$ is the periodic orbit in $s_1$.
The leaves $u_0$ and $s_1$ makes a perfect fit:
it follows that there is a component $b_1$ of
$u'_1 - \{ \tilde\theta_1 \}$ (where $u'_1 = \oo^u(\tilde\theta_1)$) such that every leaf in $]s_0, s_1[$ intersects $b_1$ (see figure \ref{lozomega33}) - observe that $u'_1$ may still be outside
$\mathcal A^u$.

\begin{figure}
  \centering
   \includegraphics[scale=0.8]{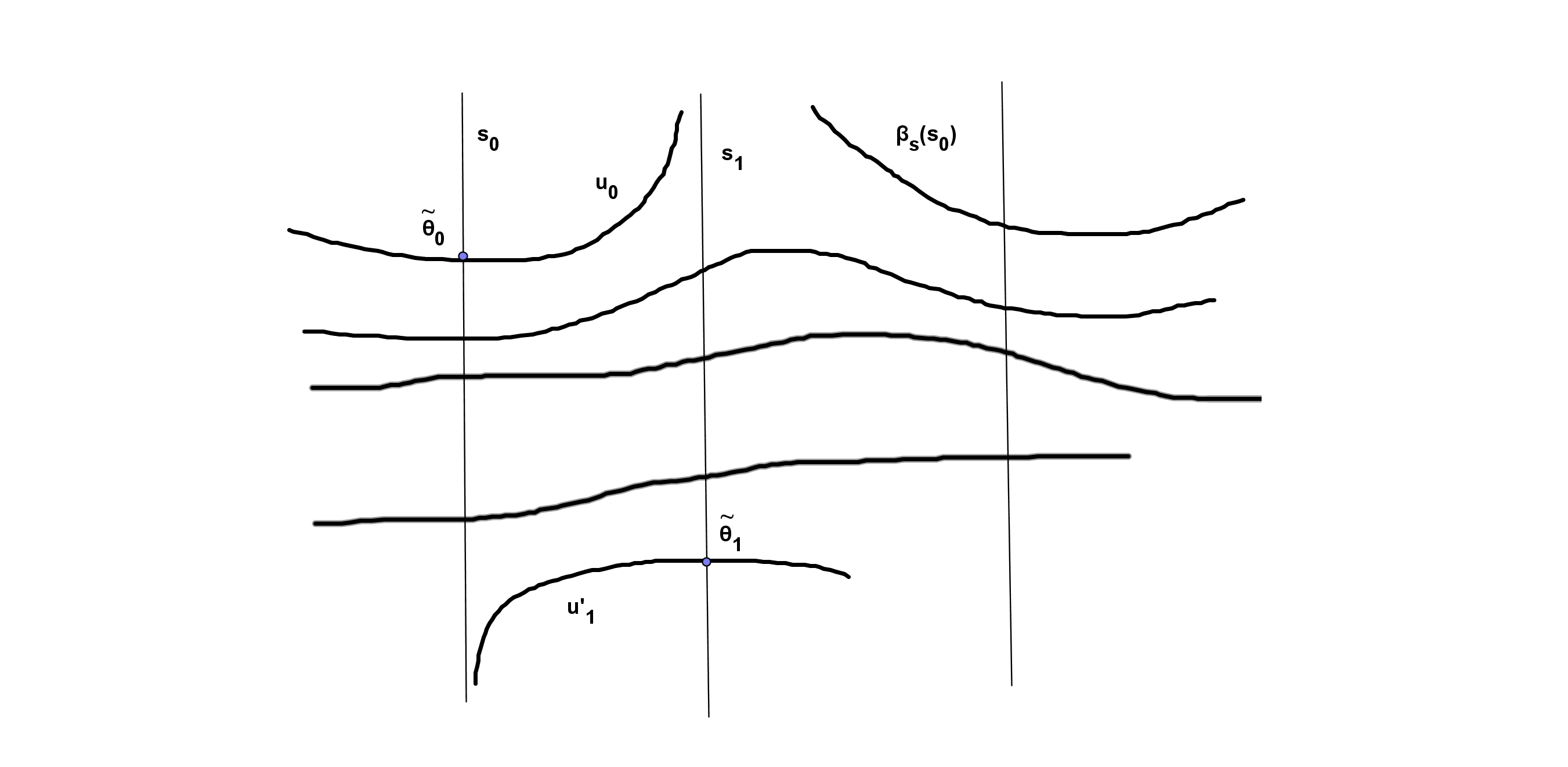}
\caption{The picture in $\oo$ for case $2$.}
\label{lozomega33}
\end{figure}

Again $u'_1$ does not intersect $s_0$ and $b_1$
makes a perfect with $s_0$.
It follows that we have $u'_1 = \alpha_s(s_0)$. The lozenge $C'_0$ with corners $\tilde\theta_0$ and $\tilde{\theta}_1$ is then
a lozenge with interior contained in $\Omega_P$, corresponding to the rectangle $]s_0, s_1[ \times ]\alpha_s(s_0), \beta_s(s_1)[$.
\end{proof}

Observe that it is still not totally
clear from this proof that $C'_0$ coincides with the first lozenge $C_0$ of the chain connecting $\tilde\theta_0$ to $h(\tilde\theta_0)$. But it is easy to infer this statement: indeed, apply Proposition \ref{pro:lozengeinomega} to $s_1$: we get another lozenge at the right
of $s_1$, that is, intersecting
$s$ with $s > s_1$ in $\mathcal A^s$. If we iterate this procedure, since there are only finitely many $\gamma_0$-fixed points in $\mathcal A^s$
between $s_0$ and $h(s_0)$, we finally reach
the stable leaf $h(s_0)$. It follows that the sequence of lozenges obtained in this way must be the unique chain of lozenges connecting $\tilde\theta_0$ to $h(\tilde\theta_0)$.
In figure \ref{toromega}, we have drawn what a possible
example of this sequence of lozenges
in $\mathcal A^s \times \mathcal A^u$. We have drawn
in red the  corners of the rectangles that cannot be corners of the lozenges (they are attracting or repelling fixed points for the action
of $\gamma_0$ on $\mathcal A^s \times \mathcal A^u$), and in blue points that \emph{may} be corners.
In other words, the blue corners correspond to actual orbits
of the flow in $\Omega'$. The action of $\gamma_0$ is attracting
on one of $\mathcal A^s, \mathcal A^u$ and repelling on the
other one.
The red corners correspond to perfect fits
between stable/unstable leaves and they do not correspond
to any orbit of $\widetilde{\Phi}$. In $\mathcal A^s$,
$\mathcal A^u$ the actions at the corresponding
points are either both attracting or both repelling.

\begin{figure}
  \centering
   \includegraphics[scale=0.8]{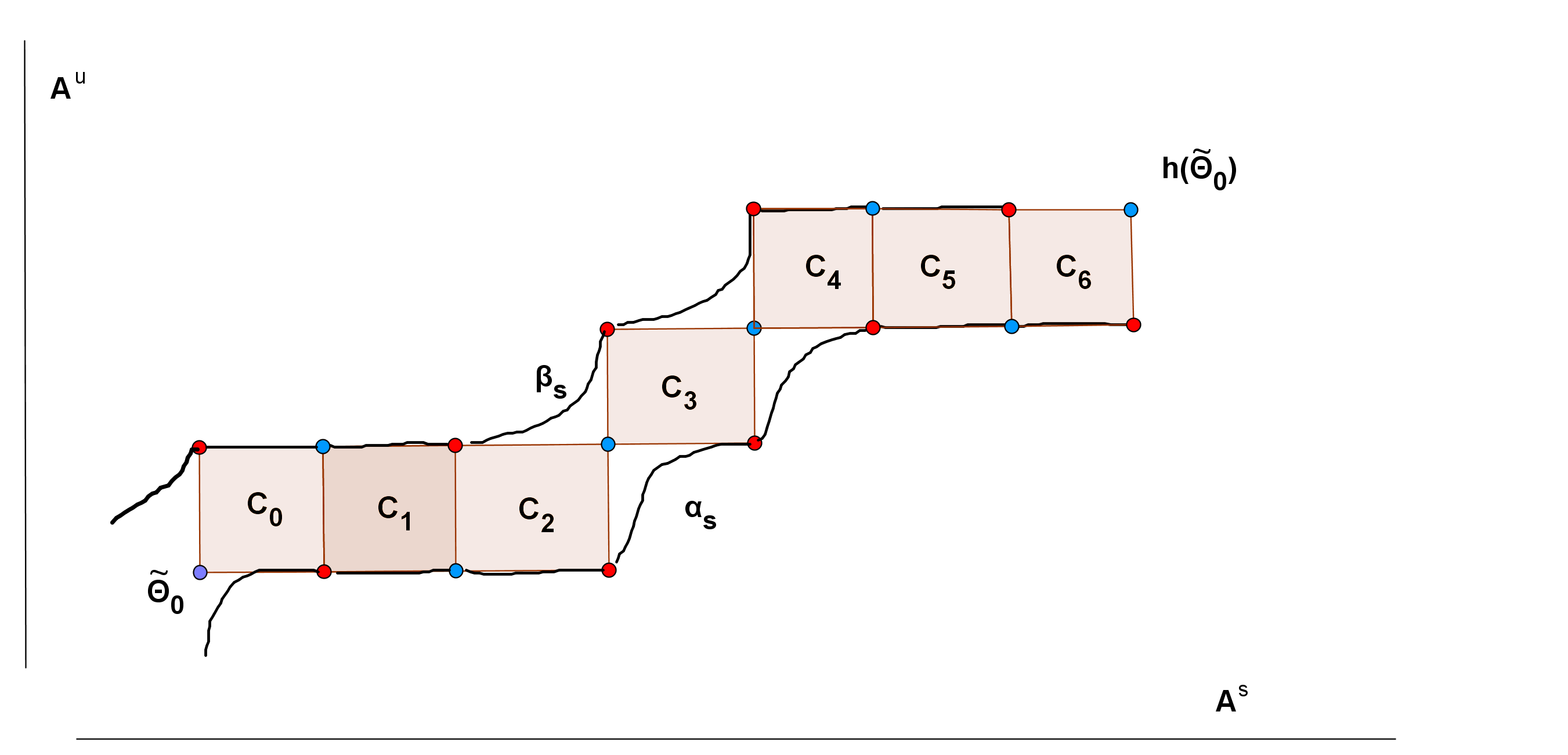}
\caption{A chain of lozenges in $\Omega'$ between the graphs of $\alpha_s$ and $\beta_s$.}
\label{toromega}
\end{figure}

As we can see in this picture, this chain of lozenges is a subdivision of a chain of rectangles, where each rectangle is a maximal
union of s-adjacent or
u-adjacent lozenges. Moreover, it follows from the analysis above that (blue) corners between two successive maximal
rectangles (for example, the corner common to
$C_2$ and $C_3$, or the one common to $C_3$ and $C_4$) cannot be as in case $2$ of the proof of Proposition \ref{pro:lozengeinomega}: they are points
in $\Omega'$ corresponding to the common corner of the lozenges in question. We call them \emph{true blue corners}. The other blue corners (for example,
the one common to $C_0$ and $C_1$, are \emph{fake blue corners}: they are precisely the ones appearing in case $2$ of the proof of Proposition \ref{pro:lozengeinomega}. They are never points in $\Omega'$.
More precisely:

\vskip .08in
\noindent
{\bf {Type S and type N for fixed points
in $\mathcal A^s$.}} \
There are two types of fixed points of $\gamma_0$ in $\mathcal A^s$: the stable leaves $s_i$ containing their blue point (which is then a true
blue corner), and the others, for which $s_i \cap \Omega'$ contains no $\gamma_0$-fixed point. We call the first type of type S, and the second type of
type N - this terminology comes from the following observation left to the reader:
let $x_i$ be the fixed point of $\gamma_0$ in $s_i$.
If $s_i$ is of type S, then $\oou(x_i)$ Separates
$x_{i_1}$ from $x_{i+1}$ in $\oo$.
in $s_{i+1}$ from the the $\gamma_0$-fixed point in $s_{i-1}$; and
If $s_i$ is  of type N, then $\oou(x_i)$ does not
separate $x_{i-1}$ from $x_{i+1}$ in $\oo$.
\vskip .04in

\begin{lemma}{}{}
If $s$ is of type N then $s$ is not in $\tilde{\mu}_s$.
\end{lemma}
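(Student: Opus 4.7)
The plan is to deduce this directly from structural information already extracted in the proof of Proposition \ref{pro:lozengeinomega} combined with the openness observation implicit in the proof of Lemma \ref{le:alphabetamu}.

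First I would identify what ``type N'' really says. By definition $s$ is of type N when the (unique up to finite index) element $\gamma_0 \in \pi_1(P)$ fixing $s$ has its fixed point $\tilde\theta_0 \in s$ outside $\Omega_P$; equivalently, $\oo^u(\tilde\theta_0)$ does not lie in $\mathcal A^u$. This is exactly the hypothesis of Case 2 in the proof of Proposition \ref{pro:lozengeinomega}. The analysis carried out there produces a maximal $\gamma_0$-invariant open interval $]s_{-1},s_1[ \subset \mathcal A^s$ with $s = s_0$ in its interior, and shows that one of the two functions $\alpha_s$, $\beta_s$ is constant on this entire interval (according to which side of the periodic half-leaf of $\oo^s(\tilde\theta_0)$ in $\Omega_P$ is non-separated from $u_0$). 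Thus $s$ admits a neighborhood on which $\alpha_s$ or $\beta_s$ is locally constant.

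Next I would recall the open/closed dichotomy from the proof of Lemma \ref{le:alphabetamu}: the set $U \subset \mathcal A^s$ of points where $\alpha_s$ or $\beta_s$ is locally constant is open and $\pi_1(P)$-invariant, so $\mathcal A^s \setminus U$ is closed and $\pi_1(P)$-invariant. Neither $\alpha_s$ nor $\beta_s$ can be globally constant: both are $h$-equivariant and $h$ acts by translation on $\mathcal A^u$ (Remark \ref{rk:orienteA}), so constancy would force a fixed leaf of $h$ in $\mathcal A^u$, contradicting Proposition \ref{axesfirst}. Hence $\mathcal A^s \setminus U$ is a nonempty closed $\pi_1(P)$-invariant set, so minimality of $\tilde\mu_s$ gives $\tilde\mu_s \subset \mathcal A^s \setminus U$, i.e.\ $U \cap \tilde\mu_s = \emptyset$.

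Combining the two steps, $s \in U$ and $U$ is disjoint from $\tilde\mu_s$, whence $s \notin \tilde\mu_s$, as required. There is essentially no obstacle: the only thing to check carefully is the identification of the type-N hypothesis with Case 2 of Proposition \ref{pro:lozengeinomega}, which is immediate from the definition of type N, and after that the local constancy of $\alpha_s$ or $\beta_s$ near $s$ is already provided by that case analysis.
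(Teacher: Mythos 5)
Your proof is essentially the paper's own (two-sentence) argument, unpacked: the paper says exactly that type N forces $\alpha_s$ or $\beta_s$ to be locally constant near $s$ and then invokes Lemma~\ref{le:alphabetamu}, which is precisely the invariant open set $U$ you use. Your identification of type N with Case 2 of Proposition~\ref{pro:lozengeinomega} is correct, and the observation that $h$-equivariance rules out global constancy is a worthwhile point the paper leaves implicit. One small imprecision: if $U_\alpha$ (resp.\ $U_\beta$) denotes the set where $\alpha_s$ (resp.\ $\beta_s$) is locally constant, your non-constancy argument shows $\mathcal A^s \setminus U_\alpha$ and $\mathcal A^s \setminus U_\beta$ are each nonempty, but it does not by itself show $\mathcal A^s \setminus (U_\alpha \cup U_\beta)$ is nonempty, which is what you assert before applying minimality to the combined $U$. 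The fix is to run the minimality argument twice, once for each function, yielding $\tilde\mu_s \subset \mathcal A^s \setminus U_\alpha$ and $\tilde\mu_s \subset \mathcal A^s \setminus U_\beta$, and then intersect; the conclusion $U \cap \tilde\mu_s = \emptyset$ follows as you intended.
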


\begin{proof}{}
Observe that if $s$ is of type N, then $\alpha_s$ or $\beta_s$ is constant in a neighborhood of $s$. Now the lemma follows from Lemma \ref{le:alphabetamu}.
\end{proof}

Conversely:

\begin{lemma}{}{}
If $s$ is of type S then $s$ is in $\tilde{\mu}_s$.
\end{lemma}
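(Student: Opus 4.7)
The plan is to derive a contradiction from the assumption $s_0 \notin \tilde{\mu}_s$, by combining the hyperbolic dynamics of $\gamma_0$ at the type S fixed point $s_0$ with the closedness and invariance of $\tilde{\mu}_s$. First I would establish that the action of $\gamma_0$ on $\mathcal{A}^s$ at $s_0$ is hyperbolic. Indeed, since $s_0$ is of type S, the orbit $\tilde{\theta}_0 = s_0 \cap u_0$ lies in $\Omega_P$ with $u_0 \in \mathcal{A}^u$, and it projects to a periodic orbit $\theta_0$ of $\Phi$. The element $\gamma_0$ covers a nonzero power of the first-return map of $\Phi$ at $\theta_0$, which is topologically hyperbolic (contracting along the stable direction, expanding along the unstable). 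Hence the induced action of $\gamma_0$ on a small neighborhood of $s_0$ in $\mathcal{A}^s$ is conjugate to a hyperbolic map with $s_0$ as its unique local fixed point; replacing $\gamma_0$ by its inverse if necessary, there is an open interval $V = (s_-', s_+')$ containing $s_0$ such that $\gamma_0^n(s) \to s_0$ uniformly on compacta of $V$ as $n \to \infty$.

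The core step is then to produce a point $\tilde{s} \in \tilde{\mu}_s \cap V$ with $\tilde{s} \neq s_0$: once found, the sequence $\gamma_0^n(\tilde{s}) \in \tilde{\mu}_s$ (by invariance) converges to $s_0$, forcing $s_0 \in \overline{\tilde{\mu}_s} = \tilde{\mu}_s$, which is the desired conclusion. Suppose toward contradiction that no such $\tilde{s}$ exists, so $V \setminus \{s_0\}$ is disjoint from $\tilde{\mu}_s$. Then $s_0$ lies in an open gap $J = (a,b)$ of $\tilde{\mu}_s$, and $V \subset J$. Since gaps of $\tilde{\mu}_s$ are disjoint and $\gamma_0(J)$ is a gap containing $\gamma_0(s_0)=s_0$, we have $\gamma_0(J) = J$, hence $\gamma_0$ permutes $\{a,b\}$. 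After replacing $\gamma_0$ by $\gamma_0^2$, I assume $\gamma_0(a) = a$ and $\gamma_0(b) = b$. Both $a$ and $b$ are then $\gamma_0$-fixed leaves in $\mathcal{A}^s$ belonging to $\tilde{\mu}_s$, and by the previous Lemma (type N excludes membership in $\tilde{\mu}_s$) they must themselves be of type S.

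I would then derive a contradiction from the configuration where $a$, $s_0$, $b$ are three consecutive type S leaves of $\gamma_0$ in $\mathcal{A}^s$ with $a,b \in \tilde{\mu}_s$ but $s_0 \notin \tilde{\mu}_s$. Since $s_0$ is the unique $\gamma_0$-fixed leaf in $(s_{-1}, s_1)$ we must have $a \leq s_{-1}$ and $s_1 \leq b$; a strict inequality $a < s_{-1}$ would place the $\gamma_0$-fixed leaf $s_{-1}$ inside the gap $J$, so $s_{-1} \notin \tilde{\mu}_s$ and the previous Lemma would force $s_{-1}$ to be of type N. However, applying Proposition \ref{pro:lozengeinomega} at $s_0$ yields a lozenge in $\Omega_P$ whose other corner $\tilde{\theta}'_1$ is, in the notation of the analysis following the proof of that Proposition, either a true blue corner (type S, lying in $\Omega_P$) or a fake blue corner (type N, corresponding to a perfect fit). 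Tracing through the chain of lozenges between $\tilde{\theta}_a$, $\tilde{\theta}_0$, $\tilde{\theta}_b$ and using the characterization of true blue corners as the common corners of successive maximal rectangles, the sandwiched configuration forces $s_0$ to be approached in $\mathcal{A}^s$ by stable sides of these maximal rectangles, which are themselves elements of $\tilde{\mu}_s$ (being limits of $\pi_1(P)$-translates of the type S leaves $a$, $b$). This contradicts $\tilde{\mu}_s \cap V = \emptyset$, completing the proof.

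The main obstacle is the last step, namely showing that the stable leaves forming the boundaries of maximal rectangles near the true blue corner $\tilde{\theta}_0$ accumulate on $s_0$ and lie in $\tilde{\mu}_s$. I expect this to rest on the fact that $\tilde{\mu}_s$ is the closure of the $\pi_1(P)$-orbit of any of its points combined with the density of periodic orbits of $\Phi$ contained in $\Omega_P$; an alternative route is to invoke non-elementarity of $\pi_1(P)$ directly to show that the stabilizer of the gap $J$ is cyclic, generated by $\gamma_0$, so that translates $\delta(a)$ of $a$ under $\delta \in \pi_1(P) \setminus \langle \gamma_0 \rangle$ land in $V$, providing the required point of $\tilde{\mu}_s$ in the attracting basin of $s_0$.
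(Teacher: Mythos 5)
Your opening move is natural and your case analysis is sound up to the point where you reduce to $s_0$ lying in a gap $J=(a,b)$ with $\gamma_0$-fixed type S endpoints. But from there the argument collapses, for a reason that you yourself set up and then do not confront.

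Your core strategy is to produce a point $\tilde s\in\tilde\mu_s\cap V\setminus\{s_0\}$ and pull it to $s_0$ by $\gamma_0^n$. However, once you establish that $s_0$ lies in the gap $J$ with $\gamma_0$-fixed endpoints $a,b$, the attracting basin $V=(s_{-1},s_1)$ of $s_0$ is pinched between $a$ and $b$ (indeed $a\le s_{-1}<s_0<s_1\le b$), hence $V\subset J$. Since $J$ is by definition disjoint from $\tilde\mu_s$, and $\tilde\mu_s$ is $\pi_1(P)$-invariant, no point of $\tilde\mu_s$ can ever be in $V$, under any amount of translation. So the object you are looking for provably does not exist, and the ``alternative route'' suffers from the same defect: translates $\delta(a)$ stay in $\tilde\mu_s$ and therefore never enter $J\supseteq V$. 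The ``sandwiched configuration'' claim --- that stable sides of maximal rectangles accumulate on $s_0$ and lie in $\tilde\mu_s$ --- is precisely the assertion $s_0\in\overline{\tilde\mu_s}$ that you are trying to prove; it is not a consequence of the lozenge structure, and it is flatly inconsistent with $V\subset J$ and $J\cap\tilde\mu_s=\emptyset$. In short, the entire basin-of-attraction route is blocked, not merely difficult.

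The paper sidesteps this obstruction altogether. Rather than trying to bring $\tilde\mu_s$ into $J$, it exploits density of the $\pi_1(P)$-orbit of the gap endpoint $s_-$ in $\tilde\mu_s$ to find a conjugate $\gamma'_0=\gamma\gamma_0\gamma^{-1}$ (with $\gamma(s_-)$ close to $s_-$ on the $\tilde\mu_s$-side, hence disjoint from $J$) together with the monotone maps $\beta_s$ and $\beta_u$. Tracking the $\gamma'_0$-fixed leaf through $\beta_s$ and then $\beta_u$ shows that $\gamma'_0$ has a fixed point in the closed interval $[s_-,s_+]$; since $J$ is a gap of the $\gamma'_0$-invariant set $\tilde\mu_s$, this forces $\gamma'_0$ to preserve $J$ and hence to fix $s_\pm$. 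But $\gamma$ was chosen so that $\gamma_0$ and $\gamma'_0$ have no common fixed point in $\mathcal A^s$ --- contradiction. This is a different mechanism than yours: it never needs a $\tilde\mu_s$-point inside $J$, only a second element of $\pi_1(P)$ forced to stabilize $J$.
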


\begin{proof}{}
The proof is by contradiction. Suppose that $s$ is not in $\tilde{\mu}_s$. Let
$J$ be the complementary interval of $\tilde{\mu}_s$ in $\mathcal A^s$ containing $s$.
Observe that $J$ is $\gamma_0$-invariant. Its endpoints $s_-,$ $s_+$ with $s_- < s < s_+$ are also
$\gamma_0$-fixed points.
By the previous lemma $s_-$ and $s_+$ are of type S since they are in $\tilde{\mu}_s$.
Each of $s_-, s_+$ contains
 a $\gamma_0$-fixed point $x\pm = (s_\pm, u_\pm)$ in $\Omega$ which is a true blue corner, and there is
also a fixed point $(s,u)$ as well.
Since $\tilde{\mu}_s$ is perfect and since the $\pi_1(P)$-orbit of $s_-$ is dense in $\tilde{\mu}_s$ there
is an iterate $\gamma(s_-)$ that intersects the
unstable leaf $u_- = \oo^u(x_-)$.
The leaf $u_-$ is in $\mathcal A^u$, because
$\oou(u_-)$ separates two stable leaves
in $\mathcal A^s$ from each other since $s_-$ is of
type S and is associated with a true blue corner.
Again, because $x_-$ is the periodic point in $s_-$ it
follows that $\gamma(s_-)$ intersects elements
in $\mathcal A^u$ that are bigger than $u_-$.
In other words, $\beta_s(\gamma(s_-)) > u_-$.
But since $\beta_s$ is non decreasing, we have

$$u_- \ < \ \beta_s(\gamma(s_-)) \ \leq \ \beta_s(s_-) \ \leq \ u.$$

Observe that one can chose $\gamma$ so that in addition
$\gamma s_-$ is not fixed by $\gamma_0$: it is a fixed point of an
element $\gamma'_0 = \gamma\gamma_0\gamma^{-1}$ which has no common
fixed points with $\gamma_0$ (indeed, observe that
if two elements of $\pi_1(P)$  as above
admit a common fixed point in $\mathcal A^s$,
then either they are both orientation preserving
or orientation reversing on $\mathcal A^s$ and it follows that
they have exactly the same set of fixed points in $\mathcal A^s$).

The unstable leaf $\beta_s(\gamma(s_-))$ is fixed by $\gamma'_0$,
hence its image by $\beta_u$ is a $\gamma'_0$ fixed point too.
But since $\beta_u$ is non-decreasing, we have

$$\beta_u(u_-) \ \leq \ \beta_u(\beta_s(\gamma s_-)) \ \leq \ \beta_u(u).
\ \ \ {\rm But \ also} \ \ \
s_- \ \leq \ \beta_u(u_-), \ \ {\rm and} \ \
\beta_u(u) \ \leq s_+$$

\noindent
(see figure \ref{typeSgeo}). As a consequence
 we obtain that $[s_-, s_+]$ contains the
$\gamma'_0$-fixed point $\beta_u(\beta_s(\gamma(s_-)))$.

\begin{figure}
  \centering
   \includegraphics[scale=0.8]{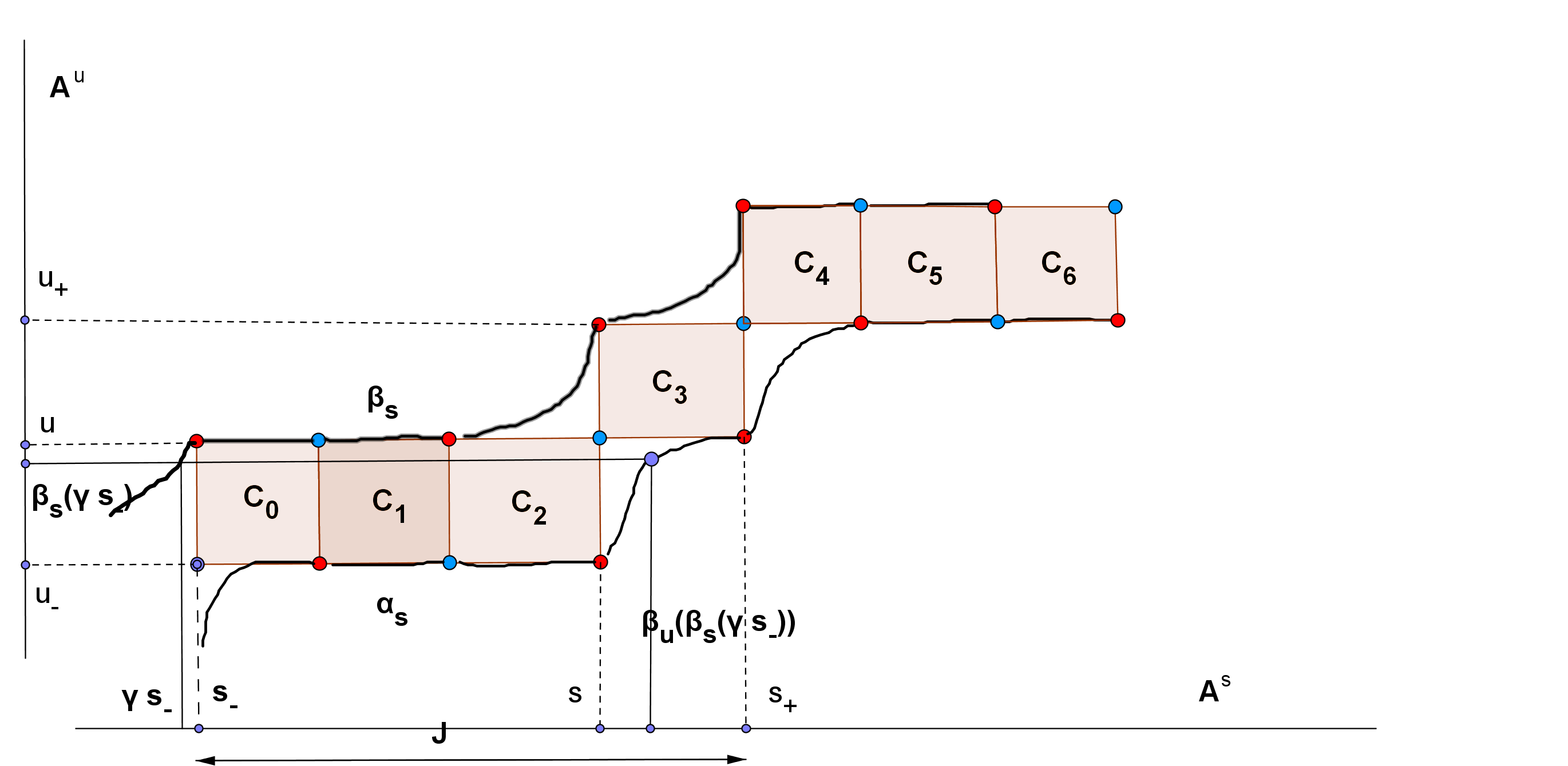}
\caption{The gap $J$ of $\tilde{\mu}_s$ contains a $\gamma'_0$-fixed point.}
\label{typeSgeo}
\end{figure}

But $[s_-, s_+]$ is the closure of the gap $J$ of $\tilde{\mu}_s$: it follows
that $J$ is preserved by $\gamma'_0$. Its endpoints $s_\pm$ are fixed by $\gamma_0$ and $\gamma'_0$: contradiction.
This proves the Lemma.
\end{proof}

We stress the following important property. The arguments above
show that $\tilde\theta$ is a true blue corner if and only
if the following happens: the two consecutive lozenges in the
chain that share $\tilde\theta$ as a corner are
{\underline {not adjacent}}. Otherwise these lozenges
either intersect common stable or unstable leaves.

\begin{corollary}{}{}
Let $\tilde\theta \in \oo$ be a fixed point of a non-trivial element
$\gamma_0$ of $\pi_1(P)$. Then, $\tilde\theta$ lies in $\Omega_P$ if and
only if $\oo^s(\tilde\theta) \in \tilde{\mu}_s$ or $\oo^u(\tilde\theta) \in \tilde{\mu}_u$. Moreover, if it happens, then the preimage
$\flat^{-1}(\tilde\theta) = (\oo^s(\tilde\theta), \oo^u(\tilde\theta))$ lies in $\tilde{\mu}_s \times \tilde{\mu}_u$. In particular,
$\tilde{\mu}_s$ is the closure of the subset of $\mathcal A^s \subset \hhs$ that consists
of $\oos(\tilde\theta) \in \mathcal A^s$, where $\tilde\theta$ in $\Omega_P$  is fixed by $\gamma$ in $\pi_1(P) - id$ and $\oos(\tilde\theta)$ is of type S.
\label{minimal}
\end{corollary}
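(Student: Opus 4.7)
The plan is to combine the two immediately preceding lemmas (type N $\Rightarrow$ $s \notin \tilde{\mu}_s$ and type S $\Rightarrow$ $s \in \tilde{\mu}_s$) with the case analysis carried out in the proof of Proposition \ref{pro:lozengeinomega}. The crucial observation, already highlighted in the paragraph preceding the corollary, is that a $\gamma_0$-fixed leaf $s \in \mathcal A^s$ is of type S exactly when the unique $\gamma_0$-fixed point $\tilde\theta$ in $s$ is a true blue corner, i.e. Case 1 of Proposition \ref{pro:lozengeinomega}; while type N corresponds to Case 2, where $\tilde\theta$ is a fake blue corner and in particular is \emph{not} an element of $\Omega_P$. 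So type S for $\oos(\tilde\theta)$ is equivalent to $\tilde\theta \in \Omega_P$.

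First I combine this with the two lemmas to obtain the chain of equivalences $\tilde\theta \in \Omega_P \Leftrightarrow \oos(\tilde\theta)$ is of type S $\Leftrightarrow \oos(\tilde\theta) \in \tilde{\mu}_s$. By the obvious stable/unstable symmetry (swap the roles of $\oos$ and $\oou$, of $\mathcal A^s$ and $\mathcal A^u$, and reverse the flow in the statements of Proposition \ref{pro:lozengeinomega} and of the two preceding lemmas), one also gets $\tilde\theta \in \Omega_P \Leftrightarrow \oou(\tilde\theta) \in \tilde{\mu}_u$. This proves the biconditional of the corollary in its strongest form: both alternatives in the ``or'' are individually equivalent to $\tilde\theta \in \Omega_P$, and so the ``moreover'' clause ($\flat^{-1}(\tilde\theta) \in \tilde{\mu}_s \times \tilde{\mu}_u$) is then immediate.

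For the ``in particular'' part, let $X$ denote the set of $\oos(\tilde\theta) \in \mathcal A^s$ such that $\tilde\theta \in \Omega_P$ is fixed by some $\gamma \in \pi_1(P) \smallsetminus \{e\}$ and $\oos(\tilde\theta)$ is of type S. The inclusion $X \subseteq \tilde{\mu}_s$ is what we have just established, so $\overline X \subseteq \tilde{\mu}_s$. For the reverse inclusion I invoke minimality of the action of $\pi_1(P)$ on $\tilde{\mu}_s$: it suffices to prove that $X$ is non-empty and $\pi_1(P)$-invariant. Invariance is trivial since the properties defining $X$ (being a fixed point of a non-trivial element, belonging to $\Omega_P$, being of type S) are all preserved by the action of $\pi_1(P)$ on $\oo$. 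For non-emptiness one applies Proposition \ref{pro:lozengeinomega} to any element $\gamma_0 \in \pi_1(P)$ fixing some $s_0 \in \mathcal A^s$: this produces a lozenge $C'_0 \subset \Omega_P$ with at least one true blue corner lying in $\Omega_P$, hence at least one element of $X$. Such a $\gamma_0$ exists because $P$ is free and non-elementary, so $\pi_1(P)$ contains non-trivial elements fixing elements of $\mathcal A^s$ (for instance, $\gamma_0 = h$ itself acts freely on $\mathcal A^s$ with no fixed leaves, so one instead uses a periodic orbit of $\Phi$ whose stable leaf is in $\mathcal A^s$, the existence of which is built into the construction of $\mathcal A^s$). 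Minimality then yields $\overline X = \tilde{\mu}_s$.

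The only delicate step is the identification of Case 1/Case 2 of Proposition \ref{pro:lozengeinomega} with type S/type N. This is not a calculation but a matter of reading off from the proof of that proposition that in Case 1 one has $\tilde\theta_0 \in \Omega_P$ and the consecutive lozenges sharing $\tilde\theta_0$ as a corner are non-adjacent, while in Case 2 one has $\tilde\theta_0 \notin \Omega_P$ and the consecutive lozenges are adjacent along the stable leaf $s_0$ (so their corners line up along an unstable direction, making $\oou(\tilde\theta_0)$ fail to separate the neighboring $\gamma_0$-fixed points in $\mathcal A^s$). Once this identification is made, the rest of the argument is formal.
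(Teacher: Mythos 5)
Your overall approach is the same as the paper's: reduce everything to the type~S/type~N dichotomy, invoke the two preceding lemmas (type~S $\Leftrightarrow$ membership in $\tilde{\mu}_s$) and the stable/unstable symmetric statement, note that both disjuncts in the ``or'' are individually equivalent to $\tilde\theta \in \Omega_P$ (so the ``moreover'' clause comes for free), and then obtain the ``in particular'' clause by minimality. The identification of type~S with Case~1 of Proposition~\ref{pro:lozengeinomega} (true blue corner $=$ $\gamma_0$-fixed point lying in $\Omega_P$) is correct and is essentially the paper's own observation.

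There is, however, a genuine gap in the non-emptiness step. You write that applying Proposition~\ref{pro:lozengeinomega} to a $\gamma_0$-fixed $s_0 \in \mathcal A^s$ ``produces a lozenge $C'_0 \subset \Omega_P$ with at least one true blue corner lying in $\Omega_P$.'' That proposition gives no such guarantee for a single lozenge: in Case~2, $\tilde\theta_0 \notin \Omega_P$ is a fake blue corner, and the proof of the proposition does not assert that the other corner $\tilde\theta_1$ lies in $\Omega_P$ either, so both corners of $C'_0$ can be fake. To extract a true blue corner from the lozenge picture you would have to iterate the proposition around a full $h$-period $\tilde\theta_0, \tilde\theta_1, \dots, h(\tilde\theta_0)$, and then argue that the resulting minimal chain cannot consist entirely of mutually adjacent lozenges (an $h$-invariant scalloped chain of infinite length would force a product region, contradicting Theorem~\ref{prod} since $P$ is non-elementary). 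The paper sidesteps this entirely with a more elementary argument: since $\pi_1(P)$ is non-abelian, by H\"older some non-trivial $\gamma$ has a fixed leaf $s'$ in $\mathcal A^s$; if $s' \notin \tilde\mu_s$, pass to the endpoints of the gap of $\tilde\mu_s$ containing $s'$, which lie in $\tilde\mu_s$ and are fixed by $\gamma^2$, hence are of type~S. Relatedly, your parenthetical ``the existence of which is built into the construction of $\mathcal A^s$'' is not a justification --- the axis $\mathcal A^s$ is constructed solely from the free action of $h$ and says nothing a priori about other elements fixing leaves; the non-abelianness argument is what is actually needed.
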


\begin{proof}{}
Let $\tilde\theta \in \oo$ with $\gamma(\tilde\theta)
= \tilde\theta$ and $\gamma$ in $\pi_1(P) - id$.
By the previous Lemmas $\oos(\tilde\theta)$ is in
$\tilde\mu_s$ if and only if $\oos(\tilde\theta)$ is
of type S.
Hence $\tilde\theta$ is a true blue corner
and as shown in the proof of the previous Lemma,
$\oou(\tilde\theta)$ is in $\mathcal A^u$.
This implies that $\tilde\theta$ is in $\Omega_P$.
In addition $\oou(\tilde\theta)$ is invariant
under $\gamma$ and it corresponds to a true blue corner
when looking at the action on $\mathcal A^u$,
so $\oou(\tilde\theta) \in \tilde\mu_u$.

Conversely if $\tilde\theta$ lies in $\Omega_P$ then
$\oos(\tilde\theta)$ and $\oou(\tilde\theta)$
are in $\mathcal A^s$ and $\mathcal A^u$ respectively.
Since $\oos(\tilde\theta)$ separates
$\oos(h(\tilde\theta))$ from $\oos(h^{-1}(\tilde\theta))$
and similarly for $\oou(\tilde\theta)$. It follows
that $\oos(\tilde\theta)$ is in $\tilde\mu_s$ and
$\oou(\tilde\theta)$ is in $\tilde\mu_u$.
This proves the first two statements of the Corollary.

To prove the last statement all we need to do is to
show that there is $\oos(\tilde\theta)$ in $\tilde\mu_s$
fixed by $\gamma$ in $\pi_1(P) - id$.
To prove that, recall that $\pi_1(P)$ is not abelian,
and therefore there is $\gamma \in \pi_1(P) -id$
with a fixed point $\oos(\tilde\theta)$ in
$\mathcal A^s$. If $\oos(\tilde\theta)$ is in $\tilde\mu_s$
then we are done. Otherwise $\oos(\tilde\theta)$ is in a
complementary component $J$ of $\tilde\mu_s$.
In that case $\gamma^2$ fixes the endpoints of $J$
that are in $\tilde\mu_s$.

This shows that the set $B$ that is the closure of
the fixed points of $\gamma \in \pi_1(P) - id$ of
type S is a non empty set. It is closed and
$\pi_1(P)$ invariant. Since any fixed point of type
S is in $\tilde\mu_s$ then $B$ is a subset of $\tilde\mu_s$.
The minimality of $\tilde\mu_s$ shows that
$B = \tilde\mu_s$.
This finishes the proof of the Corollary.
\end{proof}

In the sequel, we will need to consider the maps $\alpha^-_s$, $\beta_s^+$, $\alpha_u^-$, $\beta^+_u$, analogous
to the maps $\alpha_i^-$, $\beta_i^+$ with $i=1,2$ introduced in section \ref{sub:consorbit}. They are defined as follows:

\begin{eqnarray*}
  \alpha_s^-(s) &=& \lim_{\epsilon \to 0} \alpha_s(s - |\epsilon|) \\
  \alpha_u^-(u) &=& \lim_{\epsilon \to 0} \alpha_u(u - |\epsilon|) \\
  \beta_s^+(s) &=& \lim_{\epsilon \to 0} \beta_s(s + |\epsilon|) \\
  \beta_u^+(u) &=& \lim_{\epsilon \to 0} \beta_u(u + |\epsilon|)
\end{eqnarray*}

The maps $\alpha_{s,u}^-$ are continuous on the left, and $\beta_{s,u}^+$ are continuous on the right. For every $s$ in $\mathcal A^s$ we clearly have,
$\alpha_s^-(s) \leq \alpha_s(s)$ and $\beta_s(s) \leq \beta_s^+(s)$. Similarly, for every $u$ in $\mathcal A^u$ we have
$\alpha_u^-(u) \leq \alpha_u(u)$ and $\beta_u(s) \leq \beta_u^+(u)$.

The proof of the following two lemmas is easy,
with the help of figure \ref{typeSgeo}):

\begin{lemma}\label{le:subeta}
Let $(s,u)$ be a fixed point in $\tilde{\mu}_s \times \tilde{\mu}_u$ of an element $\gamma_0$ of $\pi_1(P) - id$.
Then, $(s,u)$ is the corner of an open  rectangle $R$ in $\oo$,
whose other corner is $(\beta_u^+(u), \beta_s^+(s))$ .
Observe that $\beta_u^+(u)$ is the lowest element $s'$
of $\tilde{\mu}_s \cap Fix(\gamma_0)$ satisfying $s < s'$.
\end{lemma}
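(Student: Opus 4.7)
The plan is to realize $R$ as a ``maximal rectangle'' obtained by stacking adjacent lozenges of the chain connecting $\tilde\theta_0 := \flat(s,u)$ to $h(\tilde\theta_0)$. Since $(s,u) \in \tilde{\mu}_s \times \tilde{\mu}_u$ is fixed by $\gamma_0$, Corollary \ref{minimal} gives $\tilde\theta_0 \in \Omega_P$, so Proposition \ref{pro:lozengeinomega} produces a first lozenge $C'_0 \subset \Omega_P$ whose preimage under $\flat$ (with the corners removed) is the open rectangle $]s, s_1[ \, \times \, ]u, \beta_s(s)[ \subset \Omega'$, where $s_1 \in \mathcal{A}^s$ is the next $\gamma_0$-fixed leaf above $s$, and $(s_1, \beta_s(s))$ is the second (possibly fake blue) corner.

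I then distinguish two cases according to whether $(s_1, \beta_s(s))$ is a true blue corner, i.e.\ lies in $\Omega_P$, equivalently (by Corollary \ref{minimal}) in $\tilde{\mu}_s \times \tilde{\mu}_u$. In the true blue case I set $R$ to be the interior of $C'_0$. In the fake blue case, by the true/fake blue analysis following Proposition \ref{pro:lozengeinomega}, $C'_0$ is adjacent (along a stable or unstable side) to the next lozenge $C'_1$ in the $h$-chain; I iterate, adjoining successive adjacent lozenges until the next true blue corner $(s^*, u^*)$ is reached. Since the $h$-chain is finite by Theorem \ref{chain}, this terminates, and the union corresponds to the open rectangle $]s, s^*[ \, \times \, ]u, u^*[ \subset \Omega'$. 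By Corollary \ref{minimal}, $(s^*, u^*) \in \tilde{\mu}_s \times \tilde{\mu}_u$, with $s^*$ and $u^*$ both $\gamma_0$-fixed. Every intermediate $\gamma_0$-fixed leaf $s' \in \,]s, s^*[$ corresponds to a fake blue corner in the chain and is therefore of type N, hence does not lie in $\tilde{\mu}_s$ --- this yields the Observation.

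It remains to identify $(s^*, u^*)$ with $(\beta_u^+(u), \beta_s^+(s))$. The inequalities $u^* \leq \beta_s^+(s)$ and $s^* \leq \beta_u^+(u)$ are immediate from $R \subset \Omega'$ and the definitions of $\beta_s^+, \beta_u^+$ as right-limits of non-decreasing functions. For the reverse, I use the $\gamma_0$-equivariance of $\beta_s$ (after replacing $\gamma_0$ by $\gamma_0^2$ if necessary, to preserve orientations on $\mathcal{A}^s$ and $\mathcal{A}^u$) together with the local hyperbolic dynamics of $\gamma_0$ at its fixed points: for any fixed $s' \in \,]s, s^*[$, an appropriate sequence of iterates of $s'$ under $\gamma_0^{\pm 1}$ converges to $s$ from above, and applying $\beta_s$ to that sequence coincides with the corresponding iterate of $\beta_s(s')$, which in turn converges to $u^*$ by the hyperbolic dynamics at $(s^*, u^*)$ (which are opposite in type to those at $(s, u)$). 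This forces $\beta_s^+(s) \leq u^*$; the argument for $\beta_u^+(u) \leq s^*$ is symmetric. The main obstacle will be in the iterative extension step: one must carefully track the alternating s- and u-adjacencies across several successive fake blue corners to verify both that the enlarged region remains an open rectangle of $\Omega'$ and that the hyperbolic dynamics of $\gamma_0$ at the final corner $(s^*, u^*)$ have the expected attracting/repelling type needed for the limit argument.
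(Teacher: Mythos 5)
Your proof follows essentially the same route as the paper's: build $R$ as the maximal string of adjacent lozenges in the chain starting from the true blue corner $(s,u)$, stop at the next true blue corner $(s^*,u^*)$, and identify $(s^*,u^*)$ with $(\beta_u^+(u),\beta_s^+(s))$, with the Observation following because intermediate $\gamma_0$-fixed leaves in $\mathcal A^s$ separate adjacent lozenges (are of type N) and hence are not in $\tilde\mu_s$. Where you add genuine detail is the identification $(s^*,u^*)=(\beta_u^+(u),\beta_s^+(s))$: the paper compresses this to ``by the definition of $\beta_u^+,\beta_s^+$'' and leans on figure \ref{typeSgeo}, whereas you argue one inequality from $R\subset\Omega'$ and the other from $\gamma_0$-equivariance of $\beta_s$ plus hyperbolic dynamics at the corners.

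Two small points. First, the ``main obstacle'' you flag at the end is actually a non-issue: the standing minimality assumption on chains (stated right after Theorem \ref{chain}) forbids three consecutive lozenges sharing a corner, so a maximal string of mutually adjacent lozenges in a minimal chain is automatically all $s$-adjacent or all $u$-adjacent, hence a genuine rectangle in $\mathcal A^s\times\mathcal A^u$. Second, your hyperbolic-dynamics step needs one more word: for $\gamma_0^n(\beta_s(s'))\to u^*$ you implicitly use that $\beta_s(s')$ lies in the basin of $u^*$, i.e., strictly below the next $\gamma_0$-fixed point of $\mathcal A^u$. A clean way to close this without an extra verification is to observe that $\gamma_0^n(\beta_s(s'))=\beta_s(\gamma_0^n(s'))$ converges to $\beta_s^+(s)$ by the very definition of $\beta_s^+$ (monotone limit from the right), so $\beta_s^+(s)$ is automatically a $\gamma_0$-fixed point $\geq u^*$; it cannot be the next fixed point above $u^*$ since that one is of repelling type (the types alternate, $u^*$ being attracting because the corner $(s^*,u^*)$ has opposite type to $(s,u)$) and therefore cannot attract the weakly decreasing sequence $\gamma_0^n(\beta_s(s'))$ from above.
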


\begin{proof}{}
It follows that $(s,u)$ is in $\Omega_P$ and it is a
true blue corner, and so is $(\beta^+_u(u),\beta^+_s(s))$.
As in figure \ref{typeSgeo} these two corners
as connected by a chain of adjacent lozenges.
We let $R$ to be this union of lozenges, plus the
sides common to adjacent lozenges.
By the definition of $\beta^+_u, \beta^+_-$, the
rectangle $R$ is the {\underline {maximal}} open rectangle
made of adjacent lozenges (all intersecting a common
stable or unstable leaf) containing the initial lozenge
with corner $(s,u)$ and with stable leaves $> s$ in
$\mathcal A^s$.
Suppose that these are the lozenges $C_1,...,C_i$ in the
chain. Notice that the next lozenge in the chain, $C_{i+1}$
cannot be adjacent with $C_i$. If $i = 1$ this is not
possible for then $C_1 \cup C_2$ would be adjacent
and $C_1$ would not be maximal.
If $i > 1$ suppose that $C_1,...,C_i$ all intersect
common unstable leaves and $C_i, C_{i+1}$ intersect
common stable leaves. Then $C_{i-1}, C_i, C_{i+1}$
all share a common corner and $C_i$ could be eliminated
from this chain of lozenges.
This shows that both  $\beta^+_u(u)$ and
$\beta^+_s(s)$ both separate consecutive lozenges
in the chain, and therefore by the above remark,
both are in $\tilde\mu_s$ and $\tilde\mu_u$
respectively.

In addition by this description anyother stable leaf
$s'$ fixed by $\gamma_0$ and between $s$ and $\beta^+_u(u)$
is a stable leaf in between two {\underline {adjacent}}
lozenges in the chain. It follows that $s'$ is not in
$\tilde\mu_s$.
\end{proof}


\begin{lemma}\label{le:propertyalphabeta}
For every $s$ in $\tilde\mu_s$ and every $u$ in
$\tilde\mu_u$ so that: $s$ is fixed by $\gamma_0 - id$
and is of type S and $u$ is also fixed by $\gamma_0$  :
\begin{eqnarray*}
\alpha_s^-(\beta_u^+(u)) & = & u \\
\alpha_u^-(\beta_s^+(s)) & = & s
\end{eqnarray*}
\label{composition}
\end{lemma}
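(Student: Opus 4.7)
I focus on the first identity $\alpha_s^-(\beta_u^+(u)) = u$; the second follows by a verbatim symmetric argument swapping the roles of stable and unstable. Since $u \in \tilde{\mu}_u$ is fixed by $\gamma_0 \neq id$, Corollary \ref{minimal} provides a true blue corner $\tilde\theta = (s_u, u) \in \Omega_P$ with $s_u \in \tilde{\mu}_s$ of type S and fixed by $\gamma_0$. Applying Lemma \ref{le:subeta} to $(s_u, u)$ yields an open rectangle $R = \, ]s_u, s'[ \, \times \, ]u, u^*[ \,$ contained in $\Omega'$, where $s' = \beta_u^+(u)$ and $u^* = \beta_s^+(s_u)$, whose opposite corner $(s', u^*)$ is a second $\gamma_0$-fixed true blue corner with $s_u < s'$ in $\mathcal A^s$. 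The upper bound $\alpha_s^-(s') \leq u$ is then immediate: every $s'' \in (s_u, s')$ meets every unstable leaf in $(u, u^*)$, so $\alpha_s(s'') \leq u$, and the claim follows upon taking the left-limit at $s'$.

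The work lies in the reverse inequality. Assume for contradiction $\alpha_s^-(s') < u$, and pick any $u_1 \in \mathcal A^u$ with $\alpha_s^-(s') < u_1 < u$. Monotonicity of $\alpha_s$ gives $\alpha_s(s'') < u_1$ for $s''$ sufficiently close to $s'$ from the left, and since $u_1 < u < u^* \leq \beta_s(s'')$, the leaf $u_1$ meets $s''$, so $(s'', u_1) \in \Omega'$. Letting $s'' \to (s')^-$ yields $\beta_u(u_1) \geq s'$, while the monotonicity bound together with $\beta_u^+(u) = s'$ forces $\beta_u(u_1) \leq s'$. Hence $\beta_u \equiv s'$ on the entire open interval $\, ]\alpha_s^-(s'), u[ \,$. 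A short perfectness argument then shows this interval contains no point of $\tilde{\mu}_u$: any hypothetical $u_1 \in \tilde{\mu}_u$ in it would, by Lemma \ref{le:alphabetamu}, be the lower endpoint of a gap abutting $u$; being in $\tilde{\mu}_u$ it would be approached from below by further $\tilde{\mu}_u$-points, each again satisfying $\beta_u = s'$ by the constancy established above, and each producing by Lemma \ref{le:alphabetamu} a second gap abutting $u$, which is impossible. Thus $\, ]\alpha_s^-(s'), u[ \,$ lies in a single gap $J = \, ]u^{**}, u[ \,$ of $\tilde{\mu}_u$ with $u^{**} \leq \alpha_s^-(s')$. (The degenerate case $\tilde{\mu}_u = \mathcal A^u$ is handled directly by Lemma \ref{le:alphabetamu}, which would force $u_1 = u$, immediately contradicting $u_1 < u$.)

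To close, replace $\gamma_0$ by $\gamma_0^2$ if necessary so that it preserves the orientation of $\mathcal A^u$; then $\gamma_0(u^{**}) = u^{**}$, and Corollary \ref{minimal} produces a true blue corner $(s^{**}, u^{**}) \in \Omega_P$, which must be the immediate predecessor of $(s_u, u)$ in the $\gamma_0$-invariant chain of true blue corners because $J$ contains no further $\tilde{\mu}_u$-point. Lemma \ref{le:subeta} applied to $(s^{**}, u^{**})$ then gives $\beta_u^+(u^{**}) = s_u$. The cyclic stabilizer $\Gamma_J$ of the gap acts on $J$ as a hyperbolic translation (fixing only the endpoints), and because it fixes the periodic unstable leaves $u^{**}, u, u^*$ it also fixes the paired stable leaves $s^{**}, s_u, s'$, using that the stabilizer of a periodic unstable leaf coincides with the stabilizer of its unique periodic orbit, together with $\pi_1(P)$-equivariance of $\beta_s^+$ and $\beta_u^+$. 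Consequently the level set $\{u_0 \in J : \beta_u(u_0) = s'\} = \, ]\alpha_s^-(s'), u[ \,$ is $\Gamma_J$-invariant, and since $\Gamma_J$ has no interior fixed point in $J$ while $u$ is already a fixed endpoint, the other endpoint $\alpha_s^-(s')$ must equal the only remaining $\Gamma_J$-fixed point in $\overline{J}$, namely $u^{**}$. Hence $\beta_u \equiv s'$ on all of $J$, giving $\beta_u^+(u^{**}) = s' \neq s_u$, contradicting the identification $\beta_u^+(u^{**}) = s_u$ obtained from Lemma \ref{le:subeta}. The most delicate step of the plan is this equivariance reduction: one must check carefully that all the relevant stable leaves $s^{**}, s_u, s'$ (not only the unstable fixed points) are preserved by $\Gamma_J$, for which the identification of the stabilizer of a periodic unstable leaf with the stabilizer of its unique periodic orbit on that leaf is essential.
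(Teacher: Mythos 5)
Your proof is correct in outline, but it takes a much longer and more indirect route than the one the paper intends, and it buries the real content inside a contradiction argument that re-derives facts already available.

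The paper's intended argument is a direct one-paragraph observation riding on Lemma~\ref{le:subeta}. That lemma already establishes not only that the open rectangle $R$ of adjacent lozenges starting from the true blue corner $(s,u)$ and going in the positive direction has the opposite corner $(\beta_u^+(u),\beta_s^+(s))$, but also that \emph{both} extremal corners of $R$ separate consecutive non-adjacent lozenges of the chain, i.e.\ both are true blue corners (this is explicit in the proof of Lemma~\ref{le:subeta}, where it is shown that $\beta_u^+(u)$ and $\beta_s^+(s)$ lie in $\tilde\mu_s$ and $\tilde\mu_u$). Consequently, starting from $(\beta_u^+(u),\beta_s^+(s))$ and going in the \emph{negative} direction, the maximal union of adjacent lozenges is again the same rectangle $R$: the chain of lozenges breaks at a true blue corner by definition, so it breaks exactly at $(s,u)$. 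The dual version of Lemma~\ref{le:subeta} (replacing $\beta^+$ by $\alpha^-$) then identifies the opposite corner of that maximal rectangle as $\bigl(\alpha_u^-(\beta_s^+(s)),\,\alpha_s^-(\beta_u^+(u))\bigr)$, and since the rectangle is the same $R$, this corner is $(s,u)$. Both identities follow at once.

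Your argument instead establishes $\alpha_s^-(s')\leq u$ directly, then assumes strict inequality and analyzes the resulting gap $J$ of $\tilde\mu_u$, invoking equivariance of the level set $\{\beta_u = s'\}$ under a cyclic stabilizer $\Gamma_J$. This essentially works, but it accumulates several delicate points that the direct argument avoids entirely: (i) you must justify that $\Gamma_J$ acts on $J$ with no interior fixed point, which in general requires the fact that any two elements of $\pi_1(P)$ sharing a fixed leaf in $\mathcal A^u$ have identical fixed sets there (the paper states this explicitly only in the proof of the next lemma); (ii) you assert that the level set equals $]\alpha_s^-(s'),u[$, whereas what you actually established is containment --- the conclusion still holds because the level set is an interval with $u$ as the upper endpoint and any $\Gamma_J$-invariant such interval must be all of $J$, but this should be said; (iii) the stabilizer argument needed to show $\Gamma_J$ fixes $s_u$ and $s'$ leans on the correspondence between stabilizers of unstable leaves and of their periodic orbits, which is correct but is an extra layer of machinery. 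None of these hurdles appear in the two-line dual-rectangle argument. The key structural fact you should have exploited is precisely what Lemma~\ref{le:subeta}'s proof already hands you: $R$ is bounded by true blue corners at both ends, hence is maximal in either direction.
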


\begin{proof}{}
For every $(s,u)$ in $\tilde\mu_s \times \tilde\mu_u$
then $(v,w) = (\beta^+_u(u),\beta^+_s(s))$ is the other
corner of the maximal rectangle $R$ as in the previous
Lemma. Starting from the corner
$(v,w)$ in $\tilde\mu_s \times \tilde\mu_u$ and
going in the {\underline {negative}} direction
there is also a maximal rectangle (union of adjacent
lozenges) and that is exactly the same rectangle
$R$. The other corners are obtained
by the maps $\alpha^-_u$ and $\alpha^-_+$
and they recover the point $(s,u)$.
\end{proof}

We define the maps $\tilde{\tau}_s := \beta_u^+ \circ \beta_s^+$
from $\tilde\mu_s$ to $\tilde\mu_s$ and
$\tilde{\tau}_u := \beta_s^+ \circ \beta_u^+$
from $\tilde\mu_u$ to $\tilde\mu_u$. From the discussion above, when $s$ is a fixed point of $\gamma_0$ in $\tilde{\mu}_s$, then
$\tilde{\tau}_s(s)$ is the second $\gamma_0$-fixed point in $\tilde{\mu}_s$ after $s$.
Clearly $\beta^+_u$ sends a fixed point of type
S of $\gamma \in \pi_1(P) - id$ to
a fixed point of $\gamma_0$ acting on $\mathcal A^u$
of the analogous type S property.
Hence restricted to the fixed points over all
$\gamma \in \pi_1(P) - id$ of type S the map
$\beta^+_u$ is injective and clearly order preserving.
We later show that all functions $\alpha^-_s,\alpha^-_u,
\beta^+_s,\beta^+_u$ are strictly increasing
when restricted to the appropriate $\tilde\mu_s$ or
$\tilde\mu_u$. In particular they are order preserving
homeomorphisms.


\section{The lifted flow in the intermediate cover}

In this section,
we produce a good model for the free Seifert piece
by lifting to a appropriate cover,
and show that the stable
and unstable foliations restrict to non singular foliations in this piece.

\subsection{Putting the boundary tori in good position in the intermediate cover}

Let $M_P$ be the cover of $M$ associated with
$\pi_1(P)$.
Notice that $M_P$ is not compact.
Let $\hat \Phi$ be the lift of $\Phi$
to $M_P$.
Choose an embedded representative
for $P$ in $M$ bounded by finitely many incompressible tori ${\mathcal T} = \{ T'_1, ..., T'_{i_0} \}$.
Each torus $T'_i$ is homotopic to an a priori only immersed Birkhoff torus $B_i$.

\begin{proposition}{}{}
Each torus $T'_i$ is freely homotopic to a
weakly embedded Birkhoff torus $T^*_i$ so that the collection
of such tori satisfies:

$-$ $T^*_i$ lifts to an {\underline {embedded}} Birkhoff torus $T_i$ in $M_P$.

$-$ The collection $\{ T_1, ..., T_{i_0} \}$ can be chosen to be pairwise disjoint and they
bound a compact submanifold $\hat P \subset M_P$.

$-$ $\pi_1(\hatp)$ is isomorphic to $\pi_1(M_P) = \pi_1(P)$. The closure
of each component of $M_P - \hatp$ is homeomorphic to $T^2 \times [0,\infty)$.
\label{goodform}
\end{proposition}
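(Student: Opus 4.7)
The plan is to build each Birkhoff torus from an $H_i$-invariant chain of lozenges in the orbit space, construct the Birkhoff bands as in Remark \ref{rk.lozengeannulus} and \cite{Ba3,bafe1}, and then use the cover $M_P$ to separate tangent periodic orbits that may a priori be identified in $M$.

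First, I would set up the algebraic data. For each $i$, let $H_i \cong \mathbb Z^2$ be $\pi_1(T'_i)$, viewed as a subgroup of $\pi_1(P)$. Since $T'_i$ is a boundary torus of the Seifert piece, $H_i$ contains the fiber class $h$ and is generated by $h$ together with an element $c_i$ (a lift of a boundary component of the base orbifold). Replacing $c_i$ by $c_i^2$ if necessary, I may assume $c_i$ commutes with $h$ and preserves the orientations of $\mathcal A^s$ and $\mathcal A^u$; then $c_i$ fixes at least one leaf in $\mathcal A^s$ and one in $\mathcal A^u$. By Corollary \ref{minimal} these fixed points of type S lie in $\tilde\mu_s$, $\tilde\mu_u$, and by Proposition \ref{pro:lozengeinomega} together with Theorem \ref{chain}, the successive true blue corners form an $H_i$-invariant bi-infinite chain of lozenges $\mathcal C_i = \{C_j^i\}_{j \in \mathbb Z}$ in $\Omega'$, with $h$ acting as a shift by some fixed amount and $c_i$ acting as the identity on the chain.

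Next, I would promote each chain to a Birkhoff torus. By Remark \ref{rk.lozengeannulus}, each lozenge $C_j^i$ is the lift of an elementary Birkhoff annulus. Using the equivariant section procedure of \cite{Ba3,bafe1} (and invoked in Section \ref{sub:cutseifert}), I choose an $H_i$-equivariant family of sections over the lozenges of $\mathcal C_i$; their union is an $H_i$-invariant properly embedded plane $\widetilde T_i \subset \widetilde M$ whose projection to $M$ is a Birkhoff torus $T_i^\ast$ freely homotopic to $T'_i$ (the free homotopy relations among boundary periodics recorded after Remark \ref{rk.lozengeannulus} guarantee that the cyclic sequence of lozenges closes up compatibly with the peripheral element $c_i$). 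In $M$ the map $T_i^\ast \to M$ may fail to be injective at tangent periodic orbits, because distinct corners of $\mathcal C_i$ may project to the same periodic orbit of $\Phi$; but after projecting only to $M_P = \widetilde M / \pi_1(P)$, the stabilizer of each lozenge corner is exactly the cyclic subgroup of $H_i$ fixing that corner, so different corners give different periodic orbits in $M_P$ and the lift $T_i$ of $T_i^\ast$ to $M_P$ is genuinely embedded.

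Then I would arrange pairwise disjointness. For $i \neq i'$, the subgroups $H_i$ and $H_{i'}$ of $\pi_1(P)$ are not conjugate (they correspond to distinct boundary components in the base orbifold) and intersect in $\langle h\rangle$, so the chains $\mathcal C_i$ and $\mathcal C_{i'}$ and all their $\pi_1(P)$-translates are pairwise distinct in $\Omega'$. Two chains can however share a common corner (at a singularity of the base orbifold producing prongs shared between peripheral curves), and two Birkhoff bands built over disjoint lozenges can still intersect transversally in $M_P$. Both problems are handled by the standard cut-and-paste technique of \cite{Ba3} and \cite[Sect.\ 5]{bafe1}, adjusting the sections in an equivariant way so that the resulting embedded Birkhoff tori are pairwise disjoint; this is the main technical obstacle, since the presence of $p$-prongs in the free piece makes the local picture more delicate than in the Anosov case, but Remark \ref{rk.lozengeannulus} supplies all the needed local flexibility.

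Finally I would extract $\hat P$ and control the ends. Each $T_i$ is incompressible in $M_P$ (its fundamental group is $H_i \cong \mathbb Z^2$, injected via $\pi_1(P) \hookrightarrow \pi_1(M_P)$); being a finite disjoint union of two-sided incompressible tori in the irreducible manifold $M_P$, the family $\{T_i\}$ bounds a compact submanifold $\hat P$ characterized as the component containing all the $H_i$-invariant planes $\widetilde T_i$ — equivalently the component whose universal cover projects onto $\Omega_P$ up to the triangles $\Delta'(\tilde\theta_j^i)$, exactly as in the proof of compactness in Section \ref{sub:cutseifert}. By construction the inclusion $\hat P \hookrightarrow M_P$ induces an isomorphism on $\pi_1$ (both equal $\pi_1(P)$: the reverse inclusion on generators is forced because $M_P  -  \hat P$ has no $\pi_1$ contribution outside the $H_i$'s, which already sit in $\pi_1(\hat P)$). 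Each complementary component of $\hat P$ in $M_P$ is bounded by a single $T_i$, is an infinite cyclic or trivial cover of the neighboring JSJ piece on the other side of $T'_i$ in $M$ (the cover corresponding to $H_i$ inside that piece's fundamental group), and hence, by the standard end analysis of covers of irreducible $3$-manifolds along incompressible tori, is homeomorphic to $T^2 \times [0,\infty)$.
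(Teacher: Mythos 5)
Your construction-from-scratch is a legitimately different route from the paper's. The paper does not rebuild the Birkhoff tori from the axes and lozenge machinery of Section \ref{leaforbit}; instead it invokes \cite{bafe1} (Theorem 6.10 there), which already produces a weakly embedded Birkhoff torus $T^*_i$ homotopic to $T'_i$ and enumerates precisely the two obstructions to its being embedded in $M$ --- (a) $T'_i$ bounds a regular neighborhood of a Birkhoff-Klein bottle in a free piece, or (b) $T'_i$ is carried by an adjoining \emph{periodic} Seifert piece $P_1$. The paper then rules out (a) directly and works around (b) using the JSJ structure. Your approach buys independence from \cite{bafe1}, but it also re-opens issues that that theorem closes, and two of your steps have genuine gaps.

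First, your argument that the lift $T_i$ is embedded in $M_P$ does not go through as stated. You write that ``the stabilizer of each lozenge corner is exactly the cyclic subgroup of $H_i$ fixing that corner, so different corners give different periodic orbits in $M_P$.'' This is a non sequitur: the failure mode is not a too-large stabilizer but the existence of some $g \in \pi_1(P) \setminus H_i$ carrying one corner $\tilde\theta^i_j$ of $\mathcal C_i$ to a \emph{different} corner $\tilde\theta^i_{j'}$ (equivalently, an element $g$ with $g(\widetilde T_i) \cap \widetilde T_i \neq \emptyset$ but $g \notin H_i$), which would identify two of the tangent periodic orbits of the image torus in $M_P$, or traverse one of them multiple times. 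Nothing in the lozenge-chain picture by itself excludes this. The paper excludes it by bringing in the JSJ geometry: any such $g$ corresponds to an element of the fundamental group of the neighboring piece $P_1$ on the other side of $T'_i$, and in $M_P$ it would then lie in both $\pi_1(P)$ and $\pi_1(P'_1)$, which intersect exactly in $\pi_1(T'_i) = H_i$ --- contradiction. You need this (or some substitute for it) and your proposal never supplies it.

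Second, your disjointness step is too coarse. You reduce everything to ``cut-and-paste,'' which does take care of transverse interior--interior intersections between Birkhoff bands (this is essentially what the paper also does for that case, using irreducibility and innermost disks). But there is a separate and harder case: a \emph{tangent closed orbit} of $T_i$ passing through the \emph{interior} of a Birkhoff annulus of $T_j$. This cannot be pushed off by a cut-and-paste homotopy of sections; the paper handles it with a geometric argument in the cover $M^*$ of $M_P$ associated to $\pi_1(T_j)$ --- showing that a ray of the lifted orbit must leave every bounded neighborhood of $T_j$ while at the same time admitting a bounded-distance free homotopy into $T_j$, a contradiction. You would need a replacement for this argument. (The case of a closed orbit of $T_i$ meeting a closed orbit of $T_j$ --- the ``shared corner'' case you mention --- is handled correctly in your proposal via the observation that it would make $P$ periodic.) The remaining steps ($\pi_1$-isomorphism and the product structure of the ends) are fine in substance, though the paper phrases them via Scott's compact core theorem for Haken manifolds rather than your ad hoc end analysis.
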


\begin{proof}{}
In \cite{bafe1} we proved that each $T'_i$ is freely homotopic to a weakly embedded Birkhoff torus $T^*_i$.
We will adjust $T^*_i$ as needed. In \cite{bafe1}, Theorem 6.10 we proved that we can
choose $T^*_i$ to be embedded unless one of the following happens:

\begin{itemize}

\item 1) $T'_i$ is isotopic to the boundary of a regular neighborhood $V$ of an embedded
Birkhoff-Klein bottle $K$. The neighborhood $V$ is contained in a free Seifert piece $P_1$.

\item 2) $T'_i$ is homotopic to a weakly embedded Birkhoff torus contained in
a periodic Seifert fibered piece $P_1$.

\end{itemize}

Consider first possibility 1). Here $\partial V = T'_i$ and $V$ is contained in a Seifert piece
$P_1$ and $T'_i$ is a torus in the JSJ decomposition of $M$.
Since $T'_i$ is also in the boundary of a piece
of the JSJ decomposition,
it follows that
$V = P_1$. Then $\pi_1(P_1) = \pi_1(K)$. As seen in the proof of Proposition
\ref{axesfirst} this
implies that $P_1$ is not free. This contradiction shows that this case cannot happen.

Consider now possibility 2). In this case, it is quite possible that $T^*_i$ is
\underline{not} embedded in $M$. What we will show is that we can choose $T^*_i$ so that
$T_i$ is embedded in $M_P$.
Here the Seifert piece $P_1$ is periodic and hence $P, P_1$ are distinct Seifert pieces
and hence $T'_i$ is a torus in the boundary of both $P_1$ and $P$.
 As described in \cite{bafe1}, $T^*_i$ is
weakly embedded, so if $T^*_i$ is not embedded, it is because of the periodic orbits
in $T^*_i$: either a periodic orbit in $T^*_i$ is traversed multiple times as a loop
in $T^*_i$ or two periodic orbits collapse. In the first case there is an element $g \in
\pi_1(M)$
associated with the periodic orbit so that $g$ is not in $\pi_1(T'_i)$, but for some
$n > 1$, $g^n$ is in $\pi_1(T'_i)$. Here $g$ is in $\pi_1(P_1)$ because in $P_1$
the orbit is represented by a closed orbit.
The other option is that two periodic orbits collapse. Here there is an element $g$ in
$\pi_1(P_1)$  that is not in $\pi_1(T'_i)$ and which corresponds to the
identification of orbits in the Birkhoff torus.

Suppose that this problem persists when lifting $T^*_i$ to $M_P$.
By Scott's core theorem \cite{Sco1,He}, there is a compact
core in $M_P$ that carries all the homotopy of $M_P$.
The submanifold $P$ lifts homeomorphically to $M_P$.
The submanifold $P_1$ lifts to a non compact submanifold
$P'_1$
of $M_P$, but the component of the
intersection of $P_1$ and $P$ corresponding to $T'_i$
lifts homeomorphically to $M_P$.
Hence we can apply the same analysis of the last
paragraph to $M_P$.
Then there is a $g$ as
above, but now this $g$ is in $\pi_1(M_P) = \pi_1(P)$. It follows that this element
$g$ is in both $\pi_1(P)$ and $\pi_1(P'_1)$.
But this intersection is only
$\pi_1(T'_i)$. This is a contradiction and it shows that we can choose $T_i$
to be embedded in $M_P$.

\vskip .1in
The next step is to analyse whether the collection $\{ T_i \}$ can be chosen pairwise disjoint.
Suppose that $T_i \cap T_j \not = \emptyset$ for some $i \not = j$. Suppose first that
a closed orbit in $T_i$ intersects a closed orbit in $T_j$. Then they are the same closed orbit
of $\hatflo$. This produces a closed curve in $T'_i$ which is freely homotopic to a closed
curve in $T'_j$. This produces an essential annulus in $P$, and this annulus is
isotopic to a vertical annulus.
It follows that the periodic orbit in $T'_i$ is freely homotopic to a regular fiber in
$P$ up to powers. But then $P$ would be a periodic Seifert piece, contradiction
to the assumption that $P$ is free.

\vskip .05in
Suppose now that a closed orbit $\gamma$ in $T_i$ intersects the interior of a Birkhoff annulus
in $T_j$ in $M_P$. Lift this to the cover $M^*$ of $M_P$ associated to $\pi_1(T_j)$.
Notice that in this proof we use several covers $\mi \rightarrow M^* \rightarrow M_P
\rightarrow M$. The torus $T_j$ lifts
homeomorphically to an embedded torus in $M^*$, which is still denoted by $T_j$ for
simplicity.  We use the fact that if $B$ is a lift to $\mi$ of a Birkhoff
torus, then an orbit $\tilde \theta$ of $\wwp$ which intersects $B$ transversely, intersects
$B$ only once and the components of $\tilde \theta - B$ are in different components
of $\mi - B$.
This follows from the description of lifts of Birkhoff annuli and associated lozenges
in $\mi$.
The closed orbit $\gamma$ in $M_P$  lifts to a curve $\gamma_0$ in $M^*$  intersecting
$T_j$ transversely.
If $\gamma_0$ is closed then
it represents a power of $\gamma$ in $\pi_1(M)$,
hence this power of $\gamma$
is freely homotopic into $T_j$ in $M_P$. This was disallowed in the previous paragraph.
Hence $\gamma_0$ is not compact. Suppose that $\gamma_)$ intersects $T_j$ more
than once in $M^*$. Then since $\pi_1(M^*) = \pi_1(T_j)$, the
corresponding lift of $\gamma_0$ to $\mi$
is an orbit intersecting the universal cover of $T_j$ more than once, contradiction.
Hence there are two rays of $\gamma_0 - T_j$ and they are in different
components of $M^* - T_j$.
Let $\delta$ be one such ray. If $\delta$ accumulates
in a point of $M^*$, then because $\gamma$ is compact this would imply that $\delta$
is actually a closed curve, as $M^* \rightarrow M$ is a cover. This is a contradiction.
Therefore $d_{M^*}(p, T_j)$ goes to infinity as $p$ escapes in the ray $\delta$.

On the other hand, projecting to $M$ we see that
$\pi(\gamma_0)$ is homotopic to a curve disjoint
from $\pi(T_j)$, since $\pi(\gamma_0)$ is homotopic
into $T'_i$ and $\pi(T_j)$ is homotopic to $T'_j$.
Lifting this free homotopy
to $M^*$ we see that $\gamma_0$ is homotoped by a homotopy
moving points a bounded distance
to be disjoint from $T_j$. Hence there is a ray of $\gamma_0 - T_j$
which is a bounded distance from $T_j$. As seen above, this is a contradiction and it
shows this situation cannot happen.

This shows that any closed orbit in $T_i$ cannot intersect $T_j$. Finally we check what
happens if the interior of a Birkhoff annulus in $T_i$ intersects the interior of a Birkhoff
annulus in $T_j$. Put $T_i, T_j$ in general position. The analysis now follows standard
arguments. First eliminate null homotopic intersections. Using innermost arguments
one gets a disk in $T_i$ and another in $T_j$ which jointly produce an embedded
sphere bounding a ball in $M_P$ - as $M_P$ is irreducible. This intersection
can be eliminated by sliding $T_j$ across the ball. The remaining intersections
are freely homotopic to closed orbits of the flow (up to powers). By the first part of
this analysis, this cannot happen.

We conclude that we can choose the collection $\{ T_1, ..., T_{i_0} \}$ to be pairwise
disjoint. Jointly they bound a compact submanifold, which is
denoted by $\hatp$ in $M_P$.
The submanifold $\hatp$ carries all the homotopy of $M_P$.
Since $\mi \cong \rrrr^3$ and $M$ is Haken one can use the
general theory of Haken manifolds and compact cores
\cite{Sco1,He},
and it follows that
the closure of the components of $M_P - \hatp$ are homeomorphic to $T^2 \times [0,\infty)$.
This finishes the proof of Proposition \ref{goodform}.
\end{proof}

\subsection{Analysing the free Seifert piece in the intermediate cover}$\\$
\label{sub:intercover}

\vskip .01in
\noindent
{\bf {Notation}} $-$ In the cover $M_P$ we have the compact submanifold $\hatp$ with
boundary a union of pairwise disjoint Birkhoff tori $\{ T_1, ..., T_{i_0} \}$.
Let $\Lambda^s_P, \Lambda^u_P$
be the lifts of $\ls, \lu$ to $M_P$.
Let $\Phi_P$ be the lift of $\Phi$ to $M_P$.

\vskip .1in
\noindent
{\bf {Preparatory step}} $-$ We will adjust the boundary of $\hatp$ slightly near some of the
boundary periodic orbits in order to have $\Lambda^s_P, \Lambda^u_P$ to be
non singular foliations when restricted to $\hatp$.
Consider a Birkhoff torus $T$ in the boundary of $\hatp$. Consider a periodic orbit $\theta$
in $T$ and lifts $\widetilde T$ and $\widetilde \theta$ to $\mi$. Let $\mathcal C$
be the chain of lozenges invariant under $\pi_1(T)$ associated with the Birkhoff representative
$T$. If the two consecutive lozenges of $\mathcal C$ at $\widetilde \theta$ are not adjacent we do not
make any adjustment to $T$ near $\theta$. Suppose now that these lozenges $C_1, C_2$
are adjacent along $\wls(\widetilde \theta)$. We adjust $T$ as follows.
Let $L$ be  the half leaf of $\wls(\widetilde \theta)$ in the boundary of
both $C_1$ and $C_2$. Move $\widetilde T$ away from $\widetilde \theta$ and
slightly into $L$. We can make this adjustment in $M$. Now $T$ is not tangent to $\Phi$
near $\theta$, but is actually transverse to $\Phi$ near $\theta$. That is, we
eliminated one tangency of $\Phi$ and $T$. See details of this in \cite[Lemma $3.3$]{bafe2}. This can
only be done if and only if $C_1$ and $C_2$ are adjacent.
We assume that $T$ is this slightly adjusted torus.

After this modification each boundary component of $\hatp$ is transverse to both
$\Lambda^s_P$ and $\Lambda^u_P$. Since $T$ is transverse to $\hatp$ outside of the periodic
orbits of $\hat \Phi$, then we only have to check what happens at a tangent orbit
$\theta \subset T$. Lift to $\mi$ producing lifts $\tilde \theta$ and $\widetilde T$ and
lozenges $D_1, D_2$ abutting $\tilde \theta$.
Since the lozenges $D_1, D_2$ are
not adjacent at $\tilde \theta$, then both $\wls(\tilde \theta)$ and
$\wlu(\tilde \theta)$ separate $D_1$ from $D_2$. It follows
that $\Lambda^s_P, \Lambda^u_P$ are transverse to $T$ at $\theta$. If $D_1, D_2$ were adjacent
along $\wls(\theta)$ then $\Lambda^u_p$ would be tangent to
$T$ at $\theta$ and not transverse. After these modifications we prove the following:

\begin{proposition}{}{}
The foliations $\Lambda^s_P, \Lambda^u_P$ induce non singular foliations in $\hatp$, which
are transverse to $\partial \hatp$. They are denoted by
$\hatwls, \hatwlu$. These foliations are $\mathbb R$-covered.
Each leaf of $\hatwls$
intersects every component of $\hatp$ in a single component.
\label{folihat}
\end{proposition}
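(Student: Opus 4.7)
The plan is to verify the four assertions in turn—absence of singularities in $\hatp$, transversality to $\partial\hatp$, $\mathbb R$-coveredness, and connectedness of the intersection of each leaf with each component of $\hatp$—with the absence of singular orbits being the central point.

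For non-singularity, I would argue by contradiction. Suppose that $\tilde\theta$ is a $p$-prong orbit of $\wwp$, with $p \geq 3$, lying in the interior of some lift $\widetilde{\hatp} \subset \mi$ of $\hatp$. Since $\widetilde{\hatp}$ projects under $\Theta$ into $\Omega_P$, a full $\oo$-neighborhood of $\tilde\theta$ is contained in $\Omega_P$. But Lemma \ref{le:2prongs} asserts that at most two stable prongs at $\tilde\theta$, and similarly at most two unstable prongs, can meet $\Omega_P$. The $p$ stable and $p$ unstable prongs at $\tilde\theta$ partition a disk neighborhood of $\tilde\theta$ in $\oo$ into $2p$ alternating sectors, and the constraint above permits at most four of them to consist of orbits in $\Omega_P$; hence at least $2p-4\geq 2$ sectors must lie outside $\Omega_P$, contradiction.

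Transversality of $\hatwls$ and $\hatwlu$ to $\partial\hatp$ follows directly from the preparatory step: near each tangent periodic orbit of a boundary torus, the torus was slid off precisely when the two abutting lozenges were adjacent, so in the final configuration the consecutive lozenges at every tangent orbit are non-adjacent, which translates into transversality of both $\wls$ and $\wlu$ to the torus there. Away from the tangent orbits the boundary tori are transverse to $\wwp$ already by construction, and therefore transverse to both singular foliations. For the $\mathbb R$-covered property, I work in $\widetilde{\hatp}$ and restrict $\wls$, obtaining a regular $2$-dimensional foliation $\widetilde{\hatwls}$ by the previous step. Each leaf of $\widetilde{\hatwls}$ is a connected component of $L \cap \widetilde{\hatp}$ with $L \in \wls$, and since $\Theta(\widetilde{\hatp}) \subseteq \Omega_P$, these $L$ range over the elements of $\mathcal A^s$. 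Lemma \ref{le:piecewisepath} shows that $\Omega_P$ is pathwise connected, and this together with the saturation of $\widetilde{\hatp}$ by flow orbits implies that $L \cap \widetilde{\hatp}$ is connected for each such $L$. Therefore the leaf space of $\widetilde{\hatwls}$ is homeomorphic to $\mathcal A^s \cong \mathbb R$, giving the $\mathbb R$-covered property, and the parallel argument for $\hatwlu$ uses $\mathcal A^u$. The last claim—each leaf of $\hatwls$ meets every component of $\hatp$ in a single component—is the quotient form of the same connectedness assertion in $\widetilde{\hatp}$.

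The hard part is the first step, and in particular the justification that $\Theta(\widetilde{\hatp}) \subseteq \Omega_P$, without which the Lemma \ref{le:2prongs} argument does not apply. This inclusion rests on the construction of $\hatp$ as the region bounded in $M_P$ by the Birkhoff tori $T_i$, whose lifted chains of lozenges precisely describe the boundary of $\Omega'$ in $\mathcal A^s\times\mathcal A^u$, in direct analogy with the geodesic-flow model of Section \ref{sub:cutseifert}: orbits not in $\Omega_P$ are either trapped in one of the outer regions $(\widetilde{T}'_i)^-$ or, equivalently, get excluded by the perfect-fit/triangle structure at the boundary corners.
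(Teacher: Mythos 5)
Your plan has a genuine circularity in the key step. You correctly reduce the absence of singularities to the inclusion $\Theta(\widetilde{\hatp}) \subseteq \Omega_P$, and you correctly observe that Lemma~\ref{le:2prongs} would then force $p\le 2$ at any singular orbit in the interior. But this inclusion is established in the paper only \emph{as a consequence} of Proposition~\ref{folihat}, in the first paragraph of Section~\ref{sub:consequences}: there the authors use the $\mathbb R$-covered property and the last statement of Proposition~\ref{folihat} to deduce that the leaf space of $\widetilde{\hat\Lambda^s}$ embeds in $\hhs$ as a $\pi_1(P)$-invariant properly embedded line, hence equals $\mathcal A^s$ by uniqueness of the axis, hence $\Theta(\widetilde P)\subset\Omega'$. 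You cannot invoke this inclusion to prove Proposition~\ref{folihat} itself. Your suggested justification---that the boundary chains of lozenges ``describe the boundary of $\Omega'$''---presupposes that the lozenge chains invariant under $\pi_1(T_i)$ sit inside $\mathcal A^s\times\mathcal A^u$. That is exactly what is being debated: nothing established before Proposition~\ref{folihat} shows that the stable leaf of a periodic orbit in the interior of $\hatp$, or even the stable leaf of a corner of a boundary chain, lies in $\mathcal A^s$. The analysis in Section~\ref{sec:minimalfixed} (starting with ``Let $\gamma_0$ be a non-trivial element of $\pi_1(P)$ preserving an element $s_0$ of $\mathcal A^s$'') \emph{assumes} the fixed leaf is on the axis, and Corollary~\ref{minimal} only characterizes which such fixed points lie in $\Omega_P$; it does not say a priori that every periodic orbit meeting $\hatp$ has its stable leaf on the axis.

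There is a second, independent gap. Even granting the inclusion, your deduction of the $\mathbb R$-covered property and of the last statement from Lemma~\ref{le:piecewisepath} does not work. Lemma~\ref{le:piecewisepath} gives global path-connectedness of $\Omega_P$, but the needed fact is that for each fixed $L\in\mathcal A^s$ the set $L\cap\widetilde{\hatp}$ is connected; ``saturation by flow orbits'' does not yield this. Indeed $\Theta(\widetilde{\hatp})$ is a proper subset of $\Omega_P$ (it omits the triangles $\Delta(\tilde\theta^i_j)$), so $L$ could a priori meet $\widetilde{\hatp}$ in several components separated by excised regions. The paper's proof of the final statement is precisely a careful orbit-tracking argument ruling this out, using the fact that a flow line can cross a lifted Birkhoff plane at most once and the behaviour of orbits forward/backward asymptotic to boundary tangent orbits; none of this is captured by appealing to connectedness of $\Omega_P$.

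By contrast, the paper's route to non-singularity is topological and avoids $\Omega_P$ entirely: after the preparatory step, it doubles $\hatp$, blows up singular leaves to an essential lamination, rules out sphere and solid-torus compact leaves and proves incompressibility via Gabai--Oertel, and then applies Brittenham's vertical/horizontal dichotomy, with verticality excluded because $P$ is free. Only after this is the orbit-space picture ($\Theta(\widetilde P)\subset\Omega'$, identification of the leaf space with $\mathcal A^s$) available. If you want an orbit-space proof of non-singularity, you would need an independent proof that every orbit of $\widetilde{\hatp}$ projects into $\Omega_P$, which is a substantial fact, not a formality.
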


\begin{proof}{}
We do the proof for $\hatwls$. By the preliminary step $\Lambda^s_P$ induces a foliations in $\hatp$
which is possibly singular with $p$-prong singularities. The preliminary step shows that there
is no local leaf of $\Lambda^s_P$ restricted to $\hatp$ which intersects $\hatp$ only in a tangent
periodic orbit of $\hatp$. If that were the case then in the universal cover no stable
prongs of the lifted periodic orbit would separate the lozenges abutting that orbit.
Hence these lozenges would be adjacent along an unstable leaf.
This was dealt with in the preliminary step. It follows that the restriction of $\Lambda^s_P$ to
$\hatp$ is an actual foliation in $\hatp$ with possible $p$-prong singularities.

Now double $\hatp$ along the boundary and double the foliations as well.
This produces a manifold $2 \hatp$ with a foliation $2 \hatwls$.
We have the following
properties:

\begin{itemize}

\item $\hatp$ is Seifert fibered and hence $2 \hatp$ is also Seifert fibered.

\item $2 \hatwls$ is a (possibly singular) foliation.

\item Since every tangent orbit to $\partial \hatp$ has at least one prong of $\Lambda^s_P$
entering $\hatp$, then in $2 \hatp$ there are at least two prongs of $2 \hatwls$ for
each such orbit.

\item Therefore the singularities of $2 \hatwls$ are $p$-prong singular orbits
with $p \geq 2$.

\end{itemize}

Now we use the theory of essential laminations  \cite{Ga-Oe} to deal with this situation.
If there are singular leaves of $2 \hatwls$ blow them up to produce a lamination
$\mathcal L$ in $2 \hatp$.  We will prove that $\mathcal L$ is an essential lamination.

Suppose first there is a compact leaf $C$ of $\mathcal L$.
First we have to rule out sphere leaves of $\mathcal L$ and
tori leaves bounding solid
tori. A lot of the arguments are standard in $3$-dimensional topology
and some details are left to the reader.
We think of $\hatp$ as contained in $2 \hatp = \hatp \cup P^*$.
Suppose that $C$ is a sphere.
If $C$ is contained in $\hatp$ or in $P^*$ that
produces a compact leaf of $\wls$ in $\mi$, contradiction. It follows that
$C$ intersects $\partial \hatp$. Now using the components of $C - \partial \hatp$
and innermost arguments there is a component of $C - \partial \hatp$
which is a disk $D$. Without loss of generality assume that $D \subset \hatp$.
Consider the flow $\hat \Phi$ in $\hatp$. If $\partial D$ is not a tangent
orbit of the flow then the flow $\hat \Phi$ is transverse to $T$ in $\partial D$.
Hence it is transverse to $\partial D$. Suppose it is incoming in $\partial D$.
This is impossible as it would generate a center singularity of the flow
in $D$. If $\partial D$ is tangent to the flow a similar argument ensues.
It follows that $C$ cannot be a sphere.

Suppose now that $C$ is a torus. Consider a component $E$ of $C - \partial \hatp$.
Assume that $E$ is contained in $\hatp$. Since $C$ is compact then $C$ is the
double of $E$. So the problem can only happen if $E$ is an annulus.
The case that $C$ bounds a solid torus in $2 \hatp$
can only happen if
$E$ is an annulus that is boundary parallel in $\hatp$.
So $E$ together with an annulus in $\partial \hatp$ bound a solid torus $V$ in $\hatp$.
In particular both boundary components of $E$ intersect the same component
$T$ of $\partial \hatp$.
Lifting to the universal cover we obtain a lift $\widetilde E$ of $E$ which
has two boundary components in the same component $\widetilde T$ of $\partial
\widetilde P$ because $V$ is a solid torus. So this produces a leaf of
$\wls$ which intersects $\widetilde T$ in at least two components. This is a
contradiction as $T$ is a quasitransverse Birkhoff torus. Hence the leaves of
$\mathcal L$ cannot be tori bonding solid tori.


Let $W$ be the closure of
a  complementary component of $\mathcal L$ and $Y$ a component of
$\partial W$. We need to prove that $Y$ is incompressible in  $W$.
Suppose this is not the case. Then there is a simple
closed curve $\alpha$
in $Y$ which is not null homotopic in $Y$ but bounds a disk $D$ in $W$.
We look at the intersection of $D$ and $\partial \hatp$ $-$ the surfaces
are supposed to be in general position with respect to each other.
In $D$ look at an innermost arc intersection $\delta$ with $\partial \hatp$.
This curve
$\delta$ bounds a subdisk $D_1$ of $D$ (that is $\delta$ and
an arc in $\partial D$) with no arc intersections with $\partial
\hatp$. If there is a circle intersection in $D_1$, it is null homotopic
in $D$ and hence bounds a disk in $\partial \hatp$ as well. These two
disks can be used to create a sphere which bounds a ball and such
intersections can be isotoped away.
It follows that we may assume that the interior
of $D_1$ does not intersect  $\partial \hatp$. The complementary regions
of $\mathcal L$ restricted to $\partial \hatp$
(that is, $(2\hatp - \mathcal L) \cap \partial \hatp$, \
are either annuli $-$ when
one blows up a closed curve in $\hatp$; or infinite strips, if one
blows up an infinite curve in $\partial \hatp$.
Since the two endpoints of $\delta$
are in $D$ which is a disk and has connected boundary; it follows that
the endpoints of $\delta$ are in $Y$. It also follows that $\delta$
union an arc $\epsilon$ in $Y$  bounds
a disk $D_2 \subset \partial \hatp$.
Furthermore $\delta$ and an arc in $\partial D$
 bounds a subdisk $D_1 \subset \hatp$, where $D_1$ is a subdisk
of $D$.
Then $D_1 \cup D_2$ is a disk $D'$ which
has boundary in a leaf of $\mathcal L$.
Now $D'$ is a disk enitrely contained in $\hatp$. And in $\hatp$ it is easy to see
that this component of $Y \cap \hatp$ is incompressible in $W$. Hence
$\partial D'$ also bounds a disk in $Y$.
Since  $\partial D'$ is null homotopic in $Y$,
then the curve $\alpha$ cut up by $D_2$
and union with the arc $\epsilon$ produces
another simple closed curve in $Y$ which is not null homotopic. Proceed with
one less intersection with $\partial \hatp$. By induction we arrive finally
at a contradiction and therefore $Y$ is incompressible in $W$.

In a similar way, using that there are
no one prongs in $\Lambda^s$, then the boundary
leaves of $K$ are also end incompressible. One of the main results of \cite{Ga-Oe}
implies that $\mathcal L$ is an essential lamination in $2 \hatp$. Brittenham's
theorem \cite{Br} implies that $\mathcal L$ has a minimal sublamination
$\mathcal L_1$ which is vertical or horizontal.

Suppose that $\mathcal L_1$ is vertical. Let $F$ be a leaf of $\mathcal L_1$. Then
$F$ has a closed curve $\alpha$
contained in say $\hatp$ so that $\alpha$ is
not null homotopic in $\hatp$. Since it is vertical it is freely homotopic to
a regular fiber in $\hatp$ and projects to a curve which is not null homotopic
in $M$. So in $M$ the stable leaf which contains it is not a plane
and contains a periodic orbit $\beta$. But then the fiber in $P$ is freely
homotopic to a periodic orbit up to powers. This would imply that $P$ is
a periodic piece, contrary to assumption in this case.

\vskip .1in
We conclude that $\mathcal L_1$ is a horizontal sublamination. Since $\mathcal L_1$
is horizontal it follows that $\mathcal L$ is horizontal also, because one can do the
isotopies preserving the tori in $\partial \hatp$. In particular this also
implies that there are no singularities of $2 \hatwls$. We conclude the following:
\begin{itemize}
\item $\hatwls$ is a non singular horizontal foliation in $\hatp$.

\item Recall that each component of $\partial \hatp$ is a Birkhoff torus.
It now follows that every compact component
$\tilde \theta$ of $\hatwls \cap \partial \hatp$
is a closed orbit of $\Phi_p$ and locally the stable leaf of $\tilde \theta$
has only one prong entering $\hatp$.
Notice that there
may be singularities of $\Phi_P$ contained in $\partial \hatp$, but only
one stable prong of such an orbit enters $\hatp$.

\item Given these facts it is immediate that in the universal cover of $\hatp$ every
leaf of the lifted foliation $\widetilde \hatwls$ intersects every
component of $\partial \widetilde \hatp$. In particular $\hatwls$ is
an $\rrrr$-covered foliation in $\hatp$.
\end{itemize}

The fact about $\tilde\theta$ in the second claim is true
because $\tilde\theta$ is a closed curve in a Birkhoff
torus and it is contained in a single stable leaf.
By the structure of the induced stable foliation
in Birkhoff annuli, it follows that $\tilde\theta$
has to be a boundary component of one of the annuli
in the Birkhoff torus, and hence it is a closed
orbit of the flow.

We now prove the last statement of the proposition.
First we note the following fact: consider an orbit $\theta$ of $\Phi_P$ that intersects
a component $T^-$ of $M_P - \hatp$ and let $p$ be a point of $\theta$ in this component.
Suppose that the forward orbit of $p$ is always in this component, that is it never
enters $\hatp$. Then if the forward orbit of $p$ only limits in points of the torus
boundary $T$ of $T^-$ it follows that $p$ is in the stable leaf of a boundary tangent
orbit contained in $T$. This follows from the fact that $T$ is a Birkhoff torus and the
hyperbolic dynamics near a periodic orbit.

Let now $L$ be a leaf of $\Lambda^s_P$. We show that $L$ intersects $\hatp$ in a single
component (could be empty). Let then $p, q$ be points in $L \cap \hatp$.
Let $\theta, \beta$ be the $\Phi_P$ orbits through $p, q$ respectively.
Suppose first that the forward orbit of $p$ gets out of $\hatp$. Then it first exits
$\hatp$ through a boundary Birkhoff torus $T$. As explained before $\theta$ can intersect
$T$ at most once. Hence once $\theta$ leaves $\hatp$ through $T$ it has to stay in
the component $T^-$ of $M_P - \hatp$ bounded by $T$.
We claim that $\theta$ cannot be in the stable leaf of a boundary periodic orbit
contained in $T$. This is a key fact here. Suppose not.
Then $L$ is the stable leaf
of this periodic orbit $\theta_1$. But $L$ also intersects $T$ transversely in the
orbit through $p$. Consider a segment in $\theta$ from the intersection with
$T$ and then to a point very near the boundary orbit $\theta_1$.
Add a small arc at
the end to connect it to $\theta_1$ and produce segment $S$. Then $S$ is a segment
in the closure of $T^-$ intersecting $T$ only in the endpoints. Since $\pi_1(M_P) = \pi_1(P)$,
this segment is homotopic rel endpoints to an arc in $T$. Hence when we lift to $\mi$,
producing $\widetilde T, \widetilde \theta, \widetilde L$ we obtain that $\widetilde \theta$
is transverse to $\widetilde T$ $-$ in the lift of the interior of a Birkhoff annulus, hence
in a lozenge. In addition $\widetilde \theta$ is also in the stable leaf of one of the lifts of a  periodic orbit,
that is in a corner of one of the lozenges associated with $\widetilde T$. This is a
contradiction. This shows that $\theta$ cannot forward accumulate only in
$T$.

Since $q$ is also in $L$ it is forward asymptotic with the $p$ orbit,
 and so has to enter
the component $T^-$ as well (as $\theta$ gets sufficiently far from $T$). So we have the
following:
Points $p_0, q_0$ in the forward orbits of $p, q$ that are in $T$. From $p$ to $p_0$ the
orbit $\theta$ is contained in $\hatp$ and after $p_0$ the orbit $\theta$ is entirely contained
in $T^-$.
Similarly for $q$ and the orbit $\beta$.
Go forward enough on both orbits so they are close. Then we produce an arc in $L$ as follows:
start in $p_0$, move forward on $\theta$ until very close to $\beta$, move to $\beta$ along $L$ and
then backwards to $q_0$. As above call this arc $S$. As above $S$ is homotopic into $T$.
As above lift to $\mi$ to produce $\widetilde T, \widetilde L, \widetilde p_0, \widetilde q_0$, etc..
By the homotopic property above, the two points
$\widetilde p_0, \widetilde q_0$ are in $\widetilde T$ and also in $\widetilde L$. But the
intersection of $\widetilde L$ with $\widetilde T$ is connected $-$ just consider the intersection
of $\widetilde L$ with the chain of lozenges associated with $\widetilde T$. Therefore
$\widetilde p_0, \widetilde q_0$ can be connected along $\widetilde T$ by an arc in
$\widetilde L$.
Therefore in $\hatp$, $p$ and $q$ are connected by an arc from $p$ to $p_0$ in $\hatp$, then
an arc along $L \cap T$ to $q_0$ then back to $q$. It follows that $p,q$ are in the same
component of $L \cap \hatp$.

This deals with the case that the forward orbit of $p$ exits $\hatp$.
This was the harder case. Let us now deal with the other case. Suppose first that $\theta$ forward
accumulates in a point in the interior of $\hatp$. Then $\beta$ also does and since the intersection
of $\beta$ with $\hatp$ is connected the forward orbit of $q$ is contained in $\hatp$. Then
$p,q$ are connected in $L \cap \hatp$.
If the forward orbit of $p$ only accumulates in a component $T$ of $\partial \hatp$, then it is
in the stable leaf of a boundary periodic orbit
$\theta_1$. Similarly for $q$. By the explanation
in the beginning, $q$ cannot be in an orbit that intersects $T$ transversely, therefore
the forward orbit of $q$ has to be entirely contained in $\hatp$. So again $p, q$ are connected
in $L \cap \hatp$.

This finishes the proof of Proposition \ref{folihat}.
\end{proof}

\subsection{Gaps are periodic and consequences}\label{sub:consequences}

In the previous section, we have shown that in the intermediate cover $M_P$ there is a submanifold $\hat{P}$ whose boundary is an union
of tori transverse to the foliations ${\Lambda}^s_P$, $\Lambda^u_P$, which are regular inside $\hat{P}$.

Let $\widetilde{P}$ be the lift of $\hat{P}$ to
$\widetilde M$ which is $\pi_1(P)$-invariant.
The induced foliations $\hat{\Lambda}^s$ and $\hat{\Lambda}^u$ on $\hat P$ are $\mathbb R$-covered, the last statement of Proposition
\ref{folihat} and the arguments in the proof of
Proposition \ref{folihat}  imply that the projection of these $\mathbb R$'s into the leaf spaces
of $\wls$ and $\wlu$ respectively is injective.
In addition these projections are obviously $\pi_1(P)$ invariant.
From this it follows that
the leaf space of $\widetilde{\hat \Lambda^s}$ is $\mathcal A^s$,
and the leaf space of
$\widetilde{\hat \Lambda^u}$ is $\mathcal A^u$, and that the projection of $\widetilde{P}$
in the orbit space is contained in the domain $\Omega'$
defined in Section \ref{sub:omegainorbitspace}.
Recall that $\Gamma = \pi_1(P)$.

Actually, for every lift $\widetilde T$ of a torus boundary $T$ of $\hat P$,
the projection of $\widetilde T$ in $\oo$ (or $\Omega'$)
is a chain of lozenges.
More precisely let $\widetilde A_j$ be a Birkhoff band, connected component of $\widetilde T$ with
the tangent orbits removed. It projects in $\Omega$ in a region of the form:
$$R_j = \{(x,y) \in \Omega' \; \mid \; x_j < x < x_{j+1}, \; y_j < y < y_{j+1} \}$$
Due to our adjustment procedure
(preparatory step in section \ref{sub:intercover}),
the set $R_j$  is not necessarily a lozenge;
it could be a $s$-scalloped or $u$-scalloped chain of lozenges,
where we also include the sides between adjacent lozenges.
We call it a \textit{generalized lozenge}. In section \ref{sec:minimalfixed},
we have proved that the corners $(x_i, y_i)$ of these
rectangles all lie in $\Omega'$.
Let $\gamma_0 \in \Gamma$ be an element generating the group
of orientation preserving elements of $\pi_1(\Gamma)$ fixing every $x_i$ and every $y_i$: according to Corollary \ref{minimal},
the fixed points of $\gamma_0$ that are in $\tilde{\mu}_s$ are precisely the $x_i$'s, and  the $y_i$'s are the fixed points of $\gamma_0$
that are in $\tilde{\mu}_u$. The other fixed points of $\gamma_0$ in $\mathcal A^s$ or in $\mathcal A^u$ correspond to subdivisions of
$R_j$ in lozenges as in section \ref{leaforbit}.

\begin{lemma}
$\widetilde A_j$ is an entrance (respectively exit) transverse band if and only if
$]y_j, y_{j+1}[$ is a gap of $\tilde{\mu}_u$ (respectively $]x_j, x_{j+1}[$ is a gap of $\tilde{\mu}_s$).
\label{entrance}
\end{lemma}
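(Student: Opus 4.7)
The plan is to adapt the argument of \cite[Corollaire 3.15]{Ba3} to our pseudo-Anosov setting, using the orbit-space model $\Omega'$ developed in Section \ref{leaforbit}. Since the corners $(x_j, y_j)$ and $(x_{j+1}, y_{j+1})$ of the generalized lozenge $R_j$ lie in $\tilde{\mu}_s \times \tilde{\mu}_u$ by Corollary \ref{minimal}, I would first verify that \emph{exactly one} of the two sides $]x_j, x_{j+1}[$ and $]y_j, y_{j+1}[$ is a gap of the corresponding minimal set. This follows from the scalloped-chain structure (Definition \ref{def:scallop}): $R_j$ is a maximal $s$- or $u$-scalloped generalized lozenge arising from the Birkhoff band $\widetilde{A}_j$, and the scalloping direction (i.e., the direction along which the subdivision lozenges are adjacent) is transverse to the gap direction; in the non-gap direction, the interior of the side contains $\gamma_0$-fixed points that belong to $\tilde{\mu}$ by Lemma \ref{le:subeta}.

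Second, since $\widetilde{A}_j$ is transverse to the flow in its interior, any orbit through a point $p \in \mbox{Int}(\widetilde{A}_j)$ crosses $\widetilde{T}$ exactly once: one semi-orbit (past or future) lies in the trapping region $(\widetilde{T})^-$ and is trapped there for all time, while the other semi-orbit enters $\widetilde{P}$. By the definition of entrance/exit, $\widetilde{A}_j$ is an entrance band if and only if the trapped semi-orbit is the past, and an exit band if and only if the trapped semi-orbit is the future. It therefore suffices to show that the past of $\theta = \flat(p) = (x, y)$ lies in the trapping region if and only if $y \in\,]y_j, y_{j+1}[$ is a gap of $\tilde{\mu}_u$; the exit case is symmetric by reversing the role of stable/unstable.

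Third, I would carry out the key dynamical analysis: since orbits on a common unstable leaf are backward asymptotic (last axiom of pseudo-Anosov flows), the $\alpha$-limit set of $\theta$ is contained in the closure of the union of unstable leaves near $\oo^u(\theta)$. An $\alpha$-limit point inside $\widetilde{\mathcal M}$ would require an orbit of $\widetilde{\mathcal M}$ lying on or in a limit of unstable leaves through $\oo^u(\theta)$; concretely, this forces $y$ to be an accumulation point of $\tilde{\mu}_u$ on at least one side. When $]y_j, y_{j+1}[$ is a gap, $y$ is separated from $\tilde{\mu}_u$ inside this open interval, so no such accumulation is possible, and the past of $\theta$ cannot accumulate on $\widetilde{\mathcal M}$. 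Combined with the compactness of $\mathcal M$ in $\hat P$ established in Section \ref{sub:intercover} and the trapping structure of $(\widetilde{T}_i)^-$, this forces the past of $\theta$ to escape through some boundary torus; the unique crossing at $\widetilde{T}$ then forces the trapping side to be $(\widetilde{T})^-$ itself, so $\widetilde{A}_j$ is an entrance band. Conversely, if $]y_j, y_{j+1}[$ is not a gap (so $]x_j, x_{j+1}[$ is), the same argument applied to the future (via stable leaves and $\omega$-limit sets) shows that it is the future that escapes, making $\widetilde{A}_j$ an exit band.

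The main obstacle will be the rigorous $\alpha$-limit set analysis in the non-compact cover $M_P$: specifically, ruling out that $\theta$'s past stays in $\widehat{P}$ forever without approaching $\mathcal M$, and ruling out that it exits through some $(\widetilde{T}_i)^-$ with $i \neq j$. The first is addressed by noting that $\mathcal M$ is the entire non-wandering set of $\hat\Phi$ restricted to $\widehat{P}$ (again using compactness of $\mathcal M$), so any backward orbit trapped in $\widehat{P}$ must accumulate in $\mathcal M$; the second is excluded because an orbit crossing another boundary torus $\widetilde{T}_i$ before $\widetilde{T}$ would contradict the fact that the past semi-orbit starting at $p$ lies on a single side of $\widetilde{T}$.
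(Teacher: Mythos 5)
Your proposal contains a genuine gap in the first step, and this gap undermines the whole structure of your argument.

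In step 1 you try to establish, independently of the entrance/exit dichotomy, that \emph{exactly one} of the sides $]x_j, x_{j+1}[$, $]y_j, y_{j+1}[$ is a gap, by appealing to the scalloped structure and Lemma \ref{le:subeta}. This has two problems. First, when the generalized lozenge $R_j$ is a single elementary lozenge (no scalloping), there are no $\gamma_0$-fixed points in the interior of either side, so the argument says nothing. This is the generic case, and it is precisely the case where the lemma has content. Second, and more seriously, Lemma \ref{le:subeta} says the \emph{opposite} of what you need: the $\gamma_0$-fixed points lying strictly between consecutive true blue corners (i.e., those separating adjacent sub-lozenges of a scalloped generalized lozenge) are \emph{not} in $\tilde{\mu}$ — that is precisely the remark at the end of the proof of Lemma \ref{le:subeta} (``any other stable leaf $s'$ fixed by $\gamma_0$ and between $s$ and $\beta^+_u(u)$ is a stable leaf in between two adjacent lozenges... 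It follows that $s'$ is not in $\tilde\mu_s$''). So the ``non-gap'' side does not exhibit $\gamma_0$-fixed points in $\tilde{\mu}$ in its interior, and your dichotomy cannot be read off from the scalloped structure alone. In fact, ``exactly one side is a gap'' is not an independent input to the lemma; it is equivalent to the lemma's statement, given the exit/entrance dichotomy. The paper proves both implications by contradiction: assuming $\widetilde A_j$ is an exit band and $]x_j, x_{j+1}[$ is not a gap, minimality gives an iterate $\gamma x_j$ in $]x_j, x_{j+1}[$, and then a careful analysis of where $\gamma(R_j)$ sits relative to $R_j$ (using the preliminary claim that $\gamma(R_j) \cap R_j = \emptyset$ for $\gamma$ outside the stabilizer, together with an asymptotic escape argument near the unstable leaf $y_{j+1}$) produces a translate of the Birkhoff band lying entirely on the flow-forward side of $\widetilde A_j$, which is impossible. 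A separate contradiction argument using $\beta_s$ and Lemma \ref{le:alphabetamu} shows both sides cannot be gaps simultaneously. This combinatorial approach avoids the $\alpha$-/$\omega$-limit set analysis altogether.

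Your steps 2--4, even granting step 1, are also too loosely stated to be conclusive: the claim ``an $\alpha$-limit point inside $\widetilde{\mathcal M}$ would require \dots this forces $y$ to be an accumulation point of $\tilde{\mu}_u$'' is not justified — the $\alpha$-limit set of an orbit is not controlled simply by the position of its unstable leaf in the leaf space, since backward asymptotics along an unstable leaf do not constrain the $\alpha$-limit set to be contained in or near that unstable leaf. You also invoke compactness of the non-wandering set $\mathcal M$ and quote Section \ref{sub:intercover}, but that section establishes compactness of $\hat{P}$ (Proposition \ref{goodform}); the non-wandering set used in your argument is the one for the model flow in Section \ref{sub:cutseifert}, not the actual pseudo-Anosov flow, and the translation between them is part of what the Main Theorem is trying to establish, so using it here risks circularity.
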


\begin{proof}
This lemma was proved in
\cite[Section $3.1$]{Ba3} in the case of $\rrrr$-covered
Anosov flows.
Here we give a simpler proof, and that
applies to our much more general situation.

First of all irrespective of entering or exit annulus,
we claim the following:

\vskip .05in
\noindent
{\bf {Claim:}} If $\gamma$ is in $\Gamma = \pi_1(P)$
and $\gamma(R_j)$ intersects $R_j$, then they are
equal and $\gamma$ is a power of $\gamma_0$.

Suppose that $\gamma(R_j)$ non trivially intersects
$R_j$. Since $R_j$ is a rectangle, it follows
that either $\gamma(]x_j,x_{j+1}[)
\subset ]x_j,x_{j+1}[$ or $\gamma(]y_j,y_{j+1}[)
\subset ]y_j,y_{j+1}[$ (but not both). In either
case there is $w$ in $R_j$ with $\gamma(w) = w$.
Projecting to $M$ produces a closed orbit of $\Phi$
intersecting one of the Birkhoff tori transversely
and so that it represents an element of $\pi_1(P)$.
As we explained previously this is impossible. This
proves the claim.

\vskip .05in
Let us consider the case where $\widetilde A_j$ is an exit transverse band.
Then, an orbit starting from a point in $\widetilde A_j$ goes outside $\widetilde P$
and never go back since it crosses $\wt$ at most once.
Assume that $\tilde{\mu}_s$ intersects $]x_j, x_{j+1}[$.
Then there is an iterate $\gamma x_j$ in $]x_j, x_{j+1}[$,
because $x_j$ is in $\tilde\mu_s$ and $\tilde\mu_s$
is the minimal set of the action.
Then $\gamma(R_j)$ intersects $]x_j,x_{j+1}[ \times
\mathcal A^u$. But since $\gamma(R_j) \cap R_j = \emptyset$,
it follows that this intersection is either
``below" $R_j$ (that is contains points $(x,y)$ with
$y < y_j$) or ``above" $R_j$.  Since $\beta_s(\gamma x_j)
\geq y_{j+1}$ we have to have the second option.
The band $\widetilde A_j$ does not intersect
the unstable leaf $y_{j+1}$ but intersects
every unstable leaf $y$ with $y < y_{j+1}$ in
$\mathcal A^u$ near $y_{j+1}$. So this band escapes
{\underline {down}} as it nears the unstable leaf
$y_{j+1}$. This is because in a stable leaf,
say $\gamma x_j$, as it nears the intersection with
$y_{j+1}$ one has to escape flow backwards for it not
to intersect $y_{j+1}$, as flow forwards all orbits
are asymptotic. It follows that $\gamma R_j$ is
in the component of $\mi - \widetilde T$
that is ``flow forwards" of $\widetilde A_j$.
But this is a contradiction to the property above.

This contradiction  shows that $]x_j, x_{j+1}[$ is disjoint
from $\tilde{\mu}_s$, i.e. is a gap.

%
%

In a similar way, one proves that if  $\widetilde A_j$ is an entrance transverse band, then $]y_j, y_{j+1}[$ is a gap.

In order to conclude,
we just have to prove that when  $\widetilde A_j$ is an exit transverse band, then $]y_j, y_{j+1}[$ is \textbf{not} a gap (the proof
that $]x_j, x_{j+1}[$ is not a gap when  $\widetilde A_j$ is an entrance transverse band is similar).

Assume by a way of contradiction that $]x_j, x_{j+1}[$ and $]y_j, y_{j+1}[$ are both gaps. Let $p_n = \gamma_n x_j$ a sequence of iterates accumulating non-trivially to $x_j$.
We can assume that no $p_n$ is fixed by $\gamma_0$, i.e. that no $\gamma_n\gamma_0\gamma_n^{-1}$ is a power of $\gamma_0$.
Since $]x_j, x_{j+1}[$ is a gap, we have $p_n < x_j$. On the other hand, for $n$ sufficiently big, we have $y_j < \beta_s(p_n)  \leq \beta_s(x_j)$.
According to Lemma \ref{le:alphabetamu}, the last inequality is strict, since $]p_n, x_j[$ cannot be a gap ($x_j$ cannot be the extremity of two different gaps).
Hence, since $y_{j+1} \geq \beta_s(x_j)$, the gap $]y_j, y_{j+1}[$ contains all the points $\beta_s(p_n)$, that are fixed points of $\gamma_n\gamma_0\gamma_n^{-1}$.
It follows that $y_j$ and $y_{j+1}$ are fixed by $\gamma_n\gamma_0\gamma_n^{-1}$. Therefore, $\gamma_n\gamma_0\gamma_n^{-1}$ are all powers of
$\gamma_0$. Contradiction.

The Lemma is proved.
\end{proof}

When $]x_j, x_{j+1}[$ is a gap, we define:
$$\Delta(x_j, y_j) :=  \{ (x,y) \in \Omega' \; \mid \; x_j < x < x_{j+1} , \; \alpha_s(x) < y < y_j \}$$

When the gap is $]y_j, y_{j+1}[$, we define:
$$\Delta(x_j, y_j) :=  \{ (x,y) \in \Omega' \; \mid \; \alpha_u(y) < x < x_j , \; y_j < y < y_{j+1} \}$$


The following is one of the most important facts
in our analysis:

\begin{proposition}\label{pro:periodicgap}
Every gap of $\tilde{\mu}_s$ (respectively $\tilde{\mu}_u$) is $\pi_1(P)$-periodic. More precisely, all gaps of $\tilde{\mu}_s$ and $\tilde{\mu}_u$ are sides of one generalized lozenges $R$
corresponding to a Birkhoff annulus in $\partial\hat P$.
\label{periodic}
\end{proposition}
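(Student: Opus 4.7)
\emph{Proof plan for Proposition \ref{pro:periodicgap}.}
We treat the case of a gap $I = \,]x_-, x_+[$ of $\tilde{\mu}_s$; the statement for $\tilde{\mu}_u$ is symmetric by exchanging $\hhs$ and $\hhu$ and reversing time. The first move is to produce a \emph{test orbit forced to leave} $\widetilde{P}$. By Corollary \ref{minimal}, $x_- \in \tilde{\mu}_s$ is the stable leaf of some type-S periodic orbit $\tilde{\theta}_-$ in $\Omega_P$, and its unstable leaf $y_- = \oo^u(\tilde{\theta}_-)$ lies in $\tilde{\mu}_u$. Using the monotonicity of $\alpha_s, \beta_s$ (Lemma \ref{le:monotone}), for any $y$ slightly above $y_-$ in $\mathcal A^u$ and any $x \in I$ sufficiently close to $x_-$, the pair $(x,y)$ lies in $\Omega' = \Omega_P$, so it corresponds to an orbit $\tilde{\theta}(x) \subset \widetilde{M}$ meeting the interior of $\widetilde{P}$.

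The second step is to force this orbit out of $\widetilde{P}$ through a single boundary plane. Since $\hat{P}$ is compact, an orbit of $\Phi_P$ whose forward ray stays in $\hat{P}$ forever must have its forward $\omega$-limit in the non-wandering set $\mathcal{M}$ of $\Phi_P \rvert_{\hat{P}}$. But the preimage of $\mathcal{M}$ is exactly the set of orbits whose $(\oo^s, \oo^u)$-projection lies in $\tilde{\mu}_s \times \tilde{\mu}_u$, which is impossible here as $x \notin \tilde{\mu}_s$. So the projected orbit must cross some boundary Birkhoff torus $T_{j}$ in forward time, and by the trapping behavior established in section \ref{sub:cutseifert} it thereafter stays in the complementary end $T_j^-$. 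Lifting to $\widetilde M$, the orbit $\tilde{\theta}(x)$ crosses a unique plane $\widetilde{T} \subset \partial \widetilde{P}$ on its forward side. A continuity and connectedness argument, together with the local finiteness of $\{\widetilde{T}_i\}$ and the fact that as $(x,y) \to (x_-, y_-)$ the orbit $\tilde\theta(x)$ accumulates on the periodic orbit $\tilde{\theta}_-$ (which must therefore be a tangent orbit of $\widetilde{T}$), shows that $\widetilde{T}$ is the \emph{same} plane for all $x \in I$ near $x_-$.

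In the third step I identify $\widetilde{T}$ with a generalized lozenge whose stable side is $I$. By section \ref{sub:intercover}, the plane $\widetilde{T}$ projects in $\Omega'$ to a chain of generalized lozenges, each of the form $R_{j_0} = ]a, b[ \times ]c, d[$ with corners lying in $\tilde{\mu}_s \times \tilde{\mu}_u$; the crossing point $\tilde\theta(x) \cap \widetilde{T}$ projects inside one such $R_{j_0}$, so $a < x < b$ and $c < y < d$. Since $\widetilde{T}$ is the forward exit plane, Lemma \ref{entrance} implies that $]a, b[$ is a gap of $\tilde{\mu}_s$. Because $a, b \in \tilde{\mu}_s$ and $x \in \,]a, b[ \cap I$, the maximality of the gap $I$ forces $a = x_-$ and $b = x_+$. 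Hence $I$ is precisely the stable side of the generalized lozenge $R_{j_0}$, and thus of a Birkhoff annulus in $\partial \hat{P}$; its $\pi_1(P)$-periodicity then comes from the cyclic subgroup stabilizing this generalized lozenge, as in Remark \ref{rk.lozengeannulus}.

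The most delicate step is the second one: showing that a single plane $\widetilde{T}$ is shared across the whole family of exits as $x$ varies in $I$ near $x_-$. The continuity of the crossing point is clear when the orbit meets the interior of a Birkhoff annulus, but the limiting orbit $\tilde{\theta}_-$ is itself a periodic orbit tangent to $\widetilde{T}$, so one must carefully track how nearby forward exit crossings accumulate onto $\tilde{\theta}_-$ using the precise hyperbolic structure at a tangent periodic orbit. A secondary subtlety is ruling out the alternative that within $R_{j_0}$ it is the unstable side $]c, d[$ rather than $]a, b[$ that is the gap (i.e.\ that $\widetilde{T}$ is crossed as an entrance rather than an exit annulus at this level): this is handled by choosing $y$ in $\tilde{\mu}_u$ and reusing Lemma \ref{le:alphabetamu} so that $x \in I$ combined with $a, b \in \tilde{\mu}_s$ rules out the configuration where $]a, b[$ would have to meet $\tilde{\mu}_s$ strictly inside.
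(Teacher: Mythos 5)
Your proof takes a genuinely different, constructive route: you try to force the test orbits to exit through a particular boundary plane and then read off the generalized lozenge from the exit. The paper instead argues by contradiction from the hypothesis that $I$ is \emph{not} a side of a generalized lozenge: then no orbit with stable coordinate in $I$ can ever meet an exit annulus, so an unstable segment transverse to the flow and projecting inside $I$ has its entire forward saturation trapped in $\hat P$; expansion along unstable leaves then produces a whole unstable leaf entirely contained in $\hat P$, contradicting the fact (proved in Proposition~\ref{folihat}) that in the universal cover every leaf of $\widetilde{\hat\Lambda}^u$ must reach $\partial \widetilde{\hat P}$. This is a two-paragraph argument and does not require constructing an exit torus.

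Your second step is where the gap lies, and it is a real one. You argue that if the forward orbit of a point with $x\notin\tilde\mu_s$ stayed in $\hat P$ forever, its $\omega$-limit would lie in the non-wandering set $\mathcal M$, and then you rule this out because ``the preimage of $\mathcal M$ is exactly the set of orbits whose $(\oo^s,\oo^u)$-projection lies in $\tilde\mu_s\times\tilde\mu_u$, which is impossible here as $x\notin\tilde\mu_s$.'' This confuses the coordinates of the orbit with the coordinates of its $\omega$-limit: the limit set can consist of orbits with completely different $(s,u)$-coordinates (for instance, a spiraling approach to a periodic orbit lying on the boundary of the gap), so no contradiction follows from $x\notin\tilde\mu_s$. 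Moreover, the clean characterization of $\mathcal M$ as the preimage of $\tilde\mu_s\times\tilde\mu_u$ is something established for the \emph{model} flow in the construction of section~\ref{sub:cutseifert}; for the actual pseudo-Anosov flow restricted to $\hat P$ it is not yet available at this point in the logic, and in fact is essentially what one is working towards. Finally, you flag the ``single plane $\widetilde T$'' step as delicate and leave it unproved; in the paper's framework this subtlety never arises, because one does not need to locate the exit plane at all---one only needs to know that the absence of any possible exit forces an entire unstable leaf to be trapped, and that is ruled out by the $\mathbb R$-covered property.
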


\begin{proof}
Let $I$ be a gap of $\tilde{\mu}_s$. Assume it is not a side of some generalized lozenge corresponding to a Birkhoff annulus in $\partial\hat P$. Let $\widetilde \Lambda^s(I)$ be the preimage of $I$ in $\widetilde P$, and let $\hat \Lambda^s(I)$ be the projection of $\widetilde \Lambda^s(I)$ in $\hat P$. By hypothesis, $I$ cannot be one side of a generalized lozenge. Hence $\hat \Lambda^s(I)$
contains no point in an exit annulus. It follows that no orbit in $\hat \Lambda^s(I)$ can escape from $\hat P$.

But since $I$ is an open segment in $\mathcal A^s$ there is
a segment $J$ in an unstable leaf, transverse to the flow
and so that $J$ is contained in $\hat \Lambda^s(I)$.
By the above, the forward orbit of $J$ is entirely
contained in $\hat P$, and similarly for any point
in its closure. Since orbits in $J$ are expanding away
from each other we obtain unstable leaves entirely contained
in $\hat P$.
This contradicts the final statement of Proposition
\ref{folihat}.

This finishes the proof of the Proposition.
\end{proof}

%

Recall that at the end of section \ref{sec:minimalfixed} we have defined maps $\alpha_{s,u}^-$, $\beta_{s,u}^+$, and $\tilde{\tau}_{s} = \beta_u^+ \circ \beta_s^+$.

\begin{corollary}\label{cor:alphabetamu2}
The restrictions to $\tilde{\mu}_s$ of the maps $\alpha_s^-$, $\beta_s^+$, and the restrictions of $\alpha_u^-$, $\beta_u^+$ to $\tilde{\mu}_u$
are all injective.
\end{corollary}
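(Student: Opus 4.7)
The plan is to strengthen the almost-injectivity of Lemma \ref{le:alphabetamu} to full injectivity by combining two ingredients: injectivity on the dense subset of type-S fixed points in $\tilde{\mu}_s$ (respectively $\tilde{\mu}_u$), and the periodic-gap structure of Proposition \ref{pro:periodicgap}. Together with the monotonicity of $\alpha_s$, $\beta_s$, $\alpha_u$, $\beta_u$ (Lemma \ref{le:monotone}), this will force strict monotonicity of all four derived maps on the corresponding minimal sets.

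For Step 1, let $\tilde{\theta} \in \Omega_P$ be a true blue corner fixed by some $\gamma_0 \in \pi_1(P) - id$, and set $s = \oos(\tilde{\theta})$, $u = \oou(\tilde{\theta})$; these are type-S fixed points in $\tilde{\mu}_s$ and $\tilde{\mu}_u$ respectively. Lemma \ref{le:subeta} produces a maximal $\gamma_0$-invariant rectangle (a chain of adjacent lozenges) whose two opposite corners are $(s,u)$ and $(\beta_u^+(u), \beta_s^+(s))$, and Lemma \ref{composition} records the identities $\alpha_u^-(\beta_s^+(s)) = s$ and $\alpha_s^-(\beta_u^+(u)) = u$. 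Applying the same construction to the adjacent $\gamma_0$-invariant rectangle in the chain of lozenges through $\tilde{\theta}$ --- the one for which $(s, u)$ is the upper-right corner rather than the lower-left corner --- yields the symmetric identities $\beta_u^+(\alpha_s^-(s)) = s$ and $\beta_s^+(\alpha_u^-(u)) = u$. These four identities together exhibit $\beta_s^+$ and $\alpha_u^-$ as mutually inverse bijections between the type-S fixed points in $\tilde{\mu}_s$ and those in $\tilde{\mu}_u$, and similarly for $\beta_u^+$ and $\alpha_s^-$; in particular each of the four maps is injective on the appropriate set of type-S fixed points. The one delicate point here is the existence of this second rectangle, which follows from the two-sided $\gamma_0$-invariance of the full chain of lozenges already exploited in Section \ref{sec:minimalfixed}.

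For Step 2, I would argue by contradiction. Suppose $\beta_s^+(s_1) = \beta_s^+(s_2)$ for some $s_1 < s_2$ in $\tilde{\mu}_s$. Since $\beta_s$ is weakly increasing (Lemma \ref{le:monotone}), so is $\beta_s^+$, and the latter is therefore constant on $[s_1, s_2]$. If $(s_1, s_2) \cap \tilde{\mu}_s \neq \emptyset$, then Corollary \ref{minimal} together with the perfectness of $\tilde{\mu}_s$ give at least two distinct type-S fixed points $p_1 < p_2$ in $(s_1, s_2)$, on which $\beta_s^+$ takes the common value $\beta_s^+(s_1)$ --- contradicting Step 1. Otherwise $(s_1, s_2)$ is a gap of $\tilde{\mu}_s$; by Proposition \ref{pro:periodicgap} this gap is the stable side of a generalized lozenge coming from a Birkhoff annulus $A \subset \partial\hat{P}$, and the endpoints $s_1, s_2$ are the stable leaves through the two distinct tangent periodic orbits of $A$. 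The corresponding points $(s_1, u_1), (s_2, u_2)$ lie in $\Omega_P$, so $s_1$ and $s_2$ are themselves type-S fixed points, and Step 1 again forbids $\beta_s^+(s_1) = \beta_s^+(s_2)$. The arguments for $\beta_u^+$, $\alpha_s^-$, $\alpha_u^-$ are entirely parallel, using the appropriate identity from Step 1 and the statement of Proposition \ref{pro:periodicgap} for gaps of $\tilde{\mu}_u$.
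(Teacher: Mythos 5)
Your proof is correct and rests on the same core ingredients as the paper's --- Proposition \ref{pro:periodicgap} (gaps are periodic sides of generalized lozenges), the true-blue-corner structure, and the inversion identities of Lemma \ref{le:propertyalphabeta} --- but it is organized differently. The paper argues in one stroke: assuming $\alpha_s^-(a)=\alpha_s^-(b)$, it first shows (by the Lemma \ref{le:alphabetamu} invariant-open-set mechanism, since $\alpha_s$ is then constant on $]a,b[$) that $]a,b[$ is a gap, and then reads off $\alpha_s^-(a)<u_1=\alpha_s^-(b)$ directly from the preceding generalized lozenge $W$ at the lower-left corner $(a,u_1)$. You instead isolate a clean intermediate statement --- injectivity on the dense set of type-S fixed points, obtained from the two-sided lozenge identities --- and then extend it to all of $\tilde{\mu}_s$ by monotonicity, treating the two cases ``the interval meets $\tilde{\mu}_s$'' and ``the interval is a gap'' separately. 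Note that your Case 1 is in fact vacuous: the same Lemma \ref{le:alphabetamu} argument used by the paper shows the interval on which $\beta_s^+$ is constant must already be a gap, so your density-of-type-S argument for that case, while valid, is doing work that a shorter appeal to Lemma \ref{le:alphabetamu} would also do. The modular formulation you chose (type-S injectivity as a standalone lemma) is arguably clearer and would have been reusable elsewhere in the section, at the cost of a slightly longer proof.
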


\begin{proof}
We first prove that the restriction of $\alpha_s^-$ to $\tilde{\mu}_s$ is injective. Assume not:
there are two element $a$, $b$ of $\tilde{\mu}_s$ such that $\alpha_s^-(a) = \alpha_s^-(b)$.
As in Lemma \ref{le:alphabetamu} one proves that $]a, b[$ is a gap of $\tilde{\mu}_s$. According to Proposition \ref{pro:periodicgap},
$a$ and $b$ are preserved by a non-trivial element $\gamma_0$ of $\pi_1(P)$, and $]a, b[$ is the $s$-side of a $\gamma_0$-invariant generalized lozenge
$R$ corresponding to an exit Birkhoff band $\widetilde A$. Let $(a, u_1)$ be the ``lower left" corner of $R$
and $(b, u_2)$ the
``upper right" corner of $R$ in $\Omega'$.
It follows that they are both true blue corners.
Then as in Lemma \ref{composition},
we have that $\alpha_s^-(b) = u_1$.
But since we assumed that $\alpha_s^-(a) = \alpha_s^-(b)$,
we obtain $\alpha_s^-(a) = u_1$, which is impossible.

The proof of the injectivity of $\beta^+_s$ on $\tilde{\mu}_s$, and of $\alpha^-_u$, $\beta_u^+$ on $\tilde{\mu}_u$ are similar.
\end{proof}

\begin{proposition}\label{pro:taumu}
The image of $\tilde{\mu}_s$ by $\alpha^-_s$ is $\tilde{\mu}_u$. Similarly, $\beta_s^+(\tilde{\mu}_s) = \tilde{\mu}_u$ and $\alpha^-_u(\tilde{\mu}_u) = \beta^+_u(\tilde{\mu}_u) = \tilde{\mu}_s$.
In particular, $\tilde{\tau}_s$ induces an
order preserving  homeomorphism from $\tilde{\mu}_s$ onto itself,
and $\tilde{\tau}_u$ induces an order preserving
 homeomorphism from $\tilde{\mu}_u$ onto itself.
\end{proposition}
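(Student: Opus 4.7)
The plan is to establish the four inclusions $\alpha_s^-(\tilde{\mu}_s) \subseteq \tilde{\mu}_u$, $\beta_s^+(\tilde{\mu}_s) \subseteq \tilde{\mu}_u$, $\alpha_u^-(\tilde{\mu}_u) \subseteq \tilde{\mu}_s$, $\beta_u^+(\tilde{\mu}_u) \subseteq \tilde{\mu}_s$, promote them to equalities using the composition identities of Lemma~\ref{le:propertyalphabeta}, and then extract the statement about $\tilde{\tau}_s$ and $\tilde{\tau}_u$. The four inclusions are formally analogous, so I only sketch the one for $\alpha_s^-$.

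I first verify the inclusion on the dense subset of $\tilde{\mu}_s$ consisting of type-S fixed points of elements of $\pi_1(P)$, which is dense by Corollary~\ref{minimal}. Let $s \in \tilde{\mu}_s$ be a type-S fixed point of some $\gamma_0 \in \pi_1(P) - \{\mathrm{id}\}$, let $\tilde{\theta} \in \Omega_P$ be the corresponding orbit, and let $u = \oo^u(\tilde{\theta}) \in \tilde{\mu}_u$ be its (also $\gamma_0$-fixed) unstable leaf. Since the chain of $\gamma_0$-fixed corners in $\tilde{\mu}_s \times \tilde{\mu}_u$ is doubly infinite, there is a $\gamma_0$-fixed pair $(s^-, u^-) \in \tilde{\mu}_s \times \tilde{\mu}_u$ of type-S points whose ``next corner'' in the sense of Lemma~\ref{le:subeta} is $(s, u)$, i.e. $\beta_u^+(u^-) = s$. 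Applying Lemma~\ref{le:propertyalphabeta} to $(s^-, u^-)$ then yields $\alpha_s^-(s) = \alpha_s^-(\beta_u^+(u^-)) = u^- \in \tilde{\mu}_u$. To extend the inclusion to all of $\tilde{\mu}_s$, I exploit perfection of $\tilde{\mu}_s$ and density of its type-S subset: any $s \in \tilde{\mu}_s$ is a left limit of type-S fixed points $s_n \in \tilde{\mu}_s$ (if $s$ is the right endpoint of a gap of $\tilde{\mu}_s$, I approach through the left endpoint of that gap, which is type-S by Proposition~\ref{pro:periodicgap}), and left-continuity of $\alpha_s^-$ together with closedness of $\tilde{\mu}_u$ give $\alpha_s^-(s) = \lim_n \alpha_s^-(s_n) \in \tilde{\mu}_u$.

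Once the four inclusions are in hand, the composition identities $\alpha_s^- \circ \beta_u^+ = \mathrm{id}$ and $\alpha_u^- \circ \beta_s^+ = \mathrm{id}$ of Lemma~\ref{le:propertyalphabeta}, a priori only valid at type-S fixed points, extend by one-sided continuity of their factors (whose restrictions to the minimal sets are now known to land in the minimal sets) to all of $\tilde{\mu}_u$ and $\tilde{\mu}_s$. Combined with the already established inclusions, these identities force the inclusions to be equalities. Corollary~\ref{cor:alphabetamu2} supplies injectivity of the four maps on the minimal sets, so each of $\alpha_s^-$, $\beta_s^+$, $\alpha_u^-$, $\beta_u^+$ is a weakly monotone injection from one of the closed perfect subsets $\tilde{\mu}_s \subset \mathcal{A}^s$, $\tilde{\mu}_u \subset \mathcal{A}^u$ onto the other, hence an order-preserving homeomorphism. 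Writing $\tilde{\tau}_s = \beta_u^+ \circ \beta_s^+$ and $\tilde{\tau}_u = \beta_s^+ \circ \beta_u^+$ as compositions of such homeomorphisms yields the conclusion. The main obstacle is the extension step: it is delicate because $\alpha_s^-$, $\alpha_u^-$ are left-continuous while $\beta_s^+$, $\beta_u^+$ are right-continuous, so a priori the identities might degrade at points that are one-sided limits of gaps; the key is that Proposition~\ref{pro:periodicgap} traps every gap of either minimal set between two type-S fixed points, forcing the one-sided limits on either side of every gap to match the values already pinned down at the endpoints.
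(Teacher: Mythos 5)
Your proof is correct in substance, and for the harder direction (surjectivity of $\beta^+_s$, $\alpha^-_s$, etc.\ onto the minimal sets) it follows a genuinely different route from the paper's. The paper proves $\tilde{\mu}_u \subset \beta^+_s(\tilde{\mu}_s)$ by a direct contradiction: if $u \in \tilde{\mu}_u$ were missed, one takes $y = \sup\{s : \beta^+_s(s) < u\}$, shows the interval between $\beta^+_s(y)$ and $u$ is a gap of $\tilde{\mu}_u$, hence periodic (Proposition \ref{pro:periodicgap}), and then Lemma \ref{composition} puts $u$ back in the image. You instead establish the composition identities $\alpha^-_s \circ \beta^+_u = \mathrm{id}$ and $\alpha^-_u \circ \beta^+_s = \mathrm{id}$ on \emph{all} of $\tilde{\mu}_u$, $\tilde{\mu}_s$ first, and read off surjectivity from them; in effect you prove the paper's Corollary \ref{double} before Proposition \ref{pro:taumu} rather than after. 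This inversion is legitimate and a bit more economical, since one argument (density of type-S fixed points plus monotonicity) drives both the inclusions and the surjectivity. Your worry about the left/right-continuity mismatch, and how Proposition \ref{pro:periodicgap} resolves it, is stated vaguely; the clean version is a two-sided squeeze: for non--type-S $u \in \tilde{\mu}_u$, $u$ cannot be a gap endpoint (gap endpoints are $\gamma_0$-fixed, hence type-S), so it is a two-sided accumulation point of type-S fixed points $u_n \nearrow u$ and $u_m \searrow u$, and weak monotonicity of $\alpha^-_s \circ \beta^+_u$ together with $\alpha^-_s(\beta^+_u(u_n)) = u_n$, $\alpha^-_s(\beta^+_u(u_m)) = u_m$ pins $\alpha^-_s(\beta^+_u(u))$ between $u_n$ and $u_m$, forcing equality; type-S $u$ are handled directly by Lemma \ref{le:propertyalphabeta}. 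One small inaccuracy to flag: in the inclusion step you write that if $s$ is the right endpoint of a gap you ``approach through the left endpoint of that gap'' — this does not make literal sense, but the case is vacuous, since a right gap endpoint is $\gamma_0$-fixed in $\tilde{\mu}_s$ and therefore already type-S (handled directly by Lemma \ref{le:subeta}/\ref{le:propertyalphabeta}, no limit needed); the left-limit argument is only required for non--type-S $s$, which are two-sided accumulation points. The final homeomorphism assertion is fine: a strictly monotone bijection between closed subsets of $\mathbb{R}$ has no jump (any jump value would lie in the closed target yet miss the image, contradicting surjectivity), so it is automatically a homeomorphism.
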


\begin{proof}
We just deal with $\beta^+_s$, the other cases being similar.
Let $s$ be an element of $\tilde{\mu}_s$ fixed by a non-trivial
element $\gamma_0$ of $\pi_1(P)$.
According to Proposition \ref{pro:periodicgap} and Corollary \ref{minimal} there is an element $u$
of $\tilde{\mu}_u$ such that $(s,u)$ is an element of $\Omega'$ fixed by a non-trivial element $\gamma_0$ of $\pi_1(P)$.
More precisely, $(s,u)$ is the lower left corner of a $\gamma_0$-invariant
generalized lozenge whose other (upper right)
corner is $(\beta^+_u(u), \beta^+_s(s))$ (see Lemma \ref{le:subeta}).
In particular, $\beta_s^+(s)$ lies in $\tilde{\mu}_u$. Since elements with non-trivial $\pi_1(P)$-stabilizers are dense in $\tilde{\mu}_s$,
we obtain $\beta_s^+(\tilde{\mu}_s) \subset \tilde{\mu}_u$.
Moreover, by the same arguments,
$\beta_s^+(\tilde{\mu}_s)$ contains all the elements of $\tilde{\mu}_u$
with non-trivial stabilizer. In order to conclude, we just have to prove $\tilde{\mu}_u \subset \beta_s^+(\tilde{\mu}_s)$.

Assume not. Let $u$ be an element $u$ of
$\tilde{\mu}_u - \beta^+_s(\tilde{\mu}_s)$.
Let $y$ be the bigger element of $\tilde{\mu}_s$
such that $\beta^+_s(s) < u$ for every $s < y$. Then, for every $s > y$ we have $\beta^+_s(s) > u$. By hypothesis, $\beta^+_s(y)$
is different from $u$. There are two cases:

-- either $\beta^+_s(y) < u$: in this case, for every $s$ in $\tilde{\mu}_s$ we have $\beta_s^+(s) \leq \beta_s^+(y)$ if $s \leq y$
(because $\beta_s^+$ is non decreasing) and $\beta_s^+(s) > u$ if $s > y$.

-- or $\beta^+_s(y) > u$: in this case, for every $s$ in $\tilde{\mu}_s$ we have $\beta_s^+(s) < u$ if $s < y$
and $\beta_s^+(s) > u$ if $s \geq y$.

In both cases, $\beta^+_s(\tilde{\mu}_s)$ is disjoint from the open interval $I$ with extremities $\beta^+_s(y)$ and $u$.
Since $\beta^+_s(\tilde{\mu}_s)$ is dense in $\tilde{\mu}_u$, it follows that $I$ is a gap of $\tilde{\mu}_u$.
By the previous Proposition it follows that
$u$ has
non-trivial $\pi_1(P)$-stabilizer.
It then follows by Lemma \ref{composition}
that $u$ is in $\beta^+_s(\tilde{\mu}_s)$. Contradiction.

The statements about $\tilde\tau_s$ and $\tilde\tau_u$
follow immediately.
\end{proof}

Once we proved that $\tilde\tau_s$ and $\tilde\tau_u$
are order preserving homeomorphisms, the following happens.
For every element $\gamma$ of $\pi_1(P)$:

-- $\tilde{\tau}_s \circ \gamma = \gamma \circ \tilde{\tau}_s$ if $\gamma$ preserves the orientation,

-- $\tilde{\tau}_s \circ \gamma = \gamma \circ \tilde{\tau}_s^{-1}$ if $\gamma$ reverses the orientation.

\begin{corollary}{}{}
$\alpha^-_u \circ \beta^+_s(u) = u$ for all $u$ in $\tilde\mu_u$
and $\alpha^-_s \circ \beta^+_u(s) = s$ for all $s$ in $\tilde\mu_s$.
\label{double}
\end{corollary}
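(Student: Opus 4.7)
The plan is to prove the first equation $\alpha^-_u \circ \beta^+_s(s) = s$ for every $s \in \tilde\mu_s$; the second is symmetric. The strategy is to upgrade Lemma~\ref{le:propertyalphabeta} (which identifies the composition with the identity only on type-S $\pi_1(P)$-fixed points) to all of $\tilde\mu_s$, by combining strict monotonicity of the relevant maps with the structural description of gaps from Proposition~\ref{pro:periodicgap}.

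First I would assemble the ingredients. The maps $\beta^+_s$ and $\alpha^-_u$ are weakly monotone non-decreasing, being right/left limits of the non-decreasing $\beta_s$, $\alpha_u$ from Lemma~\ref{le:monotone} (applied on $\mathcal A^s$ and, after swapping stable/unstable, on $\mathcal A^u$). Corollary~\ref{cor:alphabetamu2} upgrades these to strict monotonicity on the respective minimal sets, and Proposition~\ref{pro:taumu} gives $\beta^+_s(\tilde\mu_s) = \tilde\mu_u$ and $\alpha^-_u(\tilde\mu_u) = \tilde\mu_s$. Therefore $\beta^+_s\colon \tilde\mu_s \to \tilde\mu_u$ and $\alpha^-_u\colon \tilde\mu_u \to \tilde\mu_s$ are strictly increasing bijections, and hence $f := \alpha^-_u \circ \beta^+_s$ is a strictly increasing bijection of $\tilde\mu_s$ onto itself. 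By Lemma~\ref{le:propertyalphabeta}, $f$ agrees with the identity on every type-S fixed point in $\tilde\mu_s$, and by Corollary~\ref{minimal} these fixed points are dense in $\tilde\mu_s$.

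Now I argue by contradiction: assume some $s_0 \in \tilde\mu_s$ satisfies $f(s_0) \neq s_0$. Replacing $f$ by $f^{-1}$ if necessary (which is again a strictly increasing bijection identical to the identity on the same dense set), we may assume $s_0 < f(s_0)$. If the set $\,]s_0, f(s_0)[\, \cap\, \tilde\mu_s$ is nonempty, it is a nonempty relatively open subset of $\tilde\mu_s$, so by density it contains some type-S fixed point $s'$ with $s_0 < s' < f(s_0)$; but then $f(s') = s'$, and strict monotonicity forces $f(s_0) < f(s') = s' < f(s_0)$, a contradiction. Otherwise $]s_0, f(s_0)[$ is disjoint from $\tilde\mu_s$; since $s_0$ and $f(s_0)$ both lie in $\tilde\mu_s$, this interval is precisely a gap of $\tilde\mu_s$. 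By Proposition~\ref{pro:periodicgap} this gap is $\pi_1(P)$-periodic, so $s_0$ is fixed by some non-trivial $\gamma_0 \in \pi_1(P)$, and being in $\tilde\mu_s$ it is of type S (Corollary~\ref{minimal}); Lemma~\ref{le:propertyalphabeta} then yields $f(s_0) = s_0$, the final contradiction.

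The main obstacle is this second case: at a point $s_0 \in \tilde\mu_s$ that is the left endpoint of a gap, $s_0$ is isolated from the right inside $\tilde\mu_s$, so a purely topological density argument cannot reach it. This is precisely where Proposition~\ref{pro:periodicgap} is indispensable, as it promotes endpoints of gaps to type-S fixed points and thereby brings Lemma~\ref{le:propertyalphabeta} back into play. The same proof with the stable and unstable roles exchanged gives $\alpha^-_s \circ \beta^+_u (u) = u$ for every $u \in \tilde\mu_u$.
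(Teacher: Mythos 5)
Your proof is correct, and it takes a different route from the paper's. The paper argues in one line: by Proposition \ref{pro:taumu} the maps $\alpha^-_s, \alpha^-_u, \beta^+_s, \beta^+_u$ restricted to the minimal sets are homeomorphisms, so $\alpha^-_u \circ \beta^+_s$ is a continuous self-map of $\tilde\mu_s$ agreeing with the identity on the dense set of type-S fixed points (Lemma \ref{le:propertyalphabeta}, Corollary \ref{minimal}), hence equal to it everywhere. Your argument substitutes strict monotonicity (injectivity from Corollary \ref{cor:alphabetamu2} combined with weak monotonicity, plus surjectivity from Proposition \ref{pro:taumu}) for continuity, and handles the only place where a pure one-sided density argument could stall --- a point $s_0$ that is the left endpoint of a gap of $\tilde\mu_s$ --- by invoking Proposition \ref{pro:periodicgap} to show that $s_0$ is itself a type-S $\pi_1(P)$-fixed point, bringing Lemma \ref{le:propertyalphabeta} directly back into play. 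The trade-off is that the paper's version is shorter but leans on the homeomorphism claim of Proposition \ref{pro:taumu}, whose proof there does not dwell on the continuity half; yours is a bit longer but entirely order-theoretic and surfaces the role of Proposition \ref{pro:periodicgap} at the level of this corollary rather than leaving it buried in the proof of the homeomorphism claim. Two minor remarks: the fact that a $\pi_1(P)$-fixed point lying in $\tilde\mu_s$ must be of type S is really the content of the two unnamed lemmas that immediately precede Corollary \ref{minimal} (its proof restates the equivalence but the statement does not), and in your Case B the symbol $f(s_0)$ should read $g(s_0)$ throughout after you possibly replace $f$ by $f^{-1}$; neither affects the correctness of the argument.
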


\begin{proof}{}
The equalities are true in dense subsets of $\tilde\mu_u$,
$\tilde\mu_s$ respectively by Lemma \ref{composition}.
The previous proposition shows that the maps $\alpha^-_s,
\alpha^-_u, \beta^+_s,\beta^+_u$ are all homeomorphisms,
so the result follows.
\end{proof}

\vskip .06in
Observe that $\alpha_s(\tilde{\mu}_s)$ in general
is not necessarily contained in $\tilde{\mu}_u$, hence Proposition \ref{pro:taumu}
is false if we replace $\alpha_s^-$ by $\alpha_s$. But in the sequel we will need to understand where $\alpha_s$ and $\alpha_s^-$ may differ.

\begin{lemma}\label{le:s<s}
Let $s$ be an element of $\tilde{\mu}_s$. We always have $\alpha_s^-(s) \leq \alpha_s(s)$, and if the strict inequality
$\alpha_s^-(s) < \alpha_s(s)$ holds, then $s$ is an element of $\tilde{\mu}_s$ with non-trivial $\pi_1(P)$-stabilizer.
Similarly, we have $\beta_s(s) = \beta^+_s(s)$ unless $s$ has a non-trivial stabilizer; and for every $u$ in $\tilde{\mu}_u$,
we have $\alpha_u^-(u) = \alpha_u(u)$ and $\beta_u(u) = \beta^+_u(u)$ unless $u$ has a non-trivial stabilizer.
\label{power}
\end{lemma}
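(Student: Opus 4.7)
The weak inequality $\alpha_s^-(s) \leq \alpha_s(s)$ is immediate from the monotonicity established in Lemma \ref{le:monotone}, since $\alpha_s^-(s)$ is by definition the left limit of a non-decreasing function and $\alpha_s(s)$ itself satisfies $\alpha_s(s) \geq \alpha_s(s-\epsilon)$ for every $\epsilon > 0$. The substantive content is the converse: whenever the inequality is strict, $s$ must carry a non-trivial $\pi_1(P)$-stabilizer.

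The plan is to argue by contradiction. Suppose $\alpha_s^-(s) < \alpha_s(s)$ with $s \in \tilde\mu_s$ having trivial stabilizer. According to Proposition \ref{pro:periodicgap} every gap of $\tilde\mu_s$ is a side of a generalized lozenge associated to a Birkhoff band in $\partial\hat P$, and its two endpoints are corners of that lozenge (hence of type $S$), and as recorded in Corollary \ref{minimal} and the discussion of ``true blue corners'' in Section \ref{sec:minimalfixed}, corners of generalized lozenges are fixed by non-trivial elements of $\pi_1(P)$. Hence a point in $\tilde\mu_s$ with trivial stabilizer cannot be the endpoint of any gap, so $s$ must be a two-sided accumulation point of $\tilde\mu_s$. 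In particular there exists a sequence $(s_n)$ in $\tilde\mu_s$ with $s_n > s$ and $s_n \to s$.

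I will then play two asymptotics against each other. On one hand, for every sufficiently small $\epsilon > 0$ we have $s_n - \epsilon > s$, so the monotonicity of $\alpha_s$ gives $\alpha_s(s_n-\epsilon) \geq \alpha_s(s)$; letting $\epsilon \to 0^+$ yields $\alpha_s^-(s_n) \geq \alpha_s(s) > \alpha_s^-(s)$. On the other hand, Proposition \ref{pro:taumu} together with Corollary \ref{cor:alphabetamu2} shows that $\alpha_s^-$ restricts to an order-preserving bijection $\tilde\mu_s \to \tilde\mu_u$; any weakly monotone bijection between two subsets of $\mathbb R$ is automatically a homeomorphism, so $\alpha_s^-(s_n) \to \alpha_s^-(s)$ in $\tilde\mu_u$. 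This directly contradicts the lower bound $\alpha_s^-(s_n) \geq \alpha_s(s) > \alpha_s^-(s)$, completing the argument. The companion statements for $\beta_s$ on $\tilde\mu_s$ and for $\alpha_u^-$, $\beta_u^+$ on $\tilde\mu_u$ follow by verbatim symmetric arguments, using the corresponding right-continuity / left-continuity properties together with the bijectivity of $\beta_s^+$, $\alpha_u^-$, $\beta_u^+$ from Proposition \ref{pro:taumu} and Corollary \ref{cor:alphabetamu2}. The only conceptual point that really needs care is the first step: ruling out that a point of $\tilde\mu_s$ with trivial stabilizer is an endpoint of a gap, which is precisely what makes Proposition \ref{pro:periodicgap} (and the resulting dictionary between gaps, generalized lozenges, and periodic orbits of $\Phi$) indispensable here.
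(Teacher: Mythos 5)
Your proof is correct and genuinely different from the paper's. The paper argues directly: assuming $\alpha_s^-(s) < \alpha_s(s)$, it uses Lemma \ref{interval} to pick a point $p$ in $s \cap \Omega_P$ whose unstable leaf $u$ has a neighborhood of leaves in $\mathcal A^s$ meeting it, and then shows that $\beta_u$ is constant (equal to $s$) on the open interval $]\alpha_s^-(s), \alpha_s(s)[$; Lemma \ref{le:alphabetamu} and Proposition \ref{periodic} then place that interval inside a periodic gap of $\tilde\mu_u$, and $\pi_1(P)$-equivariance of $\beta_u$ transports the non-trivial stabilizer of the gap to $s$. Your proof instead runs by contradiction and leans on the later-established bijectivity of $\alpha_s^-|_{\tilde\mu_s}:\tilde\mu_s \to \tilde\mu_u$ (Proposition \ref{pro:taumu} and Corollary \ref{cor:alphabetamu2}): a trivial-stabilizer point cannot bound a gap, so it is a two-sided accumulation point, and the jump $\alpha_s^-(s) < \alpha_s(s)$ propagates to a uniform lower bound $\alpha_s^-(s_n) \geq \alpha_s(s)$ for $s_n \downarrow s$, contradicting continuity of the monotone bijection. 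Since those two results appear before Lemma \ref{le:s<s} in the paper and do not depend on it, there is no circularity. Your route is cleaner logically but uses more machinery; the paper's route is more elementary and actually exhibits the group element stabilizing $s$, rather than deducing its existence by contradiction.

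One small but real imprecision: the claim that \emph{any} weakly monotone bijection between subsets of $\mathbb R$ is a homeomorphism is false as stated. For instance, take $A = \{0\} \cup \{1/n : n \geq 1\}$, $B = \{0\} \cup \{1 + 1/n : n \geq 1\}$, and $f(0)=0$, $f(1/n)=1+1/n$: this is a strictly increasing bijection that is discontinuous at $0$, precisely because $B$ is not closed. What you actually need (and all you use) is that a strictly increasing bijection onto a \emph{closed} subset of $\mathbb R$ is continuous; this holds because the limit $L$ of any monotone subsequence $f(s_{n_k})$ lies in the closed target, hence equals $f(w)$ for some $w$, which squeezes between $s$ and $s_{n_k}$ and forces $L=f(s)$. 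Since $\tilde\mu_s$ and $\tilde\mu_u$ are minimal invariant sets and hence closed, your application is correct, but you should state the hypothesis on closedness explicitly rather than assert the false general statement.
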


\begin{proof}
The inequality $\alpha_s^-(s) \leq \alpha_s(s)$ is obvious. Assume $\alpha_s^-(s) < \alpha_s(s)$.
Notice that $s \cap \Omega' \not = \emptyset$.
By Lemma \ref{interval} there is $p$ in $s$ and an
open interval $I$ in $\oou(p) \cap \Omega_P$ containing $p$
in the interior.
Let $u = \oou(p)$. It is an element of
$]\alpha_s(s), \beta_s(s)[$,
and by the above there is $s_0 < s$
such that for for every
element $s'$ of $]s_0, s[$ the point $(s',u)$ also lies in $\Omega'$.

Let now $u'$ be an element of $\mathcal A^u$ in the interval $]\alpha_s^-(s), \alpha_s(s)[$.
For $s' < s$, we have $\alpha_s(s') \leq \alpha_s^-(s) < u'$.
For $s'$ in $]s_0, s[$ we obtain $\alpha_s(s') < u' <  \beta_s(s')$,
the last inequality holding since $(s', u)$  lies in $\Omega'$.
Therefore,
for every $s'$ in $]s_0, s[$ and every $u'$ in
$]\alpha_s^-(s), \alpha_s(s)[$, the point $(s',u')$
lies in $\Omega'$. But $(s, u')$ is not in $\Omega'$
(since $\alpha_s(s) > u'$). It follows that $\beta_u(u') = s$ for every $u'$ in $]\alpha_s^-(s), \alpha_s(s)[$. The map $\beta_u$ is therefore
constant on $]\alpha_s^-(s), \alpha_s(s)[$. As in Lemma \ref{le:alphabetamu}, we obtain that $]\alpha_s^-(s), \alpha_s(s)[$ is contained in
a gap of $\tilde{\mu}_u$. By Proposition \ref{periodic}
this gap is periodic, and left invariant by some
$\gamma_0$ in $\pi_1(P) - id$, hence $s$, which is the image of  $]\alpha_s^-(s), \alpha_s(s)[$ by $\beta_u$,
also is invariant by $\gamma_0$, and
has a non-trivial stabilizer.
\end{proof}

\begin{lemma}\label{le:tauk}
There is an integer $k>0$ such that $\tilde{\tau}_s^k$ and $h$ coincide on $\tilde{\mu}_s$ and
such that $\tilde{\tau}_u^k$ and $h$ coincide on $\tilde{\mu}_u$.
\label{power}
\end{lemma}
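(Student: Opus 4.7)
The plan is to establish first that the equation $\tilde{\tau}_s^k(s) = h(s)$ holds at a single periodic point, then propagate it to all of $\tilde{\mu}_s$ by combining equivariance with the density of periodic points, and finally transfer the result to $\tilde{\mu}_u$ through the conjugacy $\beta_u^+$. The analogous statement for $\tilde{\tau}_u$ will require no separate machinery.

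First I would check the commutation properties. Since $h$ is in the pseudo-center of $\pi_1(P)$ and is order-preserving on both $\mathcal A^s$ and $\mathcal A^u$, the maps $\alpha_s, \alpha_u, \beta_s, \beta_u$ together with their $\pm$-variants all commute with $h$; hence $\tilde{\tau}_s$ and $\tilde{\tau}_u$ both commute with $h$. More generally, for any orientation-preserving element $\gamma$ of $\pi_1(P)$ on $\mathcal A^s$ we have $\tilde{\tau}_s \gamma = \gamma \tilde{\tau}_s$ on $\tilde{\mu}_s$ (the orientation-reversing case yields $\tilde{\tau}_s\gamma = \gamma \tilde{\tau}_s^{-1}$). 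Now pick any periodic point $s_0$ in $\tilde{\mu}_s$ fixed by some $\gamma_0 \in \pi_1(P) - \{id\}$ with $\gamma_0 h = h\gamma_0$. By the analysis of section \ref{sec:minimalfixed} and Proposition \ref{pro:periodicgap}, the $\gamma_0$-fixed points in $\tilde{\mu}_s$ form a bi-infinite increasing sequence $\cdots < s_{-1} < s_0 < s_1 < \cdots$, and Lemma \ref{le:subeta} together with the description of the generalized lozenge chain shows that $\tilde{\tau}_s(s_i) = s_{i+1}$. The chain connecting $\tilde\theta_0 = (s_0, u_0)$ to $h(\tilde\theta_0)$ contains finitely many generalized lozenges; denote this number by $k_0$. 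Then $\tilde{\tau}_s^{k_0}(s_0) = h(s_0)$, which gives the equation at one point.

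Next I would upgrade this to an equality on all of $\tilde{\mu}_s$. Set $E = \{s \in \tilde{\mu}_s \mid \tilde{\tau}_s^{k_0}(s) = h(s)\}$. This set is closed (both sides are continuous), it is $h$-invariant (since $h\tilde{\tau}_s^{k_0}(s) = \tilde{\tau}_s^{k_0}h(s)$), and it is invariant under every orientation-preserving element $\gamma \in \pi_1(P)$ by the commutation in Step 1 together with the centrality of $h$. In particular $E$ contains the closure of the orbit of $s_0$ under the orientation-preserving subgroup $\pi_1(P)_+$. If $\pi_1(P) = \pi_1(P)_+$ (the generic case), minimality of the $\pi_1(P)$-action on $\tilde{\mu}_s$ already forces $E = \tilde{\mu}_s$. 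Otherwise, let $\gamma$ be orientation-reversing: then $\overline{\pi_1(P)_+ \cdot s_0} \cup \gamma \overline{\pi_1(P)_+ \cdot s_0} = \tilde{\mu}_s$. The key observation for points $s$ on the second half is that a direct computation using $\tilde{\tau}_s \gamma = \gamma \tilde{\tau}_s^{-1}$ gives $\tilde{\tau}_s^{k_0}(s) = h^{-1}(s)$ there; since $h^2 \neq id$ on $\tilde{\mu}_s$, the two halves are disjoint, and we deduce the partition $\tilde{\mu}_s = E \sqcup E'$ into clopen sets with $E'$ defined by $\tilde{\tau}_s^{k_0}(s) = h^{-1}(s)$. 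To exclude this configuration I would exploit the periodic structure: for any periodic point $s'$ in $E'$, applying the chain-of-lozenges analysis of Step 2 to $s'$ produces a positive integer $k'$ with $\tilde{\tau}_s^{k'}(s') = h(s')$, contradicting $\tilde{\tau}_s^{k_0}(s') = h^{-1}(s')$ unless $k' + k_0 \equiv 0$ in a sense incompatible with order-preservation. Hence $E' = \emptyset$ and $\tilde{\tau}_s^{k_0} = h$ on all of $\tilde{\mu}_s$. The main obstacle here is precisely ruling out this orientation-reversing partition; one clean alternative is to replace $\pi_1(P)$ by an index-two subgroup (replacing $k_0$ by its double if necessary) where all elements act orientation-preservingly on $\mathcal A^s$, reducing to the easy case.

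Finally, the statement for $\tilde{\tau}_u$ follows from the identity $\tilde{\tau}_s \circ \beta_u^+ = \beta_u^+ \circ \beta_s^+ \circ \beta_u^+ = \beta_u^+ \circ \tilde{\tau}_u$ on $\tilde{\mu}_u$, combined with $\beta_u^+ \circ h = h \circ \beta_u^+$. By Proposition \ref{pro:taumu}, $\beta_u^+: \tilde{\mu}_u \to \tilde{\mu}_s$ is a bijective order-preserving map, so conjugating the relation $\tilde{\tau}_s^k = h$ by $(\beta_u^+)^{-1}$ yields $\tilde{\tau}_u^k = h$ on $\tilde{\mu}_u$ with the same integer $k$. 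I expect the principal technical work, and the only real obstacle, to be the constancy argument in the second paragraph: showing that the integer $k_0$ associated to one periodic point (equivalently, to one boundary Birkhoff torus of $\hat P$) does not depend on the choice of periodic point or boundary torus.
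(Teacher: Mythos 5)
Your overall plan matches the paper's: establish the relation at a periodic point via the chain of generalized lozenges, then propagate by minimality. The key difference from the paper's proof is how the unstable side is handled. The paper obtains \emph{both} relations simultaneously and with the \emph{same} integer $k$ directly from the chain from $(s,u)$ to $(h(s),h(u))$, because each step in the chain applies $\tilde\tau_s$ to the $s$-coordinate and $\tilde\tau_u$ to the $u$-coordinate at the same time (Lemma \ref{le:subeta}). You instead derive the $\tilde\mu_u$ statement afterwards by conjugating the established relation $\tilde\tau_s^k = h$ on $\tilde\mu_s$ through $\beta_u^+$, using $\tilde\tau_s \circ \beta_u^+ = \beta_u^+ \circ \tilde\tau_u$ together with $h$-equivariance of $\beta_u^+$ and its bijectivity from Proposition \ref{pro:taumu}. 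This transfer argument is valid and is a clean alternative; it has the small advantage of making the equality of the two exponents completely formal. Your propagation argument (the set $E$ being closed, nonempty, $\pi_1(P)$-invariant, hence equal to $\tilde\mu_s$ by minimality) is also essentially what the paper invokes when it says the integer $k(s)$ is locally constant and $\pi_1(P)$-invariant, hence constant.

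However, there is a sign error in your treatment of orientation-reversing elements that sends you down an unnecessary detour. For $\gamma$ reversing the orientation of $\mathcal A^s$ you correctly record $\tilde\tau_s\gamma = \gamma\tilde\tau_s^{-1}$, but you overlook that $h$ also \emph{anti}-commutes with such a $\gamma$: since $h$ moves every point of $\mathcal A^s$ up and $\gamma$ reverses order, $\gamma h\gamma^{-1}$ moves points down, so $\gamma h\gamma^{-1} = h^{-1}$, i.e.\ $h\gamma = \gamma h^{-1}$. The two sign changes cancel: for $s_0$ with $\tilde\tau_s^{k_0}(s_0)=h(s_0)$ (equivalently $\tilde\tau_s^{-k_0}(s_0)=h^{-1}(s_0)$, using that $\tilde\tau_s$ and $h$ commute),
$$
\tilde\tau_s^{k_0}\bigl(\gamma(s_0)\bigr) = \gamma\,\tilde\tau_s^{-k_0}(s_0) = \gamma\bigl(h^{-1}(s_0)\bigr) = h\bigl(\gamma(s_0)\bigr),
$$
so $\gamma(s_0)\in E$. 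Thus $E$ is invariant under \emph{all} of $\pi_1(P)$, and minimality gives $E = \tilde\mu_s$ at once; no splitting into $E$ and $E'$, no hand-waving about ``$k'+k_0\equiv 0$,'' and no doubling of $k_0$ is needed. (The doubling fallback you offer would in any case prove $\tilde\tau_s^{2k_0}=h^2$, not the stated relation, so it does not in fact repair the worried-about gap.) Once the sign is corrected, your proof is complete and sound, and your worry that the constancy of $k_0$ is the ``only real obstacle'' dissolves: it is precisely what minimality of $\tilde\mu_s$ under the full group $\pi_1(P)$ settles.
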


\begin{proof}{}
Recall that $\tilde\tau_s = \beta^+_u \circ \beta^+_s$
and $\tilde\tau_u = \beta^+_s \circ \beta^+_u$.
Let $(s,u)$ be an element of $(\tilde{\mu}_s \times \tilde{\mu}_u) \cap \Omega'$ fixed by a non-trivial element $\gamma_0$ of $\pi_1(P)$.
Then $(h(s), h(u))$ is also a fixed point of $\gamma_0$ in $(\tilde{\mu}_s \times \tilde{\mu}_u) \cap \Omega$. There is a sequence of generalized lozenges
connecting $(s,u)$ to $(h(s), h(u))$. Moreover, if $s$ is an attracting fixed point, $h(s)$ is attracting too. It follows that there is an integer
$k(s)$ such that $h(s) = \tilde{\tau}_s^{k(s)}(s)$ and $h(u)=\tilde{\tau}_u^{k(s)}(u)$. Moreover, since $h^{-1}\circ \tilde{\tau}^\ell_s$ is continuous on $\tilde{\mu}_s$
for every integer $\ell$, the map $s \mapsto k(s)$ is locally constant on $s \in \tilde{\mu}_s$,
hence constant since $\pi_1(P)$-invariant.
\end{proof}

Here are a couple of remarks:

1) Suppose that $\gamma_0$ in $\pi_1(P)$  is associated
with a boundary
periodic in a Birkhoff torus.
Consider the collection $\{ x_i \},
\ i \in {\mathbb Z}$
of type S fixed points of $\gamma_0$ in $\mathcal A^s$.
Notice that this set is invariant
by $h$, hence bi-infinite.
The intervals $]x_i,x_{i+1}[$ are associated with
the lifts of the Birkhoff annuli and can be entering
or exiting, alternatively. Hence, by Lemma \ref{entrance},
 up to reindexing,
the intervals $]x_{2i},x_{2i+1}[$ are all complementary
components of $\tilde\mu_s$ and the intervals
$]x_{2i-1},x_{2i}[$ all intersect $\tilde\mu_s$.

2) If $\mathcal C$ is the chain of lozenges associated
with $\gamma_0$ and $\mathcal D$ is any other chain
of lozenges invariant by some ${\mathbb Z}^2 < \pi_1(P)$,
then $\mathcal D$ and $\mathcal C$ intersect each
other a lot: for every lozenge $C_i$ in $\mathcal C$,
then it intersects one at least one lozenge in $\mathcal
D$ and vice versa.

\begin{proposition}\label{pro:top}
Let  $\top$ be the map from $\widetilde U = \{ (x,y) \in \tilde{\mu}_s \times \tilde{\mu}_s \;|\; x < y < \tilde{\tau}_s(x)\}$
into $\mathcal A^s \times \mathcal A^u$ defined by:
$$\top(x,y) := (x, \alpha^-_s(y))$$
Then, $\top$ is a homeomorphism, with image
$(\tilde{\mu}_s \times \tilde{\mu}_u) \cap \Omega'$.
\end{proposition}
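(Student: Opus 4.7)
My plan is to establish the four standard properties (well-definedness, injectivity, surjectivity, and bicontinuity) of $\top$ by reducing each one to the identity $\alpha_s^- \circ \beta_u^+ = \mathrm{id}$ on $\tilde{\mu}_s$ (from Corollary \ref{double}) together with the homeomorphism statements from Proposition \ref{pro:taumu}. The key algebraic observation is that on $\tilde{\mu}_s$ one has $\tilde{\tau}_s = \beta_u^+ \circ \beta_s^+$, so applying $\alpha_s^-$ and using Corollary \ref{double} yields $\alpha_s^-(\tilde{\tau}_s(x)) = \beta_s^+(x)$. Since $\alpha_s^-$ is a strictly monotone homeomorphism $\tilde{\mu}_s \to \tilde{\mu}_u$ by Proposition \ref{pro:taumu}, the strict inequality $x < y < \tilde{\tau}_s(x)$ in $\tilde{\mu}_s$ translates into $\alpha_s^-(x) \le \alpha_s^-(y) < \beta_s^+(x)$ in $\tilde{\mu}_u$. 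This bracketing is exactly what will characterize membership in $\Omega'$ for points of $\tilde{\mu}_s \times \tilde{\mu}_u$.

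For injectivity, if $\top(x,y) = \top(x',y')$ then $x = x'$ at once, and the equality $\alpha_s^-(y) = \alpha_s^-(y')$ forces $y = y'$ by Corollary \ref{cor:alphabetamu2}. For surjectivity, given $(s,u) \in (\tilde{\mu}_s \times \tilde{\mu}_u) \cap \Omega'$, Proposition \ref{pro:taumu} gives $y := \beta_u^+(u) \in \tilde{\mu}_s$, and Corollary \ref{double} shows that $\alpha_s^-(y) = u$, so $\top(s,y) = (s,u)$. The content lies in verifying that $(s,y) \in \widetilde{U}$, namely $s < y < \tilde{\tau}_s(s)$: these inequalities are obtained by applying the strictly increasing map $\beta_u^+$ to the bracketing $\alpha_s^-(s) < u < \beta_s^+(s)$ encoding $(s,u) \in \Omega'$, using the inverse identity $\beta_u^+ \circ \alpha_s^- = \mathrm{id}$ on $\tilde{\mu}_s$. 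For bicontinuity, $\top$ is visibly continuous on $\widetilde{U}$ (as $\alpha_s^-$ restricted to $\tilde{\mu}_s$ is a homeomorphism onto $\tilde{\mu}_u$ by Proposition \ref{pro:taumu}), and the inverse map $(s,u) \mapsto (s,\beta_u^+(u))$ is equally continuous by the same proposition.

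The main obstacle is the precise characterization I use implicitly in the arguments above: that for $(s,u) \in \tilde{\mu}_s \times \tilde{\mu}_u$, membership in $\Omega'$ is equivalent to the bracketing $\alpha_s^-(s) \le u \le \beta_s^+(s)$ \emph{with strict inequalities whenever $y \neq s$ and $y \neq \tilde{\tau}_s(s)$}. For this I must rule out the degenerate cases at the two endpoints. If $u = \alpha_s^-(s) < \alpha_s(s)$, then by Lemma \ref{le:s<s} the leaf $s$ has nontrivial stabilizer and the discrepancy is produced by a gap of $\tilde{\mu}_u$; by Proposition \ref{pro:periodicgap} this gap is the unstable side of a generalized lozenge, and no point $(s,u)$ with $u$ in this gap lies in $\Omega'$, so in fact $u = \alpha_s(s)$, forcing $u \in \tilde{\mu}_u$ to coincide with $\alpha_s^-(s)$ only when both equal $\alpha_s(s)$; the corresponding singular orbit (Lemma \ref{le:boundaryleaf}) is then handled by noting $y = \beta_u^+(u) > s$ by the strict monotonicity on $\tilde{\mu}_s$. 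The analogous analysis for the upper endpoint $u = \beta_s^+(s)$ gives $y = \tilde{\tau}_s(s)$, which is again excluded from $\widetilde{U}$ but also from this boundary case of $\Omega'$ after applying the symmetric argument. Thus the strict bracketing matches exactly the strict inequalities defining $\widetilde{U}$, which is the heart of the proof.
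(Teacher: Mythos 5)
Your overall strategy is sound and in spirit similar to the paper's converse direction: reduce everything to the maps $\alpha^-_s$, $\beta^+_u$ and the identities $\alpha^-_s\circ\beta^+_u=\mathrm{id}$ on $\tilde\mu_u$, $\beta^+_u\circ\alpha^-_s=\mathrm{id}$ on $\tilde\mu_s$, and to the ``bracketing'' characterization of $(\tilde\mu_s\times\tilde\mu_u)\cap\Omega'$. Injectivity and continuity are correctly disposed of. However, the bracketing characterization is not a small lemma to be quoted: it \emph{is} the content of the proposition (conjugate it by $\beta^+_u$ and you get the statement back), and the paper's proof spends essentially all of its effort establishing it. Your paragraph purporting to supply it contains both a logical muddle and a concrete false step, so there is a genuine gap.

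The concrete error: you write that the degenerate endpoint case $u=\alpha^-_s(s)=\alpha_s(s)$ is ``handled by noting $y=\beta^+_u(u)>s$ by the strict monotonicity on $\tilde\mu_s$.'' But $\beta^+_u(\alpha^-_s(s))=s$ by the inverse identity you yourself invoke elsewhere, so $y=s$, which lies \emph{outside} $\widetilde U$. Strict monotonicity gives you nothing here because you are not comparing two distinct points. If such a pair $(s,u)$ with $u=\alpha^-_s(s)\in I^u(s)$ were in the target set, your surjectivity argument would produce $(s,s)\notin\widetilde U$, and the forward image $\top(\widetilde U)$ would visibly miss it; this case must be \emph{excluded}, not ``handled.'' The paragraph also contains the self-contradictory chain ``if $u=\alpha^-_s(s)<\alpha_s(s)$\ldots so in fact $u=\alpha_s(s)$'' --- the correct and elementary observation in that subcase is simply $u<\alpha_s(s)\le$ every element of $I^u(s)$, so $(s,u)\notin\Omega'$ outright; no appeal to Proposition~\ref{pro:periodicgap} is needed, and no conclusion $u=\alpha_s(s)$ follows.

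The deeper missing piece is the forward inclusion when $x$ has non-trivial stabilizer. Your bracketing gives $\alpha^-_s(x)<\alpha^-_s(y)<\beta^+_s(x)$, and the gap argument then yields only $\alpha_s(x)\le\alpha^-_s(y)\le\beta_s(x)$: when $\alpha^-_s(x)<\alpha_s(x)$ the interval $]\alpha^-_s(x),\alpha_s(x)[$ is a gap of $\tilde\mu_u$, so your strict inequality $\alpha^-_s(x)<\alpha^-_s(y)$ does not force $\alpha_s(x)<\alpha^-_s(y)$. The paper's Case~1 closes exactly this gap by a non-trivial argument: it produces a translate $\gamma x$ in $]x,y[$ (using that $]x,y[$ cannot be a gap of $\tilde\mu_s$ because $y$ is not the next type-S fixed point), and then uses the way the generalized lozenge $\gamma(W)$ crosses $R$ to obtain the strict inequality $\alpha^-_s(y)>\alpha_s(x)$. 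Nothing in your proposal replaces that step, and without it you cannot conclude that $\top(x,y)$ actually lies in $\Omega'$ rather than merely in $\overline{\Omega'}$. In summary: the reduction is correct, the auxiliary identities are used correctly, but the heart of the proof --- the strict bracketing, equivalently the exact determination of which corner points of $\Omega'$ belong to $\tilde\mu_s\times\tilde\mu_u$ --- is asserted, attempted with an incorrect computation at the critical endpoint, and left unproved in the periodic case where the paper does real work.
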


\begin{proof}
In order to prove the Proposition, we just have to prove
that the image of $\top(\widetilde U)$ is precisely
$(\tilde{\mu}_s \times \tilde{\mu}_u) \cap \Omega'$.
This is because all maps are homeomorphisms.

Let $(x,y)$ be an element of $\widetilde U$. The image $\top(x,y) = (x, \alpha^-_s(y))$ lies in $\tilde{\mu}_s \times \tilde{\mu}_u$.
We have to show that this image is in $\Omega'$.

\vskip .1in
\noindent
{\bf {Case 1:}} \
Consider first the case where $x$ is fixed by a non-trivial
element $\gamma_0$ of $\pi_1(P)$.

Then, there is a unique element $u$ of $\tilde{\mu}_u$ fixed
by $\gamma_0$ and such that $(s,u)$ lies in $\Omega'$. The point $(x,u)$ is the lower left corner of some generalized lozenge $R$. The
upper right corner
of $R$ is $(\beta^+_u(u), \beta_s^+(x))$. There is another generalized lozenge $R'$ with corners

$$(\beta^+_u(u), \beta_s^+(x)) \ \ \ {\rm and} \ \ \
(\beta^+_u(\beta_s^+(x)), \beta_s^+(\beta^+_u(u)))
 \ = \ (\tilde{\tau}_s(x), \tilde{\tau}_u(u)).$$

\noindent
Finally there is a generalized lozenge $W$ with corners
$(\alpha^-_u(u),\alpha^-_s(x))$ and $(x,u)$.
We refer to
figure \ref{fig:corners} in this proof.
Let $\mathcal C$ be the chain of generalized lozenges
(each a union
of adjacent lozenges) through $(x,u)$. It contains
$R,R'$ and $W$.
The only $\gamma_0$ fixed points of type S
in $\mathcal A^s$ in
$[x,\tilde\tau_s(x)]$ are the endpoints and $\beta^+_u(u)$.
Suppose first that $y$ is in $]x,\beta^+_u(u)]$.
If $y = \beta^+_u(u)$ then $\alpha^-_s(y) = u$ by Lemma
\ref{composition}, and hence $(x,\alpha^-_s(y))$ is in
$\Omega'$. Otherwise $y$ is in $]x,\beta^+_u(u)[$.
Clearly

$$\alpha_s(x) \leq \alpha^-_s(y) \leq
\alpha^-_s(\beta^+_u(u)) = u.$$

\noindent
So to have $\alpha^-_s(y)$ intersect $x$ we only
need to rule out $\alpha^-_s(y) = \alpha_s(x)$ and
not intersecting $x$.
Since $y$ is in $]x,\beta^+_u(u)[$,
$\tilde\mu_s$ intersects this interval.
In addition $]x,y[$ cannot be a gap of $\tilde\mu_s$,
as $y$ is not a $\gamma_0$ fixed point of type S.
In particular
there is a translate $\gamma x$ in $]x,y[$,
where we can assume $\gamma$ preserves orientation
in $\mathcal A^s, \mathcal A^u$.
Hence  the generalized lozenge $\gamma(W)$ in
$\gamma(\mathcal C)$ intersects the generalized
lozenge $R$ crossing it vertically:
the interior of $\gamma(W)$ intersects parts of the unstable
sides of $R$ and the closure of $\gamma(W)$  does not intersect
the stable sides of $R$.
The lower left corner of $\gamma(W)$ is
$(\gamma(\alpha^-_u(u),\alpha^-_s(x))$.
The next generalized lozenge in $\gamma(\mathcal C)$ in
the negative direction has to intersect $W$ horizontally
and hence intersects $s$.
This implies that

$$\alpha^-_s(y) \geq \alpha^-_s(\gamma(\alpha^-_u(u)))
\ > \alpha_s(s)$$

\noindent
so $\alpha^-_s(y)$ intersects $s$,
which is what we wanted to prove in this subcase.

Suppose now that $y$ is in $]\beta^+_u(u),\tilde\tau_s(x)[$.
Here $\alpha^-_s(y) \geq \alpha^-_s(\beta^+_u(u)) = u$,
so now we need to show that $\alpha^-_s(y) < \beta_s(s)$.
By arguments similar to the subcase we just finished,
there is $\gamma x$ in the interval
$]y,\tilde\tau_s(x)[$. Then $\gamma(W)$ ($W$ as above)
intersects $S$ horizontally, so
$\alpha^-_s(\gamma(x)) < \beta_s(x)$.
Since $\alpha^-_s(y) < \alpha^-_s(\gamma(x))$,
it follows that $\top(x,y) = (x, \alpha^-_s(y))$ lies in $\Omega'$
This finishes the analysis in Case 1.

\begin{figure}
  \centering
   \includegraphics[scale=0.8]{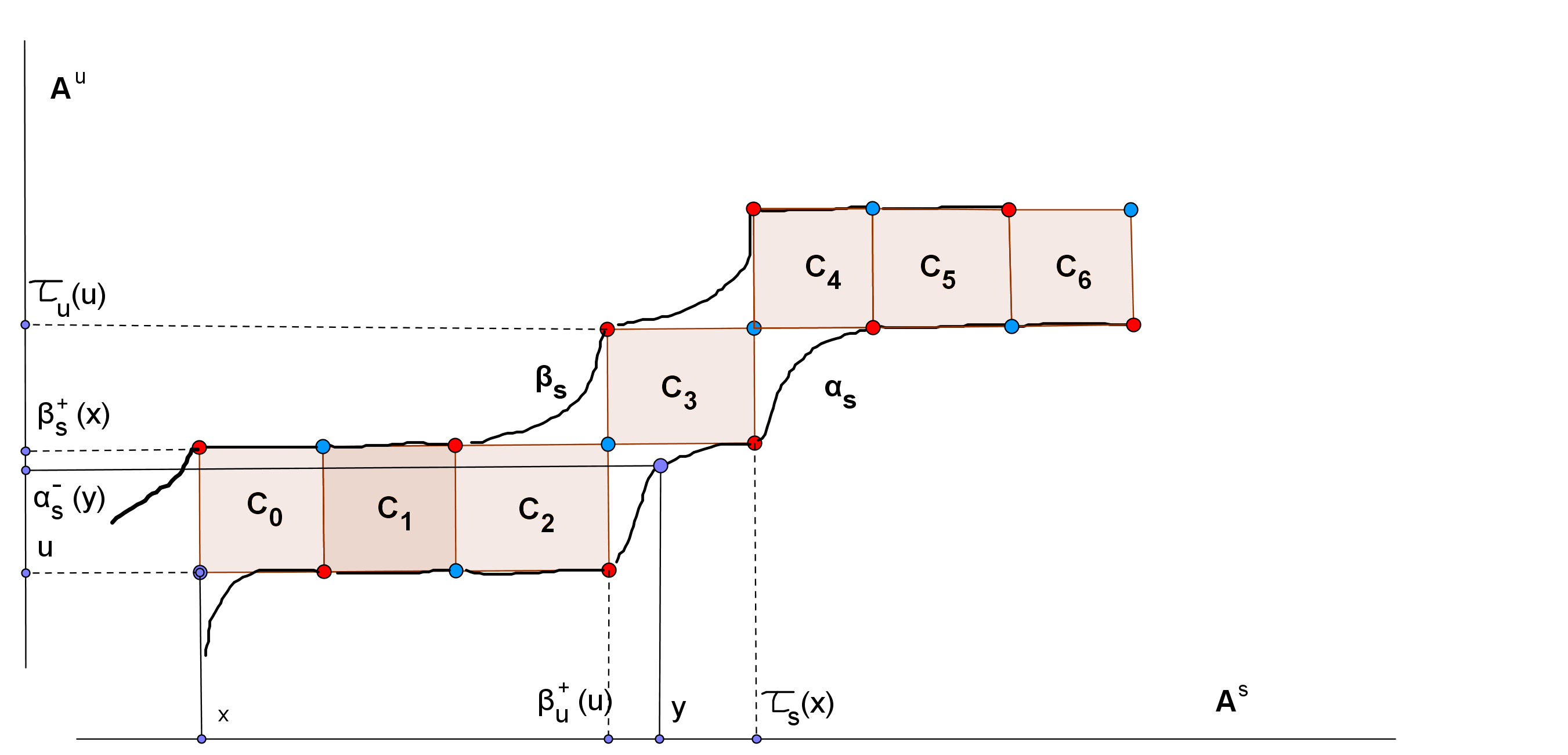}
\caption{The case where $x$ has a non-trivial stabilizer.}
\label{fig:corners}
\end{figure}

\vskip .08in
\noindent
{\bf {Case 2:}} \
From now on, we assume that $x$ has a trivial $\pi_1(P)$-stabilizer.

Since $x <y$, we have $\alpha_s(x) \leq \alpha_s^-(y)$. Moreover, if we have equality $\alpha_s(x) = \alpha_s^-(y)$, then $\alpha_s$
is constant on $]x,y[$. It follows (Lemma \ref{le:alphabetamu}) that $]x,y[$ is a gap of $\tilde{\mu}_s$,
because both endpoints are in
$\tilde\mu_s$. According to the important
Proposition \ref{pro:periodicgap},
$x$ has then a non-trivial $\pi_1(P)$-stabilizer, that we have excluded. Therefore:
$$\alpha_s(x) < \alpha_s^-(y)$$
By hypothesis, we have $y < \tilde{\tau}_s(x) = \beta^+_u(\beta^+_s(x))$. It implies $\alpha_s^-(y) \leq  \alpha_s^-(\beta^+_u(\beta^+_s(x)))$
By Lemma \ref{le:propertyalphabeta} we obtain $\alpha_s^-(y) \leq \beta^+_s(x)$.

We claim that this inequality is strict. Indeed, assume by a way of contradiction that the equality $\alpha_s^-(y) = \beta^+_s(x)$ holds.
Then $\alpha^-_s(z) = \beta^+_s(x) = \alpha^-_s(y)$
for any $z$ in $]y, \beta^+_u(\beta^+_s(x))[$. As we
have already observed several times, since $\alpha_s^-$ is then constant on $]y, \beta^+_u(\beta^+_s(x))[$, this segment is
contained in a periodic gap of $\tilde{\mu}_s$, preserved by a non-trivial element $\gamma_0$ of $\pi_1(P)$. But
$y$ and $\beta^+_u(\beta^+_s(x))$ are both elements of $\tilde{\mu}_s$,
hence they are both fixed points of $\gamma_0$.
We conclude that $(y, \beta^+_s(x))$ is a corner of a generalized lozenge $R$. But then we obtain $\alpha_s^-(y) < \beta^+_s(x)$: contradiction.

Therefore, as claimed, we have $\alpha_s^-(y) < \beta^+_s(x)$. According to Lemma \ref{le:s<s}, and since the $\pi_1(P)$-stabilizer
of $x$ is trivial, $\beta_s(x) = \beta^+_s(x)$: hence $\alpha_s(x) < \alpha^-_s(y) < \beta_s(x)$. These inequalities mean
that $\top(x,y) = (x, \alpha^-_s(y))$ lies in $\Omega'$: we have proved $\top(\widetilde U) \subset
(\tilde{\mu}_s \times \tilde{\mu}_u) \cap \Omega'$.

\vskip .05in
Conversely, let $(x,z)$ be an element of
$(\tilde{\mu}_s \times \tilde{\mu}_u) \cap \Omega'$. According to Proposition \ref{pro:taumu},
$z = \alpha_s^-(y)$ for some element $y$ of $\tilde{\mu}_s$. Clearly, since $(x, \alpha_s^-(y))$ lies in $\Omega'$, we have $y > x$.
Assume that $y \geq \tilde{\tau}_s(x)$: then by Corollary
\ref{double}

$$\alpha_s^-(y) \ \geq \ \alpha_s^-(\beta_u^+ \circ \beta_s^+(x))
\ \geq \ \beta_s^+(x).$$

\noindent
But this is impossible since $(x, \alpha^-_s(y)) = (x,z)$
is in $\Omega'$. Therefore, $y < \tilde{\tau}_s(x)$: $(x,y)$ is
an element of $\widetilde U$ such that $\top(x,y) = (x,z)$.
This finishes the proof of the proposition.
\end{proof}


\section{Proof of the Main theorem}
\label{conclusion}

We have now all the ingredients needed in the proof of the main theorem,
which we state in more precise terms:

\begin{theorem}
Let $(M, \Phi)$ be a pseudo-Anosov flow
in a closed $3$-manifold.
Let $P$ be a free Seifert piece in $M$. Assume that $P$ is not elementary, i.e. that $\pi_1(P)$ does
not contain a free abelian group of finite index. Then, in the intermediate cover $M_P$ associated to $\pi_1(P)$ there is a compact submanifold $\hat P$
bounded by embedded Birkhoff tori, such that the restriction of the lifted
flow $\hat \Phi$ to $\hat P$ is orbitally equivalent to a hyperbolic
blow up $(P_\Gamma(\Phi), \Phi)$ of a
geodesic flow associated to a convex cocompact subgroup $\Gamma \subset \widetilde{\mbox{PGL}}(2, \mathbb R)$ isomorphic to $\pi_1(P).$ More precisely,
this orbital equivalence maps the restricted foliations $\hat \Lambda^s$ and $\hat \Lambda^u$ to the restricted foliations
$\hat \Lambda^s_\Gamma$ and $\hat \Lambda^u_\Gamma$ (see Definition \ref{def:hyperbolicblowup}). Moreover, $\hat P$ is almost unique up to isotopy along the lifted flow $\hat \Phi$:
if  $\hat P'$ is another compact submanifold bounded by embedded Birkhoff tori, and if $\hat P_*$, $\hat P'_*$ are the complements in $\hat P$, $\hat P'$ of the (finitely many) periodic orbits contained in $\partial \hat P$,
there is a map $t: \hat P_* \to \mathbb R$ such that the map from $\hat P_*$ into $M_P$ mapping $x$ on $\hat{\Phi}^{t(x)}(x)$ is a homeomorphism, with image $\hat P'_*$.
\label{main}
\end{theorem}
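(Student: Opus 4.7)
The plan is to use the structural results of Sections \ref{leaforbit} and \ref{sec:blowing} to package the dynamics of $\hat\Phi$ on $\hat P$ as a hyperbolic blow up in the sense of Definition \ref{def:hyperbolicblowup}, and then to transport this back via a Haefliger-type orbit-space rigidity argument to obtain the claimed orbital equivalence.

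First I would define $\rho_1, \rho_2: \Gamma \to \mbox{Homeo}(\mathbb R)$ by letting $\rho_1$ be the action of $\pi_1(P)$ on $\mathcal A^s \cong \mathbb R$, and $\rho_2$ the corresponding action on $\mathcal A^u$, twisted on orientation-reversing elements exactly as $\rho_0^\ast$ was twisted in Subsection \ref{sub:consorbit}. The closed perfect minimal sets are $\tilde\mu_s$ and $\tilde\mu_u$, and by Lemma \ref{le:tauk} the homeomorphism $\tilde\tau_s$ has order $k$ modulo $h$, so the induced actions $\bar\rho_1, \bar\rho_2$ on the circles $\mathcal A^{s,u}/\langle h\rangle$ are $(\mu,\tau,\sigma)$-representations of $\bar\Gamma$. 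Next I would show that $\bar\rho_1, \bar\rho_2$ are hyperbolic blow ups of Fuchsian representations. Collapsing every gap of $\tilde\mu_{s,u}$ to a point yields quotient representations $\bar\rho_1^{\flat}, \bar\rho_2^{\flat}$; by Proposition \ref{pro:periodicgap} every gap is periodic, and the collapsing procedure is a well-defined semi-conjugacy in the sense of Remark \ref{rk:semiconj}. I would then verify the four hypotheses of Theorem \ref{thm:caracteriseconvergencegroup} for the collapsed actions: the absence of gaps is built in; stabilizers are trivial or cyclic by the analysis of periodic orbits in Section \ref{leaforbit}; non-trivial elements have at most $2k$ fixed points which are hyperbolic, a consequence of Corollary \ref{minimal} combined with the hyperbolicity of periodic orbits of $\Phi$; and discreteness of $\bar\Gamma_0$-orbits of fixed-point pairs follows from the proper discontinuity of $\pi_1(P)$ on $\Omega_P$ inherited from the manifold structure of $M_P$. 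This identifies the quotients as Fuchsian, so $\rho_1, \rho_2$ are hyperbolic blow ups of their associated Fuchsian representations $\rho_0, \rho_0^\ast: \Gamma \to \widetilde{\mbox{PGL}}(2,\mathbb R)$.

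With this at hand I would carry out the construction of Subsection \ref{sub:consorbit} with these $\rho_1, \rho_2$, producing a domain $\Omega \subset \mathbb R^2$ between the graphs of two non-decreasing maps, a model flow $(M_\Gamma(\Omega), \Psi)$, and a distinguished compact Seifert sub-piece $P_\Gamma(\Omega)$ with boundary Birkhoff tori. The key identification step, and technically the main obstacle, is to check that $\Omega$ coincides with $\Omega'$ as a $\Gamma$-space, with matching graph functions $\alpha_s, \beta_s$ and with the same decomposition into generalized lozenges; the entrance/exit characterization of Lemma \ref{entrance} must match the corresponding characterization of Subsection \ref{sub:cutseifert}. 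Once this is granted, Proposition \ref{pro:top} and the compatibility of the minimal sets $\tilde\mu_s, \tilde\mu_u$ in both constructions yield a canonical $\Gamma$-equivariant homeomorphism between the orbit spaces of $\hat\Phi|_{\widetilde{\hat P}}$ and $\Psi|_{\widetilde{P_\Gamma(\Omega)}}$, preserving stable and unstable foliations. By the Haefliger principle, together with the local matching of foliations near each tangent periodic orbit (the orbits in $\partial \hat P$ have the same prong indices as the corresponding orbits in $\partial P_\Gamma(\Omega)$ by construction), this promotes to an actual orbital equivalence $\hat P \to P_\Gamma(\Omega)$ preserving the weak stable and unstable foliations, as in the gluing argument at the end of Remark \ref{rk:isotopybirkhofftori}.

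Finally, for almost-uniqueness, I would apply the construction of Remark \ref{rk:isotopybirkhofftori} directly in the intermediate cover $M_P$: given two compact submanifolds $\hat P, \hat P'$ bounded by embedded Birkhoff tori, the associated chains of lozenges for corresponding boundary tori coincide because they are intrinsically determined by the orbit-space data $\Omega_P$, and the tangent periodic orbits have the same prong indices. The remark then produces orbital equivalences on tubular neighborhoods $W, W'$ of $\partial \hat P, \partial \hat P'$ preserving the weak foliations; extending by the identity on $\hat P \cap \hat P'$ (which has the same orbit space as both sides) and averaging along orbits in the overlap yields the required continuous function $t: \hat P_\ast \to \mathbb R$. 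As emphasized in Remark \ref{rk:isotopybirkhofftori}, the impossibility of extending $t$ to the tangent periodic orbits is real and accounts for the word \emph{almost} in the statement.
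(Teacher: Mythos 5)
Your overall architecture is close to the paper's: pass to the circle actions induced on $\mathcal A^{s}/\langle h\rangle$ and $\mathcal A^{u}/\langle h\rangle$, use the orbit-space results of Section \ref{leaforbit} to verify the hypotheses of Theorem \ref{thm:caracteriseconvergencegroup}, recognize the actions as hyperbolic blow ups of a common Fuchsian representation, and then build the model $(P_\Gamma(\Omega),\Psi)$ and compare orbit spaces. Two of your intermediate claims, however, are not right as stated, and one of them is a genuine gap in the argument.

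The serious problem is the parenthetical assertion that ``the orbits in $\partial \hat P$ have the same prong indices as the corresponding orbits in $\partial P_\Gamma(\Omega)$ by construction.'' This is false, and it is exactly the point the paper has to work hard to get around. The flow $\Psi$ on $M_\Gamma(\Omega)$ is non-singular, so every periodic orbit tangent to $\partial P_\Gamma(\Omega)$ is a regular (two-prong) orbit. By contrast, a tangent periodic orbit in $\partial\hat P$ can be a $p$-prong singular orbit with $p\geq 3$ (see the discussion right after Lemma \ref{le:boundaryleaf} and the branched-cover example). Near such an orbit there is no local orbital equivalence between $\hat\Phi$ and $\Psi$, so the local-patching/Haefliger step you invoke cannot be applied as written. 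The paper's fix is to replace $\hat\Phi$ by an auxiliary flow $\hat\Phi'$ that agrees with $\hat\Phi$ on $\hat P$ but is modified in the complementary regions $U(T')$ so that every tangent boundary orbit becomes a two-prong orbit of $\hat\Phi'$; it is $\hat\Phi'$ (on a slightly enlarged open set $\mathcal U$), not $\hat\Phi$, that is matched with $\Psi$ (on $\mathcal U^*$), and then one restricts to $\hat P = \mathcal U \cap \hat P$, where the two flows coincide. Without something like this modification your proof does not close.

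A second, smaller but still real, issue is that ``Haefliger'' plus ``local matching'' does not by itself give the global orbital equivalence here: $\hat P$ is a manifold with boundary and the flow is neither tangent nor transverse to that boundary. The paper therefore constructs the map $f:\mathcal U^*\to\mathcal U$ by hand with a partition of unity as in Remark \ref{rk:isotopybirkhofftori}, and then has to prove the existence of a uniform $t_0>0$ with $u(x,t)\neq 0$ for $t>t_0$ so that an averaging procedure makes $f$ injective along orbits. That compactness argument (the contradiction argument with the sequence $t_n\to\infty$ and the loops $\gamma_n$ contracting/expanding on the orbit space) is a substantive step, not a corollary of a general principle; your sketch should at least acknowledge it.

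Finally two smaller inaccuracies worth flagging: (i) condition (4) of Theorem \ref{thm:caracteriseconvergencegroup} is not ``proper discontinuity of $\pi_1(P)$ on $\Omega_P$'' -- $\pi_1(P)$ does not act properly discontinuously on the two-dimensional orbit space; what is actually used is that periodic orbits of $\Phi$ are isolated in their free homotopy class in $M$, combined with Proposition \ref{pro:top}; (ii) if you collapse all gaps before applying Theorem \ref{thm:caracteriseconvergencegroup}, the resulting Fuchsian representation you produce is a lattice with cusps, and you then still owe the ``open cusps into funnels'' step before $\bar\rho_s$ can be exhibited as a hyperbolic blow up of a convex cocompact representation $\rho_0$; the paper avoids this by doing a hyperbolic blow down (leaving gaps in place but killing interior fixed points) and applying the theorem directly, which yields the convex cocompact $\rho_0$ outright.
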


The manifold $\hat{P}$ has boundary and the flow
$\hat{\Phi}$ is neither tangent nor transverse to the
boundary everywhere, so the situation is way more subtle
than a semi-flow in $\hat{P}$ transverse to the boundary.
The semiconjugacy sends orbits to orbits, where
an orbit may be defined only in an interval of the
parameter that includes any possible boundary points.
One added subtlety or difficulty
in the proof involves the tangent orbits in $\partial \hat{P}$.

\begin{proof}{}
The proof of this theorem will take all of this section and it will use the previous constructions
in the article.

\vskip .05in

We summarize what has been done in the
previous sections: the action $\rho_s$ of $\pi_1(P)$
on the stable leaf space $\hhs$ has an invariant axis $\mathcal A^s$,
that is homeomorphic to the reals and is properly
embedded in $\hhs$.
We identify $\mathcal A^s$ with the reals $\mathbb R$.
Let $\Gamma = \pi_1(P)$.
Let $h$ represent the regular fiber in $\pi_1(P)$. Then $h$ acts freely on $\mathcal A^s \cong \mathbb R$.
Since $< h >$ is a normal subgroup of $\pi_1(P)$ this induces an action
of $\bar{\rho}_s$ of $\bar{\Gamma} = \pi_1(P)/<h>$  on

$$\mathbb S^1 \ = \ \mathbb R/h \ \cong \ \mathcal A^s/h$$

The goal is to show that the induced representation $\bar{\rho}_s$ is
a hyperbolic blow up of a Fuchsian representation and then use the
results and constructions of section \ref{sec:blowing}.

The unique $\Gamma$-invariant minimal set $\tilde{\mu}_s$ projects to an invariant set $\mu_s$ in $\mathbb S^1$, which is the unique $\bar{\rho}_s(\bar{\Gamma})$-invariant minimal invariant set.

We also have a homeomorphism $\tilde{\tau}_s: \tilde{\mu}_s \to \tilde{\mu}_s$ almost commuting with $\rho_s(\Gamma)$ and an integer $k>0$
such that $\tilde{\tau}_s^k = h|_{\tilde\mu_s}$
(Proposition \ref{pro:taumu}, Lemma \ref{le:tauk}).

Then, exactly as is done in Proposition 3.18 of \cite{Ba3},
one can extend $\tilde{\tau}_s$ to $\mathcal A^s$ so that it commutes with $h$
and satisfies $\tilde{\tau}^k_s = h$.
Since it commutes with $h$, it follows that $\tilde{\tau}_s$ induces a homeomorphism $\tau_s$ of $\mathbb S^1$.
Let $\mathfrak I$ be the set of gaps of $\mu_s$ in $\mathbb S^1$.
Let $\sigma_s: \bar{\Gamma} \to S(\mathfrak I)$ be the representation describing the action of $\bar{\Gamma}$ on $\mathfrak I$.
It is very easy to see that $\bar{\rho}_s$ is
a $(\mu_s,\tau_s,\sigma_s)$-representation
on $\mathbb S^1$, that lifts to the
$(\tilde \mu_s,\tilde \tau_s, \tilde\sigma_s)$-representation $\rho_s$
on $\mathcal A^s$.

We want to show that $\bar{\rho}_s$ is a hyperbolic blow up of a Fuchsian representation.
We know that all gaps of ${\mu}_s$ are periodic (Proposition \ref{pro:periodicgap}).
First we modify the actions on gaps by ``hyperbolic'' modifications.
Instead of adding new fixed points as we did previously, we replace
the action of the stabilizer on the periodic gap under consideration
by a map $f_0$ which has no fixed points in the gap, and only fixes the
extremities.
Hence it is more accurate to call this process a hyperbolic \textit{blow down.}
This produces another representation
$\bar{\rho}'_s: \bar{\Gamma} \to$ Homeo$(\mathbb S^1)$.
Let $\pi_s: \mathcal A^s \rightarrow \mathbb S^1$ be the projection map.

\begin{lemma}{}{} The representation $\bar{\rho}'_s$ is topologically conjugate
to a Fuchsian representation.
\end{lemma}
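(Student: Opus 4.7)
The plan is to deduce this lemma from the dynamical characterization of $(k)$-convergence groups, namely Theorem \ref{thm:caracteriseconvergencegroup}, followed by Theorem \ref{thm:kconvfuchsian}. Thus I would verify in order the four hypotheses of Theorem \ref{thm:caracteriseconvergencegroup} for the modified representation $\bar{\rho}'_s$: (1) every gap of $\mu_s$ is periodic, (2) point stabilizers are trivial or cyclic, (3) non-trivial orientation-preserving elements have either no fixed points, or one $\tau_s$-orbit of fixed points, or two $\tau_s$-orbits, alternately attractive and repelling; and (4) fixed-point-pair orbits are closed and discrete in $U$.

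Condition (1) is precisely Proposition \ref{pro:periodicgap}. For condition (2), note first that the construction of $\bar{\rho}'_s$ does not alter the action on $\mu_s$, so the stabilizer of any point of $\mu_s$ under $\bar{\rho}'_s$ equals its stabilizer under $\bar{\rho}_s$; this is cyclic because the stabilizer in $\Gamma$ of a leaf of $\mathcal{A}^s$ is cyclic (via the lozenge structure from Theorem \ref{chain} and the fact that $P$ is a free, non-elementary Seifert piece). For a point in the interior of a gap $I$ the stabilizer under $\bar{\rho}'_s$ is contained in the stabilizer of $I$, which is cyclic; but by the very definition of the hyperbolic blow-down we chose $f_0$ to be fixed-point-free on the interior of $I$, so in fact the stabilizer is trivial.

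For condition (3), consider a non-trivial $\bar{\gamma}_0 \in \bar{\Gamma}_0$ and any lift $\gamma_0 \in \Gamma$ with $\rho_s(\gamma_0)$ preserving an element of $\mathcal{A}^s$. By Corollary \ref{minimal} and the analysis of type S versus type N fixed points in Section \ref{sec:minimalfixed}, the $\gamma_0$-fixed points in $\mathcal{A}^s$ that survive on the minimal set $\tilde{\mu}_s$ (i.e.\ those of type S) are exactly the ones corresponding to true blue corners of the chain of lozenges of $\gamma_0$; by Lemma \ref{le:tauk} these form two $\tilde{\tau}_s$-orbits in a fundamental domain for $h$, one attractive and one repelling (the attractive/repelling nature comes from the hyperbolic transverse behavior of $\Phi$ at the associated periodic orbit). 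The original fixed points of $\gamma_0$ of type N lay in the interiors of gaps and have been killed by the blow-down, so after the modification $\bar{\rho}'_s(\bar{\gamma}_0)$ has exactly two $\tau_s$-orbits of fixed points in $\mathbb{S}^1$; the hyperbolicity of $f_0$ near the endpoints of each gap guarantees that these fixed points are hyperbolic.

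For condition (4), a fixed pair $(x_0, y_0) \in U$ of $\bar{\rho}'_s(\bar{\gamma}_0)$ lifts to a pair of consecutive type S fixed points in $\tilde{\mu}_s$, which via $\top$ (Proposition \ref{pro:top}) corresponds to an element of $(\tilde{\mu}_s \times \tilde{\mu}_u) \cap \Omega'$ fixed by $\gamma_0$, i.e.\ to a periodic orbit of $\hat{\Phi}$ inside $\hat{P}$. A $\bar{\Gamma}_0$-accumulation in $U$ of the orbit of $(x_0, y_0)$ would then project in $M_P$ to an accumulation of periodic orbits of $\hat{\Phi}$ on a non-periodic orbit, which is excluded by the standard fact that periodic orbits of a pseudo-Anosov flow form a discrete family in any compact region. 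This yields condition (4). I expect condition (4) to be the main obstacle, as it is the one that genuinely uses the global dynamics of the ambient flow rather than purely formal properties of the representation; the other three reduce more directly to results already obtained in Sections \ref{leaforbit} and \ref{sub:consequences}. Having verified all four conditions, Theorem \ref{thm:caracteriseconvergencegroup} provides the $(k)$-convergence property, and Theorem \ref{thm:kconvfuchsian} supplies a topological conjugacy of $\bar{\rho}'_s$ to a Fuchsian representation, as required.
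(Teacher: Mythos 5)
Your proposal follows exactly the same route as the paper: verify the four conditions of Theorem \ref{thm:caracteriseconvergencegroup} for $\bar{\rho}'_s$ and conclude. Conditions (1)--(3) are handled essentially as the paper does. For condition (4), however, the justification you invoke --- that ``periodic orbits of a pseudo-Anosov flow form a discrete family in any compact region'' --- is false as stated: periodic orbits of a pseudo-Anosov flow are \emph{dense} in the non-wandering set. The fact the paper actually uses is more specific: every periodic orbit of $\Phi$ admits a neighborhood containing no other periodic orbit freely homotopic to it. Since the $\bar{\Gamma}_0$-orbit of $(x_0,y_0)$ corresponds, via $\top$, to lifts to $\oo$ of periodic orbits associated to conjugates of $\gamma_0$ --- hence all lying in a single free homotopy class when projected to $M$ --- it is this local finiteness within a fixed free homotopy class, not general discreteness of the set of periodic orbits, that gives closedness and discreteness of the orbit in $U$. (Also, there is no need to say the hypothetical limit orbit is non-periodic; the contradiction comes purely from accumulating orbits in the same free homotopy class.) Finally, the appeal to Theorem \ref{thm:kconvfuchsian} is redundant, as Theorem \ref{thm:caracteriseconvergencegroup} already yields the Fuchsian conjugacy in its conclusion.
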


\begin{proof}{}
We need to verify the conditions of Theorem \ref{thm:caracteriseconvergencegroup} that we reproduce here
for the readers's convenience (we recall that $\bar{\Gamma}_0$ is the index $2$ subgroup made of elements preserving the orientation):
\begin{enumerate}
  \item every gap of $\mu_s$ is periodic,
  \item for every $x$ in $\mathbb S^1$, the stabilizer of $x$ is trivial or cyclic,
  \item for every non-trivial element $\bar{\gamma}$ of $\bar{\Gamma}_0$ the fixed point set of $\bar{\rho}'_s(\bar{\gamma})$ is either trivial,
or one orbit of ${\tau}_s$, or the union of two orbits by $\tau_s$, one made of attractive fixed points and the other made of repellent fixed points,
  \item  if $(x_0, y_0)$ is a pair of fixed points of some element of $\bar{\Gamma}$ with $x_0 < y_0 < \tau_s(x_0)$,  then the $\bar{\rho}'_s(\bar{\Gamma}_0)$-orbit by the diagonal action of $(x_0, y_0)$ in the space $U = \{ (x,y) \in \mu_s \times \mu_s \;|\; x < y < \tau_s(x)\}$ is closed and discrete.
\end{enumerate}

Condition (1) is the content of Proposition \ref{pro:periodicgap}.

Condition (2) is also easy to deduce. The stabilizer of a point
by $\rho_s$ is at most cyclic, as $\Phi$ is a pseudo-Anosov flow.
Hence condition (2) was already satisfied by the representation
$\bar{\rho}_s$.
During the hyperbolic blow down we have
eliminated fixed points, so condition (2) follows.

Now consider condition (3). Let $\bar{\rho}'_s(\bar{\gamma})$ be an element of the
representation. Look back at $\bar{\rho}_s(\bar{\gamma})$ before the blow down.
Either it acts freely or has fixed points. If it has fixed points, then
it has an even number of points, that are consecutively attracting and repelling.
This is because when we pull back $\bar{\gamma}$ to an element
$\gamma$ of $\Gamma$ so that it has fixed points, then the fixed
points of $\gamma$ are discrete and are alternatively attracting
and repelling.
So the same holds for $\bar{\rho}_s(\bar\gamma)$.
When we perform the hyperbolic blow down, the fixed points that are not in
$\mu_s$ are removed. Hence the only remaining the fixed points
are those corresponding to elements in $\tilde\mu_s$.
This proves (3).

Finally consider condition (4). In Proposition \ref{pro:top}
we proved that the map $\top$ from

$$\widetilde U \ = \  \{ (x,y) \in
\tilde\mu_s \times \tilde\mu_s \;|\; x < y < \tilde\tau_s(x)\}$$

\noindent
into $\mathcal A^s \times A^u$ defined by $\top(x,y) = (x, \alpha_s^-(y))$ is a homeomorphism between $\widetilde U$ and
$(\tilde{\mu}_s \times \tilde{\mu}_u) \cap \Omega$. Let $(x_0,y_0)$
be an element of $\widetilde U$ fixed by some element $\gamma_0$:
its image by $\top$ is an element of $\Omega' \approx \Omega_P \subset \oo$ corresponding to a periodic orbit of $\widetilde\Phi$.
Since every periodic orbit in $M$ is compact, and
since every periodic orbit of $\Phi$ admits a neighborhood in $M$ in which there is no other periodic orbit freely homotopic
to it, it follows that the $\bar{\rho}_s(\bar{\Gamma}_0)$-orbit
of $(\pi_s(x_0), \pi_s(y_0))$ in $U = \{ (x,y) \in \mu_s \times \mu_s \;|\; x < y < \tau_s(x)\}$ is closed and discrete (for more details, see for example Lemma $3.20$ in \cite{Ba3}).
Now the $\bar{\rho}'_s(\bar{\Gamma}_0)$-orbit
of $(\pi_s(x_0), \pi_s(y_0))$ coincides with its
$\bar{\rho}_s(\bar{\Gamma}_0)$-orbit since the hyperbolic blow down
does not modify nothing in $\mu_s \times \mu_s$. Condition (4) is proved.

By Theorem \ref{thm:caracteriseconvergencegroup} it follows that
$\bar{\rho}'_s$ has the $(k)$-convergence property, and is topologically
conjugate to a Fuchsian group.
\end{proof}

The same happens with the representation $\bar{\rho}_u$ coming from the unstable axis $\mathcal A^u$: it is a hyperbolic blow up of a Fuchsian representation. Observe that
$\alpha_s^+$ induces a conjugacy between $\bar{\rho}_s$ and $\bar{\rho}_u$, at least on the minimal sets $\mu_s$ and
$\mu_u$. It follows that the Fuchsian representations conjugated to $\bar{\rho}'_s$ and $\bar{\rho}'_u$ are the same.
This is because a Fuchsian action is determined by
its action on the minimal set. It follows that
$\bar{\rho}_s$ and $\bar{\rho}_u$ are hyperbolic blow ups of
the \underline{same} Fuchsian representation $\bar{\rho}_0: \bar{\Gamma} \to$ PGL$(2,\mathbb R)$.

Since every fixed point in $\mathcal A^s$ or $\mathcal A^u$ is attracting or repelling,
the Fuchsian representation has no parabolic element
and no non-trivial element of finite order: it is a convex cocompact Fuchsian representation as defined in Definition \ref{defi:convexcocompact}.

\vskip .1in
Therefore, we are precisely in the situation described in section
\ref{sec:blowing}:
There is a hyperbolic blow up of a piece of geodesic flow
$(P_\Gamma(\Omega), \Psi, \hat\Lambda^s_\Gamma, \hat\Lambda^u_\Gamma)$ associated to the hyperbolic blow ups $\bar{\rho}_s$ and $\bar{\rho}_u$ of the convex cocompact
representation $\bar{\rho}_0$. As mentioned in Remark \ref{rk:uniqueblowup}, this piece does not depend on the choice of the maps $\alpha_1$ and $\beta_1$ involved in
the construction. The orbit space $D(\Psi)$ of $(P_\Gamma(\Omega), \Psi)$ is identified with the domain $D$ in
$\mathcal A^s \times \mathcal A^u \approx \mathbb R \times \mathbb R$ obtained
by removing from $\Omega'$ the triangles $\Delta(\tilde\theta)$ associated to
orbits $\tilde\theta$ tangent to lifted Birkhoff tori, and this identification is $\Gamma$-equivariant.  In other words, the Seifert piece $\hat P$, equipped with the restriction of $\hat\Phi$, and
the foliations $\hat\Lambda^s$, $\hat\Lambda^u$ has exactly the same
transverse structure as $(P_\Gamma(\Omega), \Psi, \Lambda^s(\Psi),
\Lambda^u(\Psi))$, meaning that there is a
$\Gamma$-equivariant homeomorphism
$$\Upsilon: \ D(\Psi) \
\to \ D$$
\noindent
between their orbit spaces.
The usual key idea is that $\Upsilon$ should lift to an orbital equivalence
between $(\hat P, \hat \Phi)$ and
$(P_\Gamma(\Omega), \Psi)$, which moreover maps $\hat\Lambda^{s,u}$ on
$\hat \Lambda^{s,u}_{\Gamma}(\Psi)$ since $\Upsilon$ respects
the horizontal/vertical foliations.

Actually this is not exactly true, since here we consider restrictions to Seifert pieces admitting toroidal
components with
{\underline {tangent}} periodic orbits: $(P_\Gamma(\Omega), \Psi)$ depends
on the choice on the boundary Birkhoff tori.

Moreover, there is an additional difficulty: the periodic
orbits contained in the boundary Birkhoff tori of $\hat P$
might be singular periodic orbits, with $p$-prongs ($p \geq 3$)
hence cannot be orbitally equivalent in their neighborhood to the
corresponding periodic orbit in a Birkhoff torus
$T \subset \partial P_{\Gamma}(\Omega)$.
Such a periodic orbit
lifts in $\widetilde M$
to an orbit $\tilde\theta$ contained in $\Omega_P$ but not in the interior of $\Omega_P$: it is the situation described at the end of section \ref{sub:omegainorbitspace}. The associated
triangle $\Delta(\tilde\theta)$ is then empty.


We solve this difficulty as follows: let $\hat\theta$ be a periodic
orbit of $\hat\Phi$, contained in a Birkhoff torus $T' \subset \partial \hat P$. Since the orbit space of the restriction of
$\hat\Phi$ to $\hat P$ is $D \subset \Omega'$, there is
only one component $F^s$ of $\Lambda^s_P(\hat\theta) - \hat\theta$
intersecting $\hat P$, the other components are contained in $M_P - \hat P$.
None of them can be in the interior
of $\Omega_P$ (the triangle $\Delta(\tilde\theta)$ is empty).
What we do is to artificially modify the flow $\hat{\Phi}$ in the
other side of $T'$ in $M_P$ in the neighborhood of
$\hat\theta$ so that $\hat{\theta}$ is still a periodic orbit of the modified flow $\hat{\Phi}'$, but now regular.

More precisely, we consider pairwise disjoint
small tubular open neighborhoods $U(T')$ in $M_P$ of each Birkhoff
torus $T' \subset \partial {\hat{P}}$, and the union
$\mathcal U$ of $\hat P$ with all these open domains $U(T')$.
We replace $\hat{\Phi}$ in $\mathcal U$ by a semi flow $\hat{\Phi}'$ such that:

-- $\hat{\Phi}$ and $\hat{\Phi}'$ coincide in $\hat{P}$,

-- every periodic orbit contained in $\partial \hat{P}$ is a $2$-prong periodic orbit for $\hat{\Phi}'$,

-- the restriction of $\hat{\Phi}'$ to every $U(T')$ is orbitally equivalent to an \underline{Anosov} flow (not pseudo-Anosov)  in the
neighborhood of an embedded Birkhoff torus.
By an abuse of notation we still denote by
$\hat{\Lambda}^{s,u}$ the stable and unstable foliations
in $U(T')$.

Recall that we proved in Remark \ref{rk:isotopybirkhofftori} that embedded elementary Birkhoff annuli are unique up to orbital equivalence in
their neighborhoods, meaning that any two of them admit tubular neighborhoods in which the restrictions of the flow are orbitally equivalent.
This is important because we will now show that the
arguments used in the proof in Remark \ref{rk:isotopybirkhofftori}
lead to the fact that $(\mathcal U, \hat{\Phi}')$ is orbitally
equivalent to the restriction of $\Psi$ to a neighborhood of
$P_\Gamma(\Omega)$ in $M_\Gamma(\Omega)$.

Indeed: first, select pairwise disjoint tubular neighborhoods
$U(T)$ for every $T$ in $\partial P_\Gamma(\Omega)$, so that
there are orbital equivalences $f_{T}$ between the restriction of $\Psi$
to $U(T)$ and the restriction of $\hat{\Phi}'$ to $U(T')$
(the torus $T'$ corresponding to $T$ being the one lifting to a
Birkhoff plane having the
same projection in $\Omega' \approx \Omega$
as a lift of $T$). Futhermore, $f_{T}$ sends
the restrictions of $\Lambda^{s,u}_\Gamma$ to $U(T)$ to the
restrictions of $\hat\Lambda^{s,u}$ to $U(T')$ - more precisely, the lift of
$f_{T}$ induces on the orbit space the restriction of $\Upsilon$
to the chain of lozenges in $\Omega$
associated with (a lift of) $T$ and the chain of lozenges in
$\Omega'$ associated to (a lift of) $T'$.
Observe that in general, $f_{T}(T)$ is \underline{not} $T'$,
but a small deformation of it, which is isotopic to
 $T'$ along the flow, outside the tangent periodic orbits.
Since this deformation is along the flow,  the foliation
$\hat{\Phi}'$ still coincides with the foliation $\hat{\Phi}$
on one side of $f_{T}(T)$.
We
{\underline {modify}} $\hat{P}$ so that we have $T= f_{T'}(T')$ for every $T'$.

Let $\mathcal U^*$ be the union in $M_\Gamma(\Omega)$ of $P_\Gamma(\Omega)$
and the $U(T')$'s, and equip $\mathcal U$, $\mathcal U^*$ with complete metrics, so that the orbits of the restrictions
of $\Psi$ to $\mathcal U^*$ and of $\hat{\Phi}'$ to $\mathcal U$, parametrized by unit length, are complete.
Notice that $\mathcal U^*, \mathcal U$ are {\underline {open}}.
From now on let $\Psi$ and $\hat\Phi'$ denote these restrictions, which  after these reparametrizations
are complete flows, not merely semi-flows.

By construction, the collection of
maps $f_{T}$ for $T$ boundary components
of $\partial P_{\Gamma}(\Omega)$,
 defines an orbital equivalence
between $\Psi$ and $\hat{\Phi}'$ in
the neighborhoods of $\partial P_\Gamma(\Omega)$ and $\partial \hat P$
that we want to extend in the interior of $P_\Gamma(\Omega)$.
Select a collection $(Z_i)_{i \in I}$ of open $2$-dimensional
disks in $P_\Gamma(\Omega)$ transverse
to $\Psi$ such that for some $\epsilon >0$, the flow boxes $U_i$
obtained by pushing $Z_i$ along $\Psi$ a time of absolute value
$<\epsilon$, together with all the $U(T)$'s,
define a locally finite covering of $\mathcal U^*$. We moreover can require that the flow boxes $U_i$ for $i \in I$ are all contained in the
interior of $P_\Gamma(\Omega)$.

For every $i \in I$, the projections
in $\Omega \approx \Omega'$ of every lift of $Z_i$ are open disks;
take the image of each of them in $\Omega'$ by the map $\Upsilon$,
lift it to $\widetilde{M}$. We can choose this last lift so
that it projects to a disk $Z'_i$ contained in $\hat{P}$,
and transverse to
$\hat\Phi' = \hat\Phi$ (in $\hat{P}$).
We then have a naturally defined homeomorphism $f_i: Z_i \to Z'_i$.

We  incorporate the collection of indices $I$ and the collection of boundary Birkhoff tori into a collection $J$. For every $j \in J$ is either an element of $I$, or a Birkhoff torus
$T$. In order to simplify the redaction,
in the last case, we define $Z_j$ and $Z'_j$ as $T$, $T' =f_{T}(T)$ respectively, and $U_j$ as the tubular neighborhood $U(T)$.

Now we do as in Remark \ref{rk:isotopybirkhofftori}: select a partition of unity $(\mu_j)_{i \in J}$ for the covering $(U_j)_{j \in J}$, and for every $x$ in $\mathcal U^*$ define $f(x)$ as the barycenter
of the various $\hat{\Phi'}^{t_j}(f_j(x_j))$, balanced with the
weight $\mu_j(x)$, where $x_j$ is an element of $Z_j$ and $t_j$ a real number in absolute value $<\epsilon$ such that $\Psi^{t_j}(x_j)=x$.

In this way, we obtain a continuous homotopy equivalence $f: \mathcal U^* \to \mathcal U$, mapping orbits of $\Psi$ into orbits of $\hat{\Phi}'$.
Observe that since the flow boxes $U_i$ for $i \in I$ are contained in the
interior of $P_\Gamma(\Omega)$, if $x \in \mathcal U^*$
is outside $P_\Gamma(\Omega)$, or even in
$\partial P_\Gamma(\Omega)$, then $f(x) = f_{T}(x)$ for some torus $T$:
$f$ restricted to the complement of  $P_\Gamma(\Omega)$ is a homeomorphism, with image the complement of $\hat{P}$ in $\mathcal U$.

Moreover, $f$ lifts to a map $\tilde{f}$ between the universal coverings
that, restricted to $P_{\Gamma}(\Omega)$
 induces the map $\Psi: \Omega \to \Omega'$ between the orbit spaces.
In particular, it follows that
$f$ maps leaves of $\Lambda^{s,u}(\Psi)$ into leaves of
$\hat\Lambda^{s,u}$, and it may fail to be injective only along
orbits of $\Psi$: if $f(x) = f(x')$, then there is some real number $t$ such that $x' = \Psi^t(x)$. There is a continuous map $(x,t) \mapsto u(x,t)$ such that for every $x$ in $\mathcal U^*$ we have
$f(\Psi^t(x)) = (\hat{\Phi}')^{u(x,t)}(f(x))$ (this continuous map is well-defined and unique despite of periodic orbits, it is due to the fact that the orbits lifted in the universal covering are non-periodic).

As mentioned in Remark \ref{rk:isotopybirkhofftori}, if we can find a real number $t_0 > 0$ such that $u(x,t) \neq 0$ for every $t > t_0$ for every $x$ in $\mathcal U^*$, then
one can average $f$ along $\Psi$-orbits and obtain a map $f_0: \mathcal U^* \to \mathcal U$ with the same properties than $f$, but which is moreover a homeomorphism (once more, for details,
we refer to Proposition $3.25$ of \cite{Ba3}).

The existence of such a $t_0$ will follow, by compactness
of $P_\Gamma(\Omega)$, if we are able to prove that for every $x$ in $\mathcal U^*$, there is an upper bound on the set of real numbers
$t$ such that $u(x,t)=0$. Assume by a way of contradiction that there is a sequence of elements $x_n$ of $\mathcal U^*$ and an increasing sequence $0 < t_1 < t_2 < ...$ of positive real numbers with no upper bound
such that $u(x_n, t_n)=0$ for every integer $n >0$.

If $\Psi^t(x)$ lies outside $P_\Gamma(\Omega)$ for some $t$, then,
since an orbit can exit $P_\Gamma(\Omega)$ at most once, we have
$\Psi^s(x) \notin P_\Gamma(\Omega)$ for all $s >t$ and therefore
$f(\Psi^s(x)) \neq f(x)$. Since $f(\Psi^{t_n}(x_n)) = (\hat{\Phi}')^{u(x_n, t_n)}(f(x_n)) = f(x_n) \in P_\Gamma(\Omega)$, it follows that for every $t$ in $[0, t_n]$ the iterate $\Psi^t(x)$ lies
in $P_\Gamma(\Omega)$. Since $P_\Gamma(\Omega)$ is compact, up to a subsequence, we can assume that $x_n$ and $y_n = \Psi^{t_n}(x_n)$ have limits $x_\infty$ and $y_\infty$. Moreover, $f(y_n)=f(\Psi^{t_n}(x_n)) = f(x_n)$, therefore $x_\infty$ and $y_\infty$ have the same image by $f$, and since $f$ is transversely injective, there is some real number $t_1$ such that $y_\infty = \Psi^{t_1}(x_\infty)$.

For every integer $n$, consider the loop $\gamma_n$ in $P_{\Gamma}(\Omega)$
starting from $x_\infty$, going to $x_n$, then following the $\Psi$-orbit during the time $t_n$ until reaching $y_n$, when going to $y_\infty = \Psi^{t_1}(x_\infty)$, and then going back to $x_\infty$ along the $\Psi$-orbit backward. The bigger $n$, the smaller the intermediate steps outside
the $\Psi$-orbits, and the bigger the time $t_n$ of travel along the first orbit. Due to the pseudo-Anosov character of $\Psi$, the action of $\gamma_n$
on the orbit space $D(\Psi)$ near the orbit of a fixed lift of $x_\infty$ is contracting on the vertical, and expanding on the horizontal, and the bigger $n$ is, the bigger are these expansions and contractions.

On the other hand, the image of $\gamma_n$ by $f$ is a loop homotopic to the loop $\gamma'_n$
in $\hat{P}'$,
going from $f(x_\infty)$ to $f(x_n)$, then going quickly to $f(y_\infty) = f(x_\infty)$. Indeed, we can replace the portion on the $\hat\Phi'$-orbits by constant maps, since $u(x_n,t_n)= 0 = u(x_\infty, t_1)$. Therefore, this loop is homotopically trivial and acts trivially on the orbit space $D$. It is a contradiction, since $\Upsilon$ is a conjugacy between the action of $\gamma_n$ on $D(\Psi)$ and the action of $\gamma'_n$ on $D$.

Therefore, we have shown that $(\mathcal U^*, \Psi)$ and $(\mathcal U, \hat{\Phi}')$ are orbitaly equivalent, in a way preserving the stable and unstable foliations It follows that
the same is true for the restrictions of $\Psi$ and $\hat\Phi$ to
respectively $P_\Gamma(\Omega)$ and $\hat{P}$, since $\hat\Phi$ and $\hat\Phi'$ coincide on $\hat{P}$.

This finishes the proof of Theorem \ref{main}, except that we still have to show that $\hat{P} \subset M_P$ is unique up to isotopy along the flow
outside the periodic orbits contained in the boundary. This
essentially follows from the ``uniqueness of Birkhoff tori up to flow isotopy'', established in \cite{bafe1}.
However, in \cite{bafe1} this statement was imprecise, since as pointed out in Remark \ref{rk:isotopybirkhofftori}, the flow isotopy does not in general
extend to the tangent periodic orbits. We clarify the situation here.

The key fact is that for any other compact core $\hat{P}'$ of $M_P$, admitting as boundary components embedded Birkhoff tori, every component $T'$ of $\partial \hat{P}'$ is homotopic
to a boundary torus of $\hat{P}$, hence with the same fundamental group (as subgroups of $\pi_1(M)$). Therefore, the chains of lozenges in $\Omega_P$ preserved by $\pi_1(T)$ and $\pi_1(T')$
are the same. Therefore, we can push along the flow every Birkhoff torus in $\hat{P}'$
onto the corresponding torus in $\partial\hat P$ (except the tangent periodic orbits that they already have in common).
Moreover, the order in which entrance/exit
tori in the boundary are crossed is encoded in the orbit space $D$: we can therefore push every boundary torus one-by-one, without perturbing at any
moment what has be done previously. It then follows that this procedure lead to an isotopy along the flow between the convex cores $\hat P'$ and $\hat P$
outside their boundary components.

Theorem \ref{main} is proved.
\end{proof}

\section{Examples of pseudo-Anosov flows and tori decompositions}
\label{examples}

Here we produce a variety of examples showing how two adjoining pieces $P,P'$ of the
JSJ decomposition can behave with respect to a pseudo-Anosov flow.
The examples we mention here were constructed in one way or another previously
in other references. Therefore we only describe them briefly with an emphasis on the
properties we want to analyze and refer to the appropriate references.
First a preliminary lemma:

\begin{lemma}{}{}
Let $\Phi$ be a pseudo-Anosov flow in $M^3$ and $P,P'$ adjoining pieces
along a torus $T$. Suppose that $P$ is a free Seifert fibered piece and
$\Phi$ is not orbitally equivalent to a suspension Anosov flow.
Then $T$ is not homotopic to a torus transverse to $\Phi$, that
is, it can only be homotopic to a quasi-transverse torus.
\end{lemma}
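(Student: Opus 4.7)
The plan is to argue by contradiction and force a stable product region in $\widetilde M$, which by Theorem \ref{prod} would imply $\Phi$ is orbitally equivalent to a suspension Anosov flow, contradicting the hypothesis. Suppose then that $T$ is homotopic to a torus $T_0 \subset M$ transverse to $\Phi$. I would first lift $T_0$ to a properly embedded plane $\widetilde T_0 \subset \widetilde M \cong \mathbb R^3$. Since $\widetilde T_0$ separates $\widetilde M$ and $\widetilde\Phi$ crosses it in a coherent direction, every $\widetilde\Phi$-orbit meets $\widetilde T_0$ in at most one point, so the orbit-space projection $\Theta$ restricted to $\widetilde T_0$ is a homeomorphism onto an open set $\Omega_{T_0} \subset \mathcal O$ homeomorphic to $\mathbb R^2$. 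The group $H := \pi_1(T_0) \cong \mathbb Z^2$ preserves $\Omega_{T_0}$ and acts there freely and cocompactly, with quotient $T^2$.

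Next I would show that the induced $1$-dimensional foliations coming from $\mathcal O^s, \mathcal O^u$ on $\Omega_{T_0}$ are non-singular. A $p$-prong singular orbit of $\widetilde\Phi$ meeting $\widetilde T_0$ would produce a $p$-prong singularity of the induced foliation on $\Omega_{T_0}$, contributing index $1 - p/2 \leq -1/2$ (since $p \geq 3$) on the quotient $T^2$. By Poincar\'e-Hopf these contributions sum to $\chi(T^2) = 0$, which forces the absence of such singularities. Hence $\widetilde T_0$ avoids every singular orbit, and the induced foliations on $\Omega_{T_0}$ (and on $T^2$) are transverse and non-singular.

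The heart of the argument is to show that $\Omega_{T_0}$ is a ``product region'' in the orbit space: every stable leaf $F \in \widetilde\Lambda^s$ meeting $\Omega_{T_0}$ intersects every unstable leaf $G \in \widetilde\Lambda^u$ meeting $\Omega_{T_0}$, with intersection lying in $\Omega_{T_0}$. The intersections $F \cap \Omega_{T_0}$ and $G \cap \Omega_{T_0}$ are closed subsets of $\Omega_{T_0}$ (since $F, G$ are closed in $\mathcal O$) and, coming from single leaves of the induced non-singular transverse foliations on $T^2$, they are connected. Each is therefore a properly embedded copy of $\mathbb R$ in $\Omega_{T_0} \cong \mathbb R^2$, and two transverse properly embedded lines in $\mathbb R^2$ must intersect. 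With this orbit-space product structure established, I would lift it to $\widetilde M$: taking the flow band $A$ over $F \cap \Omega_{T_0}$ in the stable leaf $F$, and letting $B_{\widetilde\theta}$ be the half of $\widetilde\Lambda^u(\widetilde\theta)$ whose projection lies in $\Omega_{T_0}$, the hypothesis of Definition \ref{defsta} is satisfied, yielding a stable product region. Theorem \ref{prod} then gives the desired contradiction.

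The main obstacle is verifying that $F \cap \Omega_{T_0}$ is connected, equivalently that the set of $\widetilde\Phi$-orbits in $F$ meeting $\widetilde T_0$ forms an interval in the stable parameter of $F$. A failure of connectedness would produce a leaf of the induced foliation on $T^2$ whose lift to $\Omega_{T_0}$ is disconnected, reflecting Reeb-type behavior in a pair of transverse non-singular foliations on $T^2$. Ruling this out relies on the tree structure of $\mathcal H^s, \mathcal H^u$ and Theorem \ref{theb}: non-separated stable leaves are connected by a scalloped chain of lozenges, a configuration incompatible with sitting inside the lift of a transverse torus because the periodic orbits at the corners of such lozenges would force tangencies with $\widetilde T_0$. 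Once connectedness is established, the remaining steps are routine.
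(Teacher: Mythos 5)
The approach cannot work as written, and the most telling symptom is that your argument never uses the hypothesis that $P$ is a \emph{free Seifert} piece. A proof that concluded ``transverse essential torus $\Rightarrow$ suspension'' would be false: the Franks--Williams examples and many of the B\'eguin--Bonatti--Yu flows are non-suspension (pseudo-)Anosov flows containing tori transverse to the flow in the torus decomposition. So the step that fails must be exactly where the free hypothesis ought to enter, and indeed it is the connectedness of $F\cap\Omega_{T_0}$.

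Concretely, when $T_0$ is transverse to $\Phi$, the induced stable and unstable foliations on $T_0 \cong T^2$ can (and in the Franks--Williams situation do) have Reeb annuli, so a single leaf $F$ of $\mathcal{O}^s$ can meet $\Omega_{T_0}$ in several arcs. Your proposed fix in the last paragraph has the logic reversed: non-separated stable leaves connected by an $s$-scalloped chain of lozenges are \emph{not} incompatible with sitting in the lift of a transverse torus --- on the contrary, a $\pi_1(T)$-invariant scalloped chain (all consecutive lozenges adjacent) is precisely the configuration associated with a torus that can be isotoped to be transverse (this is the content of \cite{bafe2} that the paper invokes). What actually precludes transversality is the presence of \emph{consecutive non-adjacent} lozenges in the $\pi_1(T)$-invariant chain, and that is where the free Seifert hypothesis is used in the paper: since $h$ acts freely on the axes $\mathcal A^s,\mathcal A^u$, the chain invariant under $\pi_1(T)$ cannot be entirely scalloped (an all-adjacent chain would force $h$ to fix a stable or unstable leaf). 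There is also a secondary gap: a product structure on the orbit-space open set $\Omega_{T_0}$ (which is bounded in the product parametrization, with cocompact $\mathbb Z^2$ action) does not by itself produce a stable product region in the sense of Definition~\ref{defsta}, since that definition is about unbounded half leaves and their intersections with stable leaves that may leave $\Omega_{T_0}$. The Euler-characteristic argument for ruling out singular orbits through $\widetilde T_0$ is fine, but the rest needs to be replaced by the lozenge-chain argument.
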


\begin{proof}{}
Let $h$ represent the fiber in $P$. According to Prop. \ref{axesfirst}
the axes $\aas, \aau$ are properly embedded lines
in the respective leaf spaces.
Since $\Phi$ is not orbitally equivalent to a suspension
Anosov flow, then
$\pi_1(T)$ leaves invariant a chain of lozenges $\mathcal C$
\cite{Fe5}. The lozenges
intersect the stable and unstable leaves in $\aas,\aau$. If two
lozenges $C_1, C_2$ are adjacent, then they either intersect the
same stable leaves or the same unstable leaves. Since $h$ acts monotonically on both
$\aas,\aau$ there are consecutive lozenges that are not adjacent.
According to \cite{bafe2} this implies that $T$ cannot be homotopic to a torus
transverse to $\Phi$.
\end{proof}

Note that there may be many consecutive lozenges in $\mathcal C$ that are adjacent.
The proof shows that {\em not all} such pairs can be adjacent.
Now we begin the examples.

\vskip .15in
\noindent
{\bf {Free adjoining free}}

Examples of these are the Handel-Thurston flows \cite{Ha-Th}.
For example suppose that $\Psi_0$ is the geodesic flow in a hyperbolic
surface $S$. Cut the manifold along the torus above a (say) separating, simple geodesic $c$ of $S$
and reglue with an appropriate Dehn twist. Handel and Thurston showed
 that the
resulting flow is Anosov. If $S_1, S_2$ are the closures of the complementary
components of $c$ and $M_1, M_2$ are the $\mathbb S^1$ bundles over these,
then they survive in the surgered manifold $M$. In addition they form the JSJ decomposition
of $M$. Finally both pieces are free.

\vskip .15in
\noindent
{\bf {Free adjoining periodic}}

Start with say the geodesic flow $\Phi_0$ in a hyperbolic surface $S$ of high
enough genus. Let $c$ be (say) a separating, simple geodesic of $S$. Let $A_0$ be a closed annulus neighborhood of $c$ in $S$,
and let $S_-, S_+$ be the  closures of the complementary components of $A_0$ in
$S$. Let $V_0, V_-, V_+$ be the unit tangent bundles of $A_0, S_-, S_+$ in $T^1S$.
The unit tangent bundle $T^1S$ is the union of $V_0$, $V_-$ and $V_+$.

Now do a blow up of one periodic flow line $\theta_+$ of $\Phi_0$ over $c$. In doing so, we create
3 periodic orbits, $\theta_1, \theta_2, \theta_3$, two of which are hyperbolic
and the last one (say) $\theta_3$ is an expanding orbit.
Remove a solid torus neighborhood of
$\theta_3$ to get a manifold $M_1$ with a semi-flow transverse to the boundary.
In the terminology of \cite{BBY}, $M_1$ is a
\emph{hyperbolic attracting plug}: the maximal invariant subset is hyperbolic, and there is a unique
entrance torus. The entrance lamination (see \cite{BBY}) is then a foliation. It is easy to check that
the stable leaves of $\theta_1$, $\theta_2$ intersects the entrance torus along closed leaves. Moreover,
since $\theta_1$ and $\theta_2$ are freely homotopic, the closed leaves in the entrance torus have the same contracting direction
(once more, in the terminology of \cite{BBY}). It follows that the entrance foliation admits at least one Reeb annulus - actually, a more
careful analysis as in \cite{Fr-Wi} would show that it is the union of two Reeb annuli.

Now consider a copy $M_2$ of $M_1$ but with the reversed flow.
In \cite{BBY}, B\'eguin, Bonatti and Yu showed that if we glue $M_1$ to $M_2$
by a map $h$ sending the exit foliation of $\partial M_2$ to a foliation of $\partial M_1$
transverse to the entrance lamination, then the resulting flow $\Phi$
is Anosov. Since the entrance and exit foliations have a Reeb annulus,
such a gluing map must send the isotopy class
of closed exit leaves to the isotopy class of closed entrance  leaves.
This is because
the only possible way for $h$ to map the boundary of a Reeb
annulus transversely to the entrance foliation (which has
a Reeb annulus) is
for $h$ to map  this curve to the isotopy class of closed
curves in the entrance foliation.

Each submanifold $V_-$, $V_+$ survives in $M_1$ and hence
in $M$. The same is true for their copies $V'_-$, $V'_+$ in $M_2$.
By construction these are Seifert fibered manifolds and they are free.
The remaining part of $M$ is the gluing of $W_0 \subset M_1$ with a copy of $W'_0 \subset M_2$, where
$W_0$ is $V_0$ with a solid torus removed. Observe that $W_0$
and $W'_0$ are both  diffeomorphic to the product of a punctured annulus with the circle.
In addition $\theta_1$ is a representative of the Seifert fiber
in $M_1$ and so is any closed curve in the entrance foliation
of $\partial M_1$. Similarly for the exiting foliation in
$\partial M_2$.
Therefore  condition on the gluing maps we pointed out in the previous
paragraph implies that the gluing map $h$ sends the Seifert
fibers of $W_0$ to the Seifert fibers of $W'_0$. Hence
the union $W = W_0 \cup W'_0$ is also a Seifert piece, diffeomorphic
to the product of the circle with the sphere minus fours disks. The fiber is freely homotopic to the periodic orbits $\theta_1$, $\theta_2$,
hence \textit{not} freely homotopic in $M$ with the fiber of $M_1$.  This shows that the pieces of the JSJ decomposition of $M$ are $V_-$, $V_+$, $V'_-$, $V'_+$ and $W$.
These pieces, except $W$, are free pieces for the Anosov flow in $M$. The piece $W$, adjoining all the others, is periodic.


This produces examples of Anosov flows with free pieces adjoing periodic pieces.

\vskip .15in
\noindent

Other examples were constructed in \cite{bafe1}: start with the geodesic flow
$\Phi_0$ in a closed hyperbolic surface $S$ with a symmetry over a (say) simple
geodesic $c$. Let $S_1, S_2$ be the closures of the complementary
components of $c$ in $S$ and let $M_1, M_2$ be the unit tangent bundles of $S_1, S_2$ respectively.
Do branched covers of $S$ along $c$ that generate branched covers of
the unit tangent bundle. In \cite{bafe1} we explain that the resulting
branched flow $\Phi$ is a pseudo-Anosov flow in a manifold $M$. Unlike in the previous examples, this
cannot be an Anosov flow.
The pieces $M_i$ lift to free Seifert pieces of the final flow $\Phi$.
The torus over the geodesic $c$ lifts to an $\mathbb S^1$ bundle over
a graph $E$ that has singularities.  This bundle is Seifert fibered. As in the previous example
one can easily show that this is a piece of the JSJ decomposition of $M$ and it is
a periodic piece. It is adjoining to the free pieces.
One can also do this construction with a disjoint union of geodesics in $S$.

\vskip .15in
\noindent
{\bf {Hyperbolic blow up}}

\begin{figure}
  \centering
   \includegraphics[scale=0.5]{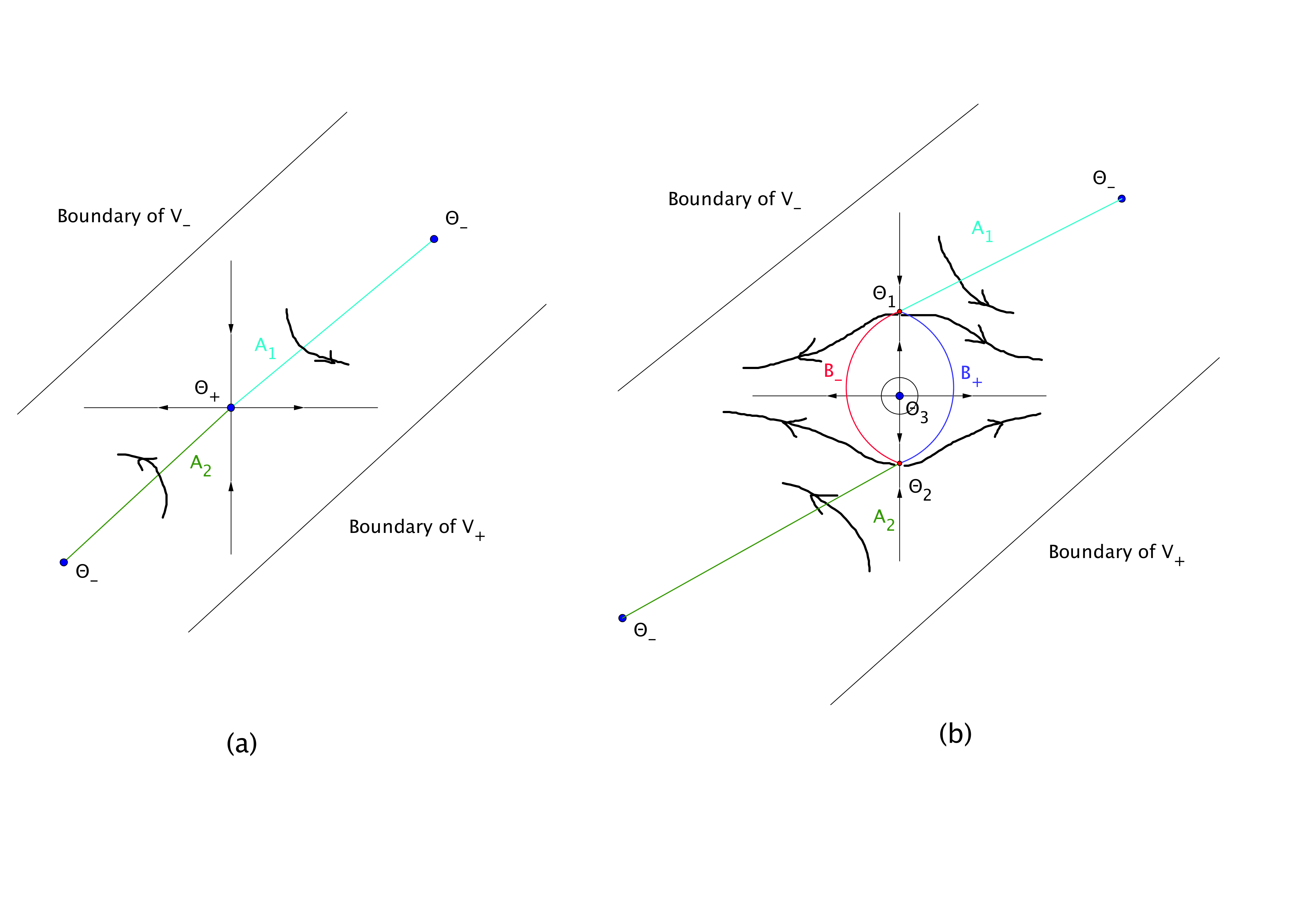}
\caption{Birkhoff annuli: (a) Before the DA blow
up fo orbit $\theta_+$,
(b) After the DA blow up
(the periodic orbit $\theta_3$ is the repelling one created during the DA operation, it is removed during the contruction.}
\label{fig:da}
\end{figure}

We further explain the first class of examples in the previous subsection
Free adjoining periodic. See Figure \ref{fig:da}.
Recall that $W = W_0 \cup W'_0$ is a
(periodic) Seifert piece. The original flow $\Phi_0$ had a Birkhoff torus associated with
the geodesic $c$. This torus was made up from two Birkhoff annuli between
the orbit $\theta_+$ and another orbit $\theta_-$ corresponding to the geodesic
traversed in the opposite direction.
After the DA blow up of the orbit $\theta_+$ creating periodic
orbits $\theta_1, \theta_2$ and $\theta_3$,
these two Birkhoff annuli are modified to two Birkhoff annuli
$A_1, A_2$, with
$A_1$ from $\theta_1$ to $\theta_-$ and $A_2$ from $\theta_2$ to $\theta_-$.
These annuli are in the submanifold $W_0 \subset M_1$.
There is an embedded annulus $B_-$ from $\theta_1$ to
$\theta_2$ contained in $W_0$ and so
that the torus $A_1 \cup A_2 \cup B_-$ is isotopic to $\partial V_-$
in $V_- \cup W_0$.  The annulus $B_-$ can be chosen transverse
to the blow up flow in $T_1 S$, and so is a Birkhoff annulus
for this flow, even though this flow is {\underline {not}}
Anosov or pseudo-Anosov.
There is also a Birkhoff annulus $B_+$ in $W_0$ so
that $A_1 \cup A_2 \cup B_+$ is isotopic to $\partial V_+$
in $W_0 \cup V_+$.

The union $A_1 \cup B_- \cup A_2$ is a Birkhoff torus for $\Phi$ in $M$
as well ($\Phi$ is Anosov), again isotopic to $\partial V_-$ in $M$.

Note that the Birkhoff annuli $B_-$ and $B_+$ cannot be elementary
for the Anosov flow $\Phi$.
This is because these Birkhoff annuli realize an oriented isotopy between
the orbits $\theta_1$ and $\theta_2$ oriented by the direction of the flow.
Therefore each of $B_-, B_+$ must be isotopic to a union of
an even number elementary annuli (cf. \cite{bafe1}).
We analize this situation in much more detail.
There is a symetric picture in $W'_0$, with three periodic
orbits $\theta'_1, \theta'_2, \theta'_-$ in $W'_0$ that are freely homotopic
to Seifert fibers and 4 Birkhoff annuli $B'_-$, $B'_+$, $A'_1$, $A'_2$.
Here $A'_1, A'_2$ are elementary and $B'_-,B'_+$
are not elementary.  All the periodic orbits in the boundary
of these Birkhoff annuli are fibers, but taking into account their orientation
given by the direction of the flow, we see that
$\theta_1$, $\theta_2$ are freely homotopic to $\theta'_-$, and to the \underline{inverse} of $\theta'_1$, $\theta'_2$, $\theta_-$.

Consider an elementary Birkhoff annulus containing $\theta_1$
in its boundary. There are at most four such elementary Birkhoff annuli, because the flow is Anosov and therefore
a given point in $\oo$ is the corner of at most four lozenges.
Recall that a quadrant is
a connected component of a small tubular neighborhood of
$\theta_1$ with the stable/unstable local
leaves removed. It is clear from the picture that one of the four
quadrants of $\theta_1$ cannot contain an elementary Birkhoff annulus:
the quadrant between the local stable and unstable
half-leaves crossing the boundary of $V_-$. The three others quadrants
contain $A_1$, $B_-$ and $B_+$. We conclude that each of them contains
one elementary Birkhoff annulus:, and
we already know one of them, which is $A_1$.
The other boundary component of any one of the two other Birkhoff annuli must be an oriented periodic orbit freely homotopic to the inverse of $\theta_1$:
it only can be $\theta'_1$ or $\theta'_2$.
We explain this in more detail. Any closed orbit $\alpha$ of $\Phi$
what is freely homotopic to $\theta_1$ cannot intersect $\partial M_1$
because $\partial M_1$ is transverse to the flow and separates $M$.
In addition if $\alpha \subset M_1$ then since $M_1, M_2$ are union
of Seifert spaces it follows that one can do cut and paste and produce
a free homotopy from $\alpha$ to $\theta_1$ entirely contained
in $M_1$. But the only periodic orbit of $\Phi$ in $M_1$ that is
freely homotopic to the inverse of $\theta_1$ in $M_1$ is $\theta_-$,
because of the structure of the geodesic flow in $T^1 S$.
The same reasoning applies to $M_2$, hence the other orbit
in the boundary of these Birkhoff annuli must be either
$\theta'_1$ or $\theta'_2$.

It follows that there is an elementary Birkhoff annulus $C_1$ between
$\theta_1$ and $\theta'_1$, and another elementary Birkhoff annulus $D_1$
between $\theta_1$ and $\theta'_2$.
In a symmetric way, there is an elementary Birkhoff annulus $C_2$
between $\theta_2$ and $\theta'_1$, and another elementary Birkhoff annulus
$D_2$ between $\theta_2$
and $\theta'_2$.
The union

$$T_1 = C_1 \cup D_1 \cup C_2 \cup D_2$$

\noindent
can be chosen to be embedded and it is isotopic to $\partial M_1
= \partial M_2$. Notice that $T_1$ has two periodic orbits
$\theta_1$ and $\theta_2$ in $M_1$ and two other periodic
orbits $\theta'_1, \theta'_2$ in $M_2$. The four Birkhoff annuli
above are transverse to $\partial M_1$ in their interiors.
Modulo changing the indices we can assume that
$\theta'_1$ is the orbit in the intersection of $C_1$ and $C_2$,
and $\theta'_2$ is the orbit in the intersection of $D_1$ and $D_2$.

Now we can describe the free Seifert pieces of $M$ with respect
to the flow. We can choose a representative of the free
Seifert piece $V_-$ in $M$ bounded by
the embedded Birkhoff torus $A_1 \cup A_2 \cup C_2 \cup C_1$.
Similarly $V_+$ has a representative with boundary
$A_1 \cup A_2 \cup D_2 \cup D_1$,
also $V'_-$ has a representative with boundary $A'_1 \cup A'_2 \cup D_1 \cup C_1$,
and finally  $V'_+$ has a representative with boundary
$A'_1 \cup A'_2 \cup D_2 \cup C_2$.

\vskip .1in
Finally we discuss the periodic Seifert piece $W$.
The periodic Seifert piece $W$ has a two dimensional spine that
can be chosen to be

$$Z \ = \ A_1 \cup A_2 \cup A'_1 \cup A'_2 \cup C_1 \cup C_2
\cup D_1 \cup D_2.$$

\noindent
The associated fat graph to this Seifert piece
(see explanation of this in \cite{bafe1}) has 8 edges
corresponding exactly to the 8 Birkhoff annuli in $Z$.
In addition it has 6 vertices corresponding to $\theta_1,
\theta_2, \theta'_1, \theta'_2, \theta_-, \theta_+$.
The first 4 vertices have valence 3 in the fat graph
and the last 2 have valence 2. For example
$\theta_1$ is a boundary of orbit of $A_1,C_1$ and $D_1$
only and $\theta_-$ is a boundary orbit of
$A_1$ and $A_2$ only.

\vskip .1in

In terms of actions on the circle, the flow $\Phi$ in
a Seifert fibered piece of $M$ can be achieved by blowing up the action on
one
of the intervals associated with the action in the original Birkhoff annuli
in $T_1 c$ to a homeomorphism with {\underline {two new hyperbolic fixed
points}} in the interior. This splits a lozenge into three adjacent lozenges.

\begin{remark}{\em
Let us denote by $\Phi_1$ the
blow up flow of $\Phi_0$ in $N = T^1S$.
The two Birkhoff annuli $A_1, A_2$ of $\Phi_1$ and the
annulus $B_+$ bound a submanifold $Y$ in $N$. The
annulus $B_+$ is transverse to $\Phi_1$ in the interior.
The flow $\Phi_1$ restricted to $Z$ is already exactly
in the format prescribed by the Main theorem (notice that
$Z$ is {\underline {not}} a Seifert piece of the
JSJ decomposition of $N$ as $N$ is Seifert fibered).
What we mean is that it is orbitally equivalent
to one obtained by the Hyperbolic blow up of geodesic
flow operation in section \ref{sec:blowing}.
{\underline {However}} the flow $\Phi_1$ in $N$ is
{\underline {not}} an Anosov flow, since it has
a repelling orbit. That is why one does the operation
of removing a solid torus neighborhood of the repelling
orbit and gluing a time reversal flow to
{\underline {obtain}} an Anosov flow in the final
manifold $M$. But the final flow in $M$ contains a copy
of the flow
$\Phi_1 |_{Y}$ in it and $Y$ is associated with
a Seifert fibered piece of the JSJ decomposition of $M$.
}
\end{remark}

\vskip .15in
\noindent
{\bf {Free adjoining atoroidal}}

We give some examples of Anosov flows. Start with a geodesic flow $\Phi_0$ in
a closed hyperbolic surface $S$ and let $c$ be a {\em non simple} geodesic
that fills a subsurface $S_1$ so that the complement is also not a union of
annuli. For simplicity assume that the complement $S_2$ is connected.
Let $M_i$ be the bundles over $S_i$.
Now do a high enough Dehn surgery on an orbit of $\Phi_0$ over $c$.
In \cite{Fe6} the second author proves that the resulting manifold
$M$ has two pieces in the JSJ decomposition: one corresponds to $M_1$ that
survives the surgery intact. The other, call it $M'$, is obtained from
the surgery in $M_2$. Here $M_1$ is Seifert fibered and free and $M'$ is
atoroidal \cite{Fe6}. This produces the examples.

\vskip .15in
\noindent
{\bf {Atoroidal adjoining atoroidal}}

One can get examples very similar to the ones in the previous case.
Use the same notation as in the previous setup, but now also do Dehn
surgery on an orbit over a geodesic $c'$ that fills $S_2$.
Then the resulting manifold has two atoroidal pieces in its JSJ decomposition.

\vskip .15in
\noindent
{\bf {Periodic adjoining atoroidal}}

Examples of Anosov flows were constructed by Franks and Williams \cite{Fr-Wi} in
their seminal paper.
These are not the original set of examples in that paper $-$ the famous intransitive
ones, but are slightly more complicated. Start with a suspension $\Phi_0$ do a
blow up of an orbit and remove a solid torus neighborhood to produce an atoroidal manifold
$M_1$ with a semi-flow. Now glue to a more complicated manifold: for example
it could be an $\mathbb S^1$ bundle $P$ over a twice punctured sphere.
The flow is the suspension of a simple diffeomorphism
 with Morse-Smale singularities.
This produces a Seifert piece $P$ in the resulting manifold and the piece
is periodic.

Examples of pseudo-Anosov flows that have singularities can be obtained by the
branched construction as explained in the free adjoining free examples as follows.
Before doing the branching, do Dehn surgery to have the original
manifold with two symmetric atoroidal pieces $M_1, M_2$ glued along a torus.
Then proceed as before. The atoroidal pieces lift to atoroidal pieces
in the resulting manifold. The torus lifts to a Seifert piece $P$ that is
periodic.

\vskip .15in
\noindent
{\bf {Periodic adjoining periodic}}

A very large class of examples were constructed in \cite{bafe1}.
The construction is not immediate and we refer the reader to \cite{bafe1}.
Then if
$P,P'$ are adjoining periodic pieces, we showed in \cite{bafe2} that the
adjoining torus $T$ is isotopic to a torus {\em transverse} to the
pseudo-Anosov flow. We showed that there are many examples that are Anosov.

\vskip .15in
\noindent
{\bf {Adjoining tori that are transverse}}

The original Franks-Williams examples have two atoroidal pieces glued
along a transverse torus. As for atoroidal adjoining a periodic piece, we explained above that
Franks-Williams produced examples where the adjoining torus is again transverse to the
flow.

So unless the piece is free, there are examples where the adjoining torus
is transverse to the flow.

{\footnotesize
{
\setlength{\baselineskip}{0.01cm}

}
}

\end{document}